\documentclass[12pt]{amsart} % for a short document
\usepackage[margin=3cm]{geometry}
\usepackage[english]{babel}
\usepackage{xcolor}
\usepackage{hyperref}
\makeatletter
\def\l@subsection{\@tocline{2}{0pt}{2.75pc}{3pc}{}}
\makeatother
\definecolor{deepsea}{RGB}{0, 139, 139}
\hypersetup{
    colorlinks=true,       % Removes the ugly boxes entirely
    linkcolor=blue,    % Table of contents, equation references, etc.
    citecolor=cyan,    % Citation links [1]
    urlcolor=cyan      % Website/URL links
}
\usepackage{amsmath}
\usepackage{amsthm}
\usepackage{amssymb}
\usepackage{bbm}
\usepackage{ucs}
\usepackage{enumerate}
\usepackage{tikz}
\usepackage{tikz-cd}
\usepackage[utf8]{inputenc}
\usepackage[T1]{fontenc}
\usepackage{xcolor}
\usepackage{mathtools}
\usepackage{comment}
\usepackage{dsfont}
\usepackage{faktor}
\usepackage{xcolor}
\usepackage[draft,inline,nomargin,index]{fixme}
\fxsetup{theme=color,mode=multiuser}
\FXRegisterAuthor{pf}{apf}{\color{olive}PF}
\FXRegisterAuthor{flm}{aflm}{\color{red}FLM}
\FXRegisterAuthor{km}{akm}{\color{purple}KM}
\FXRegisterAuthor{ip}{aip}{\color{blue}IP}
\AtBeginDocument{%
   \def\MR#1{}
}
\theoremstyle{plain}
\newtheorem{theorem}{Theorem}[section]
\newtheorem{theoremA}{Theorem}

\newtheorem{lemma}[theorem]{Lemma}
\newtheorem{proposition}[theorem]{Proposition}
\newtheorem{corollary}[theorem]{Corollary}
\theoremstyle{definition}
\newtheorem{definition}[theorem]{Definition}
\newtheorem{example}[theorem]{Example}

\newtheorem{remark}[theorem]{Remark}

\newcommand{\R}{\mathbb{R}}

\newcommand{\C}{\mathbb{C}}
\newcommand{\N}{\mathbb{N}}

\newcommand{\B}{\mathbb{B}}
\newcommand{\Z}{\mathbb{Z}}
\newcommand{\Lcal}{\mathbb{B}}

\newcommand{\ot}{\otimes}
\newcommand{\id}{{\rm id}}

\newcommand{\LL}{{\rm L}}

\newcommand{\F}{\mathbb{F}}
\newcommand{\Aut}{{\rm Aut}}
\newcommand{\Ad}{{\rm Ad}}

\newcommand{\Ucal}{\mathcal{U}}

\DeclareMathOperator{\Sub}{Sub}
\DeclareMathOperator{\Stab}{Stab}

\DeclareMathOperator{\Prob}{Prob}

\DeclareMathOperator{\Hyp}{Hyp}
\DeclareMathOperator{\Homeo}{Homeo}

\DeclareMathOperator{\Proj}{Proj}

\newcommand{\dist}{\mathrm{dist}}

\title[Uniformly recurrent subalgebras]{Uniformly recurrent subalgebras in finite von Neumann algebras}
\date{}
 \author[Amrutam]{Tattwamasi Amrutam}
\address{Tattwamasi Amrutam
\newline
Institute of Mathematics of the Polish Academy of Sciences, ul. Sniadeckich 8, 00-656, Warszawa, Poland}
\email{tattwamasiamrutam@gmail.com}
\thanks{T.A. is supported by the National Science Centre, Poland, Sonata grant number 2025/59/D/ST1/03117.}
\author[Fima]{Pierre Fima}
\address{Pierre Fima
\newline
Universit\'e Paris Cit\'e and Sorbonne Universit\'e, CNRS, IMJ-PRG, F-75013 Paris, France}
\email{pierre.fima@imj-prg.fr}
\thanks{P.F. is partially supported by the CNRS IEA GAOA and ANR-25-CE40-5010 (CroCQG)}
\author[Jiang]{Yongle Jiang}
\address{Yongle Jiang
\newline
School of Mathematical Sciences, Dalian University of Technology, Dalian, 116024, China}
\email{yonglejiang@dlut.edu.cn}
\thanks{Y.J. is supported by National Natural Science Foundation of China, grant number 12471118}
\begin{document}
\begin{abstract}
We introduce the notion of a uniformly recurrent subalgebra (URA) for a trace-preserving action of a countable discrete group $\Gamma$ on a finite von Neumann algebra $M$, providing an operator-algebraic counterpart to the theory of uniformly recurrent subgroups (URS). We also show that the Effros-Mar\'echal space $\Sub(M)$ is compact if and only if $M$ lacks a diffuse direct summand. Leveraging this, we show that URAs can exhibit arbitrary topological complexity and construct exotic URAs homeomorphic to any prescribed minimal Polish space. In the context of crossed products $M \rtimes \Gamma$ with amenable coefficients, we utilize URAs to formulate a new characterization of C*-simplicity, proving that $\Gamma$ is C*-simple if and only if the only amenable URA of the crossed product containing $M$ is $\{M\}$. Finally, to bypass the failure of compactness in $\Sub(M)$, we develop a generalized state-space machinery on the weak-* compact space of trace-extending states. This construction captures compact, discrete, and exotic URAs, while recovering the classical URS framework as a special case.
\end{abstract}
\maketitle
\tableofcontents
%%%%%%%%%%%%%%%%%%%%%%%%%%%%%%%%%%%%%%%%%%%%%%%%%%%%%%%%%%%%%%%%%%%%%%%%%%%%%%%%%%%%%%%%%%%%%%%%%%%%%%%%
\section{Introduction}
\label{sec:intro}
%%%%%%%%%%%%%%%%%%%%%%%%%%%%%%%%%%%%%%%%%%%%%%%%%%%%%%%%%%%%%%%%%%%%%%%%%%%%%%%%%%%%%%%%%%%%%%%%%%%%%%%%
%%%%%%%%%%%%%%%%%%%%%%%%%%%%%%%%%%%%%%%%%%%%%%%%%%%%%%%%%%%%%%%%%%%%%%%%%%%%%%%%%%%%%%%%%%%%%%%%%%%%%%%%
%%%%%%%%%%%%%%%%%%%%%%%%%%%%%%%%%%%%%%%%%%%%%%%%%%%%%%%%%%%%%%%%%%%%%%%%%%%%%%%%%%%%%%%%%%%%%%%%%%%%%%%%
The study of $\text{Sub}(\Gamma)$ associated with a group $\Gamma$ has opened up new directions of research at the interface of geometric group theory, ergodic theory, and operator algebras. The Chabauty space $\Sub(\Gamma)$ serves as a unifying geometric object for a wide range of rigidity questions. Classical rigidity phenomena---such as Margulis's normal subgroup theorem~\cite{margulis1991discrete} and the Nevo--Stuck--Zimmer theorems~\cite{stuck1994stabilizers, nevo1994boundary, nevo1999homogenous} on lattices in higher-rank semisimple Lie groups---can all be formulated as statements about the $\Gamma$-dynamics on $\Sub(\Gamma)$ via the classification of invariant random subgroups (IRS)~\cite{abert2014kesten, Baderetal2016}. On the topological dynamics side, using the notion of Uniformly Recurrent subgroups introduced by Glasner-Weiss~\cite{glasner2015uniformly}, Kennedy's fundamental theorem \cite{Ke18} characterizes the $C^*$-simplicity of $\Gamma$ in terms of the absence of non-trivial amenable URS. The subsequent works of Le Boudec and Matte Bon \cite{le2018subgroup} have placed URS at the heart of the modern theory of $C^*$-simplicity. More recently, this dynamical perspective has been greatly expanded through the study of stationary random subgroups (SRS) by Fraczyk-Gelander~\cite{fraczyk2023infinite}, illustrating the broader scope of stationary measures and random dynamics on $\Sub(\Gamma)$.

The space of von Neumann subalgebras carries a natural Polish topology whose
history is somewhat winding, and which stands in marked contrast to the
Chabauty space $\Sub(\Gamma)$ discussed above. Whereas $\Sub(\Gamma)$ sits
inside $\{0,1\}^\Gamma$ and is thereby a compact metric space, its operator-algebraic counterpart is in general only Polish, and this loss of compactness will shape the entire theory. The relevant topology has a long
lineage. Mar\'echal \cite{Marechal1973}
introduced a topology on the space of von Neumann algebras acting on a
fixed separable infinite-dimensional Hilbert space, refining the standard Borel
structure that Effros had defined on this space \cite{Effros1965}. Maréchal showed that this topology is Polish, although a
full proof only appeared recently in \cite{fima2024michael}.
Although Mar\'echal already observed that this topology yields streamlined
proofs for results on Borel subsets of the space of von Neumann algebras, the
circle of ideas lay largely dormant until Haagerup and Winsl\o w revived it in
two influential papers. In the first \cite{haagerup1998effros}, they established the
continuity of many natural operations on this space, including the commutant
map; in the second \cite{HW2000}, they uncovered a striking link with Connes'
embedding problem, proving it equivalent to the density of the isomorphism
class of the injective type III$_1$ factor in the space of all von Neumann
algebras acting on a separable infinite-dimensional Hilbert space --- a
formulation resting on the Baire category theorem and on Mar\'echal's theorem
that the ambient space is Polish. It is this Effros--Mar\'echal topology, in the
form carried by the space $\Sub(M)$ of von Neumann subalgebras of a finite von
Neumann algebra $(M,\tau)$ with separable predual, that all of our minimality
and continuity statements take place in.

More recently, operator-algebraic counterparts of the study on $\text{Sub}(\Gamma)$ picture have begun to emerge in two parallel directions. On the invariant-random side, \cite{THO25} develops an operator-algebraic analog of the theory of invariant random subgroups, putting the rigidity story of the Invariant Random Subgroups into a non-commutative setting. On the topological-dynamics side, the recent work \cite{AJ} develops an operator-algebraic analogue of uniformly recurrent subgroups by studying the space $\mathrm{PD}_1(\Gamma)$ of normalized positive definite functions on $\Gamma$, recovering both URS and a corresponding $C^*$-simplicity criterion as orbit closures in the pointwise-compact space $\mathrm{PD}_1(\Gamma)$.
In this paper, we take a complementary approach. The natural map associating a positive definite function $\phi_{M} \in \mathrm{PD}_1(\Gamma)$ to a von Neumann subalgebra $M \le L(\Gamma)$ is not injective. Therefore, rather than studying $\mathrm{PD}_1(\Gamma)$, we develop the operator-algebraic analogue of URS directly inside the space of von Neumann subalgebras. Let $(M,\tau)$ be a finite von Neumann algebra with separable predual; the space $\Sub(M)$ of von Neumann subalgebras of $M$ is a Polish space under the Effros--Mar\'echal topology. Given a trace-preserving action $\Gamma \curvearrowright (M,\tau)$, we define a \emph{uniformly recurrent subalgebra} (URA) to be a minimal closed $\Gamma$-invariant subset of $\Sub(M)$.  The basic question driving this paper is to understand the richness of the world of URAs and the tools available for constructing and classifying them.
Before we can pursue this, however, we must address a structural point. In the URS setting, $\Sub(\Gamma)$ is a compact metric space, and Zorn's lemma immediately produces minimal $\Gamma$-invariant subsets. For von Neumann subalgebras, the situation is markedly different. $\Sub(M)$ is in general only a Polish space, and whether or not it is compact turns out to depend on the type structure of $M$. Our first main theorem gives a complete characterization, along with an embedding theorem that quantifies just how badly compactness can fail.
\begin{theoremA}\label{thm:A}
Let $M$ be a finite von Neumann algebra with separable predual. The following are equivalent.
\begin{enumerate}
\item $M$ has a non-zero diffuse direct summand.
\item For every Polish space $X$, there exists a closed topological embedding
$$X \hookrightarrow \Sub(M).$$
\item $\Sub(M)$ is not compact.
\end{enumerate}
\end{theoremA}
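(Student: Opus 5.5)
We prove the cycle $(1)\Rightarrow(2)\Rightarrow(3)\Rightarrow(1)$. We use the standard description of the Effros--Mar\'echal topology on $\Sub(M)$ for finite $M$: $N_k\to N$ iff $E_{N_k}(x)\to E_N(x)$ in $\|\cdot\|_2$ for every $x\in M$. Equivalently, the map $N\mapsto E_N$ is a homeomorphism onto its image in the pointwise-$\|\cdot\|_2$ topology on conditional expectations. For $(2)\Rightarrow(3)$, take $X=\N$ (discrete). A closed embedding of $\N$ gives a closed, infinite, discrete subset of $\Sub(M)$, which has no accumulation point, so $\Sub(M)$ is not compact.

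For $(3)\Rightarrow(1)$, suppose $M$ has no diffuse summand. Then $M=\bigoplus_i M_{n_i}(\C)$ is a countable direct sum of matrix algebras, with trace weights $t_i$ summing to $1$.
\begin{itemize}
\item Let $(N_k)$ be a sequence in $\Sub(M)$. Since $\Sub(M)$ is Polish, it suffices to extract a convergent subsequence.
\item The set of $\tau$-preserving conditional expectations is the set of unital completely positive $\tau$-preserving idempotent maps. It sits inside the set of maps $\phi$ that contract $\|\cdot\|_2$. This set is compact for pointwise-$\|\cdot\|_2$ convergence on a countable dense set. The reason is atomicity: each finite block $\bigoplus_{i\le m}M_{n_i}$ has a norm-compact unit ball, and the tail has small $\|\cdot\|_2$ uniformly.
\item Pass to a subsequence with $E_{N_k}\to\phi$ pointwise in $\|\cdot\|_2$. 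Then $\phi$ is ucp, $\tau$-preserving, and $M$-bimodular over the limits. The key point is that $\phi$ is idempotent and $\phi=E_N$ for $N=\{x:\phi(x)=x\}$.
\end{itemize}
Idempotency is where finite dimensionality helps. On each block the projections $E_{N_k}$ restricted to finite-dimensional pieces converge in norm, and norm limits of idempotents are idempotent. Truncation errors are controlled by the tail trace. Alternatively, I would use Haagerup--Winsl\o w: in the Effros--Mar\'echal topology, the lim inf / lim sup characterization together with the compactness of the relevant projection spaces (all of $M$'s projections form a compact set when $M$ is atomic with finite blocks) gives compactness. I would follow whichever of these is cleaner.

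For $(1)\Rightarrow(2)$, the main step, write $M=M_0\oplus M_1$ with $M_1\neq0$ diffuse. It suffices to embed $X$ closedly into $\Sub(M_1)$ via $N\mapsto M_0\oplus N$, which is a closed embedding. Every Polish space is homeomorphic to a closed subset of $\R^\N$, so it suffices to treat $X=\R^\N$, or even $\N^\N$ together with a closed embedding of $X$ into a suitable parameter space. Inside a diffuse $M_1$ we have a diffuse abelian subalgebra $A\cong L^\infty[0,1]$ and projections $p_n$ with $\tau(p_n)=2^{-n}\tau(1)$. Set $M_1'=\bigoplus p_nM_1p_n$ modulo a remainder. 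Each corner is again diffuse, so it contains a copy of $L^\infty[0,1]$ with generating unitary $u_n$.

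For $x=(x_n)\in\R^\N$, define
\[
N_x=\Big(\bigvee_n W^*\big(f(x_n)\big)\Big),
\]
where $f(x_n)\in p_nAp_n$ is a projection of trace $\tau(p_n)\,\sigma(x_n)$. Here $\sigma:\R\to(0,1)$ is a homeomorphism, and the projections are nested within $L^\infty$ of each corner. Continuity of $x\mapsto N_x$ follows from $\|\cdot\|_2$-continuity of the generating projections. Injectivity and continuity of the inverse follow by reading off $\tau(f(x_n))$ from $E_{N_x}$, using the inclusion of the fixed central projections $p_n$.

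The hard part is \emph{closedness of the image}. A limit of the $N_{x^k}$ with $x^k_n\to\pm\infty$ would degenerate to the subalgebra where $f=0$ or $f=p_n$, and this limit must lie outside the image. I would ensure this by choosing $\sigma$ onto $(0,1)$ and adding to $N_x$ a fixed marker subalgebra. The marker is meant to make the degenerate limits, where some $f$ is trivial, detectable as non-members: in the image $f(x_n)$ is never $0$ or $p_n$, but a limit of the $N_{x^k}$ could contain such a trivial $f$. Proving that every convergent sequence $N_{x^k}$ has $x^k$ convergent in $\R^\N$, via continuity of $N\mapsto\tau$ of the relevant minimal projections, is the step I expect to be the main obstacle.
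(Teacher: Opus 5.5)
\textbf{Main gap: closedness in $(1)\Rightarrow(2)$.} The step you flag is a real gap, and it cannot be closed within your encoding. You record $x_n$ by a nested projection $f(x_n)\le p_n$ of trace $\tau(p_n)\sigma(x_n)$. Each coordinate then runs along an arc $s\mapsto f_n(s)$ that is $\|\cdot\|_2$-continuous on the closed interval $[0,1]$. For the abelian algebras you describe, $E_{N_x}(y)=\sum_e\frac{\tau(ye)}{\tau(e)}e$, summed over the minimal projections $e$ of $N_x$. Each term has $\LL^2$-norm at most $\|y\|\,\tau(e)^{1/2}$, so the terms degenerate continuously as $\tau(e)\to0$. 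Hence $x\mapsto N_x$ factors through $x\mapsto(\sigma(x_n))_n$ followed by a continuous map on the compact space $[0,1]^{\N}$, and its image is relatively compact in $\Sub(M)$. Since $\R^{\N}$ is not compact, no embedding of it can have closed, relatively compact image. Concretely, letting one coordinate tend to $\pm\infty$ gives a convergent sequence $N_{x^k}$ whose limit, the degenerate algebra, lies outside the image. A fixed marker subalgebra does not address this. The degenerate limit is already a non-member, and that is exactly the failure of closedness. What you must arrange is that unbounded parameter sequences have \emph{no} limit in $\Sub(M)$, and nothing in the marker idea enforces that.

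\textbf{The missing idea.} You need a closed embedding $\R\hookrightarrow\Proj(\LL^\infty([0,1]))$ for the $\LL^2$-topology, i.e.\ a path of projections that leaves every $\|\cdot\|_2$-compact set.
\begin{itemize}
\item The paper gets this from Bernoulli independence (Lemma~\ref{LemClosedLineProjections}). On $\{0,1\}^{\Z}\times[0,1]$, writing $t=n+s$, it interpolates between the independent half-measure sets $R_n=\{\omega_n=1\}$ and $R_{n+1}$ via $A_t=R_n\times(s,1]\sqcup R_{n+1}\times[0,s]$.
\item This gives $\tau(q_t)=\frac12$ for all $t$, while $\|q_t-q_{t'}\|_2^2=\frac12$ whenever $|t-t'|\geq2$. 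So unbounded sequences are never Cauchy.
\item Copies of this line in orthogonal corners $f_j\leq1-e_0$ of summable trace, added to $e_0$, give a closed embedding $\R^{\N}\hookrightarrow\mathcal P(e_0,r)$ (Lemma~\ref{LemClosedBaireMarkedProjections}).
\item The map $p\mapsto W^*(p)=\C p\oplus\C(1-p)$ is then a closed embedding of $\mathcal P(e_0,r)$ into $\Sub(M)$ (Lemma~\ref{LemMarkedProjectionsToSubalgebras}). There the marker $e_0\leq p$ only serves to recover $p=\frac{r}{s}E_{W^*(p)}(e_0)$ continuously, and $r<\frac12$ gives injectivity. So closedness in $\Sub(M)$ reduces to $\LL^2$-closedness of $\mathcal P(e_0,r)$ and of the projection image.
\item Your decompression step and Kechris' theorem then finish the proof.
\end{itemize}
A reduction to $\N^{\N}$ would not suffice, since $\N^{\N}$ is zero-dimensional and does not even contain a copy of $\R$.

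\textbf{The other implications.} Your $(2)\Rightarrow(3)$ is fine. Your $(3)\Rightarrow(1)$ follows the paper's route, but two steps need repair.
\begin{itemize}
\item The set of $\|\cdot\|_2$-contractions is not pointwise compact when $\LL^2(M)$ is infinite-dimensional. The uniform tail bound $\|(1-z_m)\phi(x)\|_2\leq\|x\|\,\tau(1-z_m)^{1/2}$ needs operator-norm contractivity of ucp maps.
\item The blockwise idempotency argument fails, because $E_{N_k}$ does not respect the central decomposition of $M$. For $N_k=\C1$, the compression $z_mE_{N_k}(z_m\,\cdot\,)$ is $a\mapsto\tau(z_ma)z_m$, which is not idempotent when $z_m\neq1$.
\end{itemize}
Idempotency of the limit $\phi$ holds in general, from $\|E_{N_k}(\phi(x))-E_{N_k}(x)\|_2\leq\|\phi(x)-E_{N_k}(x)\|_2$. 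You still have to prove that the fixed-point set of $\phi$ is multiplicative. Kadison--Schwarz plus faithfulness of $\tau$ does this, as does the direct $\LL^2$ computation in Proposition~\ref{prop:relative-compactness-subalgebras}.
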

This improves \cite[Theorem 2.5]{AJ}, where it was shown that $\Sub(M)$ is non-compact in the diffuse case. The failure of compactness is the structural obstruction that shapes the entire theory developed in this paper. Despite this obstruction, URAs are abundant and exhibit remarkable structural diversity. URAs in $\Sub(L(\Gamma))$ can be wild and exotic in a way that has no counterpart on the subgroup side since the Chabauty space $\Sub(\Gamma)$ is a closed subset of $\{0,1\}^\Gamma$ and is, in particular, compact. In contrast, $\Sub(L(\Gamma))$ supports URAs with any prescribed minimal Polish topology. We make this precise in our second main theorem, which can be regarded as a wildness statement complementing Theorem~\ref{thm:A}. We say that a URA in $\Sub(L(\Gamma))$ is \emph{exotic} if it is not of the form $\{L(\Lambda) : \Lambda \in X\}$ for some URS $X$.
\begin{theoremA}\label{thm:B}
Let $\Lambda$ be a countably infinite abelian group. For every minimal Polish space $X$, there exists an exotic URA $\mathcal{U}_X \subset \Sub(L(\Lambda \wr \mathbb{F}_\infty))$ that is homeomorphic to $X$.
\end{theoremA}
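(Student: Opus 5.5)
We realize the minimal dynamics of $X$ inside the abelian base algebra and then transport it along the free factor. By a minimal Polish space we mean a Polish space $X$ with a minimal action of $\mathbb{F}_\infty$ by homeomorphisms. We may assume the acting group is $\mathbb{F}_\infty$: every countable group acting minimally is a quotient of $\mathbb{F}_\infty$, and every point of $X$ has dense orbit.

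Write $G=\Lambda\wr\mathbb{F}_\infty$, so that $L(G)=L(\bigoplus_{\mathbb{F}_\infty}\Lambda)\rtimes\mathbb{F}_\infty$. Since $\Lambda$ is countably infinite abelian, $L(\Lambda)\cong L^\infty(\widehat\Lambda)$ is diffuse abelian, hence $L(\Lambda)\cong L^\infty([0,1])$. Then $A:=L(\bigoplus_{\mathbb{F}_\infty}\Lambda)\cong L^\infty([0,1]^{\mathbb{F}_\infty})$, with $\mathbb{F}_\infty$ acting by the Bernoulli shift.

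\textbf{Step 1 (equivariant embedding).} By Theorem~\ref{thm:A} applied to the diffuse algebra $L(\Lambda)$, there is a closed embedding $\iota:X\hookrightarrow\Sub(L(\Lambda))$. Using the copy of $L(\Lambda)$ at the identity coordinate, define
\[
\Phi(x)=\Big(\overline{\bigotimes}_{g\in\mathbb{F}_\infty}\iota(g^{-1}x)\Big)\vee L(\mathbb{F}_\infty)\in\Sub(L(G)).
\]
The factor indexed by $g$ is placed in the $g$-th copy of $L(\Lambda)$. Conjugation by $u_h$ shifts coordinates, which gives $u_h\Phi(x)u_h^*=\Phi(hx)$, so $\Phi$ is equivariant.

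Joining with $L(\mathbb{F}_\infty)$ is not essential; one could instead use just the tensor product inside $A$. However, including $L(\mathbb{F}_\infty)$ makes $\Phi(x)=B_x\rtimes\mathbb{F}_\infty$ with $B_x:=\overline{\bigotimes}_g\iota(g^{-1}x)$. Then $E_A$ of the subalgebra recovers $B_x$, and the conditional expectation onto the identity coordinate recovers $\iota(x)$.

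\textbf{Step 2 (topology).} We show that $\Phi$ is a homeomorphism onto its image and that the image is closed.

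\emph{Continuity.} We use the characterization of Effros--Maréchal convergence in finite algebras via conditional expectations: $N_n\to N$ iff $E_{N_n}\to E_N$ pointwise in $\|\cdot\|_2$. The expectation onto $B_x\rtimes\mathbb{F}_\infty$ acts as $\bigotimes_g E_{\iota(g^{-1}x)}$ on elementary tensors times $u_h$. Pointwise convergence on finitely supported tensors, which form a $\|\cdot\|_2$-dense set, follows from continuity of $\iota$ and of the action. The passage to the whole algebra is justified because the expectations are contractions.

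\emph{Inverse continuity.} Suppose $\Phi(x_n)\to\Phi(x)$. Compose with the expectation onto the identity-coordinate copy of $L(\Lambda)$; this operation is continuous on the relevant subalgebras by the same $E$-criterion. It yields $\iota(x_n)\to\iota(x)$, and since $\iota$ is an embedding, $x_n\to x$.

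\emph{Closedness of the image.} Suppose $\Phi(x_n)\to N$. The projected algebras $\iota(x_n)$ converge, to some limit lying in the closed set $\iota(X)$, say $\iota(x)$. Hence $x_n\to x$ and $N=\Phi(x)$.

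The main obstacle is this step. The claim that $N\mapsto E_{L(\Lambda)_e}(N)''$ is continuous is false in general. I would avoid it by working directly with $\|E_{\Phi(x_n)}(a)-E_{\Phi(x)}(a)\|_2$ for $a\in L(\Lambda)_e$. For such $a$ this quantity equals $\|E_{\iota(x_n)}(a)-E_{\iota(x)}(a)\|_2$, so convergence of the $\Phi(x_n)$ forces pointwise convergence of $E_{\iota(x_n)}$. That convergence means $\iota(x_n)$ converges in $\Sub(L(\Lambda))$, which gives the inverse continuity and closedness.

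\textbf{Step 3 (URA and exotic).} The image $\Phi(X)$ is closed, invariant, and homeomorphic to $X$ via the equivariant map $\Phi$. Its minimality follows from the minimality of $X$. Hence $\mathcal{U}_X:=\Phi(X)$ is a URA homeomorphic to $X$.

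For exoticness, we choose $\iota$ so that no $\iota(x)$ is a group algebra of a subgroup of $\Lambda$. One way is to compose with a fixed non-group-like diffuse subalgebra construction; alternatively, we can ensure that $\iota(x)$ is not spanned by the unitaries $\lambda$ that it contains. If $\Phi(x)=L(H)$ for some subgroup $H\le G$, then $E_{L(\Lambda)_e}$ would map it onto $L(H\cap\Lambda_e)$, forcing $\iota(x)$ to be a group algebra, a contradiction. Therefore $\mathcal{U}_X$ is not of the form $\{L(\Lambda'):\Lambda'\in Y\}$ for a URS $Y$, so it is exotic.
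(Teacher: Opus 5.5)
The map $\Phi$ you define in Step 1 does not work. Because $u_h\in L(\mathbb F_\infty)\subseteq\Phi(x)$, and because $u_hB_xu_h^*=B_{hx}$ (the shift places $\iota(g^{-1}hx)$ in slot $g$), you get
\[
\Phi(hx)=B_{hx}\vee L(\mathbb F_\infty)=u_h\bigl(B_x\vee L(\mathbb F_\infty)\bigr)u_h^*=u_h\Phi(x)u_h^*=\Phi(x)\qquad(h\in\mathbb F_\infty).
\]
So your equivariance identity holds only because $\Phi$ is constant on $\mathbb F_\infty$-orbits. In a minimal space with at least two points no orbit is a singleton, so $\Phi$ is not injective. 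In fact, continuity of $\iota$ and density of orbits give $\Phi(x)=\bigl(\overline{\bigotimes}_gC\bigr)\rtimes\mathbb F_\infty$ with $C=\bigvee_{z\in X}\iota(z)$, so $\Phi$ is constant on all of $X$. The identification $\Phi(x)=B_x\rtimes\mathbb F_\infty$ is false, because $B_x$ is not shift-invariant. Neither $E_A$ nor the expectation onto the identity coordinate recovers $B_x$ or $\iota(x)$ from $\Phi(x)$. Consequently the formula for $E_{\Phi(x)}$ in Step 2, the inverse-continuity argument and the exoticness argument all fail for the map you actually define.

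Your parenthetical fallback is exactly the paper's construction: drop $L(\mathbb F_\infty)$ and set $M_x:=\overline{\bigotimes}_g\iota(g^{-1}x)\subseteq L(\Lambda^{(\mathbb F_\infty)})$. With it, your Step 2 is a valid, slightly streamlined version of Lemma~\ref{LemInfiniteTens}. Testing on $a^{(e)}$ gives $E_{M_x}(a^{(e)})=E_{\iota(x)}(a)^{(e)}$, and pointwise convergence of the $E_{\iota(x_n)}$ produces a limit in $\Sub(L(\Lambda))$ by the argument of Proposition~\ref{prop:relative-compactness-subalgebras}. Two ingredients are still missing, however.

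First, a URA of $L(\Lambda\wr\mathbb F_\infty)$ must be invariant and minimal for the whole group $\Lambda\wr\mathbb F_\infty$, not only for $\mathbb F_\infty$. The paper settles this by noting that $\Lambda^{(\mathbb F_\infty)}$ is abelian. Hence conjugation by its unitaries fixes every subalgebra of $L(\Lambda^{(\mathbb F_\infty)})$, and $\mathbb F_\infty$-minimality is the same as minimality for the whole group.

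Second, exoticness requires control over $\iota$, which a black-box use of Theorem~\ref{thm:A} does not give; the phrase ``compose with a fixed non-group-like construction'' is not an argument. The paper uses the explicit embedding from the proof of Theorem~\ref{ThmEmbedding}, namely $x\mapsto\mathbb Cp_x\oplus\mathbb C(1-p_x)$ with $\tau(p_x)=r<\frac12$, obtained from Lemmas~\ref{LemClosedBaireMarkedProjections} and \ref{LemMarkedProjectionsToSubalgebras} together with Kechris's theorem. Such an algebra is never of the form $L(K)$: a two-dimensional $L(K)$ forces $K\cong\mathbb Z/2\mathbb Z$, and then both minimal projections have trace $\frac12$.

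Finally, your reduction to a minimal $\mathbb F_\infty$-action replaces the paper's definition of a minimal space by a different one. The bridge is Proposition~\ref{PropMinPolish}, which uses second countability, and you should cite it.
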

\noindent This says that the topology of URAs can be arbitrarily complicated. For example, $\Sub(L(\Gamma))$ also supports discrete URAs that have no analogue inside $\Sub(\Gamma)$. Discreteness in the Chabauty topology forces a URS to be a finite conjugacy class of subgroups, whereas $\Sub(L(\Gamma))$ contains discrete URAs of infinite cardinality. We exhibit two distinct mechanisms producing them (see Proposition~\ref{prop:finite-action-fixed-point-rigidity} and Corollary~\ref{cor:fixed-point-ura} for the first mechanism and Proposition~\ref{prop:finite-subgroup-corner-ura} for the second).

To distinguish these two families, we attach to every URA $\Ucal$ two numerical invariants. The first is the index $[\Gamma:\Stab_\Gamma(N)]$ of the stabilizer of a member $N\in\Ucal$, which does not depend on $N$ because a finite-index stabilizer forces the action on $\Ucal$ to be transitive (Lemma~\ref{lem:group-index-of-ura}). The second is the Pimsner--Popa index $[M:N]$, which does not depend on $N$ either, this time because the Pimsner--Popa constant is upper semicontinuous on $\Sub(M)$ and invariant under trace-preserving automorphisms (Lemma~\ref{lem:pimsner-popa-index-of-ura}).

Neither discreteness nor compactness is a formal consequence of these constructions.  For the locally finite i.c.c.\ group $\left(\bigoplus_{\N}\Z/3\Z\right)\rtimes S_\infty$ the same fixed-point recipe produces a \emph{compact} exotic URA homeomorphic to the Cantor space, of Pimsner--Popa index $2$ (Proposition~\ref{prop:fixed-point-non-fg-not-discrete}). Nor does finite Pimsner--Popa index by itself produce isolated points (see Proposition~\ref{ex:finitely-generated-nonisolation}). Finally, a URA need not be either compact or discrete; Example~\ref{ex:discontinuous-fiber-map} produces such a URA from a $\Gamma$-invariant continuous field of $C^*$-algebras over an irrational rotation. The full structural diversity of URAs --- compact, exotic, and discrete --- is the object of the subsequent sections (Sections~\ref{sec:URA}, \ref{subsec:compactURAs}, \ref{subsec:discreteURAs}, \ref{subsec:exoticURAs}, and~\ref{subsec:continuousfields}).

We now explain the construction underlying many of our URA examples, which is the content of Theorem~\ref{thm:D}. The natural attempt is to imitate \cite{AJ} and study orbit closures of $\phi_M$ inside the pointwise-compact space $\mathrm{PD}_1(\Gamma)$. This works perfectly at the level of subgroups, but at the level of subalgebras, the program runs into the obstruction quantified by Theorem~\ref{thm:A} since the pointwise limits of $\phi_M$ need not arise from any von Neumann subalgebra, and the natural ambient space --- subalgebras of $L(\Gamma)$ --- is generically not compact. To bypass this obstruction, we develop a highly general state-space centralizer construction. Let $(M,\tau)$ be a finite von Neumann algebra faithfully represented on a Hilbert space $\mathcal{H}$, equipped with a trace-preserving $\Gamma$-action. Let $A \subset \mathbb{B}(\mathcal{H})$ be a chosen $\Gamma$-invariant unital $C^*$-algebra. Rather than working directly in $\Sub(M)$, we enlarge the ambient space to the weak-$*$ compact convex set of states on $\mathbb{B}(\mathcal{H})$ extending the canonical trace:
$$ K = \{\varphi \in S(\mathbb{B}(\mathcal{H})) : \varphi|_{M} = \tau\}.$$
Zorn's lemma produces minimal closed $\Gamma$-invariant subsets $\mathcal{Z} \subset K$. To each $\varphi \in \mathcal{Z}$ we associate its centralizer $M_\varphi \subset M$ relative to $A$, defined as the von Neumann algebra generated by elements $z \in M$ satisfying $\varphi(zy) = \varphi(yz)$ for all $y \in A$. \iffalse The centralizer map $\Phi: \mathcal{Z} \to \Sub(M)$ given by $\Phi(\varphi) = M_\varphi$ is only upper semi-continuous, but a Baire-category argument supplies a dense $G_\delta$ set of continuity points.\fi
The centralizer map $\Phi: \mathcal{Z} \to \Sub(M)$ given by $\Phi(\varphi) = M_\varphi$ is in general not continuous; whenever it is continuous at some point of a minimal state space, the Effros--Mar\'echal orbit closure of the associated subalgebra is a URA. In the compact and discrete realization theorems below, the required continuity is verified directly. 
\begin{theoremA}\label{thm:D}
Let $\mathcal{Z} \subset K$ be a minimal closed $\Gamma$-invariant subset and let $\Phi:\mathcal{Z}\to\Sub(M)$, $\Phi(\varphi)=M_\varphi$, be the centralizer map relative to $A$.
\begin{enumerate}
\item For every continuity point $\varphi_0 \in \mathcal{Z}$ of $\Phi$, the Effros--Mar\'echal orbit closure
$$ \mathcal{U} = \overline{ \{ \alpha_g(M_{\varphi_0}) : g \in \Gamma \} }^{\,\mathrm{EM}} $$
is a minimal closed $\Gamma$-invariant subset of $\Sub(M)$, and thus a URA; moreover $\mathcal{U}=\overline{\Phi(\operatorname{Cont}(\Phi))}^{\,\mathrm{EM}}$, so it does not depend on the chosen continuity point.
\item If $\Phi(\mathcal{Z})$ is relatively compact in $\Sub(M)$, then the set $\operatorname{Cont}(\Phi)$ of continuity points of $\Phi$ is a dense $G_\delta$ subset of $\mathcal{Z}$.
\end{enumerate}
Conversely, every compact URA arises as in {\rm(1)}, for a suitable choice of the representation, of the test algebra $A$ and of the minimal state space $\mathcal{Z}$, and with $\Phi$ continuous on all of $\mathcal{Z}$.
\end{theoremA}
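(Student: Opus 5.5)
Write $\alpha_g$ both for the action on $M$ and for the dual action $\varphi\mapsto\varphi\circ\alpha_g^{-1}$ on $K$. This assumes the $\Gamma$-action on $M$ is implemented on $\mathcal H$ by unitaries $u_g$ with $u_gAu_g^*=A$, so that $K$ is $\Gamma$-invariant. The first thing to record is equivariance: $\Phi(g\varphi)=\alpha_g(\Phi(\varphi))$. Indeed, $z$ centralizes $\varphi$ relative to $A$ if and only if $\alpha_g(z)$ centralizes $g\varphi$ relative to $\alpha_g(A)=A$. Since $\Gamma$ acts on $\Sub(M)$ by homeomorphisms, $\operatorname{Cont}(\Phi)$ is a $\Gamma$-invariant subset of $\mathcal{Z}$.

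\textbf{Part (1).} Fix $\varphi_0\in\operatorname{Cont}(\Phi)$, let $\mathcal{U}$ be the orbit closure of $N_0=M_{\varphi_0}$, and take $N\in\mathcal{U}$. It suffices to show $N_0\in\overline{\Gamma N}$. Choose $g_n$ with $\alpha_{g_n}(N_0)\to N$. By compactness of $\mathcal{Z}$, pass to a subnet with $g_n\varphi_0\to\psi\in\mathcal{Z}$. By minimality of $\mathcal{Z}$, there are $h_m$ with $h_m\psi\to\varphi_0$.

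The difficulty is that $\Phi$ is continuous only at $\varphi_0$, not at $\psi$, so I cannot conclude directly that $N=\Phi(\psi)$. Instead I will use a semicontinuity property that holds everywhere: the centralizer condition $\varphi(zy)=\varphi(yz)$ is closed in $(\varphi,z)$. I will use the standard description of EM-convergence: $N_i\to N$ iff $\liminf N_i\supseteq N\supseteq\limsup N_i$, where $\liminf$ is the set of strong limits of elements $x_i\in N_i$ and $\limsup$ is the algebra generated by weak limit points. Passing to such limits on the unit ball (bounded, strong-$*$), the closedness should show that $\limsup_n \Phi(g_n\varphi_0)\subseteq\Phi(\psi)$. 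Hence $N\subseteq\Phi(\psi)$. Transporting by $h_m$, and using that inclusions persist under EM-limits, any limit point $N'$ of $\alpha_{h_m}(N)$ satisfies $N'\subseteq\lim\alpha_{h_m}\Phi(\psi)=\lim\Phi(h_m\psi)=\Phi(\varphi_0)=N_0$, where the middle equality is continuity at $\varphi_0$.

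For the reverse inclusion, continuity at $\varphi_0$ gives, for each $x\in N_0$, approximants in $\Phi(h_mg_n\varphi_0)=\alpha_{h_mg_n}(N_0)$ along a diagonal net with $h_mg_n\varphi_0\to\varphi_0$. The aim is then a diagonal selection showing $\alpha_{h_m g_n}(N_0)$ is close to $\alpha_{h_m}(N)$ for $n$ large relative to $m$, so that $N_0\in\overline{\Gamma N}$. I will organize this using a compatible metric on the Polish space $\Sub(M)$ and a countable neighbourhood base at $\varphi_0$ (if $\mathcal{Z}$ is nonmetrizable I will use nets throughout). This diagonal step is the main obstacle.

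The identity $\mathcal{U}=\overline{\Phi(\operatorname{Cont}\Phi)}$ follows once I show that each $\Phi(\varphi_1)$ with $\varphi_1\in\operatorname{Cont}(\Phi)$ lies in $\mathcal{U}$. By minimality, $g_n\varphi_0\to\varphi_1$, and continuity at $\varphi_1$ gives $\alpha_{g_n}(N_0)\to\Phi(\varphi_1)$. The reverse inclusion $\mathcal U\subseteq\overline{\Phi(\operatorname{Cont}\Phi)}$ is immediate from invariance of $\operatorname{Cont}(\Phi)$.

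\textbf{Part (2).} Let $C=\overline{\Phi(\mathcal{Z})}$, a compact metrizable space. I will invoke the classical Fort-type theorem: a map into a compact metric space whose graph is closed, or which is upper semicontinuous for the inclusion structure (proved above), has a residual set of continuity points. Concretely, for a countable family of basic open sets $V$ of the form ``approximately contains given elements'', the set $\Phi^{-1}(V)$ has interior dense in itself modulo a nowhere dense set. Since $\mathcal{Z}$ is compact Hausdorff, hence Baire, $\operatorname{Cont}(\Phi)=\bigcap_k\{\varphi:\operatorname{osc}_\Phi(\varphi)<1/k\}$ is a countable intersection of open sets. Each of these is dense by the standard argument: the sets $\{\varphi: \Phi(\varphi)\subseteq$-near $B_j\}$ for a finite cover $\{B_j\}$ of $C$ by $1/k$-balls are closed by the semicontinuity, and some has interior in every open set.

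\textbf{Converse.} Given a compact URA $\mathcal{U}$, take $\mathcal H=\ell^2(\mathcal{U})\otimes L^2(M)$, or more simply use $C(\mathcal{U})$-valued data. Take $A$ to be the $C^*$-algebra generated by $\Gamma$-translates of a conditional-expectation-type operator $\int^\oplus e_N$ and by $JMJ$. The point is to let $K$ contain the vector states $\varphi_N=\langle\,\cdot\,\delta_N\otimes\hat1,\delta_N\otimes\hat1\rangle$, whose centralizer relative to $A$ is exactly $N$ (because $z$ commutes with $e_N$ iff $z\in N$). Then $\{\varphi_N\}$ is homeomorphic to $\mathcal{U}$ and minimal, and $\Phi$ is a homeomorphism onto $\mathcal{U}$, hence continuous.
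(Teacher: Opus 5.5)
\textbf{Converse.} This is where the proposal breaks. Your vector states $\varphi_N$, $N\in\mathcal U$, live on pairwise orthogonal summands $\delta_N\otimes\LL^2(M)$. If $p_N$ denotes the projection onto that summand, then $\varphi_N(p_N)=1$ and $\varphi_{N'}(p_N)=0$ for $N'\neq N$. So each $\varphi_N$ is weak$^*$-isolated, and $\{\varphi_N:N\in\mathcal U\}$ is a discrete subset of $S(\B(\mathcal H))$.

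As soon as $\mathcal U$ is infinite (hence has non-isolated points), three things follow. The map $N\mapsto\varphi_N$ is not continuous. The set is neither closed nor homeomorphic to $\mathcal U$. And since a minimal compact $\Gamma$-space containing an isolated point is a single finite orbit, no minimal closed invariant subset of its closure contains any $\varphi_N$. Separately, $\ell^2(\mathcal U)\otimes\LL^2(M)$ is non-separable as soon as $\mathcal U$ is uncountable, e.g.\ for the Cantor URA of Proposition~\ref{prop:fixed-point-non-fg-not-discrete}.

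The missing idea is that the minimal state space consists of weak$^*$-limits of such vector states, and the real work is computing their centralizers. The paper indexes only the orbit of one $N_0\in\mathcal U$, by the countable set $I=\Gamma/\Stab_\Gamma(N_0)$, and takes $A=C^*(M,T)$ with $T=\bigoplus_{i\in I}e_{N_i}$. It proves in Lemma~\ref{lem:diagonal-jones-centralizer} that $\omega_{i_j}\to\varphi$ weak$^*$ together with $N_{i_j}\to N$ in $\Sub(M)$ forces $M_\varphi=N$. The inclusion $N\subseteq M_\varphi$ uses the strong convergence $e_{N_{i_j}}\to e_N$ and the trace-type identity $\omega(xy)=\omega(yx)$ for $x\in N$, $y\in C^*(M,e_N)$ from Lemma~\ref{LemJones}. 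Note that $x$ commuting with $e_N$ is not by itself the centralizer condition, since $x$ does not commute with $M\subseteq A$. Extending $i\mapsto N_i$ and $i\mapsto\omega_i$ to $\beta I$ then shows that $\Phi$ is a continuous equivariant surjection from $Y=\overline{\{\omega_i:i\in I\}}^{\,w^*}$ onto $\mathcal U$. Minimality of $\mathcal U$ gives $\Phi(\mathcal Z)=\mathcal U$ for every minimal $\mathcal Z\subseteq Y$.

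\textbf{Part (2).} The density step does not work as written. With only the one-sided property ``$\varphi_i\to\varphi$ and $M_{\varphi_i}\to\mathcal N$ imply $\mathcal N\subseteq M_\varphi$'', the sets attached to a finite cover of $C$ by small balls cannot be both closed and oscillation-controlling. Read one way, $\{\varphi:M_\varphi\in\overline{B_j}\}$ need not be closed: in Example~\ref{ex:minimal-discontinuous-centralizer-compact-range} the preimage of a small ball around $\C1$ is dense in $\mathcal Z$ but misses $\varphi_0$. Read the other way, $\{\varphi:P\subseteq M_\varphi\text{ for some }P\in\overline{B_j}\}$ is closed but says nothing about the oscillation; it is all of $\mathcal Z$ when $\C1\in\overline{B_j}$.

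The paper scalarizes instead. Relative compactness plus that property make every $f_x(\varphi)=\Vert E_{M_\varphi}(x)\Vert_2$ upper semicontinuous. Fort's theorem then gives a dense $G_\delta$ of continuity points of $f_{x_k}$ for an $\LL^2$-dense sequence $(x_k)$, and $\operatorname{Cont}(\Phi)=\bigcap_k\operatorname{Cont}(f_{x_k})$. Your ``inclusion structure'' idea can also be made rigorous, via the set-valued Fort theorem applied to the closed-graph, compact-valued map $\varphi\mapsto\{P\in C:P\subseteq M_\varphi\}$.

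\textbf{Part (1).} This part is essentially complete once you drop the semicontinuity detour. The detour is superfluous, and its statement about limit points of $\alpha_{h_m}(N)$ presupposes limits that need not exist in the non-compact space $\Sub(M)$.

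The diagonal step you call the main obstacle is two lines. Given a neighbourhood $W$ of $N_0$, take an open $U\ni N_0$ with $\overline U\subseteq W$ and an open $V\ni\varphi_0$ with $\Phi(V)\subseteq U$, and fix $m$ with $h_m\psi\in V$. Since $h_mg_n\varphi_0\to h_m\psi$ and $\alpha_{h_m}$ is a homeomorphism of $\Sub(M)$, for large $n$ one has $\alpha_{h_mg_n}(N_0)=\Phi(h_mg_n\varphi_0)\in U$ while $\alpha_{h_mg_n}(N_0)\to\alpha_{h_m}(N)$. Hence $\alpha_{h_m}(N)\in\overline U\subseteq W$. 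This is the paper's Lemma~\ref{lem:Minimality from a continuity point}, and your treatment of $\mathcal U=\overline{\Phi(\operatorname{Cont}(\Phi))}$ coincides with the paper's.
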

Continuity points are, however, not scarce. As soon as the range $\Phi(\mathcal Z)$ is relatively compact in $\Sub(M)$, they form a dense $G_\delta$ subset of $\mathcal Z$, by a Baire-category argument applied to the coordinate maps $\varphi\mapsto\Vert E_{M_\varphi}(x)\Vert_2$. On the other hand, a single continuity point already forces continuity everywhere when the induced action on the closure of the range is equicontinuous (Proposition~\ref{prop:equicontinuous-centralizer-continuity}).
%The space $K$ is weak-$*$ compact, so Zorn's lemma produces minimal closed $\Gamma$-invariant subsets $\mathcal{Z} \subset K$. 

As mentioned above, every \emph{compact} URA arises from the minimal-state-space construction of Theorem~\ref{thm:D} (see Theorem~\ref{ThmCompactURA}). In particular, every URS arises from this state-space machinery, recovering the Glasner--Weiss classification in our framework (Remark~\ref{rmk:urs-compact-state-space-realization}). Moreover, every \emph{discrete} URA also arises from a broader version of the construction in which the ambient state space is no longer required to be minimal (Theorem~\ref{thm:discrete-uras}). This broadening is necessary, and the exact point at which it becomes necessary can be located: for a discrete URA $\Ucal$, the following four conditions are equivalent (Proposition~\ref{prop:discrete-minimal-state-space-characterization}). $\Ucal$ is the orbit closure of $\Phi(z_0)$ for some continuity point $z_0$ of some $\Gamma$-equivariant map $\Phi$ defined on a compact minimal $\Gamma$-space; $\Ucal$ is finite; $\Ucal$ is compact; $[\Gamma:\Ucal]<\infty$. In particular, an infinite discrete URA is invisible to every construction of the type of Theorem~\ref{thm:D}, and this is precisely what the non-minimal variant of Theorem~\ref{thm:discrete-uras} repairs, by using isolated points of a non-minimal compact $\Gamma$-invariant set of states as continuity points of the centralizer map. The dichotomy is itself a manifestation of the compactness obstruction of Theorem~\ref{thm:A}.

We now turn to applications. Given a trace-preserving action $\Gamma \curvearrowright (M,\tau_M)$ on an amenable finite von Neumann algebra, we show that C*-simplicity of $\Gamma$ is equivalent to the absence of non-trivial amenable URAs in the crossed product $M \rtimes \Gamma$ containing $M$. This extends \cite{AJ} from the trivial coefficient algebra $\C$ to arbitrary amenable coefficients, and at the same time provides a URA-theoretic characterization of C*-simplicity. Our result also generalizes Kawabe's work \cite{kawabe2017uniformly} on the ideal structure of $C(X) \rtimes_r \Gamma$ for minimal $\Gamma$-spaces $X$.
\begin{theoremA}\label{thm:C}
Let $\Gamma \curvearrowright (M,\tau_M)$ be a trace-preserving action of a countable discrete group on an
amenable finite von Neumann algebra with separable predual, and  $X$ be a non-empty compact Hausdorff minimal $\Gamma$-space. The following are equivalent.
\begin{enumerate}
\item The reduced crossed product $C(X) \rtimes_r \Gamma$ is simple.
\item For all $x \in X$, any amenable von Neumann subalgebra $N\subset M\rtimes\Gamma_x$ containing $M$ is not $M$-confined in the crossed product $M\rtimes\Gamma$.
\item There exists $x \in X$ such that any amenable von Neumann subalgebra $N\subset M\rtimes\Gamma_x$ containing $M$ is not $M$-confined in the crossed product $M\rtimes\Gamma$.
\item For all $x\in X$, the only amenable URA $\Ucal$ of $M\rtimes\Gamma$ containing $M$ and such that $\Ucal\cap\Sub(M\rtimes\Gamma_x)\neq\emptyset$  is $\Ucal=\{M\}$.
\item There exists $x\in X$ such that the only amenable URA $\Ucal$ of $M\rtimes\Gamma$ containing $M$ and such that $\Ucal\cap\Sub(M\rtimes\Gamma_x)\neq\emptyset$  is $\Ucal=\{M\}$.
\end{enumerate}
\end{theoremA}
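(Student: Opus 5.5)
We plan to organize the proof as the cycle (1)$\Rightarrow$(2)$\Rightarrow$(3)$\Rightarrow$(1), and then to attach (4),(5) through the correspondence between confined subalgebras and URAs. The model is Kawabe's theorem and its refinement via Kennedy's confinement criterion. We will use the known characterization that $C(X)\rtimes_r\Gamma$ is simple if and only if, for some (equivalently every) $x\in X$, no amenable subgroup $H\le\Gamma_x$ is confined relative to $X$. Concretely, there is no amenable $H\le \Gamma_x$ such that the closure of $\{gHg^{-1}\}$ in $\Sub(\Gamma)$ avoids $\{e\}$ along the orbit of $x$. Here $M$-confinement of $N$ in $M\rtimes\Gamma$ is the analogue of this condition: some finite set $F\subset\Gamma\setminus\{e\}$ (or a $\|\cdot\|_2$-condition on the Fourier coefficients $E_M(x u_g^*)$, $g\in F$) is seen by every conjugate $u_g N u_g^*$.

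For (1)$\Rightarrow$(2) we argue by contraposition. Suppose $N\subset M\rtimes\Gamma_x$ is amenable, contains $M$, and is $M$-confined. We first show that the set
$$H=\{g\in\Gamma_x: E_M(yu_g^*)\neq 0\text{ for some } y\in N\}$$
generates an amenable subgroup. The tool is the Fourier-coefficient/conditional-expectation argument: since $M\subset N$ with $M$ amenable and $\Gamma$ acting trace-preservingly, $N$ is the crossed-product-like algebra generated by $M$ and the partial isometries $E_M(yu_g^*)u_g$, so amenability of $N$ forces amenability of the "support" subgroup. We would use that an injective $N$ containing $M$ admits a conditional expectation onto $N$ from $\B(L^2)$. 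Restricting it to $\ell^\infty(\Gamma)\subset\B(\ell^2\Gamma\otimes L^2M)$ should then yield an invariant mean for the support group. We then check that $M$-confinement of $N$ transfers to confinement of $H$ relative to $X$: the finite set $F$ witnessing confinement of $N$ must meet $gHg^{-1}$, because otherwise $E_M$ kills the relevant coefficients. This contradicts Kennedy--Kawabe, so (1) fails.

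The step (2)$\Rightarrow$(3) is trivial. For (3)$\Rightarrow$(1), again by contraposition, suppose $C(X)\rtimes_r\Gamma$ is not simple. Then for every $x$ there is an amenable confined $H\le\Gamma_x$; we can take $H$ to be the amenable radical-type subgroup produced by Kawabe, which works for all $x$ simultaneously via the minimality of $X$ and the upper semicontinuity of $x\mapsto\Gamma_x$. Set $N=M\rtimes H$. This algebra is amenable because $M$ and $H$ are amenable, it contains $M$, and it sits inside $M\rtimes\Gamma_x$. Its $M$-confinement follows directly from the confinement of $H$, since $E_M(u_hu_g^*)=\delta_{h,g}$. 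This contradicts (3).

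For (2)$\Leftrightarrow$(4) and (3)$\Leftrightarrow$(5), we take an $M$-confined amenable $N$ and form the closure of its orbit in $\Sub(M\rtimes\Gamma)$. By confinement, this closure avoids $M$ only in the appropriate sense, and it contains a minimal subset $\Ucal$. The main obstacle is this existence step, because $\Sub(M\rtimes\Gamma)$ need not be compact (Theorem~\ref{thm:A}). We plan to handle it by restricting to the closed subset of subalgebras containing $M$ and contained in images of $M\rtimes\Gamma_y$. This subset is compact because it is parametrized by the compact set of subgroups together with an $M$-bimodule structure; alternatively, we would use the state-space construction of Theorem~\ref{thm:D}. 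Amenability passes to EM-limits, since injectivity is closed in the Effros--Maréchal topology (Haagerup--Winsløw). Confinement guarantees $\Ucal\neq\{M\}$ and $\Ucal\cap\Sub(M\rtimes\Gamma_x)\neq\emptyset$. Conversely, a nontrivial amenable URA containing $M$ that meets $\Sub(M\rtimes\Gamma_x)$ yields a member $N\neq M$. Minimality shows that $M$ is not in the orbit closure of $N$, which by lower semicontinuity of $\|E_N(u_g)\|_2$ is precisely $M$-confinement.
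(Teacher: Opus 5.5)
\textbf{Main gap: the step (1)$\Rightarrow$(2).} Your claim that amenability of $N$ forces the Fourier support $H=\{g\in\Gamma_x: E_M(yu_g^*)\neq 0 \text{ for some } y\in N\}$ to generate an amenable subgroup is false. So is the picture of $N$ as generated by $M$ and the elements $E_M(yu_g^*)u_g$. These Fourier components need not lie in $N$, and intermediate algebras $M\subseteq N\subseteq M\rtimes\Gamma$ are crossed products by subgroups only under strong outerness assumptions; this fails badly for, e.g., $M=\C$.

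Here is a concrete counterexample. Take $M=\C$, $X$ a point, $\Gamma=\F_2=\langle a,b\rangle$, and the radial MASA $N=\{\lambda_a+\lambda_a^*+\lambda_b+\lambda_b^*\}''$. It is abelian, hence amenable, yet its Fourier support is all of $\F_2$, and $\lambda_a\notin N$. Restricting a hypertrace or conditional expectation to $\ell^\infty(\Gamma)$ only gives invariance under those $g$ with $u_g\in N$. That subgroup can be trivial and says nothing about confinement of $N$.

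There is no algebraic passage from an amenable confined subalgebra to an amenable confined subgroup. The missing idea is the hypertrace--boundary argument:
\begin{enumerate}
\item Produce $\Phi\in\Hyp_\tau(N)$ with $\Phi\circ\varphi_x^{{\rm L}^2(M)}=\delta_x$ (Lemma~\ref{lem:pointed-hypertrace}). This is exactly where Proposition~\ref{lem:N-central-extension} is needed, with values prescribed on the operator system $M\rtimes\Gamma+D_x$.
\item Lift the resulting measure to the generalized Furstenberg boundary $\widetilde X$ and contract it to some $\delta_y$ along a net $(s_i)$.
\item Use that simplicity makes $\Gamma\curvearrowright\widetilde X$ topologically free with clopen fixed-point sets, so $gy\neq y$ for every $g\neq e$.
\item Apply Lemma~\ref{LemApproxBoundaryVanishing} to $a_i=E_{N_i}(u_g)$ to get $\|E_{u_{s_i}Nu_{s_i}^*}(u_g)\|_2\to 0$, i.e.\ $u_{s_i}Nu_{s_i}^*\to M$ by Lemma~\ref{LemExpectationConvergenceFromBoundary}.
\end{enumerate}

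\textbf{Second gap: passing to (4) and (5).} The set of subalgebras lying between $M$ and some $M\rtimes\Gamma_y$ is not compact and is not parametrized by subgroups. For $M=\C$ and $X$ a point it is all of $\Sub(\LL(\Gamma))$, which is non-compact for infinite $\Gamma$ by Theorem~\ref{thm:A}. The state-space construction does not rescue this either. It gives no control on whether the URA it produces lies in the orbit closure of $N$, is non-trivial, or meets $\Sub(M\rtimes\Gamma_x)$. So you cannot extract a URA from the orbit closure of a confined amenable subalgebra, and (4)$\Rightarrow$(2) and (5)$\Rightarrow$(3) remain unproved.

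Your directions (2)$\Rightarrow$(4) and (3)$\Rightarrow$(5) are fine, since a URA containing the fixed point $M$ equals $\{M\}$. Your (3)$\Rightarrow$(1) is also the paper's argument; you only need Kawabe's subgroup at the single $x$ supplied by (3), nothing uniform in $x$.

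To close the cycle, prove (5)$\Rightarrow$(1) at the group level, where compactness is available. If $C(X)\rtimes_r\Gamma$ is not simple, take Kawabe's amenable confined $\Lambda\leq\Gamma_x$. Choose a URS $\mathcal H$ inside its orbit closure in the compact space $\Sub(\Gamma)$, and push it forward by the continuous equivariant map $H\mapsto M\rtimes H$ of Lemma~\ref{LemmaChabauty}. The image is an amenable URA containing $M$. It differs from $\{M\}$ because $\{e\}\notin\mathcal H$. It meets $\Sub(M\rtimes\Gamma_x)$ by upper semicontinuity of $y\mapsto\Gamma_y$, compactness of $\mathcal H$ and minimality of $X$.
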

\noindent Taking $M = \C$ and $X$ to be a point, we recover the main result of \cite{AJ}, while statement $(4)$, an intrinsic characterization in terms of URAs, is new even in that special case. For a general minimal compact $\Gamma$-space $X$ (and $M=\C$), it generalizes Kawabe's work \cite{kawabe2017uniformly} on the ideal structure of $C(X) \rtimes_r \Gamma$. While Kawabe characterized the simplicity of these reduced crossed products using amenable URS of the stabilizer subgroups, we show that this phenomenon is governed by the Effros-Mar\'echal dynamics of amenable subalgebras. Theorem~\ref{thm:C} also feeds back into the state-space machinery. When the test algebra is taken as large as possible, $A=\mathbb B(\mathcal H)$, every centralizer $M_\varphi$ is amenable, so every URA produced by Theorem~\ref{thm:D} is amenable; for $M=L(\Gamma)\subset\mathbb B(\ell^2(\Gamma))$ and $\Gamma$ $C^*$-simple, Theorem~\ref{thm:C} then forces all these URAs to be trivial (Remark~\ref{rmk:maximal-test-algebra-amenable-ura}). The choice of the test algebra $A$ is therefore not a technical convenience but the parameter which controls how much of $\Sub(M)$ the construction can see.
\subsection*{Organization of the paper} The paper is organized as follows. Section~\ref{sec:preliminaries} collects preliminaries on finite von Neumann algebras, the Effros--Mar\'echal topology, the Jones and Pimsner--Popa index, fixed-point subalgebras of automorphisms, and minimal Polish spaces. Section~\ref{sec:compactness} characterizes compactness of $\Sub(M)$ and proves the universal embedding theorem (Theorem~\ref{thm:A}). Section~\ref{sec:URA} develops the basic theory of URAs, including the two index invariants and the universality theorem for exotic URAs (Theorem~\ref{thm:B}). Section~\ref{sec:crossedproduct} treats confined subalgebras and proves the simplicity criterion (Theorem~\ref{thm:C}). Section~\ref{sec:statespace} develops the state-space centralizer construction (Theorem~\ref{thm:D}): Subsection~\ref{subsec:generalconstruction} sets up the construction and delimits it by an example, Subsection~\ref{subsec:compactURAs} shows that it captures every compact URA and, in particular, every URS, Subsection~\ref{subsec:discreteURAs} proves the obstruction for infinite discrete URAs and gives the non-minimal variant which captures all of them, Subsection~\ref{subsec:exoticURAs} constructs the fixed-point and corner families of exotic URAs and compares them through their Pimsner--Popa indices, and Subsection~\ref{subsec:continuousfields} gives a geometric realization by continuous fields producing a URA which is neither compact nor discrete.\iffalse and uses it to capture compact URAs, discrete URAs, exotic URAs, the URS revisited via the construction, the continuous-field realization, and amenable URAs via hypertrace dynamics.\fi
\subsection*{Acknowledgments} We thank Prof. Jesse Peterson for helpful comments.
%%%%%%%%%%%%%%%%%%%%%%%%%%%%%%%%%%%%%%%%%%%%%%%%%%%%%%%%%%%%%%%%%%%%%%%%%%%%%%%%%%%%%%%%%%%%%%%%%%%%%%%%
\section{Preliminaries}
\label{sec:preliminaries}
%%%%%%%%%%%%%%%%%%%%%%%%%%%%%%%%%%%%%%%%%%%%%%%%%%%%%%%%%%%%%%%%%%%%%%%%%%%%%%%%%%%%%%%%%%%%%%%%%%%%%%%%
In this section, we present the background on which the rest of the paper rests. We begin with hypertraces and the characterization of amenability for von Neumann subalgebras, then introduce the Effros--Mar\'echal topology on $\Sub(M)$, and finally record several lemmas on fixed-point subalgebras of finite-order automorphisms and on minimal Polish spaces that will be used repeatedly.
%%%%%%%%%%%%%%%%%%%%%%%%%%%%%%%%%%%%%%%%%%%%%%%%%%%%%%%%%%%%%%%%%%%%%%%%%%%%%%%%%%%%%%%%%%%%%%%%%%%%%%%%
\subsection{Von Neumann algebras and hypertraces}
%%%%%%%%%%%%%%%%%%%%%%%%%%%%%%%%%%%%%%%%%%%%%%%%%%%%%%%%%%%%%%%%%%%%%%%%%%%%%%%%%%%%%%%%%%%%%%%%%%%%%%%%
\noindent Let $(M,\tau)$ be a finite von Neumann algebra. We will always assume that $M\subset\Lcal({\rm L}^2(M))$, where $({\rm L}^2(M),\id,\xi_\tau)$ is  the GNS construction of the trace $\tau$. For a von Neumann subalgebra $N\subseteq M$ we denote by $\Hyp_\tau(N)$ the set of $(N,\tau)$-hypertraces:
$$\Hyp_\tau(N):=\left\{\Phi\in S(\Lcal({\rm L}^2(M))):\Phi\vert_M=\tau,\,\Phi(xT)=\Phi(Tx)\,\forall x\in N,\,T\in\Lcal({\rm L}^2(M))\right\}$$
It is clear that $\Hyp_\tau(N)$ is a convex and weak* compact subset of $S(\Lcal({\rm L}^2(M)))$. Hypertraces provide a flexible operator-theoretic certificate of amenability that will be used throughout the paper.
\begin{proposition}\cite[Proposition 2.4]{THO25}\label{prop:amenability-hypertrace}
Let $(M,\tau)$ be a finite von Neumann algebra with separable predual. For a subalgebra $N\subseteq M$, the following are equivalent.
\begin{enumerate}
\item $N$ is amenable.
\item $\Hyp_\tau(N)\neq\emptyset$.
\end{enumerate}
\end{proposition}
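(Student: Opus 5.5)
The plan is to prove the two implications separately, in the standard Connes-type way. Since $N\subseteq M\subseteq\Lcal({\rm L}^2(M))$, an $(N,\tau)$-hypertrace is a state on $\Lcal({\rm L}^2(M))$ that is $N$-central and restricts to $\tau$ on $M$.

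\emph{(1)$\Rightarrow$(2).} Suppose $N$ is amenable, i.e.\ injective. Choose a conditional expectation $E:\Lcal({\rm L}^2(M))\to N$; this exists because $N$ is injective and $N\subseteq\Lcal({\rm L}^2(M))$. Then $\tau\circ E$ is $N$-central, since $E$ is $N$-bimodular and $\tau$ is tracial. However, $\tau\circ E$ need not restrict to $\tau$ on $M$, because $E|_M$ need not equal the trace-preserving expectation $E_N$.

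To fix this, use the decomposition $\Lcal({\rm L}^2(M))\supseteq M$ together with the trace-preserving expectation $E_N:M\to N$. Set $\psi=\tau\circ E$ and consider its restriction to $M$. It is an $N$-central state on $M$ extending $\tau|_N$, but it may differ from $\tau$ on $M$.

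The better route goes through the basic construction $\langle M,e_N\rangle=JN'J\subseteq\Lcal({\rm L}^2(M))$. Since $N$ is injective, so is $N'$, hence so is $JN'J$. The argument then proceeds in three steps.
\begin{enumerate}
\item The algebra $\langle M,e_N\rangle$ carries its canonical semifinite trace ${\rm Tr}$.
\item By injectivity of $N$ (equivalently, Connes' theorem that injective implies hyperfinite, hence approximately finite-dimensional $N=\overline{\bigcup N_k}$ with finite-dimensional $N_k$), build hypertraces as weak$^*$ limits. For finite-dimensional $N_k$ with unitary group $U_k$ and Haar measure $\mu_k$, define
$$\Phi_k(T)=\int_{U_k}\langle u T u^*\xi_\tau,\xi_\tau\rangle\,d\mu_k(u).$$
Each $\Phi_k$ is $N_k$-central. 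It also restricts to $\tau$ on $M$, because $\langle uxu^*\xi_\tau,\xi_\tau\rangle=\tau(uxu^*)=\tau(x)$ for $x\in M$.
\item Take a weak$^*$ cluster point $\Phi$ of $(\Phi_k)$. It is $N_k$-central for every $k$, hence $N$-central by density and normality in the $N$-variable. To see this, note that $|\Phi(xT)-\Phi(Tx)|\le 2\|T\|\,\|x-x_k\|_{2}$ by Cauchy--Schwarz, using $\Phi(y^*y)=\tau(y^*y)$ for $y\in M$. Moreover, Kaplansky density gives approximants $x_k\in N_k$ in $\|\cdot\|_2$. Finally, $\Phi|_M=\tau$.
\end{enumerate}
This gives $\Phi\in\Hyp_\tau(N)$.

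\emph{(2)$\Rightarrow$(1).} Given $\Phi\in\Hyp_\tau(N)$, restrict $\Phi$ to $\Lcal({\rm L}^2(N))$. To do this, identify ${\rm L}^2(N)=e_N{\rm L}^2(M)$ and compose with the map $S\mapsto S e_N$ (a unital $N$-bimodular embedding $\Lcal({\rm L}^2(N))\to e_N\Lcal({\rm L}^2(M))e_N$ extended appropriately). Alternatively, use Connes' criterion directly in the relative form: an $N$-central state on $\Lcal(\mathcal H)$, for any faithful normal representation of $N$ on $\mathcal H$, restricting to a faithful normal trace on $N$, implies injectivity.

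The representation $N\subseteq\Lcal({\rm L}^2(M))$ is faithful and normal. Moreover, $\Phi|_N=\tau|_N$ is a faithful normal trace. Connes' characterization (a hypertrace for $N$ on $\Lcal(\mathcal H)$ yields a conditional expectation $\Lcal(\mathcal H)\to N''$ via the Radon--Nikodym argument, $T\mapsto$ the element of $N$ representing $x\mapsto\Phi(Tx)$) then gives injectivity, hence amenability. Concretely, for $T\ge0$ the functional $x\mapsto\Phi(Tx)$ on $N$ is dominated by $\|T\|\,\tau$. One checks this by $N$-centrality: $\Phi(Tx)=\Phi(x^{1/2}Tx^{1/2})\le\|T\|\tau(x)$ for $x\ge0$. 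Hence this functional equals $\tau(E(T)\,\cdot)$ for a unique $E(T)\in N$ with $0\le E(T)\le\|T\|$, and $E$ is a conditional expectation onto $N$.

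The main obstacle is direction (1)$\Rightarrow$(2). There one must arrange $\Phi|_M=\tau$ on all of $M$, not just on $N$. The Haar-averaging construction over AFD approximants is what makes this work, and it relies on Connes' deep theorem, injective $\Rightarrow$ hyperfinite, together with separability of the predual.
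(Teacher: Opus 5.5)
Your proof is correct. The paper only cites \cite{THO25} for this statement, but the argument it does give, in the proof of Proposition~\ref{lem:N-central-extension}, specializes with $\mathcal S=M$ and $\psi=\tau$ to exactly your (1)$\Rightarrow$(2). It uses Haar averaging over the unitary groups of an increasing sequence of finite-dimensional subalgebras with $\|\cdot\|_2$-dense union (via Connes' theorem), then a weak$^*$ cluster point, then the Cauchy--Schwarz bound $|\Phi(xT)-\Phi(Tx)|\le 2\|T\|\,\|x-x_k\|_2$. The only difference is cosmetic. You seed the averaging with the vector state at $\xi_\tau$, whereas the paper starts from an arbitrary extension $\Phi_0\in K$; that freedom is what lets it prescribe values on a larger operator system for Lemma~\ref{lem:pointed-hypertrace}. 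Your remarks on $\langle M,e_N\rangle$, its semifinite trace and the injectivity of $JN'J$ play no role and can be deleted.

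For (2)$\Rightarrow$(1) your route genuinely differs. In the proof of Proposition~\ref{lem:N-central-extension}, the paper passes to the standard representation $N\subset\mathbb B({\rm L}^2(N))$ and appeals to Connes' hypertrace characterization there. You instead build the conditional expectation directly on $\mathbb B({\rm L}^2(M))$, where the given hypertrace lives:
\begin{itemize}
\item for $T\ge0$, $N$-centrality gives $0\le\Phi(T\,\cdot\,)\le\|T\|\tau$ on $N$;
\item the Radon--Nikodym derivative $E(T)\in N$ defines a positive unital map;
\item this map is the identity on $N$ because $\Phi|_N=\tau$;
\item Tomiyama's theorem makes it a conditional expectation, so $N$ is injective.
\end{itemize}
This is self-contained and works in any faithful representation. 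That is precisely what this statement needs, since the hypothesis only provides a hypertrace on $\mathbb B({\rm L}^2(M))$.

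Drop your first suggestion of transporting $\Phi$ through $S\mapsto Se_N$. That map is not unital into $\mathbb B({\rm L}^2(M))$, and $\Phi(e_N)$ can be $0$: for $N=\mathbb C1$ and $M$ diffuse, any extension of $\tau$ that vanishes on the compact operators is a hypertrace.
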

\noindent Actually, the previous Proposition can be improved with a similar proof. We include the full argument for completeness.
\vspace{0.2cm}
\noindent By a \textbf{unital operator system} we mean a not necessarily norm-closed self-adjoint linear subspace of $\mathbb{B}(H)$ containing the identity.
\medskip
\noindent The improvement we need is the possibility of prescribing in advance the values of the hypertrace on an operator system which is strictly larger than $N$ itself. This is what will allow us, in Lemma~\ref{lem:pointed-hypertrace}, to produce a hypertrace of an amenable subalgebra of $M\rtimes\Gamma_x$ whose restriction to a copy of $C(X)$ is exactly the Dirac mass at $x$, which is the only property of hypertraces used in the proof of the simplicity criterion.
\medskip

\begin{proposition}\label{lem:N-central-extension}
Let $(N,\tau_N)$ be a finite von Neumann algebra with separable predual. The following are equivalent.
\begin{enumerate}
\item $N$ is amenable.
\item For every faithful representation $N\subset\mathbb{B}(H)$, every unital operator system $\mathcal S\subset \mathbb{B}(H)$ such that $N\mathcal S\subset \mathcal S$ and $\mathcal S N\subset \mathcal S$ and every positive unital linear map $\psi:\mathcal S\to\C$ such that $\psi\vert_N=\tau_N$ and $\psi(as)=\psi(sa)$ for all $a\in N$, $s\in\mathcal S$, there exists a state $\Phi\in\mathbb{B}(H)^*$ such that 
$$\Phi\vert_\mathcal{S}=\psi\quad\text{and}\quad\Phi(aT)=\Phi(Ta)\text{ for all }a\in N,\ T\in\mathbb{B}(H).$$
\end{enumerate}
\end{proposition}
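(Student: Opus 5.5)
The plan is to prove (2)$\Rightarrow$(1) using Proposition~\ref{prop:amenability-hypertrace}, and (1)$\Rightarrow$(2) by an averaging argument over the unitary group of $N$.

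\textbf{(2)$\Rightarrow$(1).} Take the standard representation $N\subset\mathbb{B}({\rm L}^2(N))$, and let $\mathcal S$ be the linear span of $N$ itself. Then $\mathcal S$ is a unital operator system stable under left and right multiplication by $N$. Set $\psi=\tau_N$, which is tracial, so $\psi(as)=\psi(sa)$. Condition (2) yields a state $\Phi$ on $\mathbb{B}({\rm L}^2(N))$ with $\Phi|_N=\tau_N$ that is $N$-central. This is an $(N,\tau_N)$-hypertrace, so $N$ is amenable by Proposition~\ref{prop:amenability-hypertrace} (applied with $M=N$).

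\textbf{(1)$\Rightarrow$(2).} Fix $N\subset\mathbb{B}(H)$, $\mathcal S$ and $\psi$ as in (2).
\begin{enumerate}
\item Extend $\psi$ to a state $\Phi_0$ on $\mathbb{B}(H)$ by Krein's extension theorem; $\psi$ is positive and unital on an operator system, hence extendable.
\item Average $\Phi_0$ over the unitary group $\mathcal U(N)$. Since $N$ is amenable, $\mathcal U(N)$ with the strong (equivalently $\|\cdot\|_2$) topology is amenable; this is one of the standard equivalent characterizations of amenability of a finite von Neumann algebra. Alternatively, one can use that $N$ is approximately finite-dimensional and average over the finite unitary groups of an increasing sequence of finite-dimensional subalgebras, then take a weak$^*$ cluster point.
\item Define $\Phi(T)=m\bigl(u\mapsto \Phi_0(u^*Tu)\bigr)$ for a suitable invariant mean $m$. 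Then $\Phi(u^*Tu)=\Phi(T)$ for all $u\in\mathcal U(N)$, which gives $\Phi(aT)=\Phi(Ta)$ for all $a\in N$, since unitaries span $N$.
\item Check that the values on $\mathcal S$ are unchanged. For $s\in\mathcal S$ we have $u^*su\in\mathcal S$ by the bimodule hypothesis. Using the centrality of $\psi$,
$$\psi(u^*su)=\psi(su u^*)=\psi(s),$$
where the first equality applies $\psi(as)=\psi(sa)$ with $a=u^*$ and the element $su\in\mathcal S$. Hence $\Phi_0(u^*su)=\psi(s)$ for every $u$, and therefore $\Phi|_{\mathcal S}=\psi$.
\end{enumerate}

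\textbf{Main obstacle.} The key issue is the invariant mean in step 2. The function $u\mapsto\Phi_0(u^*Tu)$ is bounded, but it need not be continuous for the strong topology, since $\Phi_0$ is not normal. So amenability of $\mathcal U(N)$ as a topological group does not apply directly. I would avoid this by using the finite-dimensional approximation. Write $N=\overline{\bigcup_k N_k}$ with $N_k$ finite-dimensional. Let $\Phi_k$ be the average of $\Phi_0$ over the compact group $\mathcal U(N_k)$ with respect to Haar measure; this is fine because finite-dimensional unitary groups are norm-continuous. Let $\Phi$ be a weak$^*$ cluster point of $(\Phi_k)$. Each $\Phi_k$ still agrees with $\psi$ on $\mathcal S$ by step 4, and $\Phi$ commutes with $\bigcup_k N_k$.

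It remains to pass from $\bigcup_k N_k$ to all of $N$. Since $\Phi|_N=\tau_N$, Cauchy--Schwarz gives
$$|\Phi(xT)-\Phi(yT)|\le \|x-y\|_{2}\,\|T\|\quad\text{and}\quad |\Phi(Tx)-\Phi(Ty)|\le \|T\|\,\|x-y\|_{2}$$
(using $\Phi(x^*x)=\tau_N(x^*x)$). Kaplansky density then gives centrality for all $a\in N$. This continuity estimate is exactly why the condition $\psi|_N=\tau_N$ is needed.
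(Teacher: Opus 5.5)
Your proposal is correct and follows essentially the same route as the paper. For (2)$\Rightarrow$(1) you take $\mathcal S=N$ and $\psi=\tau_N$ in the standard representation; for (1)$\Rightarrow$(2) you Haar-average an extension of $\psi$ over the unitary groups of an increasing sequence of finite-dimensional subalgebras, take a weak-$*$ cluster point, and extend centrality to all of $N$ via the estimate $|\Phi((a-a_j)T)|\leq\|a-a_j\|_2\|T\|$. You were right to drop the invariant-mean-on-$\mathcal U(N)$ idea for the reason you give, and Kaplansky density is not needed at the end since that estimate requires no norm bound on the approximants.
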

\begin{proof}$(2)\Rightarrow(1)$. Consider the faithful representation $N\subset\mathbb{B}({\rm L}^2(N))$, the operator system $\mathcal{S}=N$ and the state $\psi=\tau_N$ to deduce amenability of $N$.
\medskip
\noindent$(1)\Rightarrow(2)$. Define the weak*-compact convex set $K=\{\Phi\in S(\mathbb{B}(H)):\Phi|_{\mathcal S}=\psi\}$. By the Hahn--Banach theorem, \(K\) is non-empty.
For \(u\in\mathcal U(N)\), define
$$(u\cdot\Phi)(T)=\Phi(u^*Tu)\quad T\in\mathbb{B}(H).$$
Since \(\mathcal S\) is an \(N\)-bimodule and \(\psi\) is \(N\)-central on
\(\mathcal S\), \(K\) is invariant under this action. Indeed, for
\(s\in\mathcal S\) and $\Phi\in K$, $(u\cdot\Phi)(s)=\Phi(u^*su)=\psi(u^*su)=\psi(suu^*)=\psi(s)$. Since \(N\) is a finite amenable von Neumann algebra with a separable predual, it is hyperfinite.
Choose an increasing sequence of finite-dimensional von Neumann subalgebras
$$Q_1\subset Q_2\subset\cdots\subset N$$
whose union is \(\|\cdot\|_2\)-dense in \(N\). Fix \(\Phi_0\in K\). For \(n\in\N\) and $T\in\mathbb{B}(H)$, define $\Phi_n(T):=\int_{\mathcal U(Q_n)}\Phi_0(u^*Tu)\,d\mu_n(u)$,
where \(\mu_n\) denotes the normalized Haar measure on the compact group $\mathcal{U}(Q_n)$. Since the integrand is norm
continuous in \(u\), \(\Phi_n\) is a well-defined state. For \(s\in\mathcal S\), since \(K\) is invariant under \(\mathcal U(Q_n)\), we have, for all $u\in\mathcal U(Q_n)$, $\Phi_0(u^*su)=\psi(s)$. Hence $\Phi_n(s) =\int_{\mathcal U(Q_n)}\Phi_0(u^*su)\,d\mu_n(u)=\psi(s)$ so \(\Phi_n\in K\). Moreover, if \(v\in\mathcal U(Q_n)\), then by the left
invariance of Haar measure,
\begin{eqnarray*}
        \Phi_n(v^*Tv)
        &=&\int_{\mathcal U(Q_n)}\Phi_0(u^*v^*Tvu)\,d\mu_n(u)
         = \int_{\mathcal U(Q_n)}\Phi_0(w^*Tw)\,d\mu_n(w)=\Phi_n(T).
\end{eqnarray*}
Thus \(\Phi_n\) is invariant under conjugation by \(\mathcal U(Q_n)\). Since $Q_n$ is the linear span of \(\mathcal U(Q_n)\), $\Phi_n$ is \(Q_n\)-central. Since $K$ is weak*-compact, the sequence \((\Phi_n)_n\) has a weak-* cluster point $\Phi\in K$. Since
the \(Q_n\)'s are increasing, \(\Phi\) is \(Q_k\)-central for every fixed
\(k\). Hence $\Phi(aT)=\Phi(Ta)$ for all  $a\in \bigcup_k Q_k,\ T\in\mathbb{B}(H)$. Let now \(a\in N\), and choose \(a_j\in\bigcup_k Q_k\) such that $\|a_j-a\|_2\longrightarrow 0$. For \(T\in\mathbb{B}(H)\), we have
\[
\begin{aligned}
        \Phi(aT)-\Phi(Ta)
        &=
        \Phi((a-a_j)T)
        +
        \Phi(a_jT)-\Phi(Ta_j)
        +
        \Phi(T(a_j-a)).
\end{aligned}
\]
The middle term is zero. By Cauchy--Schwarz for the state \(\Phi\),
\[
        |\Phi((a-a_j)T)|
        \leq
        \Phi((a-a_j)(a-a_j)^*)^{1/2}\,\Phi(T^*T)^{1/2}.
\]
Since \(\Phi|_N=\psi|_N=\tau_N\) we have $|\Phi((a-a_j)T)|\leq  \|a-a_j\|_2\,\|T\|$. Similarly,
\[
        |\Phi(T(a_j-a))|
        \leq
        \|T\|\,\|a_j-a\|_2.
\]
Letting \(j\to\infty\), we obtain $\Phi(aT)=\Phi(Ta)$ so \(\Phi\) is \(N\)-central.\end{proof}
%%%%%%%%%%%%%%%%%%%%%%%%%%%%%%%%%%%%%%%%%%%%%%%%%%%%%%%%%%%%%%%%%%%%%%%%%%%%%%%%%%%%%%%%%%%%%%%%%%%%%%%%
\subsection{The space of von Neumann subalgebras}
%%%%%%%%%%%%%%%%%%%%%%%%%%%%%%%%%%%%%%%%%%%%%%%%%%%%%%%%%%%%%%%%%%%%%%%%%%%%%%%%%%%%%%%%%%%%%%%%%%%%%%%%
We now describe the topology in which all of our minimality and continuity statements take place.
Let $(M,\tau)$ be a finite von Neumann algebra and ${\rm Sub}(M)$ the set of its von Neumann subalgebras (with the same unit). For $N\in\Sub(M)$, let $E_N:M\rightarrow N$ be the unique $\tau$-preserving conditional expectation. The Effros-Mar\'echal topology on $\Sub(M)$ is the smallest topology for which the maps $\Sub(M)\rightarrow{\rm L}^2(M)$, $N\mapsto E_N(x)\xi_\tau$ are continuous for the norm topology on ${\rm L}^2(M)$, $\forall x\in M$ (where $\xi_\tau\in{\rm L}^2(M)$ is the cyclic vector). If $M$ has a separable predual, then $\Sub(M)$ is a Polish space. The Effros-Mar\'echal topology is also the smallest topology on $\Sub(M)$ for which the maps $N\mapsto\Vert E_N(x)\Vert_2$ are continuous, for all $x\in M$. For the remainder of this section, we assume that $M$ has a separable predual. We sometimes abbreviate the Effros-Mar\'echal topology as EM-topology.
%%%%%%%%%%%%%%%%%%%%%%%%%%%%%%%%%%%%%%%%%%%%%%%%%%%%%%%%%%%%%%%%%%%%%%%%%%%%%%%%%%%%%%%%%%%%%%%%%%%%%%%%
%%%%%%%%%%%%%%%%%%%%%%%%%%%%%%%%%%%%%%%%%%%%%%%%%%%%%%%%%%%%%%%%%%%%%%%%%%%%%%%%%%%%%%%%%%%%%%%%%%%%%%%%
\subsection{The Jones and Pimsner--Popa index}
%%%%%%%%%%%%%%%%%%%%%%%%%%%%%%%%%%%%%%%%%%%%%%%%%%%%%%%%%%%%%%%%%%%%%%%%%%%%%%%%%%%%%%%%%%%%%%%%%%%%%%%%
Two of our families of exotic URAs are distinguished by an index invariant, which
we now recall. Let $(M,\tau)$ be a finite von Neumann algebra and
$N\in\Sub(M)$, with $E_N:M\to N$ the trace-preserving conditional expectation.
Following Pimsner--Popa \cite{pimsnerpopa}, set
$$
        \lambda(N):=\sup\{c>0:E_N(x)\geq cx\ \forall x\in M^+\}
        \in[0,1],
$$
where $M^+$ denotes the positive cone of $M$. The \textbf{Pimsner--Popa index}
of $N$ in $M$ is $[M:N]:=\lambda(N)^{-1}\in[1,\infty]$. When $M$ is a $\mathrm{II}_1$ factor and $N\subseteq M$ a subfactor, this
coincides with the Jones index \cite{jones1983index} defined via the basic
construction $\langle M,e_N\rangle$, where $e_N:\LL^2(M,\tau)\to \LL^2(N,\tau)$ is the Jones projection introduced above. We will only need the following two elementary facts.
\begin{lemma}\label{lem:index-conjugation-invariant}
Let $(M,\tau)$ be a finite von Neumann algebra and $\theta\in\Aut(M,\tau)$ a
trace-preserving automorphism. Then $[M:\theta(N)]=[M:N]$ for every
$N\in\Sub(M)$. In particular, for a trace-preserving action
$\Gamma\curvearrowright(M,\tau)$, the index $[M:gNg^{-1}]$ is constant along the
orbit $\{gNg^{-1}:g\in\Gamma\}$.
\end{lemma}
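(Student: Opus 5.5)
The plan is to show directly that $\theta$ transports the conditional expectation onto $N$ to the conditional expectation onto $\theta(N)$, and then compare the two sets of admissible constants in the definition of $\lambda$. The key identity I will establish first is
$$E_{\theta(N)}=\theta\circ E_N\circ\theta^{-1}.$$
Set $F:=\theta\circ E_N\circ\theta^{-1}:M\to\theta(N)$. This map is unital and positive. It is $\theta(N)$-bimodular: for $a,b\in N$ and $x\in M$,
$$F(\theta(a)x\theta(b))=\theta\bigl(aE_N(\theta^{-1}(x))b\bigr)=\theta(a)F(x)\theta(b).$$
It is also $\tau$-preserving, because $\tau\circ\theta=\tau$ gives $\tau(F(x))=\tau(E_N(\theta^{-1}x))=\tau(\theta^{-1}x)=\tau(x)$. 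Since the $\tau$-preserving conditional expectation onto a von Neumann subalgebra is unique, it follows that $F=E_{\theta(N)}$.

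Next I compare the constants. Suppose $c>0$ satisfies $E_N(y)\ge cy$ for all $y\in M^+$. Given $x\in M^+$, put $y=\theta^{-1}(x)\in M^+$, since automorphisms preserve positivity. Applying the positive map $\theta$ to $E_N(y)-cy\ge0$ gives
$$E_{\theta(N)}(x)=\theta(E_N(y))\ge c\,\theta(y)=cx.$$
Hence every admissible constant for $N$ is admissible for $\theta(N)$, so $\lambda(\theta(N))\ge\lambda(N)$. Running the same argument with $\theta^{-1}$ (also trace-preserving) and the subalgebra $\theta(N)$ gives the reverse inequality. Therefore $\lambda(\theta(N))=\lambda(N)$, and so $[M:\theta(N)]=[M:N]$, including the case where both are $\infty$.

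For the orbit statement, the action is trace-preserving, so each $\alpha_g$ is a $\tau$-preserving automorphism and the first part applies with $\theta=\alpha_g$. For inner actions the notation $gNg^{-1}$ stands for $\alpha_g(N)=u_gNu_g^*$, and $\Ad(u_g)$ preserves $\tau$ by the trace property.

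The only point needing care is the identification $E_{\theta(N)}=\theta E_N\theta^{-1}$, which rests on uniqueness of the trace-preserving expectation; everything else is formal.
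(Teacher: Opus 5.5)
Your proposal is correct and follows essentially the same route as the paper's proof: identify $E_{\theta(N)}=\theta\circ E_N\circ\theta^{-1}$, then use that $\theta$ is an order isomorphism of $M^+$, so the admissible constants for $N$ and for $\theta(N)$ coincide. The only difference is that you spell out the bimodularity, trace-preservation and uniqueness check behind that identity, which the paper simply asserts.
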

\begin{proof}
Since $\theta$ is a trace-preserving automorphism, $E_{\theta(N)}=\theta\circ
E_N\circ\theta^{-1}$, and $\theta$ preserves positivity and the order. Hence
$E_N(x)\geq cx$ for all $x\in M^+$ if and only if $E_{\theta(N)}(y)\geq cy$ for
all $y\in M^+$, so $\lambda(\theta(N))=\lambda(N)$.
\end{proof}

\noindent The second fact is a computation of the Pimsner--Popa index of a
two-sided block decomposition of a finite factor. Both families of exotic URAs
constructed in Subsection~\ref{subsec:exoticURAs} consist of subalgebras of this
shape, and the numerical value obtained here is what will separate them in
Remark~\ref{rmk:fixed-point-corner-index-comparison}.
\medskip

\noindent The finite-dimensional case of the next Lemma appears in
\cite[Example~6.5(2), p.~94]{pimsnerpopa}; the same formula holds for
arbitrary finite factors.
\medskip
\noindent Whenever $\phi$ is an automorphism of a countable discrete group $\Gamma$, we still denote by $\phi\in\Aut(\LL(\Gamma))$ its canonical trace preserving extension. Also, for $M$ a von Neumann algebra and $\phi\in\Aut(M)$ we write $M^\phi:=\{x\in M:\phi(x)=x\}$.
\begin{lemma}\label{lem:index}
Let $(M,\tau)$ be a finite factor, $m\geq2$ and $p_0,\ldots,p_{m-1}\in M$ be non-zero pairwise orthogonal projections such
that $\sum_{k=0}^{m-1}p_k=1$ and define
$$N:=\bigoplus_{k=0}^{m-1}p_kMp_k.$$
One has $[M:N]=m$. In particular, if $\Gamma$ is an i.c.c.\ group and $g\in\Gamma$ is such that
$\phi=\operatorname{ad}(g)\in\Aut(\Gamma)$ has order $m\geq2$ then, $[\LL(\Gamma):\LL(\Gamma)^\phi]=m$.
\end{lemma}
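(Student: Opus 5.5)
The formula for $[M:N]$ comes from two estimates: a lower bound $E_N\geq\frac1m\,\mathrm{id}$ on $M^+$, obtained by averaging over a cyclic group of unitaries, and a test element showing that the constant $1/m$ cannot be improved. The group statement then reduces to the block case through the spectral decomposition of $u_g$.

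\textbf{Identifying $E_N$.} First, $E_N(x)=\sum_{k}p_kxp_k$. This map is unital, positive and $N$-bimodular, and it takes values in $N$. It is also $\tau$-preserving, because $\tau(p_kxp_k)=\tau(xp_k)$ and $\sum_k p_k=1$. By uniqueness of the trace-preserving conditional expectation, it equals $E_N$.

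\textbf{Lower bound $\lambda(N)\geq 1/m$.} Let $\omega=e^{2\pi i/m}$ and set $u=\sum_k\omega^kp_k$, a unitary in $N$. A direct computation gives
\[
E_N(x)=\frac1m\sum_{j=0}^{m-1}u^jxu^{-j}\quad\text{for all }x\in M,
\]
since $\sum_{j}\omega^{j(k-l)}$ equals $m\,\delta_{kl}$. For $x\geq0$ each summand is positive, and the $j=0$ summand is $x$ itself. Hence $E_N(x)\geq\frac1m x$, so $\lambda(N)\geq 1/m$.

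\textbf{Upper bound $\lambda(N)\leq 1/m$.} This step uses the factor hypothesis. By comparison of projections in a factor, we choose non-zero subprojections $q_k\leq p_k$ that are all equivalent to a single projection $q$. For instance, take $q\leq p_{k_0}$ where $p_{k_0}$ is a minimal one up to equivalence, and embed a copy of $q$ under each $p_k$. Pick partial isometries $v_k$ with $v_k^*v_k=q$ and $v_kv_k^*=q_k$. Let $\xi=\sum_kv_k$ and $x=\xi\xi^*=\sum_{k,l}v_kv_l^*\geq0$.

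Since $v_l^*v_{l'}=\delta_{ll'}q$, we get $x^2=mx$, so $e:=x/m$ is a projection. Its range lies in $\sum_k q_k$, and $E_N(x)=\sum_kv_kv_k^*=\sum_kq_k$. Now suppose $E_N(x)\geq c\,x$. Compressing by $e$ and using $e\leq\sum_kq_k$ gives $e\geq c\,m\,e$. Since $e\neq0$, this forces $c\leq1/m$. Therefore $\lambda(N)=1/m$ and $[M:N]=m$.

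\textbf{The group statement.} Let $\Gamma$ be i.c.c., so $M=\LL(\Gamma)$ is a II$_1$ factor. The automorphism $\phi=\operatorname{ad}(g)$ acts on $\LL(\Gamma)$ as $\Ad(u_g)$. The condition $\phi^m=\id$ means $g^m$ is central in $\Gamma$, hence $g^m=e$ by the i.c.c. property. Likewise $\phi^j\neq\id$ for $0<j<m$ forces $g^j\neq e$ for those $j$, so $g$ has order exactly $m$.

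Thus $u_g^m=1$, and we can write $u_g=\sum_k\omega^kp_k$ with spectral projections given by
\[
p_k=\frac1m\sum_{j}\omega^{-jk}u_g^j .
\]
Since $\tau(u_g^j)=0$ for $0<j<m$, we get $\tau(p_k)=1/m>0$, so every $p_k$ is non-zero. The fixed-point algebra $\LL(\Gamma)^\phi$ is the relative commutant $\{u_g\}'\cap\LL(\Gamma)$, which equals $\bigoplus_kp_k\LL(\Gamma)p_k$. The first part of the lemma then gives $[\LL(\Gamma):\LL(\Gamma)^\phi]=m$.

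The main obstacle is the upper bound. It needs a test element that is supported evenly across all $m$ blocks, and this is exactly where we use that $M$ is a factor, through comparison of projections to produce the equivalent $q_k\leq p_k$.
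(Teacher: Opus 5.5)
Your proof is correct and follows the paper's argument essentially step for step: the same averaging unitary $u=\sum_k\omega^kp_k$ gives $E_N(x)\geq\frac1m x$ on $M^+$, your test element $x=\sum_{k,l}v_kv_l^*$ is exactly $m$ times the paper's projection $r=\frac1m\sum_{k,\ell}e_{k\ell}$ built from matrix units, and the group case uses the same spectral projections of $\lambda_g$. The only difference is cosmetic: you obtain the mutually equivalent subprojections $q_k\leq p_k$ uniformly from the comparison theorem, whereas the paper splits into the type ${\rm I}_d$ case (minimal projections) and the type ${\rm II}_1$ case (subprojections of equal trace).
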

\begin{proof}
Let $\zeta:=e^{2\pi i/m}$ and put $u:=\sum_{k=0}^{m-1}\zeta^kp_k$. Then $u^m=1$, $N=M^{\Ad(u)}$, and the trace-preserving conditional
expectation onto $N$ is
$$
E_N(x)
=
\sum_{k=0}^{m-1}p_kxp_k
=
\frac1m\sum_{j=0}^{m-1}u^jxu^{-j},
\quad x\in M.
$$
Hence $E_N(x)\geq\frac1m x$ for all $x\in M^+$ and therefore $[M:N]\leq m$.
\medskip
\noindent If $M$ is of type ${\rm I}_d$, choose minimal projections
$e_k\leq p_k$, $0\leq k<m$. If $M$ is of type ${\rm II}_1$, choose
$0<t\leq\min_k\tau(p_k)$ and projections $e_k\leq p_k$ satisfying
$\tau(e_k)=t$, $0\leq k<m$. In either case, the projections $e_k$ are
non-zero and mutually equivalent. Choose matrix units
$(e_{k\ell})_{0\leq k,\ell<m}$ such that $e_{kk}=e_k$, and put $e:=\sum_{k=0}^{m-1}e_k$ and $r:=\frac1m\sum_{k,\ell=0}^{m-1}e_{k\ell}$.
Then $r$ is a projection, $r\leq e$, and $E_N(r)=\frac1m e$. If $E_N(r)\geq cr$ for some $c>0$, compression by $r$ gives
$\frac1m r
=rE_N(r)r
\geq cr$. Thus $c\leq1/m$, and consequently $[M:N]\geq m$.
\medskip
\noindent For the last assertion, the i.c.c.\ hypothesis gives
$Z(\Gamma)=\{e\}$. Hence $\phi$ having order $m$ implies that
$g$ has order $m$. For $0\leq k<m$, let $p_k$ be the spectral projection
of $\lambda_g$ given by
$$
p_k
:=
\frac1m\sum_{j=0}^{m-1}e^{-2\pi ikj/m}\lambda_{g^j}.
$$
These are pairwise orthogonal projections with sum $1$ and
$\tau(p_k)=\frac1m$, so they are non-zero. Moreover,
$$\LL(\Gamma)^\phi
=
\{\lambda_g\}'\cap\LL(\Gamma)
=
\bigoplus_{k=0}^{m-1}p_k\LL(\Gamma)p_k.$$
The first part therefore gives $[\LL(\Gamma):\LL(\Gamma)^\phi]=m$.
\end{proof}

%%%%%%%%%%%%%%%%%%%%%%%%%%%%%%%%%%%%%%%%%%%%%%%%%%%%%%%%%%%%%%%%%%%%%%%%%%%%%
\subsection{\texorpdfstring{$\Gamma$}{}-spaces and \texorpdfstring{$\Gamma$}{}-boundaries}
\label{subsec:GammaBoundary}
%%%%%%%%%%%%%%%%%%%%%%%%%%%%%%%%%%%%%%%%%%%%%%%%%%%%%%%%%%%%%%%%%%%%%%%%%%%%%%%%%%%%%%%%%%%%%%%%%%%%%%%%
We will repeatedly use the notions of minimality and strong proximality. A $\Gamma$-space is called \textbf{minimal} if it has no non-trivial $\Gamma$-invariant closed subset, or equivalently if every $\Gamma$-orbit is dense. A \textbf{$\Gamma$-boundary} (Furstenberg) is a compact Hausdorff $\Gamma$-space $X$ that is:
\begin{itemize}
\item \textbf{minimal}: no non-trivial closed $\Gamma$-invariant subset;
\item \textbf{strongly proximal}: for every $\mu \in \mathrm{Prob}(X)$, the orbit closure $\overline{\Gamma\cdot\mu}$ in $\mathrm{Prob}(X)$ meets the set of Dirac measures $\{\delta_x : x \in X\}$.
\end{itemize}
Up to $\Gamma$-equivariant continuous surjection, there is a unique maximal $\Gamma$-boundary, the \textbf{Furstenberg boundary} $\partial_F \Gamma$.
We will also need a relative version of this notion in the presence of a $\Gamma$-action on a compact space $X$. Following \cite{kawabe2017uniformly} (and \cite{Naghavi}), the \textbf{generalized Furstenberg boundary} $\widetilde{X}$ of $X$ is a compact $\Gamma$-space equipped with a $\Gamma$-equivariant continuous surjection $q:\widetilde{X}\to X$ which is universal among $\Gamma$-boundaries fibring over $X$ that satisfies the following. $\widetilde{X}$ is minimal and for every $\nu\in\text{Prob}(\widetilde{X})$, if the push forward of $\nu$ on X via  $q$ is contractible, then $\nu$ is contractible. This object plays the role for the action $\Gamma\curvearrowright X$ that the Furstenberg boundary plays for $\Gamma$ itself, and underlies our criterion for simplicity of $C(X)\rtimes_r\Gamma$.
%%%%%%%%%%%%%%%%%%%%%%%%%%%%%%%%%%%%%%%%%%%%%%%%%%%%%%%%%%%%%%%%%%%%%%%%%%%%%%%%%%%%%%%%%%%%%%%%%%%%%%%%
\subsection{Automorphisms and fixed-point subalgebras}
%%%%%%%%%%%%%%%%%%%%%%%%%%%%%%%%%%%%%%%%%%%%%%%%%%%%%%%%%%%%%%%%%%%%%%%%%%%%%%%%%%%%%%%%%%%%%%%%%%%%%%%%
Several of the URA constructions later in the paper rest on fixed-point
subalgebras of finite-order automorphisms.
\begin{definition}\label{def:FIR}
We say that a group $\Gamma$ has the
\textbf{finite-index rigidity property ${\rm(FIR)}$} if every automorphism of
$\Gamma$ which fixes pointwise a finite-index subgroup is the identity.
\end{definition}
\noindent We will use the following notation for the centralizer of a subgroup $\Lambda\leq\Gamma$
$$C_\Gamma(\Lambda):=\{g\in\Gamma:gh=hg\text{ for all }h\in\Lambda\}.$$
\begin{example}
The following holds.
\begin{itemize}
\item A non-trivial group $\Gamma$ is i.c.c.\ if and only if it has
${\rm(FIR)}$ and $Z(\Gamma)=\{e\}$.
\end{itemize}
\noindent Indeed, suppose first that $\Gamma$ is i.c.c. If
$\alpha\in\Aut(\Gamma)$ fixes pointwise a finite-index subgroup, let
$N\triangleleft\Gamma$ be its normal core. For $g\in\Gamma$ and $x\in N$,
one has
$$
\alpha(g)x\alpha(g)^{-1}
=\alpha(gxg^{-1})
=gxg^{-1}.
$$
Hence $g^{-1}\alpha(g)\in C_\Gamma(N)$. Since $N$ has finite index, every
element of $C_\Gamma(N)$ has a finite conjugacy class. Thus
$C_\Gamma(N)=\{e\}$ and $\alpha=\id$. Moreover, $Z(\Gamma)=\{e\}$. Conversely, suppose that $\Gamma$ has ${\rm(FIR)}$ and
$Z(\Gamma)=\{e\}$. If $g\in\Gamma$ has a finite conjugacy class, then
$C_\Gamma(g)$ has finite index in $\Gamma$. Since $\Ad(g)$ fixes
$C_\Gamma(g)$ pointwise, ${\rm(FIR)}$ gives $\Ad(g)=\id$. Hence
$g\in Z(\Gamma)=\{e\}$, so $\Gamma$ is i.c.c.
\begin{itemize}
\item $\mathbb Z$ has ${\rm(FIR)}$ but is not i.c.c.
\end{itemize}
\noindent Indeed, $\Aut(\mathbb Z)=\{\id,-\id\}$ and the fixed-point
subgroup of $-\id$ is $\{0\}$, which has infinite index in $\mathbb Z$.
Hence no non-trivial automorphism of $\mathbb Z$ fixes pointwise a
finite-index subgroup.
\end{example}

\noindent The next lemma is the rigidity statement on which the fixed-point
constructions depend. In particular, a finite group of automorphisms of $\Gamma$ is completely
determined by the fixed-point subalgebra it produces in $\LL(\Gamma)$. It is
used in Lemma~\ref{lem:finite-fixed-point-algebra} to compute the stabilizer of
$\LL(\Gamma)^G$, and hence to show that the corresponding orbit is infinite.
\medskip

\begin{lemma}\label{lem:finite-fixed-point-action-rigidity}
Let $\Gamma$ be a group with ${\rm(FIR)}$, and let
$G,H\leq\Aut(\Gamma)$ be finite subgroups. If $\LL(\Gamma)^G=\LL(\Gamma)^H$ then $G=H$.
\end{lemma}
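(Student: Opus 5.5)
The plan is to compute $\LL(\Gamma)^G$ explicitly in terms of $G$-orbits in $\Gamma$, and then recover $G$ from that algebra using (FIR).

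\textbf{Step 1: describe the fixed-point algebra.} For a finite group $G\leq\Aut(\Gamma)$, an element $x=\sum_{h}x_h\lambda_h\in\LL(\Gamma)$ is $G$-fixed if and only if its Fourier coefficients satisfy $x_{\alpha(h)}=x_h$ for all $\alpha\in G$. Each $G$-orbit $O\subset\Gamma$ is finite, so $\lambda_O:=\sum_{h\in O}\lambda_h$ lies in $\LL(\Gamma)$. It follows that $\LL(\Gamma)^G$ is the $\|\cdot\|_2$-closed span of the elements $\lambda_O$. Equivalently, the conditional expectation $E_G=\frac1{|G|}\sum_{\alpha\in G}\alpha$ onto $\LL(\Gamma)^G$ satisfies $E_G(\lambda_h)=\frac{1}{|Gh|}\lambda_{Gh}$.

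\textbf{Step 2: equal fixed-point algebras give equal orbits.} Suppose $\LL(\Gamma)^G=\LL(\Gamma)^H$. Then the expectations coincide, $E_G=E_H$, since both are the unique trace-preserving expectation onto the same subalgebra. Apply both to $\lambda_h$. The supports of the Fourier expansions, $Gh$ and $Hh$, must then be equal. So $Gh=Hh$ for every $h\in\Gamma$.

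\textbf{Step 3: recover the groups.} Fix $\alpha\in G$ and use Step 2 to define a map $\beta\colon\Gamma\to H$ by choosing $\beta(h)\in H$ with $\beta(h)(h)=\alpha(h)$. Since $H$ is finite, $\Gamma=\bigcup_{\beta\in H}\{h:\beta(h)=\alpha(h)\}$. Each set in this union is the fixed subgroup $\Gamma_\beta:=\operatorname{Fix}(\beta^{-1}\alpha)$ of the automorphism $\beta^{-1}\alpha$. By B.~H.~Neumann's lemma, if a group is a finite union of cosets of subgroups, then one of those subgroups has finite index. Hence some $\Gamma_\beta$ has finite index in $\Gamma$. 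Then $\beta^{-1}\alpha$ fixes a finite-index subgroup pointwise, so (FIR) gives $\beta^{-1}\alpha=\id$, that is, $\alpha=\beta\in H$. This shows $G\subseteq H$, and by symmetry $G=H$.

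The main obstacle is the covering step in Step 3. Here I would cite Neumann's lemma as a standard fact, or give a short inductive proof of it: if a finite union of cosets $\bigcup_i g_iK_i$ covers $\Gamma$ and some $K_j$ has infinite index, then infinitely many translates of $K_j$ are disjoint. Using these, one removes $K_j$'s cosets from the cover and inducts on the number of distinct subgroups. Steps 1 and 2 are routine Fourier-coefficient computations.
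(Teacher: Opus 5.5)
Your proposal is correct and follows the paper's proof: you compute $E_G(\lambda_h)$ as the normalized orbit sum to get $Gh=Hh$ for every $h\in\Gamma$, then cover $\Gamma$ by the finitely many fixed subgroups $\operatorname{Fix}(\beta^{-1}\alpha)$, $\beta\in H$, and use B.~H.~Neumann's lemma together with ${\rm(FIR)}$ to conclude $\alpha\in H$. The only difference is that the paper fixes $\alpha\in H$ and lets $\beta$ range over $G$, which is the same argument with the roles of $G$ and $H$ swapped.
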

\begin{proof}
Let $E_G$ and $E_H$ be the trace-preserving conditional expectations onto
the two fixed-point algebras. For $g\in\Gamma$, $E_G(\lambda_g)=
\frac1{|\operatorname{Orb}_G(g)|}
\sum_{h\in\operatorname{Orb}_G(g)}\lambda_h$
and similarly for $H$. Hence $E_G=E_H$ and the linear independence of the
canonical unitaries give $\operatorname{Orb}_G(g)=\operatorname{Orb}_H(g)$ for all $g\in\Gamma$. Fix $\alpha\in H$. For $\beta\in G$, put
$$K_\beta:=\{g\in\Gamma:\alpha(g)=\beta(g)\}.$$
Each $K_\beta$ is the fixed-point subgroup of
$\beta^{-1}\circ\alpha$ and $\Gamma=\bigcup_{\beta\in G}K_\beta$. By \cite[Lemma~4.1]{neumann1954groups}, some $K_\beta$ has finite index.
Since $\beta^{-1}\circ\alpha$ fixes $K_\beta$ pointwise, ${\rm(FIR)}$ gives
$\alpha=\beta\in G$. Thus $H\subseteq G$. Interchanging $G$ and $H$ gives
$G\subseteq H$.
\end{proof}

\noindent A direct consequence of ${\rm(FIR)}$ is that the fixed-point subgroup
of a non-trivial automorphism is always of infinite index. This is what
prevents the fixed-point URAs from being finite.
\medskip

\begin{lemma}\label{lem: finite order automorphism has infinite index fixed point subgroups}
Let $\Gamma$ be a group with ${\rm(FIR)}$ and let
$\phi\in\Aut(\Gamma)\setminus\{\id\}$. Then
$$
[\Gamma:\operatorname{Fix}(\phi)]=\infty.
$$
\end{lemma}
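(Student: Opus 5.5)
This is a direct contrapositive of the definition of ${\rm(FIR)}$ in Definition~\ref{def:FIR}. Let $\phi\in\Aut(\Gamma)\setminus\{\id\}$ and suppose, towards a contradiction, that $[\Gamma:\operatorname{Fix}(\phi)]<\infty$, where
$$\operatorname{Fix}(\phi)=\{g\in\Gamma:\phi(g)=g\}.$$

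First, $\operatorname{Fix}(\phi)$ is a subgroup of $\Gamma$. It contains $e$, and for $g,h\in\operatorname{Fix}(\phi)$ we have $\phi(gh^{-1})=\phi(g)\phi(h)^{-1}=gh^{-1}$. By construction $\phi$ fixes this subgroup pointwise.

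Under the assumption that $\operatorname{Fix}(\phi)$ has finite index, $\phi$ is therefore an automorphism of $\Gamma$ fixing pointwise a finite-index subgroup. Property ${\rm(FIR)}$ then forces $\phi=\id$, contradicting the choice of $\phi$. Hence $[\Gamma:\operatorname{Fix}(\phi)]=\infty$.

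There is no genuine obstacle here. The only point to verify is that $\operatorname{Fix}(\phi)$ is a subgroup, so that Definition~\ref{def:FIR}, which is phrased for subgroups, applies directly. The finiteness of the order of $\phi$, suggested by the lemma's label, plays no role in the argument.
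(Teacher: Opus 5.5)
Your proof is correct and is exactly the paper's argument: if $\operatorname{Fix}(\phi)$ had finite index, then $\phi$ would fix pointwise a finite-index subgroup, so ${\rm(FIR)}$ would force $\phi=\id$. Your explicit check that $\operatorname{Fix}(\phi)$ is a subgroup is a detail the paper leaves implicit.
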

\begin{proof}
Otherwise, $\phi$ would fix pointwise the finite-index subgroup
$\operatorname{Fix}(\phi)$, contradicting ${\rm(FIR)}$.
\end{proof}

\noindent Finally, we record an elementary $\LL^2$-rigidity property of finite
sums of canonical unitaries. Since the conditional expectation onto the
fixed-point subalgebra of a finite group of automorphisms is such a sum, this lemma is what converts an approximate
coincidence of two conditional expectations into an exact coincidence of the
corresponding orbits. It is the engine behind the discreteness statement of
Proposition~\ref{prop:finite-action-fixed-point-rigidity}.
\medskip

\begin{lemma}\label{lem: close multisets are equal}
Let $s_1,\ldots,s_n,t_1,\ldots,t_m$ be elements of a group $\Gamma$. For
$g\in\Gamma$, denote by $a_g$ and $b_g$ the multiplicities of $g$ in
$(s_i)_{i=1}^n$ and $(t_j)_{j=1}^m$, respectively. Then
$$
\left\|
\sum_{i=1}^n\lambda_{s_i}
-
\sum_{j=1}^m\lambda_{t_j}
\right\|_2^2
=
\sum_{g\in\Gamma}(a_g-b_g)^2.
$$
Consequently, if the norm on the left is less than $1$, then $n=m$ and the
two multisets are equal. If $n=m$, the same conclusion holds when the norm
is less than $\sqrt2$.
\end{lemma}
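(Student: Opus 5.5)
The plan is to rewrite both finite sums in the orthonormal basis $\{\lambda_g\xi_\tau\}_{g\in\Gamma}$ of $\ell^2(\Gamma)=\LL^2(\LL(\Gamma))$. Grouping equal terms gives $\sum_{i=1}^n\lambda_{s_i}=\sum_{g\in\Gamma}a_g\lambda_g$ and $\sum_{j=1}^m\lambda_{t_j}=\sum_{g\in\Gamma}b_g\lambda_g$. Both are finite sums, since only finitely many $a_g,b_g$ are non-zero. Their difference is therefore $\sum_g(a_g-b_g)\lambda_g$. Because $\tau(\lambda_h^*\lambda_g)=\delta_{g,h}$, the vectors $\lambda_g\xi_\tau$ are orthonormal, and Pythagoras gives
$$\Big\|\sum_g(a_g-b_g)\lambda_g\Big\|_2^2=\sum_g(a_g-b_g)^2.$$
This proves the displayed identity.

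For the first consequence, the key observation is that each $a_g-b_g$ is an integer. Hence each term $(a_g-b_g)^2$ is either $0$ or at least $1$. If the norm is less than $1$, the sum is less than $1$, so every term vanishes and $a_g=b_g$ for all $g$. This says exactly that the two multisets coincide. Counting elements then gives $n=\sum_g a_g=\sum_g b_g=m$.

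For the case $n=m$, suppose the multisets differ, so the set $D=\{g:a_g\neq b_g\}$ is non-empty. Since $\sum_g(a_g-b_g)=n-m=0$, the integers $a_g-b_g$ with $g\in D$ sum to zero and are all non-zero. They therefore cannot consist of a single non-zero integer, so $|D|\geq2$. It follows that $\sum_g(a_g-b_g)^2\geq|D|\geq2$, and the norm is at least $\sqrt2$. Contrapositively, a norm less than $\sqrt2$ forces the multisets to be equal.

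There is no real obstacle in this argument. The only point requiring a moment's care is the parity-type step for $n=m$: the vanishing of the total sum rules out exactly one discrepancy. That is what improves the threshold from $1$ to $\sqrt2$.
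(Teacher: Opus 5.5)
Your proof is correct and follows essentially the same route as the paper: you expand in the orthonormal family $(\lambda_g)_{g\in\Gamma}$ to get the identity, use integrality of $a_g-b_g$ for the threshold $1$, and use $\sum_g(a_g-b_g)=n-m=0$ to force at least two non-zero discrepancies for the threshold $\sqrt2$. You also spell out the deduction $n=\sum_g a_g=\sum_g b_g=m$, which the paper leaves implicit.
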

\begin{proof}
The identity follows from the orthonormality of
$(\lambda_g)_{g\in\Gamma}$ in $\LL^2(\LL(\Gamma))$. If its right-hand side is less
than $1$, then every integer $a_g-b_g$ is zero. If $n=m$ and the multisets
are distinct, then $\sum_{g\in\Gamma}(a_g-b_g)=0$, so at least two of the integers $a_g-b_g$ are non-zero and the right-hand
side is at least $2$.
The bound $1$ is optimal, as witnessed by the lists $(e,e)$ and $(e)$. If
$\Gamma\neq\{e\}$, the bound $\sqrt2$ in the equal-length case is optimal, as
witnessed by the lists $(e)$ and $(g)$ with $g\neq e$.
\end{proof}

%%%%%%%%%%%%%%%%%%%%%%%%%%%%%%%%%%%%%%%%%%%%%%%%%%%%%%%%%%%%%%%%%%%%%%%%%%%%%%%%%%%%%%%%%%%%%%%%%%%%%%%%
\subsection{Minimal Polish spaces}
%%%%%%%%%%%%%%%%%%%%%%%%%%%%%%%%%%%%%%%%%%%%%%%%%%%%%%%%%%%%%%%%%%%%%%%%%%%%%%%%%%%%%%%%%%%%%%%%%%%%%%%%
When we construct URAs from minimal dynamical systems later, the universal source of countable groups acting minimally on a given space comes from the following classical observation. In the second-countable setting, the abstract homeomorphism-group action carries a countable subgroup that acts just as minimally.
\begin{definition}\label{DefMinSpace}
    A non-empty topological space $X$ will be called \textbf{minimal} if the action $\Homeo(X)\curvearrowright X$ is minimal.
\end{definition}
\begin{proposition}\label{PropMinPolish}
    Suppose that $X$ is a non-empty second countable topological space. The following are equivalent.
    \begin{enumerate}
        \item $X$ is minimal.
        \item There exists a minimal action $\Gamma\curvearrowright X$ of a countable discrete group $\Gamma$.
    \end{enumerate}
\end{proposition}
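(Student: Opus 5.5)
The direction $(2)\Rightarrow(1)$ is immediate: a minimal action $\Gamma\curvearrowright X$ by homeomorphisms gives a homomorphism $\Gamma\to\Homeo(X)$. Every $\Homeo(X)$-orbit contains the image of a $\Gamma$-orbit, so every $\Homeo(X)$-orbit is dense and $X$ is minimal in the sense of Definition~\ref{DefMinSpace}.

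For $(1)\Rightarrow(2)$ I would extract a countable subgroup of $\Homeo(X)$ acting minimally. Fix a countable base $(U_n)_{n\in\N}$ of non-empty open sets of $X$. Minimality of $\Homeo(X)\curvearrowright X$ says that for every $x\in X$ and every $m$ there is $h\in\Homeo(X)$ with $h(x)\in U_m$. Equivalently, for each $m$,
$$X=\bigcup_{h\in\Homeo(X)}h^{-1}(U_m).$$
This is an open cover of $X$. A second countable space is Lindelöf, so for each $m$ there are countably many $h_{m,k}\in\Homeo(X)$ with $X=\bigcup_k h_{m,k}^{-1}(U_m)$.

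Let $\Gamma$ be the subgroup of $\Homeo(X)$ generated by $\{h_{m,k}:m,k\in\N\}$. It is countable and acts on $X$ by homeomorphisms. To check minimality, take $x\in X$ and a non-empty open set $V$. Choose $m$ with $U_m\subset V$, and then $k$ with $x\in h_{m,k}^{-1}(U_m)$. Then $h_{m,k}(x)\in U_m\subset V$, so $\Gamma x$ meets $V$. Hence every $\Gamma$-orbit is dense, and $\Gamma$, with the discrete topology, gives the required minimal action.

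The only point needing care is the Lindelöf reduction. It uses only second countability, which gives hereditary Lindelöfness, so neither Hausdorffness nor Polishness is required. I do not expect any real obstacle; if the intended notion of countable discrete group action requires faithfulness, the subgroup $\Gamma\le\Homeo(X)$ already acts faithfully.
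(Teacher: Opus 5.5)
Your proof is correct and follows the paper's argument essentially step for step. For $(1)\Rightarrow(2)$ you extract a countable subcover of $X=\bigcup_{h}h^{-1}(U_m)$ for each basic open set and take the group generated by the chosen homeomorphisms, and for $(2)\Rightarrow(1)$ you note that $\Gamma$-orbits sit inside $\Homeo(X)$-orbits.
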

\begin{proof}\((1)\Rightarrow (2)\). Let $(U_n)_{n\geq 1}$ be a countable basis of non-empty open subsets of $X$. Fix \(n\geq 1\). By minimality, for every
\(x\in X\), the orbit \(\Homeo(X)\cdot x\) is dense in \(X\), thus there exists \(h\in \Homeo(X)\) such that $h(x)\in U_n$. Equivalently, $x\in h^{-1}(U_n)$. Therefore $X=\bigcup_{h\in\Homeo(X)} h^{-1}(U_n)$ is an open cover of \(X\). Since $X$ is second countable, we may extract a countable subcover. Thus, for each \(n\), there exists a sequence $(h_{n,m})_{m\geq 1}\subset \Homeo(X)$ such that $X=\bigcup_{m\geq 1} h_{n,m}^{-1}(U_n)$. Define
\[
        \Gamma:=\langle h_{n,m}:n,m\geq 1\rangle
        \leq \Homeo(X).
\]
Since \(\Gamma\) is generated by countably many elements, it is a countable group. We claim that $\Gamma\curvearrowright X$ is minimal. Let \(x\in X\), and let
\(U\subset X\) be a non-empty open set. Choose \(n\) such that \(U_n\subset U\). By construction, $X=\bigcup_{m\geq 1} h_{n,m}^{-1}(U_n)$. Hence there exists \(m\geq 1\) such that $x\in h_{n,m}^{-1}(U_n)$,
or equivalently, $h_{n,m}(x)\in U_n\subset U$. This shows that $\Gamma\cdot x\cap U\neq\emptyset$. Thus \(\Gamma\curvearrowright X\) is minimal.
\medskip
\noindent\((2)\Rightarrow (1)\). Suppose that a countable group
\(\Gamma\) acts minimally on \(X\) by homeomorphisms. Then, for every \(x\in X\), $\Gamma\cdot x\subseteq \Homeo(X)\cdot x$.
Since $\overline{\Gamma\cdot x}=X$ we have $\overline{\Homeo(X)\cdot x}=X$ thus \(\Homeo(X)\curvearrowright X\) is minimal.\end{proof}
%%%%%%%%%%%%%%%%%%%%%%%%%%%%%%%%%%%%%%%%%%%%%%%%%%%%%%%%%%%%%%%%%%%%%%%%%%%%%%%%%%%%%%%%%%%%%%%%%%%%%%%%
\section{Compactness of \texorpdfstring{$\Sub(M)$}{}}
\label{sec:compactness}
%%%%%%%%%%%%%%%%%%%%%%%%%%%%%%%%%%%%%%%%%%%%%%%%%%%%%%%%%%%%%%%%%%%%%%%%%%%%%%%%%%%%%%%%%%%%%%%%%%%%%%%%
A central feature of the group-theoretic theory of URS is that the ambient space $\Sub(\Gamma)$ is compact. This compactness, inherited from the embedding into $\{0,1\}^\Gamma$, is what makes Zorn's lemma immediately produce minimal $\Gamma$-invariant subsets, and underlies the entire dynamical study of URS in $\mathrm{PD}_1(\Gamma)$ carried out in \cite{AJ}.
For von Neumann subalgebras, the situation is markedly different. The aim of this section is to characterize precisely when $\Sub(M)$ is compact, and to show that whenever compactness fails, it does so in such a way that $\Sub(M)$ captures any Polish space as a closed subspace. The reason compactness can fail is that $M$ may have a diffuse part. With a diffuse direct summand at our disposal, we can build a continuum of pairwise far-apart two-dimensional subalgebras, and these embed any Polish space.
The construction proceeds in three steps. We first encode the real line as a continuous family of projections of trace $\tfrac12$ inside $\mathrm{L}^\infty([0,1])$, using an explicit measurable dynamics on $\{0,1\}^\Z$ to enforce closedness (Lemma~\ref{LemClosedLineProjections}). We then upgrade this single embedding $\R \hookrightarrow \Proj$ to a closed embedding $\R^\N \hookrightarrow \mathcal P(e_0,r)$ inside a sufficiently large set of projections, by exploiting a diffuse direct summand (Lemma~\ref{LemClosedBaireMarkedProjections}). Finally, we pass from marked projections to subalgebras via $p \mapsto \C p \oplus \C(1-p)$ (Lemma~\ref{LemMarkedProjectionsToSubalgebras}); a Kechris embedding theorem of arbitrary Polish spaces into $\R^\N$ then completes the proof of Theorem~\ref{ThmEmbedding}.
\begin{lemma}
\label{LemClosedLineProjections}
There exists a closed topological embedding $\mathbb R\hookrightarrow \Proj({\rm L}^\infty([0,1]))$, $t\mapsto q_t$
for the \({\rm L}^2\)-topology. Moreover, the embedding may be chosen so that $\tau(q_t)=\frac12$ for all $t\in\R$.
\end{lemma}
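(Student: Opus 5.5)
The plan is to build the family of projections on a convenient standard probability space and then transport it to $[0,1]$. Let $(\Omega,\mu)=([0,1]\times\{0,1\}^{\Z},\ \mathrm{Leb}\otimes\beta^{\otimes\Z})$, where $\beta$ is the uniform measure on $\{0,1\}$. We write a point as $(u,x)$ with $x=(x_n)_{n\in\Z}$. This is a non-atomic standard probability space, so it is measure-isomorphic to $[0,1]$. The isomorphism induces a trace-preserving $*$-isomorphism ${\rm L}^\infty(\Omega)\cong{\rm L}^\infty([0,1])$, which is $\|\cdot\|_2$-isometric on projections. It therefore suffices to construct $t\mapsto q_t\in\Proj({\rm L}^\infty(\Omega))$.

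For $n\in\Z$ let $B_n=\{x_n=1\}$; these events are independent and each has measure $\tfrac12$. The integer points are the pairwise independent projections $q_n=1_{B_n}$, which the shift moves along. For $t=n+s$ with $s\in[0,1]$ we interpolate using the auxiliary coordinate $u$:
$$q_t:=1_{\{u<s\}\cap B_{n+1}}+1_{\{u\geq s\}\cap B_n}.$$
This is consistent at integers. Conditioning on $u$ shows that $\mu(q_t)=\tfrac12$. It is also continuous: for $s\le s'$ in the same interval, $q_t$ and $q_{t'}$ can only differ on $\{s\le u<s'\}$. In fact $\|q_t-q_{t'}\|_2^2=\tfrac12(s'-s)$, because on that strip they differ exactly when $x_n\neq x_{n+1}$. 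This also gives injectivity inside each interval $[n,n+1]$.

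Next I would prove a uniform separation estimate: $\|q_t-q_{t'}\|_2^2=\tfrac12$ whenever $|t-t'|\geq 2$. Indeed, $q_t$ depends only on $(u,x_n,x_{n+1})$ and $q_{t'}$ only on $(u,x_{m},x_{m+1})$ with $\{n,n+1\}\cap\{m,m+1\}=\emptyset$. Conditionally on $u$, they are therefore independent indicators, each of probability $\tfrac12$, so their symmetric difference has conditional measure $\tfrac12$. It remains to check injectivity for $t<t'$ with $|t-t'|<2$ lying in different unit intervals, which is a finite case check. If $t\in[n,n+1)$ and $t'\in(n+1,n+2]$, then on the positive-measure set of $u$ where $q_t$ uses $B_n$ and $q_{t'}$ uses $B_{n+2}$, the two projections are conditionally independent. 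This gives a strictly positive contribution to $\|q_t-q_{t'}\|_2$.

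Finally, closedness and embedding. Suppose $q_{t_k}\to p$ in ${\rm L}^2$. Then $(q_{t_k})$ is Cauchy, and the separation estimate rules out $|t_k|\to\infty$ along any subsequence. Hence $(t_k)$ is bounded, and every cluster point $t$ satisfies $q_t=p$ by continuity. Injectivity then forces $t_k\to t$. This shows at once that the image is closed and that the inverse map is continuous, so $t\mapsto q_t$ is a closed topological embedding. The only mildly delicate step is the injectivity across adjacent intervals. I expect a direct conditional computation in $u$ to handle it, as sketched above.
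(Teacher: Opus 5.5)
Your construction and overall argument are the paper's. You use the same space $\{0,1\}^{\Z}\times[0,1]$ and the same projections $q_t$, switching from $B_n$ to $B_{n+1}$ as $u$ crosses $s$. Your separation estimate plays the role of the paper's Claim that unbounded sequences $(q_{t_m})_m$ are not Cauchy. The argument for closedness and continuity of the inverse (bounded sequence with a unique cluster point) is the same. Your continuity argument, via $\|q_t-q_{t'}\|_2^2=\frac12|s-s'|$ on each closed interval $[n,n+1]$ and agreement at the endpoints, is correct and a little cleaner than the paper's separate check at the integers.

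The one step that fails as written is injectivity across adjacent intervals. Take $t=n+s$ with $s\in[0,1)$ and $t'=(n+1)+r$ with $r\in(0,1]$. The set of $u$ on which $q_t$ uses $B_n$ and $q_{t'}$ uses $B_{n+2}$ is $\{s\le u<r\}$. This set is empty as soon as $r\le s$, that is, whenever $t'-t\le 1$ (e.g.\ $t=0.7$, $t'=1.3$), which is exactly the range that needs checking.

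Use $\{u\ge s\}$ instead, which has measure $1-s>0$. There $q_t$ uses $B_n$, while $q_{t'}$ uses $B_{n+1}$ or $B_{n+2}$, never $B_n$. So, conditionally on $u$, the two indicators are independent fair coins, and $\|q_t-q_{t'}\|_2^2\ge\frac12(1-s)>0$.

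The cleanest way to handle all cases at once is the paper's. Write $c_t(u)$ for the coordinate that $q_t$ uses at height $u$. Conditioning on $u$ then gives
$$\|q_t-q_{t'}\|_2^2=\tfrac12\,\lambda(\{u:c_t(u)\neq c_{t'}(u)\})\quad\text{for all }t,t'.$$
In the adjacent case the coordinates agree only on $\{r\le u<s\}$, so the distance squared is $\frac12(1-\max\{s-r,0\})>0$.

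This formula also covers a case your enumeration skips: $t\in(n,n+1)$ and $t'\in(n+2,n+3)$ with $t'-t<2$. There the index pairs $\{n,n+1\}$ and $\{n+2,n+3\}$ are disjoint, so your independence argument applies verbatim and gives $\|q_t-q_{t'}\|_2^2=\frac12$.
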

\begin{proof}
Consider the compact metric space $Y:=\{0,1\}^{\mathbb Z}\times[0,1]$ with Borel probability measure
$\nu
        :=
       \mu\otimes \lambda$ where $\mu:=\bigotimes_{n\in\mathbb Z}
        \left(\frac12\delta_0+\frac12\delta_1\right)$ and $\lambda$ is the normalized Lebesgue measure on $[0,1]$. Since \((Y,\nu)\) is a standard non-atomic probability space, there is a
trace-preserving isomorphism ${\rm L}^\infty(Y)\cong {\rm L}^\infty([0,1])$ and it suffices to construct a closed topological embedding $\mathbb R\hookrightarrow \Proj({\rm L}^\infty(Y))$.
\medskip
\noindent Write a point of \(Y\) as \((\omega,u)\), where $\omega=(\omega_n)_{n\in\mathbb Z}\in\{0,1\}^{\mathbb Z}$ and $u\in[0,1]$. For each \(n\in\mathbb Z\), put $R_n:=\{\omega:\omega_n=1\}$. Then $\mu(R_n)=\frac12$. For \(t\in\mathbb R\), write uniquely $t=n+s$, $n\in\mathbb Z$ and  $s\in[0,1)$. Define
$A_t:=R_n\times(s,1]\sqcup R_{n+1}\times[0,s]$ and  $q_t:=1_{A_t}\in {\rm Proj}({\rm L}^\infty(Y))$. Note that $\nu(A_t)=(1-s)\nu(R_n)+s\nu(R_{n+1})=\frac12$. Hence $\tau(q_t)=\frac12$ $\forall t\in\mathbb R$ and it suffices to show that    $t\mapsto q_t$ is a closed topological embedding for the \({\rm L}^2\)-topology.
\medskip
\noindent Let us show that $t\mapsto q_t$ is continuous. Note that, for all Borel sets $A,B\subseteq Y$,
$$\|1_A-1_B\|_2^2=\nu(A\triangle B).$$
Fix $n\in\Z$ and let us show continuity on $[n,n+1)$. If \(t=n+s\) and \(t'=n+s'\) lie in the same interval
\([n,n+1)\), then \(A_t\) and \(A_{t'}\) differ only on the strip $\min(s,s')<u\leq \max(s,s')$. Therefore
$\Vert q_t-q_{t'}\Vert_2^2
        =
        \nu(A_t\triangle A_{t'})
        \leq |s-s'|$. It remains to check continuity at the integers. Let \(m\in\mathbb Z\). By
definition, $A_m=R_m\times(0,1]\cup R_{m+1}\times\{0\}$ hence, in ${\rm L}^\infty(Y)$ one has $q_m=1_{{R_m}\times[0,1]}$. For $t=m+s$ with $s\in [0,1)$. Since
$$A_t\setminus\left(R_m\times[0,1]\right)\subset R_{m+1}\times[0,s]\text{ and }\left(R_m\times[0,1]\right)\setminus A_t\subset R_m\times[0,s]$$
we have $ \nu(A_t\triangle R_m)\leq s$. Hence, if \(t\to m\) from the right, say \(t=m+s\) with \(s\to0^+\), then $\Vert q_t-q_m\Vert_2^2=\nu(A_t\triangle R_m)\leq s\to0$. If \(t\to m\) from the left, say \(t=(m-1)+s\) with \(s\to1^-\), then
$$A_t\setminus(R_m\times[0,1])\subset R_{m-1}\times(s,1]\text{ and }(R_m\times[0,1])\setminus A_t\subset R_m\times(s,1].$$
Therefore $\nu(A_t\triangle(R_m\times[0,1]))\leq 1-s\to0$. This proves continuity at \(m\).
\medskip
\noindent We now show that the map is injective. Let $t,t'\in\R$ with $t\neq t'$ and write $t=n+s$ and $t'=m+r,$
with \(n,m\in\mathbb Z\) and \(s,r\in[0,1)\). Define $c_t,c_{t'}:[0,1]\to\Z$ by
\[
        c_t(u)=
        \begin{cases}
        n+1, & u\leq s,\\
        n, & u>s,
        \end{cases}
        \qquad
        c_{t'}(u)=
        \begin{cases}
        m+1, & u\leq r,\\
        m, & u>r.
        \end{cases}
\]
Then $(\omega,u)\in A_t\Leftrightarrow\omega_{c_t(u)}=1$. Since \(t\neq t'\), the step functions \(c_t\) and \(c_{t'}\) differ on the Borel set $U:=\{u\in[0,1]:c_t(u)\neq c_{t'}(u)\}$
of positive Lebesgue measure. More precisely, writing
$t=n+s$ and $t'=m+r$, with $n,m\in\mathbb Z$ and $s,r\in[0,1)$, one has
$$
\lambda(U)=
\begin{cases}
|s-r|, & n=m,\\
1-\max\{s-r,0\}, & m=n+1,\\
1-\max\{r-s,0\}, & n=m+1,\\
1, & |n-m|\geq 2.
\end{cases}
$$
In every case, $\lambda(U)>0$. Note that, for every \(u\in U\), $ \mu(\{\omega:\omega_{c_t(u)}\neq\omega_{c_{t'}(u)}\})
        =
        \frac12$. Therefore, by Fubini,
\[
        \nu(A_t\triangle A_{t'})
        =
        \int_0^1
        \mu(\{\omega:\omega_{c_t(u)}\neq\omega_{c_{t'}(u)}\})\,d\lambda(u)
       =
        \frac12\lambda(U)>0.
\]
Thus \(q_t\neq q_{t'}\).
\medskip
\noindent Finally, we show that the image is closed and the inverse is continuous on the image. We will use the following Claim.
\medskip
\noindent\textbf{Claim.} \textit{If \((t_m)_m\) be an unbounded sequence in \(\mathbb R\) then the sequence $(q_{t_m})_m$is not Cauchy in \({\rm L}^2(Y)\).}
\medskip
\noindent\textit{Proof of the Claim.} After passing to a subsequence, we may write $t_m=n_m+s_m$, $n_m\in\mathbb Z$, $s_m\in[0,1)$, with the pairs $\{n_m,n_m+1\}$ pairwise disjoint. Therefore, for every \(u\in[0,1]\) and $m\neq l\in\Z$, $\mu(\{\omega:1_{A_{t_m}}(\omega,u)\neq 1_{A_{t_\ell}}(\omega,u)\})=\frac12$. By Fubini,
\[
        \nu(A_{t_m}\triangle A_{t_\ell})
        =
        \int_0^1
        \mu(\{\omega:1_{A_{t_m}}(\omega,u)\neq 1_{A_{t_\ell}}(\omega,u)\})
        \,d\lambda(u)
        =
        \frac12.
\]
Hence
\[
        \|q_{t_m}-q_{t_\ell}\|_2^2
        =
        \nu(A_{t_m}\triangle A_{t_\ell})
        =
        \frac12.
\]
Thus the subsequence \((q_{t_m})_m\) is not Cauchy.$\hfill\qed$
\medskip
\noindent Let us show that the image is closed in ${\rm L}^2(Y)$. Let $(t_m)_m$ be a sequence in $\R$ such that $(q_{t_m})$ converges in ${\rm L}^2(Y)$. By the Claim, $(t_m)_m$ is bounded. Passing to a subsequence, we may assume $t_m\to t\in\mathbb R$. By continuity, $q_{t_m}\to q_t$. This shows that the image is closed. Finally, we show that the inverse is continuous on the image. Assume that $q_{t_m}\to q_t$ then, by the Claim, the sequence $(t_m)_m$ is bounded, so it suffices to show that $t$ is the only accumulation point of the sequence $(t_m)_m$. Consider a subsequence such that $t_{m_k}\to t'$. By continuity, $q_{t_{m_k}}\to q_{t'}$ and by uniqueness of limits $q_{t'}=q_t$. By injectivity, we have $t'=t$. It completes the proof.\end{proof}
With the line of projections in hand, we now amplify it to a countable product. The key is that a sufficiently large diffuse algebra contains a sequence of mutually orthogonal projections, on each of which we can install an independent copy of the previous lemma.
\begin{lemma}
\label{LemClosedBaireMarkedProjections}
Let \((M,\tau)\) be a diffuse finite von Neumann algebra with faithful normal trace.
Let \(e_0\in M\) be a projection such that $\tau(e_0)<\frac12$. Then, for any $r\in(\tau(e_0),\frac12)$, there exists a closed topological embedding 
$\R^\N\hookrightarrow \mathcal P(e_0,r)$ for the \({\rm L}^2\)-topology, where
        $$\mathcal P(e_0,r):=
        \{p\in\Proj(M):e_0\leq p,\ \tau(p)=r\}.$$
\end{lemma}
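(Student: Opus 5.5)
The plan is to realize $\R^\N$ as a product of independent copies of the line of projections from Lemma~\ref{LemClosedLineProjections}, installed on a sequence of mutually orthogonal pieces of $1-e_0$. The pieces will carry summable but positive traces, and the sum of the installed projections will be added to $e_0$.

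\emph{Construction.} Set $\delta:=r-\tau(e_0)>0$. Since $M$ is diffuse, $(1-e_0)M(1-e_0)$ is diffuse, and $\tau(1-e_0)>\tfrac12>2\delta$. Choose pairwise orthogonal projections $f_k\leq 1-e_0$, $k\in\N$, with $\tau(f_k)=2\delta\,2^{-k-1}$, so that $\sum_k\tau(f_k)=2\delta$. For each $k$, choose a maximal abelian (hence diffuse) subalgebra $A_k\subset f_kMf_k$. Diffuseness of $A_k$ gives a trace-scaling isomorphism $A_k\cong{\rm L}^\infty([0,1])$ under which $\tau|_{A_k}=\tau(f_k)\,\lambda$. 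Transport the embedding $t\mapsto q_t$ of Lemma~\ref{LemClosedLineProjections} to obtain $t\mapsto q^{(k)}_t\in\Proj(A_k)$, with $q^{(k)}_t\leq f_k$, $\tau(q^{(k)}_t)=\tfrac12\tau(f_k)$, and
$$\|q^{(k)}_t-q^{(k)}_s\|_2^2=\tau(f_k)\,\|q_t-q_s\|_2^2 .$$
For $x=(x_k)\in\R^\N$, define $p_x:=e_0+\sum_k q^{(k)}_{x_k}$, the sum converging strongly. Then $p_x$ is a projection, $p_x\geq e_0$, and $\tau(p_x)=\tau(e_0)+\tfrac12\cdot 2\delta=r$. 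So $p_x\in\mathcal P(e_0,r)$.

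\emph{Topological properties.} By orthogonality of the pieces,
$$\|p_x-p_y\|_2^2=\sum_k\tau(f_k)\,\|q_{x_k}-q_{y_k}\|_2^2 ,$$
and each term is bounded by $\tau(f_k)$, which is summable. Continuity for the product topology then follows from dominated convergence: finitely many coordinates control the sum up to $\varepsilon$. Injectivity follows from the injectivity in Lemma~\ref{LemClosedLineProjections}. For the closed image and the continuity of the inverse, suppose $p_{x^m}\to p$ in ${\rm L}^2$. Compression is $\|\cdot\|_2$-contractive, so $f_kp_{x^m}f_k=q^{(k)}_{x^m_k}$ converges to $f_kpf_k$ for each $k$. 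The closedness of the image and the continuity of the inverse in Lemma~\ref{LemClosedLineProjections} (after rescaling) then give $x^m_k\to x_k$ for some $x_k\in\R$, with $f_kpf_k=q^{(k)}_{x_k}$. Continuity of $x\mapsto p_x$ yields $p_{x^m}\to p_x$, hence $p=p_x$. This shows that the image is closed and that $p_{x^m}\to p_x$ forces $x^m\to x$ coordinatewise, that is, in the product topology.

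\emph{Main obstacle.} The only delicate point is securing the diffuse abelian pieces with the correct trace scaling, so that Lemma~\ref{LemClosedLineProjections} transfers verbatim. This is handled by diffuseness of $f_kMf_k$ together with a masa in it; for a finite diffuse algebra such a masa is diffuse, and its trace is then a non-atomic probability measure after normalization. Once that is in place, everything else reduces coordinatewise to the one-dimensional lemma.
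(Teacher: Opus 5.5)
Your proof is correct and follows the paper's construction: orthogonal pieces $f_k\leq 1-e_0$ of total trace $2(r-\tau(e_0))$, a rescaled copy of the line $t\mapsto q_t$ of Lemma~\ref{LemClosedLineProjections} in each $f_kMf_k$, and $p_x=e_0+\sum_k q^{(k)}_{x_k}$. Your closedness step (coordinatewise convergence $x^m_k\to x_k$, then continuity of $x\mapsto p_x$ and uniqueness of limits) is a slightly quicker substitute for the paper's direct check that the limit $p$ is block diagonal with the right blocks. There are two harmless slips. First, the inequality $\tfrac12>2\delta$ fails in general (take $\tau(e_0)$ small and $r$ close to $\tfrac12$), but the inequality you actually need, $2\delta<\tau(1-e_0)$, still holds because $2r-\tau(e_0)<1$. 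Second, since the lemma assumes no separability of the predual, you should ask only for a trace-scaling embedding ${\rm L}^\infty([0,1])\hookrightarrow A_k$ rather than an isomorphism; the embedding is all your argument uses.
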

\begin{proof}
Write $s:=\tau(e_0)$. Since \((1-e_0)M(1-e_0)\) is diffuse, we may choose pairwise orthogonal non-zero
projections $ f_1,f_2,\dots\leq 1-e_0$ such that $\sum_{j\geq1}\tau(f_j)=2(r-s)$. For instance, one may choose $\tau(f_j)=2^{-j}\,2(r-s)$,  $j\geq1$, inside a diffuse abelian von Neumann subalgebra of $(1-e_0)M(1-e_0)$. For every \(j\geq1\), the finite von Neumann algebra \(f_jMf_j\), equipped with the
normalised trace $\tau_j(x):=\frac{\tau(x)}{\tau(f_j)}$, $x\in f_jMf_j$, is diffuse. Hence,
there exists a trace-preserving embedding ${\rm L}^\infty([0,1])\hookrightarrow f_jMf_j$ where \(f_jMf_j\) is viewed with the normalised trace \(\tau_j\). By Lemma~\ref{LemClosedLineProjections}, transported through this embedding, we may choose a closed topological embedding $\mathbb R\hookrightarrow \Proj(f_jMf_j)$, $t\mapsto q_t^{(j)}$, such that $\tau_j(q_t^{(j)})=\frac12$ $\forall t\in\mathbb R$. In terms of the original trace \(\tau\), this means $\tau(q_t^{(j)})=\frac12\tau(f_j)$. For $y=(y_1,y_2,\dots)\in\mathbb R^{\mathbb N}$ define $p_y:=e_0+\sum_{j\geq1}q_{y_j}^{(j)}$. The sum is the strong operator, hence the \({\rm L}^2\)-limit, of the increasing sequence of
finite partial sums, since the projections are supported on pairwise orthogonal
projections \(f_j\). Thus \(p_y\) is a projection. Moreover, $e_0\leq p_y$ and
\[
        \tau(p_y)
        =
        \tau(e_0)+\sum_{j\geq1}\tau(q_{y_j}^{(j)})
        =
        s+\frac12\sum_{j\geq1}\tau(f_j)
        =
        s+\frac12\cdot 2(r-s)
        =
        r.
\]
Therefore $p_y\in\mathcal P(e_0,r)$. Let us show that $\Phi:\mathbb R^{\mathbb N}\rightarrow \mathcal P(e_0,r)$, $\Phi(y):=p_y$ is a closed topological embedding.
\medskip
\noindent Let us show that \(\Phi\) is continuous. Let $(y^{(m)})_m$ be a sequence in $\R^\N$ converging to $y\in\R^\N$. Then, for every fixed \(j\), $q_{y^{(m)}_j}^{(j)}\to q_{y_j}^{(j)}$ in ${\rm L}^2(f_jMf_j)$. Since $\|q_{y^{(m)}_j}^{(j)}-q_{y_j}^{(j)}\|_2^2
\leq \tau(f_j)$ and $\sum_{j\geq1}\tau(f_j)<\infty$, the dominated convergence theorem gives
\[
        \|p_{y^{(m)}}-p_y\|_2^2
        =
        \sum_{j\geq1}
        \|q_{y^{(m)}_j}^{(j)}-q_{y_j}^{(j)}\|_2^2
        \longrightarrow 0.
\]
Thus \(\Phi\) is continuous.
\medskip
\noindent Let us show that \(\Phi\) is injective. If \(y\neq z\), choose \(j\) such that $y_j\neq z_j$. Since $t\mapsto q_t^{(j)}$ is injective, we have $q_{y_j}^{(j)}\neq q_{z_j}^{(j)}$. Compressing by \(f_j\), we get
\[
        f_jp_yf_j=q_{y_j}^{(j)}
        \neq
        q_{z_j}^{(j)}=f_jp_zf_j.
\]
Hence $p_y\neq p_z$.
\medskip
\noindent Let us show that the image is closed. Suppose that $p_{y^{(m)}}\to p$ in ${\rm L}^2(M)$, where \(p\in\Proj(M)\). For each \(j\geq1\), compression by \(f_j\) gives $q_{y^{(m)}_j}^{(j)}=f_jp_{y^{(m)}}f_j\to f_jpf_j$ in ${\rm L}^2(f_jMf_j)$. Since the image of $t\mapsto q_t^{(j)}$ is closed, there exists \(y_j\in\mathbb R\) such that $f_jpf_j=q_{y_j}^{(j)}$. Also, $e_0p_{y^{(m)}}=e_0$ for all $m$. Passing to the \({\rm L}^2\)-limit gives $e_0p=e_0$. Similarly, $p_{y^{(m)}}e_0=e_0$ for all $m$ so $pe_0=e_0$. Thus \(e_0\leq p\). Define $f_\infty:=1-e_0-\sum_{j\geq1}f_j$ and note that $f_\infty p_{y^{(m)}}=0=p_{y^{(m)}}f_\infty$ for all $m$. It follows that $f_\infty p=0=pf_\infty$. Moreover, for \(j\neq k\), $f_jp_{y^{(m)}}f_k=0$ for all $m$ so $f_jpf_k=0$. It follows that \(p\) is block diagonal with respect to the orthogonal decomposition
\[
        1=e_0+\sum_{j\geq1}f_j+f_\infty,
\]
and its diagonal blocks are $e_0$, $q_{y_j}^{(j)}\ (j\geq1)$ and  $0$. Therefore $p=e_0+\sum_{j\geq1}q_{y_j}^{(j)}=p_y$ where $y=(y_1,y_2,\dots)\in\mathbb R^{\mathbb N}$. Thus, the image of \(\Phi\) is closed.
\medskip
\noindent Finally, we show that the inverse is continuous on the image. Suppose $p_{y^{(m)}}\to p_y$ in ${\rm L}^2(M)$. Compressing by \(f_j\), we obtain $q_{y^{(m)}_j}^{(j)}=f_jp_{y^{(m)}}f_j\rightarrow f_jp_yf_j=q_{y_j}^{(j)}$ in ${\rm L}^2(f_jMf_j)$. Since $t\mapsto q_t^{(j)}$ is a topological embedding, this implies $y^{(m)}_j\to y_j$ for all $j\geq1$ hence $y^{(m)}\to y$ in \(\mathbb R^{\mathbb N}\).\end{proof}
Marked projections produce closed embeddings into a space of subalgebras almost for free, by sending a projection to the two-dimensional algebra it generates.
\begin{lemma}
\label{LemMarkedProjectionsToSubalgebras}
Let \((M,\tau)\) be a finite von Neumann algebra. Let \(0<s<r<1/2\), and let
\(e_0\in M\) be a projection with $\tau(e_0)=s$. The map
$$
        \Psi:\mathcal P(e_0,r)\rightarrow \Sub(M),
        \quad
        p\mapsto W^*(p):=\mathbb C p\oplus \mathbb C(1-p),
$$
is a closed topological embedding, where \(\mathcal P(e_0,r)\) is equipped with the
\({\rm L}^2\)-topology and \(\Sub(M)\) with the Effros-Mar\'echal topology.
\end{lemma}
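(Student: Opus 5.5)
The plan is to compute the conditional expectation explicitly and reduce every topological statement to the $\mathrm{L}^2$-behaviour of projections. For $p\in\mathcal P(e_0,r)$ the algebra $W^*(p)=\mathbb Cp\oplus\mathbb C(1-p)$ is two-dimensional, because $0<r<1$. Its trace-preserving conditional expectation is
$$E_{W^*(p)}(x)=\frac{\tau(xp)}{r}\,p+\frac{\tau(x(1-p))}{1-r}\,(1-p).$$

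\textbf{Continuity.} If $p_m\to p$ in $\mathrm{L}^2$, then $\tau(xp_m)\to\tau(xp)$ for every $x\in M$. Hence $E_{W^*(p_m)}(x)\to E_{W^*(p)}(x)$ in $\|\cdot\|_2$, which is exactly Effros--Mar\'echal continuity of $\Psi$.

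\textbf{Injectivity.} Suppose $W^*(p)=W^*(p')$. Since $\tau(p)=\tau(p')=r$ and $r\neq 1/2$, the projection $p'$ cannot equal $1-p$, nor $0$ or $1$. So $p'=p$. The marking $e_0$ is not even needed here; $r<1/2$ suffices.

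\textbf{Continuity of the inverse.} Assume $W^*(p_m)\to W^*(p)$. I would recover $p$ continuously from $E_{W^*(p)}$ applied to a fixed element. Take $x=e_0$. Since $e_0\le p$, we have $\tau(e_0p)=s$ and $\tau(e_0(1-p))=0$, so
$$E_{W^*(p)}(e_0)=\frac sr\,p.$$
Convergence $E_{W^*(p_m)}(e_0)\to E_{W^*(p)}(e_0)$ then gives $\tfrac sr p_m\to\tfrac sr p$ in $\mathrm{L}^2$, hence $p_m\to p$ because $s>0$. This is where the marking $e_0$ and the condition $s>0$ are used: they remove the ambiguity between $p$ and $1-p$ and give a uniform formula for $p$.

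\textbf{Closed image.} This is the main point. Suppose $W^*(p_m)\to N$ in $\Sub(M)$. Then $\tfrac sr p_m=E_{W^*(p_m)}(e_0)\to E_N(e_0)$ in $\mathrm{L}^2$. So $p_m$ converges in $\mathrm{L}^2$ to some $p$. The set $\mathcal P(e_0,r)$ is $\mathrm{L}^2$-closed: limits of projections in the unit ball are projections, and both $e_0p=e_0$ and $\tau(p)=r$ pass to the limit. Hence $p\in\mathcal P(e_0,r)$. By continuity of $\Psi$, $W^*(p_m)\to W^*(p)$. Since $\Sub(M)$ is Hausdorff (indeed Polish), $N=W^*(p)$, which lies in the image.

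The only subtle step is this closedness, since a priori the limit $N$ could be a degenerate algebra such as $\mathbb C$. The formula for $E(e_0)$ rules this out: it forces $\mathrm{L}^2$-convergence of the $p_m$ themselves, with $\tau(p)=r\in(0,1)$ preserved in the limit.
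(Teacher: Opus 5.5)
Your proposal is correct and follows the paper's proof essentially step for step. Both use the explicit formula for $E_{W^*(p)}$, the trace comparison $r\neq 1-r$ for injectivity, and the identity $p=\frac{r}{s}E_{W^*(p)}(e_0)$ for both the continuity of the inverse and the closedness of the image (then closedness of $\mathcal P(e_0,r)$ and Hausdorffness of $\Sub(M)$). The only difference is cosmetic: you argue with sequences where the paper uses nets, which matters only if $M$ does not have separable predual, and your argument goes through verbatim for nets.
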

\begin{proof}
For \(p\in\mathcal P(e_0,r)\), the algebra $W^*(p)$ is two-dimensional, and the trace-preserving conditional expectation onto \(W^*(p)\)
is given by
\[
        E_{W^*(p)}(x)
        =
        \frac{\tau(xp)}{r}p
        +
        \frac{\tau(x(1-p))}{1-r}(1-p)\qquad x\in M.
\]
\noindent We first prove continuity. Let \(p_i\to p\) in \({\rm L}^2(M)\), with
\(p_i,p\in\mathcal P(e_0,r)\). For every \(x\in M\), we have $\tau(xp_i)\to \tau(xp)$ and  $\tau(x(1-p_i))\to \tau(x(1-p))$.
Using the formula above, it follows that $E_{W^*(p_i)}(x)\to E_{W^*(p)}(x)$ in ${\rm L}^2(M)$.
\medskip
\noindent Next, we show that \(\Psi\) is injective. Suppose that $W^*(p)=W^*(q)$ for \(p,q\in\mathcal P(e_0,r)\). The non-zero minimal projections of this two-dimensional
algebra are $p,\ 1-p$ but also $q,\ 1-q$. Hence $q=p$ or $q=1-p$. Since $\tau(q)=r<1/2$ and $\tau(1-p)=1-r>1/2$ we deduce that $q=p$ hence \(\Psi\) is injective.
\medskip
\noindent We now prove that the inverse is continuous on the image. Note that for any $p\in\mathcal{P}(e_0,r)$ we have $e_0p=e_0$. Therefore $\tau(e_0p)=\tau(e_0)=s$ and $\tau(e_0(1-p))=0$. Using the conditional expectation formula we get $E_{W^*(p)}(e_0)
        =
        \frac{\tau(e_0p)}{r}p
        +
        \frac{\tau(e_0(1-p))}{1-r}(1-p)
        =
        \frac{s}{r}p$. Hence $p=\frac{r}{s}E_{W^*(p)}(e_0)$. Consequently, if $W^*(p_i)\to W^*(p)$ in $\Sub(M)$ then
\[
        p_i
        =
        \frac{r}{s}E_{W^*(p_i)}(e_0)
        \longrightarrow
        \frac{r}{s}E_{W^*(p)}(e_0)
        =
        p
\]
in \({\rm L}^2(M)\). Thus \(\Psi^{-1}\) is continuous on its image.
\medskip
\noindent It remains to prove that the image is closed. Let \((p_i)_i\) be a net in
\(\mathcal P(e_0,r)\) such that
\[
        W^*(p_i)\to N
        \quad
        \text{in }\Sub(M).
\]
Then $p_i= \frac{r}{s}E_{W^*(p_i)}(e_0)\rightarrow\frac{r}{s}E_N(e_0)$ in ${\rm L}^2(M)$. Since the set \(\mathcal P(e_0,r)\) is closed in the \({\rm L}^2\)-topology, there exists
\(p\in\mathcal P(e_0,r)\) such that $p_i\to p$ in ${\rm L}^2(M)$. By continuity, $W^*(p_i)\to W^*(p)$ and since  \(\Sub(M)\) is Hausdorff $N=W^*(p)$.\end{proof}
In order to upgrade an embedding into $\Sub(zM)$ for a diffuse summand $zM$ to one into the whole $\Sub(M)$, we use a decompression trick, in particular to extend a subalgebra of $zM$ to a subalgebra of $M$ by adjoining all of $(1-z)M$ on the orthogonal complement.
\begin{lemma}\label{LemDecompression}
Let $M$ be a finite von Neumann algebra with separable predual. For any non-zero central projection $z\in M$, the map
$$\Theta:\Sub(zM)\to\Sub(M)\quad N\mapsto N\oplus(1-z)M$$
is a closed topological embedding.
\end{lemma}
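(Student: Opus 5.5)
The plan is to make the conditional expectation onto $\Theta(N)$ explicit and read everything off from it. Identify $M=zM\oplus(1-z)M$. For $N\in\Sub(zM)$ (a unital subalgebra of $zM$, unit $z$), the algebra $\Theta(N)=N\oplus(1-z)M$ is a von Neumann subalgebra of $M$ containing $1=z+(1-z)$. Its trace-preserving conditional expectation is
$$E_{\Theta(N)}(x)=E^{z}_N(zx)+(1-z)x,\qquad x\in M,$$
where $E^z_N:zM\to N$ is the conditional expectation for the normalized trace $\tau_z:=\tau(z)^{-1}\tau|_{zM}$. This expectation coincides with the one for $\tau|_{zM}$, since rescaling the trace does not change it. To verify the formula, I will check that the map is $\Theta(N)$-bimodular, unital, and $\tau$-preserving. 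Bimodularity uses that $z$ is central, so $x\mapsto zx$ is a conditional expectation onto $zM$.

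For continuity, the formula gives, for all $x\in M$,
$$\Vert E_{\Theta(N_i)}(x)-E_{\Theta(N)}(x)\Vert_2^2=\tau(z)\,\Vert E^z_{N_i}(zx)-E^z_N(zx)\Vert_{2,\tau_z}^2,$$
because the $(1-z)$-parts cancel and the two summands are orthogonal. So $N_i\to N$ in the Effros--Mar\'echal topology of $\Sub(zM)$ if and only if $\Theta(N_i)\to\Theta(N)$ in $\Sub(M)$. Since every $y\in zM$ is of the form $zx$, this shows at once that $\Theta$ is continuous and that its inverse is continuous on the image. Injectivity is immediate, since $z\Theta(N)=N$.

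The only real step is closedness of the image. Suppose $\Theta(N_i)\to P$ in $\Sub(M)$. I first want to show $(1-z)M\subseteq P$ and $z\in P$. For $x\in(1-z)M$ we have $E_{\Theta(N_i)}(x)=x$, hence $E_P(x)=x$ in ${\rm L}^2$, which gives $x\in P$. In particular $1-z\in P$, and so $z\in P$. Therefore $P=zP\oplus(1-z)P=zP\oplus(1-z)M$, and $N:=zP$ is a von Neumann subalgebra of $zM$ with unit $z$, so $P=\Theta(N)$. The fact $E_P(x)=x\Rightarrow x\in P$ holds because $E_P$ is the restriction of the Jones projection $e_P$ and $e_P\hat x=\hat x$ forces $x\in P$; equivalently, $\Vert E_P(x)\Vert_2=\Vert x\Vert_2$ forces equality. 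I expect no serious obstacle. The one point needing care is to confirm that the decomposition $P=zP\oplus(1-z)P$ uses only that $z$ is a central projection of $M$ lying in $P$, so that $z$ is central in $P$.
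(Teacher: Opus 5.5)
Your proof is correct and follows essentially the same route as the paper. The paper also uses the formula $E_{\Theta(N)}(x)=E_N(zx)+(1-z)x$ for continuity and $z\Theta(N)=N$ for injectivity, and it gets closedness by showing $(1-z)M\subseteq P$ for any limit $P$, so that $P=\Theta(zP)$. The only cosmetic difference is that you obtain continuity of the inverse from the $\Vert\cdot\Vert_2$-identity $\Vert E_{\Theta(N_i)}(x)-E_{\Theta(N)}(x)\Vert_2=\Vert E_{N_i}(zx)-E_N(zx)\Vert_2$, whereas the paper writes it as $E^{zM}_{zA}(zx)=zE_A(x)$ for $A$ in the image, which is the same computation.
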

\begin{proof}
Note that, for $N\in\Sub(zM)$, one has $E_{\Theta(N)}(a)=E_N(za)+(1-z)a$ for all $a\in M$. It implies that $\Theta$ is continuous. Note also that $z\Theta(N)=N$, for all $N\in\Sub(zM)$. This implies that $\Theta$ is injective. Let us show that $\operatorname{Im}(\Theta)$ is closed. Let $ \Theta(N_i)\to Q$ in $\Sub(M)$. Since, for all $x\in M$, $(1-z)x\in \Theta(N_i)$ for all $i$, we have $E_{\Theta(N_i)}((1-z)x)=(1-z)x$. Taking the limit we get  $E_Q((1-z)x)=(1-z)x$ hence $(1-z)M\subseteq Q$. Hence,
$\Theta(zQ)=zQ\oplus (1-z)M= zQ\oplus (1-z)Q=Q$. Therefore, the image of \(\Theta\) is closed. Moreover, on the image of \(\Theta\), the inverse map is explicitly $\Theta^{-1}(A)=zA$ for all $A\in \operatorname{Im}(\Theta)$. Hence, it suffices to check that the map ${\rm Im}(\Theta)\to\Sub(zM)$, $A\mapsto zA$ is continuous. Write $E^{zM}_{zA}:zM\to zA$ the trace preserving conditional expectation. If $z\in A$ then $E^{zM}_{zA}(zx)=zE_A(x)$ for all $x\in M$. Since the map $\Sub(M)\to M$, $A\mapsto E_A(x)$ is $\Vert\cdot\Vert_2$-continuous, the left multiplication by $z$ is continuous on ${\rm L}^2$ and, for all $A\in{\rm Im}(\Theta)$ one has $z\in A$, we deduce that $A\mapsto zA$ is continuous on ${\rm Im}(\Theta)$.\end{proof}
\noindent In the next proposition, we characterize relatively compact subsets of $\Sub(M)$.
\begin{proposition}
\label{prop:relative-compactness-subalgebras}
Let $(M,\tau)$ be a finite von Neumann algebra with separable predual, and let
$\mathcal F\subseteq\Sub(M)$. The following are equivalent.
\begin{enumerate}
\item The set $\mathcal F$ is relatively compact in $\Sub(M)$ for the
Effros--Mar\'echal topology.
\item For every $x\in M$, the set $\{E_N(x):N\in\mathcal F\}$ is relatively compact in $\LL^2(M,\tau)$.
\end{enumerate}
\end{proposition}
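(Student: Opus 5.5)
The implication $(1)\Rightarrow(2)$ is immediate: for each $x\in M$ the map $N\mapsto E_N(x)$ is continuous from $\Sub(M)$ (EM-topology) to $\LL^2(M)$, so it sends the compact set $\overline{\mathcal F}$ to a compact set containing $\{E_N(x):N\in\mathcal F\}$. The real content is $(2)\Rightarrow(1)$. My plan is to embed $\Sub(M)$ as a \emph{closed} subset of a countable product of copies of $\LL^2(M)$, and then apply Tychonoff.

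\textbf{The embedding.} Fix a sequence $(x_k)_k$ which is $\Vert\cdot\Vert_2$-dense in the unit ball of $M$; this exists by separability of the predual. Define
$$\iota:\Sub(M)\to\prod_k\LL^2(M),\qquad \iota(N)=(E_N(x_k))_k .$$
Since $\Vert E_N(x)-E_N(y)\Vert_2\le\Vert x-y\Vert_2$ uniformly in $N$, the countable family $N\mapsto E_N(x_k)$ generates the EM-topology. Hence $\iota$ is a homeomorphism onto its image, and it is injective because $E_N$ determines $N$.

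\textbf{Reduction to closedness.} Under (2), each coordinate set $\{E_N(x_k):N\in\mathcal F\}$ has compact closure $C_k$. So $\iota(\mathcal F)\subset\prod_k C_k$, which is compact and metrizable. If $\iota(\Sub(M))$ is closed, then $\overline{\iota(\mathcal F)}$ is a compact subset of the image, and pulling back shows $\mathcal F$ is relatively compact. Everything is metrizable, so sequences suffice.

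\textbf{Closedness of the image: constructing the limit map.} Let $N_i\in\Sub(M)$ with $E_{N_i}(x_k)\to\xi_k$ for every $k$. By the uniform $1$-Lipschitz bound and density (after scaling), $E_{N_i}(x)$ is $\LL^2$-Cauchy for every $x\in M$. Its limit $T(x)$ lies in $M$, with $\Vert T(x)\Vert\le\Vert x\Vert$, because operator-norm balls of $M$ are $\LL^2$-closed. This defines $T:M\to M$. The map $T$ is linear, unital, positive, $\tau$-preserving and $\LL^2$-contractive, since these properties pass to $\LL^2$-limits. I then record three further properties.
\begin{itemize}
\item $T$ is idempotent. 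Write $T(T(x))-T(x)=\lim_i E_{N_i}(T(x)-E_{N_i}(x))$, and the argument is $\LL^2$-small uniformly in $i$.
\item $T$ satisfies the Kadison--Schwarz inequality $T(x)^*T(x)\le T(x^*x)$. This holds because products of uniformly norm-bounded $\LL^2$-convergent sequences converge in $\LL^2$, so the inequality for each $E_{N_i}$ passes to the limit.
\item $T$ is self-adjoint for $\langle\cdot,\cdot\rangle_\tau$. Indeed, the Jones projections $e_{N_i}$ converge strongly to the operator extending $T$, and that limit is a projection.
\end{itemize}

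\textbf{Identifying the limit as a subalgebra.} Set $N:=\{x\in M:T(x)=x\}$. For $x\in N$, Kadison--Schwarz gives $T(x^*x)-x^*x\ge0$, and applying $\tau$ shows this positive element has trace $0$. By faithfulness, $x^*x\in N$. The standard multiplicative-domain argument then shows that $N$ is a unital $*$-algebra and that $T(axb)=aT(x)b$ for $a,b\in N$. Since $T$ is $\LL^2$-continuous, the unit ball of $N$ is $\LL^2$-closed, so $N$ is a von Neumann subalgebra. Idempotence gives $T(M)=N$. Thus $T$ is a $\tau$-preserving conditional expectation onto $N$, and by uniqueness $T=E_N$. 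Therefore $E_{N_i}(x)\to E_N(x)$ for all $x$, i.e.\ $N_i\to N$, and $(\xi_k)_k=\iota(N)$ lies in the image.

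I expect this last step to be the main obstacle: showing that the pointwise limit $T$ is genuinely the conditional expectation onto some von Neumann subalgebra. The rest is routine topology.
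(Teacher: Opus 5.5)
Your proof is correct, and its outline matches the paper's. The paper extracts a diagonal subsequence from a sequence in $\mathcal F$, while you use the closed embedding $\iota$ and Tychonoff; these are equivalent ways of obtaining a pointwise $\LL^2$-limit $T$ of the expectations. Both then check that $T$ maps into $M$ and is unital, positive, trace-preserving and idempotent, and then identify $T$ as a conditional expectation.

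The genuine difference is in that last step. The paper never uses the Kadison--Schwarz inequality. For $x=E(a)$ and $y=E(b)$ it notes that $E_{N_{m_j}}(a)E_{N_{m_j}}(b)$ lies in $N_{m_j}$, so it is fixed by $E_{N_{m_j}}$. By the uniform norm bounds this product converges in $\LL^2$ to $xy$, and $\LL^2$-contractivity then forces $E(xy)=xy$. Hence the range $E(M)=\{x:E(x)=x\}$ is already an algebra, and Tomiyama's theorem supplies the bimodule property.

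You instead pass the Schwarz inequality to the limit and run the multiplicative-domain argument on the fixed-point set $N$. This gives closure under products and $N$-bimodularity at the same time, and you then only need uniqueness of the trace-preserving expectation rather than Tomiyama. The cost is the operator Cauchy--Schwarz argument for the positive form $(x,y)\mapsto T(y^*x)-T(y)^*T(x)$. That argument is valid for Schwarz maps, and $T$ is in fact completely positive anyway, being a pointwise limit of completely positive maps.

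Your third bullet, self-adjointness of $T$ via strong convergence of the Jones projections, is correct but never used. In the idempotence bullet, the accurate statement is that $\Vert E_{N_i}(T(x)-E_{N_i}(x))\Vert_2\leq\Vert T(x)-E_{N_i}(x)\Vert_2\to0$, not that the argument is small uniformly in $i$.

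A side benefit of your packaging is that closedness of $\iota(\Sub(M))$ in the Polish space $\prod_k\LL^2(M)$ directly reproves that $\Sub(M)$ is Polish, a fact the paper quotes from the literature.
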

\begin{proof}
The implication ${\rm(1)}\Rightarrow{\rm(2)}$ follows from the continuity of
the map $N\mapsto E_N(x)$. Assume ${\rm(2)}$. Since $\Sub(M)$ is metrizable, it suffices to show that every sequence in $\mathcal{F}$ has a subsequence converging in $\Sub(M)$. Choose an $\LL^2$-dense sequence
$(x_k)_{k\geq1}$ in $M$. Let $(N_n)_{n\geq1}$ be a sequence in
$\mathcal F$. Since \(M\) has separable
predual, choose a countable \({\rm L}^2\)-dense subset $\{x_1,x_2,\dots\}\subset M$. By the compactness claim and a diagonal extraction, there exists a subsequence $(N_{m_j})_j$ such that, for every \(k\geq1\), the sequence $(E_{N_{m_j}}(x_k))_j$ converges in \({\rm L}^2(M)\). Note that $(E_{N_{m_j}}(x))_j$ converges in \({\rm L}^2(M)\) for all $x\in M$. Indeed, using contractivity in ${\rm L}^2$ of the maps $E_N$ and the density in ${\rm L}^2$ of the sequence $(x_k)_k$, it is easy to check that the sequence $(E_{N_{m_j}}(x))_j$  is Cauchy in ${\rm L}^2$. Define the linear map $E:M\to \LL^2(M)$ by $E(x):=\lim_jE_{N_{m_j}}(x)\in{\rm L}^2(M)$ and note that $\Vert E(x)\Vert_2\leq\Vert x\Vert_2$, for all $x\in M$. We claim that \(E(M)\subset M\). Indeed, for \(x\in M\) and for all $j$ one has  $\|E_{N_{m_j}}(x)\|_\infty\leq \|x\|_\infty$. Since the operator norm closed ball of $M$ (of radius $\Vert x\Vert_\infty$) is ${\rm L}^2$ closed, we deduce that $E(x)\in M$. The linear map \(E:M\to M\) is unital, positive, and trace-preserving, because
these properties pass to the \({\rm L}^2\)-limit from the maps \(E_{N_{m_j}}\). It is
also idempotent. indeed, for any $x\in M$, $E_{N_{m_j}}(E(x))\to E(E(x))$ and $E_{N_{m_j}}(x)\to E(x)$ in ${\rm L}^2(M)$. Moreover, 
\begin{eqnarray*}
\|E_{N_{m_j}}(E(x))-E_{N_{m_j}}(x)\|_2&=&\|E_{N_{m_j}}(E(x))-E_{N_{m_j}}(E_{N_{m_j}}(x))\|_2\\
&\leq&\|E(x)-E_{N_{m_j}}(x)\|_2\to 0.
\end{eqnarray*}
Therefore $ E(E(x))=E(x)$. To finish the proof, it suffices to show that $E(M)$ is a von Neumann subalgebra of $M$. Indeed, Tomiyama's theorem implies that \(E\)
is the trace-preserving conditional expectation onto $E(M)$. By construction, $N_{m_j}\to E(M)$ in $\Sub(M)$. 
\medskip
\noindent We now show that $E(M)$ is a von Neumann subalgebra of $M$. It is a unital $*$-preserving subspace. Let $x,y\in E(M)$ and write $x=E(a)$, $y=E(b)$ with $a,b\in M$. Then,
\begin{eqnarray*}
\Vert E_{N_{m_j}}(a)E_{N_{m_j}}(b)-xy\Vert_2&\leq &\Vert (E_{N_{m_j}}(a)-x)E_{N_{m_j}}(b)\Vert_2+\Vert x((E_{N_{m_j}}(b)-y)\Vert_2\\
&\leq &\Vert b\Vert\,\Vert E_{N_{m_j}}(a)-x\Vert_2+\Vert x\Vert\,\Vert E_{N_{m_j}}(b)-y\Vert_2\to_j0.
\end{eqnarray*}
Hence $\Vert E_{N_{m_j}}(xy)-E_{N_{m_j}}(E_{N_{m_j}}(a)E_{N_{m_j}}(b)))\Vert_2\leq\|xy-E_{N_{m_j}}(a)E_{N_{m_j}}(b)\|_2\to 0$ and since $E_{N_{m_j}}(xy)\to E(xy)$ and $E_{N_{m_j}}(E_{N_{m_j}}(a)E_{N_{m_j}}(b)))=E_{N_{m_j}}(a)E_{N_{m_j}}(b)\to xy$ both in ${\rm L}^2$, we deduce that $xy=E(xy)\in E(M)$. Since \(E\) is a $\Vert\cdot\Vert_2$-contraction and $E(M)=\{x\in M:E(x)=x\}$ the algebra $E(M)$ is ${\rm L}^2$-closed, hence strongly closed. Therefore $E(M)$ is a von Neumann subalgebra of \(M\).\end{proof}
We can now combine the previous proposition and the four lemmas into the main result of the section. As soon as $M$ has any diffuse part, the topology of $\Sub(M)$ is rich enough to accommodate every Polish space; absent a diffuse summand, $M$ is essentially finite-dimensional after central decomposition, $\Sub(M)$ is compact, and the URS-style machinery applies directly.
\begin{theorem}
\label{ThmEmbedding}
Let \(M\) be a finite von Neumann algebra with separable predual. The following
are equivalent.
\begin{enumerate}
    \item \(M\) has a non-zero diffuse direct summand.
    \item For every Polish space \(X\), there exists a closed topological embedding
    $$X\hookrightarrow \Sub(M).$$
    \item $\Sub(M)$ is not compact.
\end{enumerate}
\end{theorem}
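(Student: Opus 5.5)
I will prove $(1)\Rightarrow(2)\Rightarrow(3)\Rightarrow(1)$.

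\emph{$(1)\Rightarrow(2)$.} Let $z$ be a non-zero central projection with $zM$ diffuse, and equip $zM$ with the normalized trace $\tau(z\cdot)/\tau(z)$. By Kechris' theorem, every Polish space $X$ is homeomorphic to a closed subset of $\R^\N$, so it suffices to embed $\R^\N$ closedly into $\Sub(M)$. Since $zM$ is diffuse, I choose a projection $e_0\in zM$ with normalized trace $s\in(0,\tfrac12)$, and then some $r\in(s,\tfrac12)$. Lemma~\ref{LemClosedBaireMarkedProjections} gives a closed embedding $\R^\N\hookrightarrow\mathcal P(e_0,r)\subset\Proj(zM)$. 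Lemma~\ref{LemMarkedProjectionsToSubalgebras}, applied in $zM$, gives a closed embedding $\mathcal P(e_0,r)\hookrightarrow\Sub(zM)$. Lemma~\ref{LemDecompression} gives a closed embedding $\Sub(zM)\hookrightarrow\Sub(M)$. A composition of closed embeddings is a closed embedding, so this step is just bookkeeping.

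\emph{$(2)\Rightarrow(3)$.} Take $X=\R$. A closed subset of a compact space is compact, and $\R$ is not compact.

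\emph{$(3)\Rightarrow(1)$.} I argue by contraposition. Suppose $M$ has no non-zero diffuse direct summand. Then $M$ is atomic: the central support of the atomic part is $1$, since otherwise its complement would be a diffuse summand. So $M=\bigoplus_{i\in I}M_{n_i}(\C)$ with $I$ countable, and $1=\sum_i z_i$ where $\sum_i\tau(z_i)=1$. By Proposition~\ref{prop:relative-compactness-subalgebras}, and since $\Sub(M)$ is closed in itself, it suffices to show that for each $x\in M$ the set $\{E_N(x):N\in\Sub(M)\}$ is relatively compact in $\LL^2(M)$.

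Fix $\varepsilon>0$ and choose a finite $F\subset I$ with $\tau(1-z_F)<\varepsilon$, where $z_F=\sum_{i\in F}z_i$. Then
\[
E_N(x)=z_FE_N(x)+(1-z_F)E_N(x).
\]
The first term lies in the norm ball of radius $\|x\|$ of the finite-dimensional algebra $z_FM$, which is compact. The second satisfies $\|(1-z_F)E_N(x)\|_2\le\|x\|\,\tau(1-z_F)^{1/2}<\|x\|\varepsilon^{1/2}$. Hence the set is totally bounded in $\LL^2(M)$, and therefore relatively compact. Proposition~\ref{prop:relative-compactness-subalgebras} then shows that $\Sub(M)$ is compact.

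The only step needing care is justifying the atomic decomposition of a finite von Neumann algebra with no diffuse summand. This follows from the standard splitting of $M$ into its atomic part and its diffuse part, the latter being the complement of the central support of the minimal projections.
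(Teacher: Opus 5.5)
Your proof is correct and follows the paper's argument. For $(1)\Rightarrow(2)$ you use the same chain of Lemmas~\ref{LemClosedBaireMarkedProjections}, \ref{LemMarkedProjectionsToSubalgebras} and \ref{LemDecompression} followed by Kechris, and for $(3)\Rightarrow(1)$ you prove total boundedness of $\{E_N(x):N\in\Sub(M)\}$ and apply Proposition~\ref{prop:relative-compactness-subalgebras}; your splitting $E_N(x)=z_FE_N(x)+(1-z_F)E_N(x)$ is a slightly leaner form of the paper's comparison with $z_nE_N(z_nx)$, since it avoids the extra $\|(1-z_n)x\|_2$ term.
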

\begin{proof}Note that $(2)\Rightarrow(3)$ is obvious.
\medskip
\noindent$(1)\Rightarrow(2)$. Let $z$ be a non-zero central projection such that $zM$ is diffuse. Let $\tau$ be a faithful normal tracial state on \(zM\). Since \(zM\) is diffuse, there exists a  projection $e_0\in zM$ with $0<\tau(e_0)<\frac12$. Let $r\in(\tau(e_0),1/2)$. Combining Lemma \ref{LemClosedBaireMarkedProjections}, Lemma \ref{LemMarkedProjectionsToSubalgebras}, and Lemma \ref{LemDecompression}, we get a closed topological embedding $\theta:\mathbb R^{\mathbb N}\hookrightarrow \Sub(M)$. Let \(X\) be a Polish space. By \cite[Theorem 4.17]{kechrisClassicaldescriptiveset1995}, there exists a closed topological embedding $X\hookrightarrow \mathbb R^{\mathbb N}$, and by composing with $\theta$, we get the result.
\medskip
\noindent $(3)\Rightarrow(1)$. Assume that \(M\) has no non-zero diffuse direct summand and let us show that $\Sub(M)$ is compact. By Proposition \ref{prop:relative-compactness-subalgebras} it suffices to shows that, for every $x\in M$, the set $S_x:=\{E_N(x):N\in\Sub(M)\}$ is relatively compact in ${\rm L}^2(M)$ i.e. totally bounded in ${\rm L}^2(M)$. Let \(\varepsilon>0\).
Since $M$ has separable predual and no non-zero diffuse direct summand, $M$ is a countable direct sum of finite-dimensional factors. Therefore there are increasing central projections $z_n\uparrow1$ such that $z_nM$ is finite-dimensional for every $n$. %Since $M$ is finite with separable predual and no diffuse direct summand, there exists a sequence $(z_n)_n$ of central projections in $M$ such that $z_n\to1$ strongly and $z_nM$ is finite dimensional for all $n$. 
One has $\Vert(1-z_n)x\Vert_2\to0$ and $\tau(1-z_n)\to0$. Choose \(n\) large enough so that $\|(1-z_n)x\|_2<\frac{\varepsilon}{3}$ and $\|x\|_\infty\,\tau(1-z_n)^{1/2}<\frac{\varepsilon}{3}$. For \(N\in\Sub(M)\), one has
\begin{eqnarray*}
        \|E_N(x)-z_nE_N(z_nx)\|_2
        &\leq&
        \|E_N(x)-E_N(z_nx)\|_2
        +
        \|E_N(z_nx)-z_nE_N(z_nx)\|_2  \\
        &=&
        \|E_N((1-z_n)x)\|_2
        +
        \|(1-z_n)E_N(z_nx)\|_2.\\
        &\leq & \|(1-z_n)x\|_2+\|E_N(z_nx)\|_\infty\,\tau(1-z_n)^{1/2}\\
        &<& \frac{\varepsilon}{3}+\|x\|_\infty\,\tau(1-z_n)^{1/2}<\frac{2\varepsilon}{3}.
\end{eqnarray*}
Define $T_n:=\{z_nE_N(z_nx):N\in\Sub(M)\}\subset z_nM$ and note that $T_n$ is bounded for the \({\rm L}^2\)-norm in the finite dimensional space $z_nM$. Therefore \(T_n\) is totally bounded for the \({\rm L}^2\)-norm. Hence there exist $ \eta_1,\dots,\eta_k\in {\rm L}^2(M)$ such that $T_n\subset \bigcup_{\ell=1}^k B_{\LL^2}\left(\eta_\ell,\frac{\varepsilon}{3}\right)$. Let \(N\in\Sub(M)\). Since \(z_nE_N(z_nx)\in T_n\), there exists \(\ell\in\{1,\dots,k\}\) such that $\|z_nE_N(z_nx)-\eta_\ell\|_2<\frac{\varepsilon}{3}$. Using the previous estimate, we get
$$\|E_N(x)-\eta_\ell\|_2\leq\|E_N(x)-z_nE_N(z_nx)\|_2+\|z_nE_N(z_nx)-\eta_\ell\|_2 <\frac{2\varepsilon}{3}+ \frac{\varepsilon}{3}=\varepsilon.$$
Therefore $S_x\subset\bigcup_{\ell=1}^k B_{\LL^2}(\eta_\ell,\varepsilon)$. This proves that \(S_x\) is totally bounded in \(\LL^2(M)\).\end{proof}
The compactness obstruction quantified by Theorem~\ref{ThmEmbedding} is what forces the broader state-space machinery developed in Section~\ref{sec:statespace}. The naive idea of running the URS argument inside $\Sub(M)$ via Zorn's Lemma does not get off the ground whenever $M$ has a diffuse summand. Therefore, we work through the weak-$*$ compact space of states extending the canonical trace, where Zorn's lemma is available; this is the strategy developed in Section~\ref{sec:statespace}.
%%%%%%%%%%%%%%%%%%%%%%%%%%%%%%%%%%%%%%%%%%%%%%%%%%%%%%%%%%%%%%%%%%%%%%%%%%%%%%%%%%%%%%%%%%%%%%%%%%%%%%%%
\section{Uniformly recurrent subalgebras}
\label{sec:URA}
%%%%%%%%%%%%%%%%%%%%%%%%%%%%%%%%%%%%%%%%%%%%%%%%%%%%%%%%%%%%%%%%%%%%%%%%%%%%%%%%%%%%%%%%%%%%%%%%%%%%%%%%
In this section, we fix a finite von Neumann algebra $(M,\tau)$ with separable predual and consider the Effros-Mar\'echal topology on $\Sub(M)$. Let $\alpha:\Gamma\curvearrowright M$ be a trace preserving action of $\Gamma$ on $M$. Then, for all $g\in\Gamma$, the map $N\mapsto\alpha_g(N)$ is a homeomorphism of $\Sub(M)$ since, $\alpha$ being $\tau$-preserving, we have $E_{\alpha_g(N)}=\alpha_g\circ E_N\circ\alpha_{g^{-1}}$ for all $N\in\Sub(M)$. Hence, we get an action $\Gamma\curvearrowright\Sub(M)$ by homeomorphisms, still denoted $\alpha$. A subalgebra $N\subseteq M$ is called \textbf{$\Gamma$-invariant} if $\alpha_g(N)=N$ for all $g\in\Gamma$.
Recall that the Chabauty topology on the set of subgroups of $\Gamma$, denoted by $\Sub(\Gamma)$, is the smallest topology for which the maps $\Sub(\Gamma)\rightarrow\{0,1\}$ $\Lambda\mapsto 1_\Lambda(g)$ are continuous, for all $g\in\Gamma$. It is easy to see that $\Sub(\Gamma)$ is a compact metric space since it identifies with a closed subset of $\{0,1\}^\Gamma$ with the product topology. A URS on $\Gamma$ is a minimal closed $\Gamma$-invariant subset of $\Sub(\Gamma)$ (for the conjugation action). We now formalize the von Neumann analog.
\begin{definition} A \textbf{uniformly recurrent subalgebra of $M$} (URA) is a minimal closed $\Gamma$-invariant subset of $\Sub(M)$. A URA $\mathcal{U}\subset\Sub(M)$ is called \textbf{amenable} if every $N\in\mathcal{U}$ is amenable.
\end{definition}

\noindent A URA is an orbit closure, so a property which is invariant under the
action and stable under Effros--Mar\'echal limits need only be tested on a single
member. Amenability is such a property.
\medskip

\begin{remark}\label{rmk-amenbale-URA}A URA $\mathcal{U}\subset\Sub(M)$ is amenable if and only if there exists $N\in\mathcal{U}$ such that $N$ is amenable. Indeed, take $N\in\Ucal$ amenable. Then each $\alpha_g(N)$ is amenable. Hence $\{\alpha_g(N):g\in\Gamma\}$ is contained in the set $\Sub_{{\rm am}}(M)$ of amenable von Neumann subalgebras of $M$. Since $\Sub_{{\rm am}}(M)$ is closed in $\Sub(M)$
by \cite[Proposition~4.7]{THO25} (see also
\cite[Proposition 5.2]{fima2026spacesucpmapssubalgebras} for an easy proof) and by minimality we have $\Ucal=\overline{\{\alpha_g(N):g\in\Gamma\}}\subset\Sub_{{\rm am}}(M)$.
\end{remark}
\noindent We now discuss two basic invariants of an URA.
\begin{lemma}\label{lem:group-index-of-ura}
Let $\Gamma$ be a countable discrete group acting by trace-preserving
automorphisms on a finite von Neumann algebra $(M,\tau)$, and let
$\mathcal U\subseteq\Sub(M)$ be a URA. Then $[\Gamma:\operatorname{Stab}_\Gamma(N)]$
does not depend on $N\in\mathcal U$.
\end{lemma}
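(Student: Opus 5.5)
The proof reduces to a dichotomy: either every member of $\mathcal U$ has a stabilizer of infinite index, or some member has a stabilizer of finite index. In the second case we show that $\mathcal U$ is a single finite orbit, on which the index is visibly constant. The intro already hints at this: a finite-index stabilizer forces the action on $\mathcal U$ to be transitive.

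\emph{Invariance along orbits.} First observe that for $g\in\Gamma$ and $N\in\Sub(M)$ one has $\Stab_\Gamma(\alpha_g(N))=g\,\Stab_\Gamma(N)\,g^{-1}$. Conjugate subgroups have the same index, so $[\Gamma:\Stab_\Gamma(N)]$ is constant on each $\Gamma$-orbit in $\mathcal U$. It remains to compare members of $\mathcal U$ lying in different orbits.

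\emph{The finite-index case.} Suppose some $N_0\in\mathcal U$ has $[\Gamma:\Stab_\Gamma(N_0)]=n<\infty$. The orbit $\Gamma\cdot N_0$ is in bijection with $\Gamma/\Stab_\Gamma(N_0)$, so it is a finite set of $n$ elements. A finite subset of the Hausdorff (indeed Polish) space $\Sub(M)$ is closed, so $\Gamma\cdot N_0$ is a non-empty closed $\Gamma$-invariant subset of $\mathcal U$. Minimality of $\mathcal U$ then gives $\mathcal U=\Gamma\cdot N_0$. Every $N\in\mathcal U$ is therefore of the form $\alpha_g(N_0)$, and by the previous step it has index $n$.

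\emph{Conclusion.} If no member of $\mathcal U$ has a finite-index stabilizer, every member has index $\infty$, so the index is again constant. Combining the two cases, the index is independent of $N\in\mathcal U$. The only point needing care is the closedness of finite orbits, which holds because the Effros--Mar\'echal topology is Hausdorff. Beyond that there is no real obstacle, and the argument requires no compactness of $\Sub(M)$.
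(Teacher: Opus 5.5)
Your proof is correct and follows the paper's argument essentially verbatim. A finite-index stabilizer gives a finite, hence closed, orbit, which by minimality equals $\mathcal U$, so all stabilizers are conjugate and have the same index. Otherwise every index is infinite; the paper records this as $\aleph_0$, using countability of $\Gamma$.
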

\begin{proof}
Suppose first that
$[\Gamma:\operatorname{Stab}_\Gamma(N)]<\infty$ for some
$N\in\mathcal U$. Then $\Gamma\cdot N$ is finite and hence closed in
$\Sub(M)$. Since $\Gamma\cdot N$ is dense in $\mathcal U$ by minimality, $
\mathcal U=\Gamma\cdot N$. Thus the action on $\mathcal U$ is transitive. The stabilizers of any two
elements of $\mathcal U$ are conjugate and therefore have the same index.
If no element of $\mathcal U$ has a finite-index stabilizer, then all the
indices are infinite and, since $\Gamma$ is countable, they are all equal
to $\aleph_0$.
\end{proof}
\begin{definition}\label{def:group-index-of-ura}
Let $\Gamma$ be a countable discrete group acting by trace-preserving
automorphisms on a finite von Neumann algebra $(M,\tau)$, and let
$\mathcal U\subseteq\Sub(M)$ be a URA. The constant value of the map $\Ucal\to\N\cup\{+\infty\}$, $N\mapsto [\Gamma:\operatorname{Stab}_\Gamma(N)]$ is called the
\textbf{index of $\mathcal U$ in $\Gamma$} and denoted by $
[\Gamma:\mathcal U]$.
\end{definition}

\noindent The second invariant does not refer to the acting group at all. It is
the Pimsner--Popa index of the members of $\Ucal$ in $M$, and its constancy along
$\Ucal$ comes from a different mechanism. The Pimsner--Popa constant $\lambda$ is
upper semicontinuous on $\Sub(M)$ and invariant under trace-preserving
automorphisms, so it cannot drop along an orbit and cannot drop along a limit
either. In Subsection~\ref{subsec:exoticURAs} this invariant will separate two
families of URAs built from the same element of $\Gamma$.
\medskip

\begin{lemma}\label{lem:pimsner-popa-index-of-ura}
Let $\Gamma$ be a countable discrete group acting by trace-preserving
automorphisms on a finite von Neumann algebra $(M,\tau)$ with separable
predual, and let $\mathcal U\subseteq\Sub(M)$ be a URA. Then the Pimsner--Popa index
$[M:N]$ does not depend on $N\in\mathcal U$.
\end{lemma}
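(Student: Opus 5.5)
We will show that $\lambda$ is constant on $\Ucal$ by combining its invariance along orbits (Lemma~\ref{lem:index-conjugation-invariant}) with upper semicontinuity on $\Sub(M)$. Minimality then forces the value to be the same everywhere.

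\textbf{Upper semicontinuity.} We first show that, for each $c\in[0,1]$, the set
$$F_c:=\{N\in\Sub(M):\lambda(N)\geq c\}$$
is closed in the Effros--Mar\'echal topology. Since $\lambda(N)$ is a supremum and the condition $E_N(x)\geq cx$ is closed in $c$, we have $\lambda(N)\geq c$ if and only if $E_N(x)\geq cx$ for all $x\in M^+$. So $F_c=\bigcap_{x\in M^+}\{N: E_N(x)-cx\geq0\}$, and it suffices to see that each set in this intersection is closed.

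Let $N_i\to N$. Then $E_{N_i}(x)\to E_N(x)$ in $\LL^2$, and all these elements are bounded in operator norm by $\|x\|$. Positivity of an element $y\in M$ is equivalent to $\tau(y\,a^*a)\geq0$ for all $a\in M$. The functional $y\mapsto\tau(ya^*a)$ is $\LL^2$-continuous because $a^*a\in\LL^2$. Hence $\tau((E_{N_i}(x)-cx)a^*a)\geq0$ passes to the limit, giving $E_N(x)-cx\geq0$. Thus $F_c$ is closed, and $\lambda$ is upper semicontinuous.

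\textbf{Minimality argument.} Fix $N\in\Ucal$ and set $c:=\lambda(N)$. By Lemma~\ref{lem:index-conjugation-invariant}, $\lambda(\alpha_g(N))=c$ for all $g\in\Gamma$, so the orbit $\Gamma\cdot N$ lies in the closed set $F_c$. By minimality $\Ucal=\overline{\Gamma\cdot N}\subseteq F_c$. Hence $\lambda(N')\geq\lambda(N)$ for all $N,N'\in\Ucal$. Exchanging the roles of $N$ and $N'$ gives equality, so $[M:N]=\lambda(N)^{-1}$ is constant on $\Ucal$. The case $\lambda=0$, i.e.\ infinite index, is included automatically.

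\textbf{Main point of care.} The one delicate step is the closedness of the positivity condition under $\LL^2$-convergence. This is handled by testing against $\tau(\,\cdot\,a^*a)$, which only requires $\LL^2$-continuity; uniform operator-norm bounds are not needed for that step.
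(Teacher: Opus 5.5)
Your proof is correct and follows essentially the same route as the paper. Both arguments establish upper semicontinuity of $\lambda$ via closedness of $\{N:\lambda(N)\geq c\}$, take $\Gamma$-invariance from Lemma~\ref{lem:index-conjugation-invariant}, and conclude by minimality and symmetry; the only cosmetic difference is that you check positivity of the $\LL^2$-limit by testing against $\tau(\,\cdot\,a^*a)$, whereas the paper cites closedness of the positive cone of $\LL^2(M,\tau)$.
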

\begin{proof}
For $N\in\Sub(M)$, put $\lambda(N):=\sup\{c>0:E_N(x)\geq cx\text{ for every }x\in M^+\}$. The map $\lambda:\Sub(M)\to[0,1]$ is upper semicontinuous. Indeed, for
every $c\in[0,1]$, the set $\{N\in\Sub(M):\lambda(N)\geq c\}$ is closed: if $N_i\to N$ and $\lambda(N_i)\geq c$ for every $i$, then,
for every $x\in M^+$, $E_{N_i}(x)-cx\geq0$ and $E_{N_i}(x)-cx\to E_N(x)-cx$ in $\LL^2(M,\tau)$. Since the positive cone of $\LL^2(M,\tau)$ is closed,
$E_N(x)-cx\geq0$, and hence $\lambda(N)\geq c$.
\medskip
\noindent Lemma~\ref{lem:index-conjugation-invariant} shows that $\lambda$ is
$\Gamma$-invariant. Let $N,P\in\mathcal U$. By minimality, there is a net
$(g_i)_i$ in $\Gamma$ such that $\alpha_{g_i}(N)\to P$. Upper
semicontinuity and invariance give
$$
\lambda(P)\geq\limsup_i\lambda(\alpha_{g_i}(N))=\lambda(N).
$$
Interchanging $N$ and $P$ gives $\lambda(N)\geq\lambda(P)$. Therefore
$\lambda(N)=\lambda(P)$ and
$$
[M:N]=\lambda(N)^{-1}=\lambda(P)^{-1}=[M:P].
$$\end{proof}
\begin{definition}\label{def:pimsner-popa-index-of-ura}
Let $\Gamma$ be a countable discrete group acting by trace-preserving
automorphisms on a finite von Neumann algebra $(M,\tau)$, and let
$\mathcal U\subseteq\Sub(M)$ be a URA. The constant value of the map $\Ucal\to[1,+\infty]$, $N\mapsto [M:N]$ is called the
\textbf{Pimsner--Popa index of $\mathcal U$ in $M$} and denoted by $
[M:\mathcal U]$.
\end{definition}
\noindent Let us now present some examples of URA. Two basic constructions of URAs are immediate from the definition. First, every URS of $\Gamma$ gives a URA of ${\rm L}(\Gamma)$ through the continuous map $\Lambda \mapsto L(\Lambda)$; the resulting URA is automatically compact since the Chabauty space $\Sub(\Gamma)$ is. Second, the singleton $\{\C 1\}$ is always an amenable URA, the trivial one. 
\begin{example}\label{ExBasic}The following are always URAs of ${\rm L}(\Gamma)$.
\begin{enumerate}
    \item For every URS $X\subset\Sub(\Gamma)$, the image $\psi(X)\subset\Sub({\rm L}(\Gamma))$ under the continuous injective map $\psi:\Sub(\Gamma)\rightarrow\Sub({\rm L}(\Gamma))$, $\Lambda\mapsto L(\Lambda)$ (continuity is the special case $M=\C$ of Lemma~\ref{LemmaChabauty} below). Such a URA is always compact.
    \item The trivial URA $\mathcal{U}:=\{\C 1\}$, which is amenable.
\end{enumerate}
\end{example}
Neither of these settles the interesting question --- whether ${\rm L}(\Gamma)$ admits URAs that genuinely belong to the operator-algebraic world rather than reducing to subgroup data --- so we set apart the URAs that do. Consequently, we define the following.
\begin{definition}\label{def:exotic}
A URA of ${\rm L}(\Gamma)$ is called \textbf{exotic} if it is not of the form $\psi(X)$ for any URS $X$ of $\Gamma$. In particular, every non-compact URA is exotic.
\end{definition}
The rest of this section is devoted to constructing an explicit family of exotic URAs. Specifically, we prove that every minimal Polish space arises as a URA inside ${\rm L}(\Lambda \wr \mathbb F_\infty)$ for $\Lambda$ countable abelian. Two observations motivate this construction. First, an obstruction lemma says that if a continuous $\Gamma$-equivariant image of a minimal compact space has a discrete orbit closure, then the stabilizer must have finite index. Second, every URA in $\Sub(L(\Gamma))$ is automatically a minimal Polish space.
\begin{lemma}\label{LemObstruction}
Let \(\Gamma\curvearrowright Z\) be a minimal action on a compact space, \(\Gamma\curvearrowright X\) an action on a topological space $X$ and $\Phi:Z\to X$ a \(\Gamma\)-equivariant map. If \(z_0\in Z\) is a point of continuity of
\(\Phi\) and $\mathcal O:=\overline{\Gamma\cdot \Phi(z_0)}$ is discrete then $\operatorname{Stab}_\Gamma(\Phi(z_0))$ has finite index.
\end{lemma}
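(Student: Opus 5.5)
The idea is to use the continuity of $\Phi$ at $z_0$, together with the discreteness of $\mathcal O$, to pull back an isolating neighbourhood of $x_0:=\Phi(z_0)$ to an open set $V\subseteq Z$. Compactness and minimality of $Z$ then yield finitely many translates of $V$ covering $Z$. From this we read off a finite set of coset representatives for $\operatorname{Stab}_\Gamma(x_0)$.

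First, since $\mathcal O$ is discrete in the subspace topology and $x_0\in\mathcal O$, there is an open set $W\subseteq X$ with $W\cap\mathcal O=\{x_0\}$. By continuity of $\Phi$ at $z_0$, there is an open neighbourhood $V$ of $z_0$ in $Z$ such that $\Phi(V)\subseteq W$.

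Next, minimality gives that every orbit $\Gamma\cdot z$ meets $V$, so $Z=\bigcup_{g\in\Gamma}g^{-1}V$. Compactness produces $g_1,\dots,g_n\in\Gamma$ with $Z=\bigcup_{i}g_i^{-1}V$.

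Now let $g\in\Gamma$ be arbitrary. The point $gz_0$ lies in some $g_i^{-1}V$, so $g_igz_0\in V$. By equivariance, $g_ig\,x_0=\Phi(g_igz_0)\in W\cap\Gamma\cdot x_0\subseteq W\cap\mathcal O=\{x_0\}$. Hence $g_ig\in\operatorname{Stab}_\Gamma(x_0)$, i.e.\ $g\in g_i^{-1}\operatorname{Stab}_\Gamma(x_0)$. Therefore $\Gamma=\bigcup_{i=1}^n g_i^{-1}\operatorname{Stab}_\Gamma(x_0)$, and the index is at most $n$.

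The only delicate point is the last step. We need $\Phi(g_igz_0)=g_ig\,\Phi(z_0)$, and this uses equivariance at the points $z_0$ and $g_igz_0$ only; no continuity away from $z_0$ is required. That is precisely why continuity at a single point is enough.
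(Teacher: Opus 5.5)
Your proof is correct and is essentially the paper's argument. The paper chooses the same $W$ and $V$, cites syndeticity of the return-time set $N(z_0,V)=\{g\in\Gamma: gz_0\in V\}$ for a minimal action on a compact space, and checks that $N(z_0,V)\subseteq\operatorname{Stab}_\Gamma(x_0)$; your finite cover $Z=\bigcup_i g_i^{-1}V$ is just the standard proof of that syndeticity written out inline.
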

\begin{proof}
Write $x_0:=\Phi(z_0)$. Since \(\mathcal O\) is discrete, there exists an open neighborhood
\(W\subset X\) of \(x_0\) such that $W\cap\mathcal O=\{x_0\}$. Since \(\Phi\) is continuous at \(z_0\), there exists an open neighborhood
\(V\subset Z\) of \(z_0\) such that $\Phi(V)\subset W$. Since $\Gamma\curvearrowright Z$ is minimal, the return-time set $N(z_0,V):=\{g\in\Gamma:gz_0\in V\}$ is syndetic. Hence, it suffices to show that $N(z_0,V)\subset \operatorname{Stab}_\Gamma(x_0)$. Let \(g\in N(z_0,V)\), then \(gz_0\in V\), hence $\Phi(gz_0)\in W$. By equivariance, $\Phi(gz_0)=g\Phi(z_0)=gx_0$. Moreover \(gx_0\in\mathcal O\). Thus $gx_0\in W\cap\mathcal O=\{x_0\}$ so \(gx_0=x_0\), i.e. $g\in\operatorname{Stab}_\Gamma(x_0)$.\end{proof}
Proposition~\ref{PropMinPolish} provides the dual fact that any minimal Polish space $X$ is acted upon minimally by some countable group $\Gamma\leq\Homeo(X)$. To realize $X$ as a URA in $\Sub(M)$, one therefore needs only a closed topological $\Gamma$-equivariant encoding $X\hookrightarrow\Sub(M)$; minimality of the $\Gamma$-orbit on $X$ then transports automatically. The next lemma constructs such an encoding inside an infinite tensor product, starting from a closed topological embedding $\beta:X\hookrightarrow\Sub(M)$ that need not yet be equivariant.
\begin{lemma}\label{LemInfiniteTens} Let $(M,\tau)$ be a finite von Neumann algebra with separable predual. Let $\Gamma\curvearrowright X$ be an action by homeomorphisms of a countable group $\Gamma$ on the topological space $X$. If $\beta:X\hookrightarrow\Sub(M)$ is a closed topological embedding then the map
$$\widetilde{\beta}:X\to\Sub\left(\bigotimes_{g\in\Gamma}(M,\tau)\right),\quad x\mapsto\bigotimes_{g\in\Gamma}(\beta(g^{-1}\cdot x),\tau)$$
is a closed topological embedding.\end{lemma}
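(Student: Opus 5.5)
Write $\mathcal M:=\bigotimes_{g\in\Gamma}(M,\tau)$ and, for $g\in\Gamma$, let $\iota_g:M\to\mathcal M$ be the embedding into the $g$-th tensor factor. The plan rests on one formula for the conditional expectation onto a tensor product of subalgebras. For $N_g\in\Sub(M)$, $g\in\Gamma$, put $N:=\bigotimes_g(N_g,\tau)$. Then $E_N=\bigotimes_g E_{N_g}$ on the algebraic tensor product of the $\iota_g(M)$. Concretely, for an elementary tensor $x=\iota_{g_1}(a_1)\cdots\iota_{g_k}(a_k)$ with distinct $g_i$ we have
\[
E_N(x)=\iota_{g_1}(E_{N_{g_1}}(a_1))\cdots\iota_{g_k}(E_{N_{g_k}}(a_k)).
\]
This holds because the right-hand side lies in $N$ and satisfies the bimodule and trace identities against elementary tensors of $N$, which span a $\|\cdot\|_2$-dense subspace of $N$.

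\textbf{Continuity.} Suppose $x_i\to x$ in $X$. Then for each $g$ the points $g^{-1}\cdot x_i$ converge to $g^{-1}\cdot x$, since $\Gamma$ acts by homeomorphisms, and so $\beta(g^{-1}x_i)\to\beta(g^{-1}x)$. For an elementary tensor as above, the $\|\cdot\|_2$-norm of a difference of products of commuting factors is bounded by a telescoping sum of terms $\|E_{N_{g_j}^i}(a_j)-E_{N_{g_j}}(a_j)\|_2\prod_{l\neq j}\|a_l\|_\infty$. Hence $E_{\widetilde\beta(x_i)}(x)\to E_{\widetilde\beta(x)}(x)$. The algebraic tensor product is $\|\cdot\|_2$-dense and every $E$ is a $\|\cdot\|_2$-contraction, so convergence extends to all $x\in\mathcal M$ by an $\varepsilon/3$ argument.

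\textbf{Injectivity and continuity of the inverse.} The key observation is that $E_{\widetilde\beta(x)}(\iota_e(a))=\iota_e(E_{\beta(x)}(a))$, and $\iota_e$ is a $\|\cdot\|_2$-isometry. So the map $\rho:\Sub(\mathcal M)\to\Sub(M)$, $Q\mapsto$ "the $e$-th slice", should recover $\beta(x)$. Rather than define $\rho$ globally, I would argue with nets. If $\widetilde\beta(x_i)\to\widetilde\beta(x)$, then $E_{\beta(x_i)}(a)\to E_{\beta(x)}(a)$ for all $a\in M$, that is $\beta(x_i)\to\beta(x)$. Since $\beta$ is an embedding, $x_i\to x$. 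Injectivity is the same computation applied to a constant net.

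\textbf{Closed image.} This is the main obstacle. Let $\widetilde\beta(x_i)\to Q$ in $\Sub(\mathcal M)$; we must show $Q=\widetilde\beta(x)$ for some $x$.
\begin{enumerate}
\item For each $g$ and $a\in M$, $E_{\widetilde\beta(x_i)}(\iota_g(a))=\iota_g(E_{\beta(g^{-1}x_i)}(a))$ converges. So $\beta(g^{-1}x_i)$ converges in $\Sub(M)$, because the limit map is again a conditional expectation onto a subalgebra. This is where one uses Proposition~\ref{prop:relative-compactness-subalgebras}'s argument that a pointwise limit of expectations is an expectation, or simply that $\iota_g^{-1}E_Q\iota_g$ is the expectation onto $\iota_g^{-1}(Q)$.
\item Since $\beta$ has closed image and is an embedding, the $g=e$ case gives $x_i\to x$ for some $x\in X$ with $\beta(x_i)\to\beta(x)$.
\item Continuity of $\widetilde\beta$ then gives $\widetilde\beta(x_i)\to\widetilde\beta(x)$.
\item By Hausdorffness of $\Sub(\mathcal M)$, $Q=\widetilde\beta(x)$.
\end{enumerate}
The subtle point is only the identification in step 1, which requires care with nets, since $\Sub(M)$ is Polish. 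Alternatively, we can sidestep step 1 entirely by working with the $e$-coordinate only.
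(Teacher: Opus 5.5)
Your proposal is correct and follows the paper's proof essentially step for step. Continuity comes from the elementary-tensor formula together with $\|\cdot\|_2$-density. Injectivity and continuity of the inverse come from testing on the $e$-th tensor factor. Closedness of the image comes from first getting convergence of $\beta(x_i)$ in $\Sub(M)$, then using closedness of $\beta(X)$, the embedding property of $\beta$, continuity of $\widetilde\beta$, and Hausdorffness.

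The only variation is in that first closedness step. The paper shows $E_Q(\iota_e(a))\in\iota_e(M)$ via $\LL^2(\iota_e(M))\cap\mathcal M=\iota_e(M)$, and then identifies the limit as $B=\{a\in M:\iota_e(a)\in Q\}$ by Tomiyama's theorem. Your main justification, the limit-of-expectations argument of Proposition~\ref{prop:relative-compactness-subalgebras}, runs verbatim for nets and is equally valid. Your ``simply'' alternative, however, silently requires exactly the paper's inclusion $E_Q(\iota_g(M))\subseteq\iota_g(M)$.
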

\begin{proof}
Recall that the von Neumann algebra $\mathcal M=\bigotimes_{g\in\Gamma}(M,\tau)$ satisfies $\mathcal{M}=\mathcal{A}''$, where $\mathcal{A}$ is the linear span of elementary tensors $ \bigotimes_{g\in\Gamma}a_g$, $a_g\in M$,  $a_g=1$ for all but finitely many $g$ with product trace $\tau_{\mathcal M}\left(\bigotimes_{g\in\Gamma}a_g\right)=\prod_{g\in\Gamma}\tau(a_g)$. Note that $\mathcal{A}$ is a unital $*$-subalgebra of $\mathcal{M}$ and it is $\Vert\cdot\Vert_2$ dense in $\mathcal{M}$.
Note that $\widetilde\beta(x)
        =
        \bigotimes_{g\in\Gamma}\beta(g^{-1}\cdot x)$ is the von Neumann subalgebra of \(\mathcal M\) generated by all elementary tensors $\bigotimes_{g\in\Gamma}b_g$, $b_g\in\beta(g^{-1}\cdot x)$, with  $b_g=1$ for all but finitely many $g$. Moreover, the trace-preserving conditional expectation onto \(\widetilde\beta(x)\) satisfies
\[
        E_{\widetilde\beta(x)}
        \left(
        \bigotimes_{g\in\Gamma}a_g
        \right)
        =
        \bigotimes_{g\in\Gamma}E_{\beta(g^{-1}\cdot x)}(a_g)
\]
for every elementary tensor \(\bigotimes_g a_g\), where \(a_g=1\) for all but
finitely many \(g\).
\medskip
\noindent We now prove continuity. Define the vector space
$$\mathcal{Z}:=\{a\in\mathcal{M}:X\to{\rm L}^2(\mathcal{M}),\,\,x\mapsto E_{\widetilde{\beta}(x)}(a)\xi_{\tau_{\mathcal{M}}}\text{ is continuous}\}$$
and note that $\mathcal{Z}$ is $\Vert\cdot\Vert_2$-closed in $\mathcal{M}$. Indeed, if $\Vert a-a_n\Vert_2\to 0$ with $a_n\in\mathcal{Z}$ for all $n$ and $a\in\mathcal{M}$ then, for all $x\in X$,
$$\Vert E_{\widetilde{\beta}(x)}(a)\xi_{\tau_{\mathcal{M}}}-E_{\widetilde{\beta}(x)}(a_n)\xi_{\tau_{\mathcal{M}}}\Vert \leq\Vert a-a_n\Vert_2\rightarrow 0.$$
hence, the sequence of continuous functions $(x\mapsto E_{\widetilde{\beta}(x)}(a_n)\xi_{\tau_{\mathcal{M}}})$ converges uniformly on $X$ to the function $(x\mapsto E_{\widetilde{\beta}(x)}(a)\xi_{\tau_{\mathcal{M}}})$ which implies that $a\in\mathcal{Z}$. Hence, to show continuity of $\widetilde{\beta}$ it suffices to show that $\mathcal{A}\subset\mathcal{Z}$ and since $\mathcal{Z}$ is a linear space, it suffices to show that it contains elementary tensors. Let $a=\bigotimes_{g\in\Gamma}a_g$ be an elementary tensor and consider the finite set $F:=\{g\in\Gamma:a_g\neq1\}$. Let $(x_i)_i$ be a net in $X$ converging to $x\in X$. The conditional expectation formula and the trace formula give that $\Vert E_{\widetilde\beta(x)}(a)\xi_{\tau_{\mathcal{M}}}-E_{\widetilde\beta(x_i)}(a)\xi_{\tau_{\mathcal{M}}}\Vert$ equals:
\begin{eqnarray*}
\Vert E_{\widetilde\beta(x)}(a)-E_{\widetilde\beta(x_i)}(a)\Vert_2
&=& \left\Vert\bigotimes_{g\in \Gamma}E_{\beta(g^{-1}\cdot x)}(a_g)-\bigotimes_{g\in \Gamma}E_{\beta(g^{-1}\cdot x_i)}(a_g)\right\Vert_2\\
&=& \left\Vert\bigotimes_{g\in F}E_{\beta(g^{-1}\cdot x)}(a_g)-\bigotimes_{g\in F}E_{\beta(g^{-1}\cdot x_i)}(a_g)\right\Vert_2\\
&\leq& C^{\vert F\vert-1}\sum_{g\in F}\left\Vert E_{\beta(g^{-1}\cdot x)}(a_g)-E_{\beta(g^{-1}\cdot x_i)}(a_g)\right\Vert_2,
\end{eqnarray*}
where $C:=\max\{1,\|a_g\|_2:g\in F\}$. Since the action is by homeomorphisms and \(\beta\) is continuous, we deduce that $a\in\mathcal{Z}$.
\medskip
\noindent Next, we prove that \(\widetilde\beta\) is injective and that its inverse on the image
is continuous. For \(a\in M\) define $a^{(e)}$ to be the elementary tensor $a^{(e)}:=\bigotimes_{g\in F} a_g\in\mathcal{A}$, where $a_e=a$ and $a_g=1$ for $g\neq e$. Note that $M\rightarrow\mathcal{M}$, $a\mapsto a^{(e)}$ is a normal isomorphism and $E_{\widetilde\beta(x)}(a^{(e)})=E_{\beta(x)}(a)^{(e)}$. Let $(x_i)_i$ be a net in $X$ and $x\in X$ and suppose that $\widetilde\beta(x_i)\to \widetilde\beta(x)$ in $\Sub(\mathcal{M})$. Testing on \(a^{(e)}\), we get
\[
        E_{\beta(x_i)}(a)^{(e)}
        =
        E_{\widetilde\beta(x_i)}(a^{(e)})
        \to
        E_{\widetilde\beta(x)}(a^{(e)})
        =
        E_{\beta(x)}(a)^{(e)}
\]
in \({\rm L}^2(\mathcal M)\). Hence $E_{\beta(x_i)}(a)\to E_{\beta(x)}(a)$ in ${\rm L}^2(M)$ for every \(a\in M\). Thus $\beta(x_i)\to \beta(x)$ in $\Sub(M)$. Since \(\beta\) is a topological embedding, this implies that $ x_i\to x$. In particular, \(\widetilde\beta\) is injective and its inverse on its image is
continuous.
\medskip
\noindent It remains to show that the image is closed. Let $(x_i)_i$ be a net in $X$ and $Q\in\Sub(\mathcal M)$ be such that $\widetilde\beta(x_i)\to Q$. We again restrict to the identity coordinate \(e\). For \(a\in M\),
\[
        E_{\widetilde\beta(x_i)}(a^{(e)})
        =
        E_{\beta(x_i)}(a)^{(e)}.
\]
The left-hand side converges in \(L^2(\mathcal M)\) to \(E_Q(a^{(e)})\). Let $M_e\subset\mathcal{M}$ be the von Neumann subalgebra consisting of elementary tensor $\bigotimes_{g\in\Gamma}a_g$ with $a_g=1$ for all $g\neq 1$. Since ${\rm L}^2(M_e)\subset {\rm L}^2(\mathcal M)$ is a closed subspace and each \(E_{\beta(x_i)}(a)^{(e)}\) belongs to \(L^2(M_e)\),
we get $E_Q(a^{(e)})\in L^2(M_e)$. But \(E_Q(a^{(e)})\in Q\subset\mathcal M\), and $L^2(M_e)\cap \mathcal M = M_e$
inside \(L^2(\mathcal M)\). Therefore $E_Q(a^{(e)})\in M_e$. Hence there is a unique element \(T(a)\in M\) such that $E_Q(a^{(e)})=T(a)^{(e)}$. Define $B:=\{a\in M:a^{(e)}\in Q\}$. Since $B^{(e)}=Q\cap M_e$, \(B\) is a von Neumann subalgebra of \(M\). We claim that \(T=E_B\). Indeed, since \(E_Q\) is linear, unital, positive, normal, and trace-preserving, the same is true for \(T\). Being positive and unital, $T$ is a contraction and, since it is trace-preserving, it suffices to show, by Tomiyama's Theorem, that $T(M)\subset B$ and $T(b)=b$ for all $b\in B$. For every \(a\in M\), $T(a)^{(e)}=E_Q(a^{(e)})\in Q$.
Since \(T(a)^{(e)}\in M_e\), we get
\[
        T(a)^{(e)}\in Q\cap M_e=B^{(e)}.
\]
Hence $T(a)\in B$ so $T(M)\subset B$. Let \(b\in B\), then \(b^{(e)}\in Q\), and since \(E_Q\) fixes \(Q\)
pointwise, $T(b)^{(e)}=E_Q(b^{(e)})=b^{(e)}$. Thus $T(b)=b$. This shows that $T=E_B$. Consequently, for every \(a\in M\), $E_{\beta(x_i)}(a) \to E_B(a)$ in ${\rm L}^2(M)$. Thus $ \beta(x_i)\to B$.
Since \(\beta(X)\) is closed in \(\Sub(M)\), there exists \(x\in X\) such that $B=\beta(x)$. Since \(\beta\) is a topological embedding, the convergence $\beta(x_i)\to \beta(x)$ implies $ x_i\to x$. 
By the continuity of \(\widetilde\beta\), already proved, $\widetilde\beta(x_i)\to \widetilde\beta(x)$. But also $\widetilde\beta(x_i)\to Q$ and since \(\Sub(\mathcal M)\) is Hausdorff for the Effros-Mar\'echal topology, limits are
unique. Therefore $Q=\widetilde\beta(x)$. So the image of \(\widetilde\beta\) is closed.\end{proof}
With all the pieces in place, we now prove the universality theorem for exotic URAs. We use the strategy of Theorem~\ref{ThmEmbedding} to encode an arbitrary minimal Polish space $X$ as a closed topological subset of $\Sub(L(\Lambda))$, transport this embedding through Lemma~\ref{LemInfiniteTens} to $\Sub(L(\Lambda^{(\mathbb F_\infty)}))$, and finally use a minimal action of $\mathbb F_\infty$ on $X$ (guaranteed by Proposition~\ref{PropMinPolish}) to make the image a URA.
\begin{theorem}
\label{UniversalWreath}
Let \(\Lambda\) be a countably infinite abelian group. For every minimal Polish space \(X\), there exists an exotic
URA
$$\mathcal U_X\subset \Sub({\rm L}(\Lambda\wr \mathbb F_\infty))$$
such that $\Ucal_X$ is homeomorphic to $X$.
\end{theorem}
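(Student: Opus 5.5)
The plan follows the strategy announced just before the statement. First, by Proposition~\ref{PropMinPolish} (applied to the second countable space $X$), there is a minimal action of a countable group on $X$. Take such a group $\Gamma_0\le\Homeo(X)$, choose a surjection $\mathbb F_\infty\twoheadrightarrow\Gamma_0$ (possible since $\Gamma_0$ is countable), and let $\mathbb F_\infty\curvearrowright X$ be the pulled-back action. This action is still minimal by homeomorphisms. Next, since $\Lambda$ is countably infinite abelian, ${\rm L}(\Lambda)\cong{\rm L}^\infty(\widehat\Lambda)$ is diffuse with separable predual. Theorem~\ref{ThmEmbedding}, implication $(1)\Rightarrow(2)$, therefore gives a closed topological embedding $\beta:X\hookrightarrow\Sub({\rm L}(\Lambda))$.

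Apply Lemma~\ref{LemInfiniteTens} with $\Gamma=\mathbb F_\infty$ and $M={\rm L}(\Lambda)$. This gives a closed topological embedding
$$\widetilde\beta:X\to\Sub\Big(\bigotimes_{g\in\mathbb F_\infty}{\rm L}(\Lambda)\Big)=\Sub({\rm L}(\Lambda^{(\mathbb F_\infty)})).$$
The Bernoulli shift $\sigma$ of $\mathbb F_\infty$ permutes the tensor factors, and $\widetilde\beta$ is equivariant for it:
$$\sigma_h\big(\widetilde\beta(x)\big)=\bigotimes_g\beta\big((h^{-1}g)^{-1}x\big)=\widetilde\beta(h\cdot x).$$
Now pass to $\mathcal M={\rm L}(\Lambda\wr\mathbb F_\infty)={\rm L}(\Lambda^{(\mathbb F_\infty)})\rtimes\mathbb F_\infty$, with $\mathbb F_\infty$ acting on it by conjugation by $\lambda_h$. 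This action restricts to $\sigma$ on $P:={\rm L}(\Lambda^{(\mathbb F_\infty)})$. The inclusion $\iota:\Sub(P)\to\Sub(\mathcal M)$ is continuous, because $E^{\mathcal M}_N=E^P_N\circ E_P$. Its inverse on the image is also continuous, because $E^P_N=E^{\mathcal M}_N|_P$. So $\iota$ is a topological embedding.

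Its image is closed. If $N_i\to Q$ with $N_i\subseteq P$, then $E_Q(x)=\lim E_{N_i}(E_P(x))$. Hence $E_Q(x)=E_Q(E_P(x))$ and $E_Q(x)\in{\rm L}^2(P)\cap\mathcal M=P$. For $q\in Q$ this gives $q=E_Q(q)\in P$, so $Q\subseteq P$. Therefore $\Ucal_X:=\iota\circ\widetilde\beta(X)$ is a closed, $\mathbb F_\infty$-invariant subset of $\Sub(\mathcal M)$ homeomorphic to $X$. Its closed invariant subsets correspond to those of $X$, so minimality of $\mathbb F_\infty\curvearrowright X$ makes $\Ucal_X$ a URA.

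The remaining step, and the main obstacle, is exoticity. Suppose $\Ucal_X=\psi(Y)$ for a URS $Y$ of $\Lambda\wr\mathbb F_\infty$. If $X$ is non-compact, $\Ucal_X$ is non-compact while $\psi(Y)$ is compact, and we are done. In general, I would use the specific form of the members. Each $\widetilde\beta(x)$ is a tensor product of the two-dimensional algebras $W^*(p)$ produced by Lemma~\ref{LemMarkedProjectionsToSubalgebras}, where $\tau(p)=r$ is irrational-free but lies in $(0,\tfrac12)$. Such a member is not of the form ${\rm L}(\Lambda')$. Indeed, if it were, then each factor $\widetilde\beta(x)\cap M_e=W^*(p)$ would equal ${\rm L}(\Lambda'\cap\Lambda_e)$. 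But a finite-dimensional group von Neumann subalgebra ${\rm L}(K)$ comes from a finite group $K$, and its minimal central projections have traces that are multiples of $1/|K|$. A two-dimensional one would require $|K|=2$ and projections of trace $\tfrac12$, contradicting $\tau(p)=r<\tfrac12$.

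Checking that $\widetilde\beta(x)\cap M_e$ computes correctly, namely as $\beta(x)^{(e)}$ and as ${\rm L}(\Lambda'\cap\Lambda)$, is the delicate point. I expect to handle it via the conditional expectation formulas $E_{\widetilde\beta(x)}(a^{(e)})=E_{\beta(x)}(a)^{(e)}$ and $E_{{\rm L}(\Lambda')}(\lambda_g)=1_{\Lambda'}(g)\lambda_g$.
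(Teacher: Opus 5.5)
Your construction is the paper's: the minimal $\mathbb F_\infty$-action pulled back from Proposition~\ref{PropMinPolish}, the embedding $\beta:x\mapsto W^*(p_x)$ with $\tau(p_x)=r<\frac12$, the twisted tensor product of Lemma~\ref{LemInfiniteTens}, and exoticity read off at the identity coordinate. To guarantee that the members have this form, build $\beta$ directly from Lemmas~\ref{LemClosedBaireMarkedProjections} and~\ref{LemMarkedProjectionsToSubalgebras}, as the paper does; the statement of Theorem~\ref{ThmEmbedding} alone does not give it.

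Your proof that $\Sub(P)\hookrightarrow\Sub(\mathcal M)$ is a closed embedding is correct. It fills in a step the paper uses tacitly. Your intersection version of exoticity is equivalent to the paper's use of $E_{{\rm L}(\Lambda_e)}$. It rests on two facts: $\widetilde\beta(x)\cap M_e=\beta(x)^{(e)}$, which follows from $E_{\widetilde\beta(x)}(a^{(e)})=E_{\beta(x)}(a)^{(e)}$, and ${\rm L}(\Lambda')\cap{\rm L}(\Lambda_e)={\rm L}(\Lambda'\cap\Lambda_e)$, by Fourier coefficients.

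There is one genuine omission. You only show that $\Ucal_X$ is minimal and invariant for conjugation by the subgroup $\mathbb F_\infty$. A URA of ${\rm L}(\Lambda\wr\mathbb F_\infty)$ is taken with respect to conjugation by the whole group $\Gamma=\Lambda^{(\mathbb F_\infty)}\rtimes\mathbb F_\infty$. Exoticity is likewise measured against URSs of $\Gamma$. You never check invariance under $\Ad(\lambda_a)$ for $a\in\Lambda^{(\mathbb F_\infty)}$.

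This is exactly where the hypothesis that $\Lambda$ is abelian enters. Your proposal never really uses it, since ${\rm L}(\Lambda)$ is diffuse for every infinite $\Lambda$. The fix is one line. Because $P={\rm L}(\Lambda^{(\mathbb F_\infty)})$ is abelian and $\lambda_hN\lambda_h^*\subseteq P$ for $N\in\Sub(P)$, you get $\lambda_{ah}N\lambda_{ah}^*=\lambda_hN\lambda_h^*$ for all $a\in\Lambda^{(\mathbb F_\infty)}$ and $h\in\mathbb F_\infty$. So the $\Gamma$-orbits in $\Ucal_X$ are the $\mathbb F_\infty$-orbits, and $\Ucal_X$ is $\Gamma$-invariant and $\Gamma$-minimal. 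Without commutativity there is no reason for this invariance: conjugating $W^*(p_x)^{(e)}$ by a unitary of ${\rm L}(\Lambda)^{(e)}$ need not land back in the image of $\widetilde\beta$. With this step added, your argument is complete.
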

\begin{proof}
Write $\Gamma:=\Lambda\wr \mathbb F_\infty$ and let \(X\) be a minimal Polish space. By Proposition~\ref{PropMinPolish}, there exists
a countable group $\Gamma_X\leq \Homeo(X)$ acting minimally on $X$. Choose a surjective homomorphism $\pi:\mathbb F_\infty\to \Gamma_X$. Thus \(\mathbb F_\infty\) acts minimally on \(X\) by $g\cdot x:=\pi(g)x$.
\medskip
\noindent Since \(\Lambda\) is infinite, the finite von Neumann algebra \({\rm L}(\Lambda)\) is
diffuse hence,  there exist a projection \(e_0\in{\rm L}(\Lambda)\) with $0<\tau(e_0)<\frac12$. Fix a $r\in(\tau(e_0),1/2)$. Combining Lemma \ref{LemClosedBaireMarkedProjections} and Lemma \ref{LemMarkedProjectionsToSubalgebras} we get a closed topological embedding $\theta:\mathbb R^{\mathbb N}\hookrightarrow \Sub({\rm L}(\Lambda))$. By Kechris \cite[Theorem 4.17]{kechrisClassicaldescriptiveset1995} there exists a closed topological embedding $X\hookrightarrow \mathbb R^{\mathbb N}$ and by compositing with $\theta$ we get a closed topological embedding $\beta:X\hookrightarrow \Sub({\rm L}(\Lambda))$, $x\mapsto B_x:=W^*(p_x)=\C p_x\oplus\C(1-p_x)$ where $p_x\in\mathcal{P}(e_0,r)$ for all $x\in X$.
\medskip
\noindent We use the standard identification:
$$\left({\rm L}(\Lambda^{(\mathbb{F}_\infty)}),\tau_{\Lambda^{(\mathbb{F}_\infty)}}\right)\simeq\bigotimes_{g\in\mathbb{F}_\infty}\left({\rm L}(\Lambda),\tau_\Lambda\right)$$
Under this identification, the trace-preserving action $\mathbb{F}_\infty\curvearrowright {\rm L}(\Lambda^{(\mathbb{F}_\infty)})$ becomes for $a=\bigotimes_{g\in\mathbb{F}_\infty} a_g$ an elementary tensor, with $a_g\in{\rm L}(\Lambda)$ such that $a_g=1$ for all but finitely many $g\in\mathbb{F}_\infty$ and for $h\in \mathbb{F}_\infty$, $h\cdot a:=\bigotimes_{g\in\mathbb{F}_\infty} a_{h^{-1}g}$.
For $x\in X$ define $M_x:=\bigotimes_{g\in\mathbb{F}_\infty}\left(B_{g^{-1}\cdot x},\tau_\Lambda\right)\subset{\rm L}(\Lambda^{(\mathbb{F}_\infty)})\subset {\rm L}(\Gamma)$. It follows from Lemma \ref{LemInfiniteTens} that the map $\iota:X\to{\rm Sub}({\rm L}(\Gamma))$, $x\mapsto M_x$, is a closed topological embedding and, using the identification above, we see that $\iota$ is $\mathbb{F}_\infty$-equivariant  where the $\mathbb{F}_\infty$-action on ${\rm Sub}({\rm L}(\Gamma))$ is given by restriction of the $\Gamma$ action to the subgroup $\mathbb{F}_\infty\leq\Gamma$. Then, the set $\mathcal{U}_X:=\iota(X)=\{M_x:x\in X\}$ is a minimal closed $\mathbb{F}_\infty$-invariant subset of ${\rm L}(\Gamma)$ (since the action $\mathbb{F}_\infty\curvearrowright X$ is minimal). Since $\Lambda^{(\mathbb{F}_\infty)}$ is abelian and $\mathcal{U}_X\subset \Sub({\rm L}(\Lambda^{(\mathbb{F}_\infty)}))$, the set $\mathcal{U}_X$ is actually $\Gamma$-invariant. Hence, $\mathcal{U}_X\subset \Sub({\rm L}(\Gamma))$ is a URA, and it is homeomorphic to $X$. Let us show that $\mathcal{U}_X$ is exotic. Fix \(x\in X\) and suppose, by
contradiction, that $M_x={\rm L}(H)$ for some subgroup \(H\leq\Gamma\). Since $M_x\subset{\rm L}\bigl(\Lambda^{(\mathbb F_\infty)}\bigr)$ we must have $H\leq \Lambda^{(\mathbb F_\infty)}$.
Let $\Lambda_e\leq \Lambda^{(\mathbb F_\infty)}$ be the subgroup of $\Lambda^{(\mathbb{F}_\infty)}$ consisting of maps $a:\mathbb{F}_\infty\to\Lambda$ satisfying $a(g)=e$ for all $g\neq e$. One has $E_{{\rm L}(\Lambda_e)}({\rm L}(H))={\rm L}(H\cap \Lambda_e)$ and, using the notations of the proof of Lemma \ref{LemInfiniteTens}, $E_{{\rm L}(\Lambda_e)}(M_x)=B_x^{(e)}$. Since $M_x={\rm L}(H)$ we deduce that $B_x^{(e)}={\rm L}(H\cap \Lambda_e)={\rm L}(H_e)^{(e)}$, where $H_e:=\{\gamma\in\Lambda:a_\gamma\in H\}\leq\Lambda$, where $a_\gamma(e)=\gamma$ and $\gamma(g)=e$ for all $g\neq e$. By injectivity of the map ${\rm L}(\Lambda)\to{\rm L}(\Lambda^{(\mathbb{F}_\infty)})$, $x\mapsto x^{(e)}$, we deduce that $B_x={\rm L}(H_e)$. This is a contradiction since for all $x\in X$ and all $K\in\Sub(\Lambda)$ one has \(B_x\neq{\rm L}(K)\). Indeed, $B_x=\mathbb C p_x\oplus \mathbb C(1-p_x)$ is two-dimensional, and its two non-zero minimal projections have traces $r$ and $1-r$ and $r<\frac12$. However, if ${\rm L}(K)$ is two-dimensional, then \(K\simeq\mathbb Z/2\mathbb Z\), and the
two non-zero minimal projections of \({\rm L}(K)\) both have trace \(1/2\).
\end{proof}
This completes the proof of Theorem~\ref{thm:B}. The contrast with the URS world is sharp: while compactness of $\Sub(\Gamma)$ forces every URS to be a compact metric space, the failure of compactness in $\Sub({\rm L}(\Gamma))$ --- precisely characterized by Theorem~\ref{ThmEmbedding} --- allows URAs to inherit every minimal Polish topology. %%%%%%%%%%%%%%%%%%%%%%%%%%%%%%%%%%%%%%%%%%%%%%%%%%%%%%%%%%%%%%%%%%%%%%%%%%%%%%%%%%%%%%%%%%%%%%%%%%%%%%%%
\section{Confined subalgebras, simplicity and crossed products}
\label{sec:crossedproduct}
%%%%%%%%%%%%%%%%%%%%%%%%%%%%%%%%%%%%%%%%%%%%%%%%%%%%%%%%%%%%%%%%%%%%%%%%%%%%%%%%%%%%%%%%%%%%%%%%%%%%%%%%
Having developed URAs in $\Sub(M)$ in Section~\ref{sec:URA} for an arbitrary trace-preserving action, we now specialize the setup to crossed products, where the C*-simplicity question naturally lives. Given a trace-preserving action $\Gamma\curvearrowright (M,\tau_M)$ on a finite von Neumann algebra, the natural ambient algebra is the crossed product $M\rtimes\Gamma$, and the appropriate notion replacing confinement of subgroups is relative $M$-confinement of intermediate subalgebras. We first fix the setup.
Let $(M,\tau_M)$ be a finite von Neumann algebra with separable predual and faithful normal trace, and let $\alpha:\Gamma\curvearrowright (M,\tau_M)$ be a trace-preserving action of a countable discrete group. We view $M\subset\Lcal({\rm L}^2(M))$ in standard form, with cyclic vector $\xi_\tau$, and write $u_\alpha:\Gamma\rightarrow\mathcal{U}({\rm L}^2(M))$ for the Koopman representation determined by $u_\alpha(g)(x\xi_\tau)=\alpha_g(x)\xi_\tau$. The crossed product $M\rtimes\Gamma$ is the von Neumann algebra acting on $\ell^2(\Gamma,{\rm L}^2(M))$ generated by $\pi(M)\cup\{u_g:g\in\Gamma\}$, where $\pi$ is the diagonal representation $[\pi(x)\xi](h)=x\xi(h)$ and $[u_g\xi](h):=u_\alpha(g)\xi(g^{-1}h)$. We will use the standard properties:
\begin{itemize}
\item $u_g\pi(x)u_g^*=\pi(\alpha_g(x))$ for all $g\in\Gamma$ and $x\in M$;
\item there is a faithful normal $M$-bimodular conditional expectation $E_M:M\rtimes\Gamma\rightarrow M$ with $E_M(\pi(x)u_g^*)=\delta_{g,e}x$.
\end{itemize}
Then $M\rtimes\Gamma$ is a finite von Neumann algebra with faithful normal trace $\tau:=\tau_M\circ E_M$, and the triple $(\ell^2(\Gamma,{\rm L}^2(M)),\id,\Omega)$ with $\Omega(h):=\delta_{h,e}\xi_\tau$ is an explicit GNS construction for $\tau$. We will routinely identify $M$ with $\pi(M)\subset M\rtimes\Gamma$.

\noindent We record for later use the value of the conditional expectation onto
a crossed product by a subgroup. It is the identity which makes the Chabauty
topology on $\Sub(\Gamma)$ interact with the Effros--Mar\'echal topology on
$\Sub(M\rtimes\Gamma)$.
\medskip

\begin{remark}\label{RmkEMrtimesLambda}
For any subgroup $\Lambda\leq\Gamma$, the conditional expectation onto $M\rtimes\Lambda\subset M\rtimes\Gamma$ satisfies $E_{M\rtimes\Lambda}(u_s)=\mathbf{1}_\Lambda(s)\,u_s$ for all $s\in\Gamma$.
\end{remark}
To test whether a net of intermediate subalgebras of $M\rtimes\Gamma$ containing $M$ converges to $M$ in the Effros-Mar\'echal topology, it is enough to test on the canonical unitaries $u_g$. This reduction is the content of the next lemma, which will be applied repeatedly throughout the section.
\begin{lemma}\label{LemExpectationConvergenceFromBoundary}
Let $\Gamma \curvearrowright (M,\tau_M)$ be a trace-preserving action on a finite
von Neumann algebra and let $(N_i)_i$ be a net of von Neumann subalgebras of $M\rtimes\Gamma$ containing $M$. The following are equivalent.
\begin{enumerate}
\item $\|E_{N_i}(u_g)\|_2\longrightarrow 0$ $\forall g\in \Gamma\setminus\{e\}$.
\item $N_i\rightarrow M$.
\end{enumerate}
\end{lemma}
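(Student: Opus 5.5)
The implication $(2)\Rightarrow(1)$ is immediate. By definition of the Effros--Mar\'echal topology, $N_i\to M$ gives $E_{N_i}(u_g)\to E_M(u_g)$ in ${\rm L}^2$ for every $g\in\Gamma$. For $g\neq e$ we have $E_M(u_g)=0$, by the formula $E_M(\pi(x)u_g^*)=\delta_{g,e}x$ applied with $x=1$ and $g^{-1}$. Hence $\|E_{N_i}(u_g)\|_2\to0$.

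For $(1)\Rightarrow(2)$, I will show that $E_{N_i}(y)\to E_M(y)$ in ${\rm L}^2$ for every $y\in M\rtimes\Gamma$. The argument uses a closed-subspace trick of the same kind as in the proof of Lemma~\ref{LemInfiniteTens}. Define
$$\mathcal{Z}:=\{y\in M\rtimes\Gamma:\ \|E_{N_i}(y)-E_M(y)\|_2\to0\}.$$
This is a linear subspace. It is also $\|\cdot\|_2$-closed, because every $E_{N_i}$ and $E_M$ is an ${\rm L}^2$-contraction. Indeed, if $\|y-y_n\|_2<\varepsilon$ with $y_n\in\mathcal Z$, then $\|E_{N_i}(y)-E_M(y)\|_2\le 2\varepsilon+\|E_{N_i}(y_n)-E_M(y_n)\|_2$ uniformly in $i$.

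Since the span of $\{xu_g:x\in M,\ g\in\Gamma\}$ is a $*$-subalgebra that is $\|\cdot\|_2$-dense in $M\rtimes\Gamma$, it suffices to show $xu_g\in\mathcal Z$ for all $x\in M$ and $g\in\Gamma$. Here I use the hypothesis $M\subseteq N_i$: it makes $E_{N_i}$ $M$-bimodular, so $E_{N_i}(xu_g)=xE_{N_i}(u_g)$. It follows that
$$\|E_{N_i}(xu_g)-E_M(xu_g)\|_2=\|xE_{N_i}(u_g)-\delta_{g,e}x\|_2.$$
For $g=e$ this is $0$, because $u_e=1\in N_i$. For $g\neq e$ it is at most $\|x\|_\infty\|E_{N_i}(u_g)\|_2\to0$, using $\|xa\|_2\le\|x\|_\infty\|a\|_2$. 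Thus $\mathcal Z=M\rtimes\Gamma$, which is exactly convergence $N_i\to M$ in the Effros--Mar\'echal topology.

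The proof has no real obstacle. The only point needing care is to invoke the $M$-bimodularity of $E_{N_i}$, which is exactly where $M\subseteq N_i$ is used, and to note that the density and closedness step works uniformly for nets as well as sequences.
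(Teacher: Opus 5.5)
Your proof is correct and follows essentially the same route as the paper's. For $(1)\Rightarrow(2)$, the paper also shows that the set of $x\in M\rtimes\Gamma$ with $\|E_{N_i}(x)-E_M(x)\|_2\to 0$ is a $\|\cdot\|_2$-closed linear subspace containing every $au_g$, using $M$-bimodularity of $E_{N_i}$ and the bound $\|a(E_{N_i}(u_g)-\delta_{g,e}1)\|_2\leq\|a\|\,\|E_{N_i}(u_g)-\delta_{g,e}1\|_2$; and it treats $(2)\Rightarrow(1)$ as immediate from $E_M(u_g)=0$ for $g\neq e$.
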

\begin{proof}
$(1)\Rightarrow(2)$. It suffices to show that $\Vert E_{N_i}(x)-E_M(x)\Vert_2\rightarrow 0$ for all $x\in M\rtimes\Gamma$. Let $X:=\{x\in M\rtimes\Gamma:\Vert E_{N_i}(x)-E_M(x)\Vert_2\rightarrow 0\}$. Then $X$ is clearly a linear subspace of $M\rtimes\Gamma$ and it contains the set $\{au_g:g\in\Gamma,a\in M\}$ since:
$$\Vert E_{N_i}(au_g)-E_M(au_g)\Vert_2=\Vert a(E_{N_i}(u_g)-E_M(u_g)\Vert_2\leq\Vert a\Vert\,\Vert E_{N_i}(u_g)-\delta_{g,e}1\Vert_2\rightarrow 0.$$
Hence, it suffices to show that $X$ is $\Vert\cdot\Vert_2$-closed. Let $(x_n)_n$ be a sequence in $X$ and $x\in M\rtimes\Gamma$ be such that $\Vert x_n-x\Vert_2\rightarrow 0$. Fix $\varepsilon>0$ and let $n_0\in\N$ be such that $\Vert x_{n_0}-x\Vert_2<\frac{\varepsilon}{4}$. Since $x_{n_0}\in X$, there exists $i_0$ such that, for all $i\geq i_0$ $\Vert E_{N_i}(x_{n_0})-E_M(x_{n_0})\Vert_2<\frac{\varepsilon}{2}$. Then, since $E_N$ is contractive in $\Vert\cdot\Vert_2$-norm for any subalgebra $N$ one has, for all $i\geq i_0$,
\begin{align*}
 \Vert E_{N_i}(x)-E_M(x)\Vert_2&\leq \Vert E_{N_i}(x-x_{n_0})\Vert_2+\Vert E_{N_i}(x_{n_0})-E_M(x_{n_0})\Vert_2\\&+\Vert E_M(x_{n_0}-x)\Vert_2
 \\&\leq 2\Vert x_{n_0}-x\Vert_2+\Vert E_{N_i}(x_{n_0})-E_M(x_{n_0})\Vert_2<\varepsilon.
 \end{align*}
\medskip
\noindent $(2)\Rightarrow(1)$. This is obvious since $E_M(u_g)=0$ $\forall g\in \Gamma\setminus\{e\}$.
\end{proof}
The next lemma transports continuity from the Chabauty topology on $\Sub(\Gamma)$ to the Effros-Mar\'echal topology on $\Sub(M\rtimes\Gamma)$. It is what makes the comparison between confined subgroups and confined subalgebras work seamlessly.
\begin{lemma}\label{LemmaChabauty} The map $\Sub(\Gamma)\to\Sub(M\rtimes\Gamma)$, $\Lambda\mapsto M\rtimes\Lambda$ is continuous.\end{lemma}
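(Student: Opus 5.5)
We prove continuity by fixing a convergent net $\Lambda_i\to\Lambda$ in $\Sub(\Gamma)$ and showing that $E_{M\rtimes\Lambda_i}(x)\to E_{M\rtimes\Lambda}(x)$ in $\LL^2(M\rtimes\Gamma)$ for every $x\in M\rtimes\Gamma$. We use the same closure device as in the proof of Lemma~\ref{LemExpectationConvergenceFromBoundary}. Let
$$X:=\{x\in M\rtimes\Gamma:\Vert E_{M\rtimes\Lambda_i}(x)-E_{M\rtimes\Lambda}(x)\Vert_2\to0\}.$$
This is a linear subspace. It is $\Vert\cdot\Vert_2$-closed, by the $\varepsilon/3$ argument based on the $\Vert\cdot\Vert_2$-contractivity of all conditional expectations, exactly as written there. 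Since the linear span of $\{au_g:a\in M,\ g\in\Gamma\}$ is $\Vert\cdot\Vert_2$-dense in $M\rtimes\Gamma$, it is enough to show that $X$ contains every $au_g$.

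Each $M\rtimes\Lambda_i$ contains $M$, so the expectations are $M$-bimodular. Combined with Remark~\ref{RmkEMrtimesLambda}, this gives
$$E_{M\rtimes\Lambda_i}(au_g)=aE_{M\rtimes\Lambda_i}(u_g)=\mathbf 1_{\Lambda_i}(g)\,au_g,$$
and the same formula holds with $\Lambda$ in place of $\Lambda_i$. Hence
$$\Vert E_{M\rtimes\Lambda_i}(au_g)-E_{M\rtimes\Lambda}(au_g)\Vert_2=|\mathbf 1_{\Lambda_i}(g)-\mathbf 1_\Lambda(g)|\,\Vert a\Vert_2 .$$
By definition of the Chabauty topology, $\mathbf 1_{\Lambda_i}(g)\to\mathbf 1_\Lambda(g)$ in $\{0,1\}$. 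The right-hand side is therefore eventually $0$, and $au_g\in X$.

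This gives $E_{M\rtimes\Lambda_i}(x)\xi\to E_{M\rtimes\Lambda}(x)\xi$ in norm for every $x$, where $\xi$ is the cyclic vector $\Omega$. By the definition of the Effros--Mar\'echal topology recalled in Section~\ref{sec:preliminaries}, this is exactly $M\rtimes\Lambda_i\to M\rtimes\Lambda$.

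The one point that needs a justification beyond the cited remark is $M$-bimodularity, $E_{M\rtimes\Lambda}(au_g)=aE_{M\rtimes\Lambda}(u_g)$. This holds because $E_{M\rtimes\Lambda}$ is the trace-preserving conditional expectation onto an algebra containing $M$. If one wants to avoid it, Remark~\ref{RmkEMrtimesLambda} can be checked directly on the basis $au_g$: the map $\sum a_gu_g\mapsto\sum_{g\in\Lambda}a_gu_g$ is the orthogonal projection of $\ell^2(\Gamma,\LL^2(M))$ onto $\ell^2(\Lambda,\LL^2(M))$ restricted to $M\rtimes\Gamma$. No other difficulties are expected.
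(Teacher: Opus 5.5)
Your proposal is correct and follows the paper's proof essentially verbatim: the same $\Vert\cdot\Vert_2$-closed linear subspace $X$, the verification on the elements $au_g$ via Remark~\ref{RmkEMrtimesLambda} together with $M$-bimodularity, and the closure argument borrowed from Lemma~\ref{LemExpectationConvergenceFromBoundary}. The only cosmetic differences are that you work with nets rather than sequences, and you compute the norm exactly as $|\mathbf 1_{\Lambda_i}(g)-\mathbf 1_\Lambda(g)|\,\Vert a\Vert_2$ instead of bounding it by $\Vert a\Vert$ times that quantity.
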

\begin{proof} Let $\Lambda_n\rightarrow\Lambda$ in $\Sub(\Gamma)$.  It suffices to show that $\Vert E_{M\rtimes\Lambda_n}(x)-E_{M\rtimes\Lambda}(x)\Vert_2\rightarrow 0$ for all $x\in M\rtimes\Gamma$. Let $X:=\{x\in M\rtimes\Gamma:\Vert E_{M\rtimes\Lambda_n}(x)-E_{M\rtimes\Lambda}(x)\Vert_2\rightarrow 0\}$. Then $X$ is a linear subspace of $M\rtimes\Gamma$ and it contains the set $\{au_g:g\in\Gamma,a\in M\}$ since:
\begin{eqnarray*}
\Vert E_{M\rtimes\Lambda_n}(au_g)-E_{M\rtimes\Lambda}(au_g)\Vert_2&=&\Vert a(E_{M\rtimes\Lambda_n}(u_g)-E_{M\rtimes\Lambda}(u_g))\Vert_2\\
&\leq&\Vert a\Vert\,\Vert E_{M\rtimes\Lambda_n}(u_g)- E_{M\rtimes\Lambda}(u_g)\Vert_2\\
&=&\Vert a\Vert\,\Vert 1_{\Lambda_n}(g)u_g- 1_\Lambda(g)u_g\Vert_2\\
&=&\Vert a\Vert\,\vert 1_{\Lambda_n}(g)- 1_\Lambda(g)\vert\to 0.\\
\end{eqnarray*}
Hence, it suffices to show that $X$ is $\Vert\cdot\Vert_2$-closed and it can be proved as in Lemma \ref{LemExpectationConvergenceFromBoundary}.\end{proof}
We consider the action $\Gamma\curvearrowright M\rtimes\Gamma$ given by $g\cdot x:=u_gxu_g^*$, for $x\in M\rtimes\Gamma$ and $g\in\Gamma$ and the associated action $\Gamma\curvearrowright\Sub(M\rtimes\Gamma)$ given by $g\cdot N:=u_gNu_g^*$ for $N\in\Sub(M\rtimes\Gamma)$ and $g\in\Gamma$. We are now in a position to introduce the central notion of this section.
\begin{definition}\label{DefRelativeConfinedSubalg}
Let $\Gamma \curvearrowright (M,\tau_M)$ be a trace-preserving action on a finite
von Neumann algebra. A subalgebra $N\subset M\rtimes\Gamma$ containing $M$ is \textbf{$M$-confined} if
\[
M\notin \overline{\{u_gNu_g^*:g\in \Gamma\}}
\]
where the closure is taken in the Effros-Mar\'echal topology on $\Sub(M\rtimes\Gamma)$.
\end{definition}
\noindent This definition above is motivated by \cite[Definition 1.1]{AJ} which was itself motivated by the notion of confined subgroup, recalled below.
\begin{definition}
Let $\Gamma$ be a countable discrete group. A subgroup $\Lambda$ is called \textbf{confined} if the trivial subgroup is not in the closure of $\{g\Lambda g^{-1}:g\in\Gamma\}$ in the Chabauty topology on the space of subgroups.
\end{definition}
\noindent These two notions are related as follows.
\begin{proposition}\label{PropConfinedSubgroup}
Let $\Gamma \curvearrowright (M,\tau_M)$ be a trace-preserving action on a finite
von Neumann algebra and $\Lambda$ a subgroup of $\Gamma$. The following are equivalent.
\begin{enumerate}
\item $\Lambda$ is a confined subgroup of $\Gamma$
\item The subalgebra $M\rtimes\Lambda\subset M\rtimes\Gamma$ is $M$-confined.
\end{enumerate}
\end{proposition}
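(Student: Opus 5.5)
We will prove both directions by contraposition, comparing the two orbit closures through the identity $u_g(M\rtimes\Lambda)u_g^*=M\rtimes g\Lambda g^{-1}$. This identity holds because $u_g M u_g^*=M$ and $u_gu_hu_g^*=u_{ghg^{-1}}$. Consequently the $\Gamma$-orbit of $M\rtimes\Lambda$ in $\Sub(M\rtimes\Gamma)$ is the image of the conjugation orbit of $\Lambda$ in $\Sub(\Gamma)$ under the map $\Lambda\mapsto M\rtimes\Lambda$.

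\emph{$(2)\Rightarrow(1)$.} Suppose $\Lambda$ is not confined, so there is a net $g_i\Lambda g_i^{-1}\to\{e\}$ in the Chabauty topology. Lemma~\ref{LemmaChabauty} gives
\[
u_{g_i}(M\rtimes\Lambda)u_{g_i}^*=M\rtimes g_i\Lambda g_i^{-1}\longrightarrow M\rtimes\{e\}=M .
\]
Hence $M$ lies in the Effros--Mar\'echal orbit closure, so $M\rtimes\Lambda$ is not $M$-confined.

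\emph{$(1)\Rightarrow(2)$.} Suppose $M\rtimes\Lambda$ is not $M$-confined, so there is a net $g_i$ with $N_i:=M\rtimes g_i\Lambda g_i^{-1}\to M$. Each $N_i$ contains $M$, so Lemma~\ref{LemExpectationConvergenceFromBoundary} gives $\Vert E_{N_i}(u_s)\Vert_2\to0$ for every $s\neq e$. By Remark~\ref{RmkEMrtimesLambda},
\[
\Vert E_{N_i}(u_s)\Vert_2=1_{g_i\Lambda g_i^{-1}}(s),
\]
so $1_{g_i\Lambda g_i^{-1}}(s)\to0$ for all $s\neq e$. This is exactly the statement $g_i\Lambda g_i^{-1}\to\{e\}$ in $\Sub(\Gamma)$, so $\Lambda$ is not confined.

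No step is a real obstacle; the only point needing care is to work with nets, or with sequences by metrizability, consistently in both topologies. Both $\Sub(\Gamma)$ and $\Sub(M\rtimes\Gamma)$ are metrizable (using the separable predual and countability of $\Gamma$), so sequences suffice anyway.
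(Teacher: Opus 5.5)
Your proposal is correct and follows essentially the same route as the paper. You use Lemma~\ref{LemmaChabauty} for one direction, and for the other you read off $1_{g_i\Lambda g_i^{-1}}(s)=\Vert E_{M\rtimes g_i\Lambda g_i^{-1}}(u_s)\Vert_2\to\Vert E_M(u_s)\Vert_2=\delta_{s,e}$ via Remark~\ref{RmkEMrtimesLambda}. The paper does this last step directly from the Effros--Mar\'echal convergence rather than through Lemma~\ref{LemExpectationConvergenceFromBoundary}, which is only a cosmetic difference.
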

\begin{proof}
\noindent$(1)\Rightarrow(2)$. Assume $M\rtimes\Lambda\subset M\rtimes\Gamma$ is not $M$-confined. Then there exists
a net $(g_i)_i$ in $\Gamma$ such that $u_{g_i}M\rtimes\Lambda u_{g_i}^* = M\rtimes g_i\Lambda g_i^{-1}\longrightarrow M$
in the Effros-Mar\'echal topology on $\Sub(M\rtimes \Gamma)$. Hence, for all $s\in\Gamma$,
$$1_{g_i\Lambda g_i^{-1}}(s)=\Vert 1_{g_i\Lambda g_i^{-1}}(s)u_s\Vert_2=\Vert E_{M\rtimes g_i\Lambda g_i^{-1}}(u_s))\Vert_2\longrightarrow \Vert E_M(u_s)\Vert_2=\delta_{s,e}=1_{\{e\}}(s).$$
This shows that $g_i\Lambda g_i^{-1}\longrightarrow \{e\}$ hence $\Lambda$ is not confined.
\medskip
\noindent $(2)\Rightarrow(1).$ Assume that $\Lambda$ is not confined. Then there exists a net $(g_i)_i$ in $\Gamma$ such that $g_i\Lambda g_i^{-1}\longrightarrow \{e\}$. Then, Lemma \ref{LemmaChabauty} implies that:
$$u_{g_i}M\rtimes\Lambda u_{g_i}^*=M\rtimes g_i\Lambda g_i^{-1}\rightarrow M.$$
Hence, $M\rtimes\Lambda$ is not $M$-confined.
\end{proof}
To compare confined subalgebras with hypertraces and the Furstenberg boundary, we need a Hilbert-space framework that simultaneously carries the $\Gamma$-action on $C(X)$ and the covariant representation defining the crossed product. A topological space $X$ is called a \textbf{$\Gamma$-space} if $X$ is equipped with an action $\Gamma\curvearrowright X$ by homeomorphisms. Fix a $\Gamma$-space $X$ and a Hilbert space $H$. For each $x\in X$, define the map:
$$\varphi_x^H:C(X)\rightarrow\Lcal(\ell^2(\Gamma, H))\quad[\varphi_x^H(f)\xi](g)=f(gx)\xi(g).$$
\noindent Then, for all $x\in X$, $\varphi_x^H$ is a unital $*$-homomorphism and, if $x\in X$ has a dense $\Gamma$-orbit then $\varphi_x^H$ is faithful.
\begin{remark}\label{RmkCovariant} The map $\varphi_{x_0}^{{\rm L}^2(M)}:C(X)\rightarrow \Lcal(\ell^2(\Gamma, {\rm L}^2(M)))$ is covariant i.e. for all $f\in C(X)$,
$u_h^*\varphi_{x_0}^{{\rm L}^2(M)}(f)u_h
=\varphi_{x_0}^{{\rm L}^2(M)}(h^{-1}\cdot f)$. Indeed, for \(\xi\in \ell^2(\Gamma,{\rm L}^2(M))\) and \(g\in \Gamma\), we have
\begin{eqnarray*}
\bigl[u_h^*\varphi_{x_0}^{{\rm L}^2(M)}(f)u_h\xi\bigr](g)
&=&
u_\alpha(h^{-1})\bigl[\varphi_{x_0}^{{\rm L}^2(M)}(f)u_h\xi\bigr](hg)\\
&=&
f(hgx_0)u_\alpha(h^{-1})\,(u_h\xi)(hg)\\
&=&
f(hgx_0)\,\xi(g)=(h^{-1}\cdot f)(gx_0)\xi(g)=[\varphi_{x_0}^{{\rm L}^2(M)}(h^{-1}\cdot f)\xi](g).
\end{eqnarray*}
\end{remark}
\noindent The next lemmas are the technical engine of the simplicity criterion. For the remainder of this section, we fix a trace-preserving action $\Gamma\curvearrowright(M,\tau_M)$ and a minimal compact $\Gamma$-space $X$. We write $P:=M\rtimes\Gamma$ with canonical trace $\tau$ and, for $x\in X$, $\Gamma_x:={\rm Stab}_\Gamma(x)\leq\Gamma$.
\begin{lemma}\label{LemmaFourier}\label{LemApproxBoundaryVanishing}
%Let $\Gamma \curvearrowright (M,\tau_M)$ be a trace-preserving action and $X$ a compact minimal $\Gamma$-space.
Let $(N_i)_i$ be a net of amenable von Neumann subalgebras of $P$ such that $M\subseteq N_i$ for all $i$, and let $\Phi_i\in \Hyp_\tau(N_i)$. Let $x_0\in X$ and $\mu_i:=\Phi_i\circ\varphi_{x_0}^{{\rm L}^2(M)}\in \Prob(X)$. Assume that there exists $x\in X$ such that $\mu_i\to \delta_x$ weakly-$*$. Then, for every $g\in \Gamma$ such that $gx\neq x$, and every net $(a_i)_i$ with $a_i\in (N_i)_1$, one has $\tau(a_i u_g^*)\to 0$.
\end{lemma}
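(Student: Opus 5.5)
The idea is to exploit the $N_i$-centrality of $\Phi_i$ together with the covariance of $\varphi:=\varphi_{x_0}^{{\rm L}^2(M)}$ (Remark~\ref{RmkCovariant}). This lets us move the operator $u_g^*$ past a "bump function" localized near $x$. After the move, the bump function is evaluated near $gx\neq x$, where $\mu_i$ carries asymptotically no mass.

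\textbf{Choice of bump function.} Since $gx\neq x$ and $X$ is compact Hausdorff, Urysohn's lemma gives $f\in C(X)$ with $0\leq f\leq 1$, $f(x)=1$ and $f(gx)=0$. Put $T:=\varphi(f)$ and $S:=\varphi(g^{-1}\cdot f)$. With the convention of Remark~\ref{RmkCovariant}, $(g^{-1}\cdot f)(y)=f(gy)$, so $(g^{-1}\cdot f)(x)=f(gx)=0$. The same remark gives $u_g^*T u_g=S$, hence $u_g^*T=Su_g^*$.

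\textbf{Splitting.} Since $\Phi_i\in\Hyp_\tau(N_i)$, we have $\Phi_i|_P=\tau$, so
$$\tau(a_iu_g^*)=\Phi_i(a_iu_g^*)=\Phi_i(a_iu_g^*T)+\Phi_i\bigl(a_iu_g^*(1-T)\bigr).$$
For the second term, Cauchy--Schwarz for the state $\Phi_i$ gives
$$\bigl|\Phi_i\bigl(a_iu_g^*(1-T)\bigr)\bigr|\leq \Phi_i(a_ia_i^*)^{1/2}\,\Phi_i\bigl((1-T)^2\bigr)^{1/2}\leq \mu_i\bigl((1-f)^2\bigr)^{1/2}.$$
Here we used $\|a_i\|\leq 1$ and the fact that $\varphi$ is a $*$-homomorphism, so $\Phi_i((1-T)^2)=\mu_i((1-f)^2)$. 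This bound tends to $(1-f(x))^2{}^{1/2}=0$ because $\mu_i\to\delta_x$ weakly-$*$.

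\textbf{Centrality step.} For the first term we use $a_i\in N_i$ and the $N_i$-centrality of $\Phi_i$ with the operator $u_g^*T\in\Lcal(\ell^2(\Gamma,{\rm L}^2(M)))$. This gives
$$\Phi_i(a_iu_g^*T)=\Phi_i(u_g^*Ta_i)=\Phi_i(S\,u_g^*a_i).$$
Cauchy--Schwarz again yields
$$|\Phi_i(Su_g^*a_i)|\leq \Phi_i(SS^*)^{1/2}\,\Phi_i(a_i^*a_i)^{1/2}\leq \mu_i\bigl(|g^{-1}\cdot f|^2\bigr)^{1/2},$$
which tends to $|f(gx)|=0$. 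Combining the two estimates gives $\tau(a_iu_g^*)\to 0$.

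\textbf{Main obstacle.} The main care point is the ordering of factors in the centrality step. Centrality can only move $a_i$, not $u_g^*$, so the bump $T$ must be inserted on the right of $u_g^*$ before $a_i$ is cycled to the end. Only then does covariance convert $u_g^*T$ into $Su_g^*$, with $S$ localized away from $x$. One must also check the sign convention in Remark~\ref{RmkCovariant}, so that $S$ is indeed the translate of $f$ that vanishes at $x$ rather than at $g^{-2}x$. Everything else is two applications of Cauchy--Schwarz together with weak-$*$ convergence of $\mu_i$ tested on $(1-f)^2$ and $|g^{-1}\cdot f|^2$.
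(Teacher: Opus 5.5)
Your proof is correct and is essentially the paper's argument. You split $\tau(a_iu_g^*)=\Phi_i(a_iu_g^*f)+\Phi_i(a_iu_g^*(1-f))$ with a Urysohn function $f$, handle the second term by Cauchy--Schwarz, and for the first term cycle $a_i$ to the right by $N_i$-centrality, then use covariance $u_g^*f=(g^{-1}\cdot f)u_g^*$ before applying Cauchy--Schwarz again. The only cosmetic difference is that the paper factors $g^{-1}\cdot f$ through its square root to get the bound $\mu_i(g^{-1}\cdot f)^{1/2}$, whereas you get $\mu_i(|g^{-1}\cdot f|^2)^{1/2}$; both tend to $f(gx)=0$.
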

\begin{proof} To ease the notations, we view $C(X)\subset\mathbb{B}(\ell^2(\Gamma,{\rm L}^2(M)))$ via the map $\varphi_{x_0}^{{\rm L}^2(M)}$. Fix $g\in \Gamma$ with $gx\neq x$, and let $(a_i)_i$ be as in the statement with
$\sup_i\|a_i\|\leq 1$. Let $f\in C(X)$ satisfy $0\leq f\leq 1$. By covariance, $u_g^*f=(g^{-1}\!\cdot f)\,u_g^*$, we get
$$
a_i u_g^*f = a_i (g^{-1}\!\cdot f)u_g^*=a_i (g^{-1}\!\cdot f)^{\frac{1}{2}}(g^{-1}\!\cdot f)^{\frac{1}{2}}u_g^*.
$$
Therefore, by $N_i$-centrality and Cauchy-Schwarz for the positive functional $\Phi_i$,
\begin{eqnarray*}
|\Phi_i(a_i u_g^*f)|^2
&=&\vert\Phi_i((g^{-1}\!\cdot f)^{\frac{1}{2}}(g^{-1}\!\cdot f)^{\frac{1}{2}}u_g^*a_i)\vert^2\leq
\Phi_i\big(g^{-1}\!\cdot f\big)\,
\Phi_i(y^*y)\iffalse)\fi,
\end{eqnarray*}
where $y:=(g^{-1}\!\cdot f)^{\frac{1}{2}}u_g^*a_i$ has norm less than $1$ which implies that $\Phi_i(y^*y)\leq 1$. We conclude that $\vert\Phi_i(a_iu_g^*f)\vert\leq \mu_i(g^{-1}\cdot f)^\frac{1}{2}$. Also note that Cauchy--Schwarz gives
$$
|\Phi_i(a_i u_g^*(1-f))|^2
\leq
\Phi_i(a_i a_i^*)\,\Phi_i((1-f)^2)
\leq
\Phi_i(1-f),
$$
because $\|a_i\|\leq 1$ and $0\leq 1-f\leq 1$. Hence $|\Phi_i(a_i u_g^*(1-f))|
\leq
(1-\mu_i(f))^{1/2}$.
\noindent Since $\Phi_i\vert_{M\rtimes\Gamma}=\tau$, combining the two bounds yields
$$
|\tau(a_i u_g^*)|=\vert\Phi_i(a_iu_g^*)\vert\leq \vert\Phi_i(a_iu_g^*f)\vert+\vert\Phi_i(a_iu_g^*(1-f))\vert
\leq
\mu_i(g^{-1}\!\cdot f)^{1/2} + (1-\mu_i(f))^{1/2}.
$$
\noindent Now choose $f\in C(X)$ such that $0\leq f\leq 1$, $f(x)=1$ and $f(gx)=0$.
Since $\mu_i\to \delta_x$, we get $\mu_i(f)\to 1$ and $\mu_i(g^{-1}\!\cdot f)\to (g^{-1}\!\cdot f)(x)=f(gx)=0$. Therefore $\tau(a_i u_g^*)\to 0$.\end{proof}

\noindent The previous lemma is only of use if one can produce, for a given
amenable subalgebra of $M\rtimes\Gamma_x$, a hypertrace whose associated
probability measure on $X$ is exactly the Dirac mass at $x$. This is the content
of the next lemma, and it is the only place where the strengthened form of the
amenability criterion, Proposition~\ref{lem:N-central-extension}, is needed.

\begin{lemma}\label{lem:pointed-hypertrace}
Let \(x\in X\), and let $N\subset M\rtimes\Gamma_x\subset P$ be an amenable von Neumann subalgebra containing $M$. Then there exists $\Phi\in\operatorname{Hyp}_\tau(N)$
such that $\Phi\circ\varphi_x^{{\rm L}^2(M)}=\delta_x$.
\end{lemma}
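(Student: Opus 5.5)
We will apply Proposition~\ref{lem:N-central-extension} to the amenable algebra $N$ in its faithful representation on $H:=\ell^2(\Gamma,{\rm L}^2(M))$. The operator system will be
$$\mathcal S:=\operatorname{span}\{\,a\,\varphi_x^{{\rm L}^2(M)}(f)\,b : a,b\in M\rtimes\Gamma_x,\ f\in C(X)\,\}.$$
It is unital and self-adjoint. It is an $N$-bimodule, since $N\subset M\rtimes\Gamma_x$.

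The key structural fact is that $C(X)$, viewed through $\varphi_x:=\varphi_x^{{\rm L}^2(M)}$, commutes with $M$, because $\varphi_x(f)$ acts diagonally by the scalars $f(gx)$. By Remark~\ref{RmkCovariant}, for $h\in\Gamma_x$ we have $u_h^*\varphi_x(f)u_h=\varphi_x(h^{-1}\cdot f)$. The point we exploit is that the functional $f\mapsto f(x)$ is $\Gamma_x$-invariant.

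We define $\psi$ on $\mathcal S$ by
$$\psi\bigl(a\varphi_x(f)b\bigr):=f(x)\,\tau(ab).$$
Well-definedness and positivity will come from realizing $\psi$ as a genuine restriction of a state. For this we use the $C^*$-algebra $\mathcal C$ generated by $\varphi_x(C(X))$ and $M\rtimes\Gamma_x$; on the norm-dense part it is spanned by $\varphi_x(f)\,y$ with $y\in M\rtimes\Gamma_x$. We then try the map $\mathcal C\to M\rtimes\Gamma_x$ sending $\varphi_x(f)y\mapsto f(x)y$, composed with $\tau$.

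A cleaner route is to use the vector state. Let $P_e$ be the projection onto the $\ell^2(\Gamma_x,{\rm L}^2(M))$ part of $H$. On the invariant subspace $\ell^2(\Gamma_x,{\rm L}^2(M))$, $\varphi_x(f)$ acts as the scalar $f(x)$, since $gx=x$ for $g\in\Gamma_x$. This subspace is also invariant under $M\rtimes\Gamma_x$, and it contains $\Omega$. Hence for every $T\in\mathcal S$ (indeed every $T\in\mathcal C$),
$$\langle T\Omega,\Omega\rangle=f(x)\tau(ab)\quad\text{for } T=a\varphi_x(f)b.$$
So we simply define $\psi(T):=\langle T\Omega,\Omega\rangle$ on $\mathcal S$. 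This is automatically a well-defined positive unital map. It satisfies $\psi|_N=\tau$ and $\psi\circ\varphi_x=\delta_x$.

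It remains to check $N$-centrality: $\psi(cs)=\psi(sc)$ for $c\in N$ and $s=a\varphi_x(f)b$. Both sides equal $f(x)\tau(cab)$ and $f(x)\tau(abc)$ respectively, and these agree by traciality of $\tau$ on $M\rtimes\Gamma_x$. Here we use that $ca,\,bc\in M\rtimes\Gamma_x$ together with the scalar-action computation above.

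Proposition~\ref{lem:N-central-extension} then gives an $N$-central state $\Phi$ on $\mathbb B(H)$ extending $\psi$. Since $M\rtimes\Gamma\subset\mathcal S$ fails, we need a further step to get $\Phi|_{P}=\tau$, which is required for membership in $\Hyp_\tau(N)$. This is the main obstacle. We fix it by enlarging $\mathcal S$ to the span of $a\varphi_x(f)b$ with $a,b\in P$ and checking that the vector state still works there. For $a=a'u_g$ and $b=u_hb'$ with $a',b'\in M$, $\Omega$-compression gives $f(gx)\tau(a'u_gu_hb')$-type terms, and these vanish unless $gh=e$, in which case $f(gx)=f(g\cdot x)$ appears. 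That is not $f(x)$ in general.

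So instead we take $\mathcal S:=P+\operatorname{span}\{a\varphi_x(f)b: a,b\in M\rtimes\Gamma_x\}$, which is still an $N$-bimodule, keep $\psi=\langle\cdot\,\Omega,\Omega\rangle$, and redo the centrality check. On $P$ this is traciality of $\tau$. On the second summand it is the computation above. Positivity is automatic from the vector state. The resulting $\Phi$ then restricts to $\tau$ on $P$, is $N$-central, and satisfies $\Phi\circ\varphi_x=\delta_x$.
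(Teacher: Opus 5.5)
Your final argument is correct and is essentially the paper's proof. The paper applies Proposition~\ref{lem:N-central-extension} to the operator system $\mathcal S_x=M\rtimes\Gamma+C^*\bigl(M\rtimes\Gamma_x,\varphi_x^{{\rm L}^2(M)}(C(X))\bigr)$, which contains your $P+\operatorname{span}\{a\varphi_x(f)b\}$, and its functional $\tau\circ\rho_x$ (compression to $\ell^2(\Gamma_x,{\rm L}^2(M))$) coincides with your vector state $\langle\cdot\,\Omega,\Omega\rangle$; the only difference is that the paper gets $N$-centrality from $\rho_x$ being a $*$-homomorphism on that $C^*$-algebra, whereas you check it directly on the generators $a\varphi_x(f)b$, which works equally well.
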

\begin{proof}
We use the notation $P_x:=M\rtimes\Gamma_x$,
$$D_x:=C^*(P_x,\varphi_x^{{\rm L}^2(M)}(C(X)))\subset\Lcal(\ell^2(\Gamma,{\rm L}^2(M)))$$
and $\mathcal S_x:=M\rtimes\Gamma+D_x\subset \mathbb B(\ell^2(\Gamma,{\rm L}^2(M)))$. We start with the following Claim.
\medskip
\noindent\textbf{Claim.} \textit{There exists unital positive linear map $\psi_x:\mathcal{S}_x\to\C$ such that  $\psi_x|_{P}=\tau$, $\psi_x\circ\varphi_x^{{\rm L}^2(M)}=\delta_x$, and $\psi_x(as)=\psi_x(sa)$ $\forall a\in P_x,\ s\in\mathcal S_x$.}
\medskip
\noindent\textit{Proof of the Claim.} 
Let \(e_x=1_{\Gamma_x}\ot 1\) be the orthogonal projection of
\(\ell^2(\Gamma,{\rm L}^2(M))\) onto \(\ell^2(\Gamma_x,{\rm L}^2(M))\) and consider the unital completely positive map
$$\rho_x:\Lcal(\ell^2(\Gamma,{\rm L}^2(M)))\to\Lcal(\ell^2(\Gamma_x,{\rm L}^2(M))),\quad\rho_x(T)=e_xTe_x.$$
Note that, viewing $P_x\subset \Lcal(\ell^2(\Gamma_x,{\rm L}^2(M)))$, we have $E_{P_x}(a)=\rho_x(a)$ for all $a\in P$. Moreover, for \(f\in C(X)\) and $\xi\in\ell^2(\Gamma_x,{\rm L}^2(M))$, we have
$$
        [\rho_x(\varphi^{{\rm L}^2(M)}_x(f))\xi](g)
        =
        f(gx)\xi(g)
        =
        f(x)\xi(g)\quad\text{for all }g\in\Gamma_x.
$$
Hence $\rho_x(\varphi^{{\rm L}^2(M)}_x(f))=f(x)1$. Since  \(e_x\in D_x'\), $\rho_x\vert_{D_x}$ is a unital $*$-homomorphism hence, $\rho_x(D_x)\subset P_x$. It follows that $\rho_x(\mathcal{S}_x)\subset P_x$. Define $\psi_x:=\tau\circ\rho_x\vert_{\mathcal{S}_x}$. Then $\psi_x$ is a unital and positive functional on $\mathcal{S}_x$, $\psi_x|_{P}=\tau$, $\psi_x\circ\varphi^{{\rm L}^2(M)}_x|_{C(X)}=\delta_x$. Moreover, if \(a\in P_x\) and \(d\in D_x\), then $\psi_x(ad) =\tau(a\rho_x(d))=\tau(\rho_x(d)a)=\psi_x(da)$. Finally, let \(a\in P_x\) and \(s\in\mathcal S_x\). Write $s=b+d$ with $b\in P$, $d\in D_x$ and note that $ab,\ ba\in P$ and $ad,\ da\in D_x$. Therefore,
$$\psi_x(as)=\psi_x(ab+ad)=\tau(ab)+\psi_x(ad)=\tau(ba)+\psi_x(da)=\psi_x(sa).\hfill\qed$$
\noindent\textit{End of the Proof of the Lemma.} Applying Proposition \ref{lem:N-central-extension} to the unital positive functional $\psi_x:\mathcal{S}_x\to\C$ on the $N$-$N$-bimodule $\mathcal{S}_x\subset\Lcal({\rm L}^2(P))$ we get an extension $\Phi\in S(\mathbb{B}({\rm L}^2(P)))$ such that $\Phi|_{\mathcal S_x}=\psi_x$ and $\Phi(aT)=\Phi(Ta)$ for all $a\in N,\ T\in\mathbb B({\rm L}^2(P))$. Since \(P\subset\mathcal S_x\), we get $\Phi|_P=\psi_x\vert_P=\tau$ hence $\Phi\in\operatorname{Hyp}_\tau(N)$. Moreover, since $\varphi_x^{{\rm L}^2(M)}(C(X))\subset\mathcal S_x$, we get $\Phi\circ\varphi_x^{{\rm L}^2(M)}=\delta_x$.\end{proof}
\noindent We are now in a position to prove the main simplicity theorem in the crossed-product setting.
\begin{theorem}\label{ThmUnified}
Let $\Gamma \curvearrowright (M,\tau_M)$ be a trace-preserving action of a countable discrete group on an
amenable finite von Neumann algebra with separable predual, and  $X$ be a non-empty compact Hausdorff minimal $\Gamma$-space. The following are equivalent.
\begin{enumerate}
\item The reduced crossed product $C(X) \rtimes_r \Gamma$ is simple.
\item For all $x \in X$, any amenable von Neumann subalgebra $N\subset M\rtimes\Gamma_x$ containing $M$ is not $M$-confined in the crossed product $M\rtimes\Gamma$.
\item There exists $x \in X$ such that any amenable von Neumann subalgebra $N\subset M\rtimes\Gamma_x$ containing $M$ is not $M$-confined in the crossed product $M\rtimes\Gamma$.
\end{enumerate}
\end{theorem}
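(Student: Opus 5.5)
The plan is to prove $(2)\Rightarrow(3)$, $(3)\Rightarrow(1)$ and $(1)\Rightarrow(2)$. Throughout I will lean on Kawabe's dynamical characterization of simplicity of $C(X)\rtimes_r\Gamma$ \cite{kawabe2017uniformly}, which is not proved in the paper and which I am quoting from memory, so its exact form needs to be checked. I will use it in two forms:
\begin{itemize}
\item a subgroup form: $C(X)\rtimes_r\Gamma$ is simple iff for some (equivalently every) $x\in X$, no amenable subgroup $\Lambda\le\Gamma_x$ is confined in $\Gamma$;
\item a boundary form: simplicity forces the action $\Gamma\curvearrowright\widetilde X$ on the generalized Furstenberg boundary $q:\widetilde X\to X$ to be topologically free, so the points of $\widetilde X$ with trivial stabilizer form a dense $G_\delta$.
\end{itemize}

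The implication $(2)\Rightarrow(3)$ is trivial since $X\neq\emptyset$.

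For $(3)\Rightarrow(1)$, fix the $x$ given by (3) and an amenable subgroup $\Lambda\le\Gamma_x$. Then $N:=M\rtimes\Lambda$ is amenable, because $M$ and $\Lambda$ are, and it satisfies $M\subseteq N\subseteq M\rtimes\Gamma_x$. By (3), $N$ is not $M$-confined, so Proposition~\ref{PropConfinedSubgroup} shows that $\Lambda$ is not confined. Hence no amenable subgroup of $\Gamma_x$ is confined, and Kawabe's subgroup form gives (1).

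For $(1)\Rightarrow(2)$, fix $x\in X$ and an amenable $N$ with $M\subseteq N\subseteq M\rtimes\Gamma_x$. I want a net $(g_i)$ with $N_i:=u_{g_i}Nu_{g_i}^*\to M$. By Lemma~\ref{LemExpectationConvergenceFromBoundary} it suffices that $\|E_{N_i}(u_h)\|_2^2=\tau(E_{N_i}(u_h)u_h^*)\to0$ for every $h\neq e$. I will apply Lemma~\ref{LemApproxBoundaryVanishing} with $a_i:=E_{N_i}(u_h)\in(N_i)_1$. The steps are as follows.
\begin{enumerate}
\item Lemma~\ref{lem:pointed-hypertrace} gives $\Phi\in\Hyp_\tau(N)$ with $\Phi\circ\varphi_x^{{\rm L}^2(M)}=\delta_x$.
\item By Remark~\ref{RmkCovariant}, $\Phi_g:=\Phi\circ\Ad(u_g^*)$ lies in $\Hyp_\tau(u_gNu_g^*)$, and its measure is $g\cdot\delta_x=\delta_{gx}$.
\item Choose $\tilde x\in q^{-1}(x)$ and use $\varphi_{\tilde x}$ of $C(\widetilde X)\supseteq C(X)$ to push $\Phi$ forward to a measure $\nu\in\Prob(\widetilde X)$ with $q_*\nu=\delta_x$. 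The point-mass pushforward is contractible, so the defining property of $\widetilde X$ makes $\nu$ contractible: $g_j\nu\to\delta_{\tilde y}$ for some net $(g_j)$ and some $\tilde y\in\widetilde X$.
\item By minimality of $\widetilde X$ and the density from Kawabe's boundary form, I can, after composing with further group elements, take $\tilde y$ to have trivial stabilizer.
\item Lemma~\ref{LemApproxBoundaryVanishing}, run on $\widetilde X$ (its proof only uses covariance and Cauchy--Schwarz, so it transfers verbatim), then gives $\tau(a_ju_h^*)\to0$ for every $h\neq e$, since $h\tilde y\neq\tilde y$. Therefore $N_j\to M$, and $N$ is not $M$-confined.
\end{enumerate}

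The hard step is (4), together with making step (3) legitimate. Step (3) requires extending $\Phi$, or re-choosing it through Proposition~\ref{lem:N-central-extension}, so that it is defined on the bigger covariant copy of $C(\widetilde X)$ while staying $N$-central. For this I would rerun Lemma~\ref{lem:pointed-hypertrace} with $D_x$ enlarged by $\varphi_{\tilde x}(C(\widetilde X))$; this prescribes $\delta_{\tilde x}$ directly, and then $\nu=\delta_{\tilde x}$.

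If topological freeness of $\widetilde X$ is not exactly what simplicity yields, the fallback keeps the hypothesis in its subgroup form. In that case I would diagonalize over the countably many $h\neq e$, handling one $h$ at a time. At each stage I would move the limit point off $\operatorname{Fix}(h)$, whose interior should be empty under simplicity. Upper semicontinuity from the estimate in Lemma~\ref{LemApproxBoundaryVanishing} should make each stage stable under the later ones.
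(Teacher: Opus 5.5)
Your proposal is correct and is essentially the paper's proof. For $(3)\Rightarrow(1)$ it uses Kawabe's subgroup criterion together with Proposition~\ref{PropConfinedSubgroup}. For $(1)\Rightarrow(2)$ it follows the same chain: Lemma~\ref{lem:pointed-hypertrace}, then $\nu:=\Phi\circ\varphi_{\tilde x}^{{\rm L}^2(M)}$ on $\widetilde X$ with $q_*\nu=\delta_x$, then contraction of $\nu$, then Lemma~\ref{LemApproxBoundaryVanishing} with $a_i=E_{N_i}(u_h)$.

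The only real variation is your step (4). The paper instead cites Kawabe's result that fixed-point sets in $\widetilde X$ are clopen, so topological freeness already gives freeness and any limit point works. Your argument, moving the limit Dirac mass onto a free point by minimality plus Baire density of free points, is equally valid.

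Note also that step (3) needs no extension or re-choice of $\Phi$, since $\Phi$ is already a state on all of $\mathbb B(\ell^2(\Gamma,{\rm L}^2(M)))$. The re-choice you suggest, forcing $\nu=\delta_{\tilde x}$, would fail in general: $N$-centrality makes $\nu$ invariant under every $g$ with $u_g\in N$, and $\Gamma_x$ need not fix $\tilde x$.
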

\begin{proof}
It is clear that $(2)$ implies $(3)$.
\medskip
\noindent$(1)\Rightarrow(2)$. Assume that \(C(X)\rtimes_{r}\Gamma\) is simple and let $x\in X$ and $N\subset M\rtimes\Gamma_x$ an amenable subalgebra containing $M$. By Lemma \ref{lem:pointed-hypertrace} there exists $\Phi\in\operatorname{Hyp}_\tau(N)$
such that $\Phi\circ\varphi_x^{{\rm L}^2(M)}=\delta_x$. Let $\widetilde{X}$ be the generalized Furstenberg boundary of $X$ with canonical $\Gamma$-equivariant continuous surjection $q:\widetilde{X}\to X$. Fix $y_0\in q^{-1}(\{x\})$ and define $\mu:=\Phi\circ\varphi_{y_0}^{{\rm L}^2(M)}\in\Prob(\widetilde{X})$. Note that, for all $f\in C(X)$ and $\xi\in l^2(\Gamma,{\rm L}^2(M))$,
$$(\varphi_{y_0}^{{\rm L}^2(M)}(f\circ q)\xi)(g)=(f\circ q)(gy_0)\xi(g)=f(gq(y_0))\xi(g)=f(gx)\xi(g)=(\varphi_x^{{\rm L}^2(M)}(f)\xi)(g).$$
Hence, $\Phi\circ\varphi_{y_0}^{{\rm L}^2(M)}(f\circ q)=\Phi\circ\varphi_{x}^{{\rm L}^2(M)}(f)=f(x)$. It follows that $q_*\mu=\delta_x$. By the properties of $\widetilde{X}$, there exists a net \((s_i)_i\) in \(\Gamma\) and
a point \(y\in \widetilde{X}\) such that $s_i\mu \to \delta_y$ in the weak-* topology of \(\operatorname{Prob}(\widetilde{X})\). Put $N_i:=u_{s_i}Nu_{s_i}^*$ and $\Phi_i:=s_i\Phi$, where $\Gamma$ acts on states by $(g\cdot\Phi)(T):=\Phi(u_g^*Tu_g)$; with this convention $g\cdot\Phi\in\operatorname{Hyp}_\tau(u_gNu_g^*)$ whenever $\Phi\in\operatorname{Hyp}_\tau(N)$. Then, since $\Phi\in\operatorname{Hyp}_{\tau}(N)$, we deduce that \(\Phi_i\in \operatorname{Hyp}_{\tau}(N_i)\). Moreover, by Remark \ref{RmkCovariant},
%% [TEAL CORRECTION -- state-action convention: adjoints corrected in the display below; the original display is kept in the \iffalse block]
\iffalse
$$\Phi_i\circ\varphi_{y_0}^{{\rm L}^2(M)}(f)=\Phi(u_{s_i}\varphi_{y_0}^{{\rm L}^2(M)}(f)u_{s_i}^*)=\Phi\circ\varphi_{y_0}^{{\rm L}^2(M)}(s_i\cdot f)=s_i\mu(f)\to f(y),$$
\fi
$$\Phi_i\circ\varphi_{y_0}^{{\rm L}^2(M)}(f)=\Phi(u_{s_i}^*\varphi_{y_0}^{{\rm L}^2(M)}(f)u_{s_i})=\Phi\circ\varphi_{y_0}^{{\rm L}^2(M)}(s_i^{-1}\cdot f)=s_i\mu(f)\to f(y),$$
for all $f\in C(\widetilde{X})$. Hence, $\Phi_i\circ\varphi_{y_0}^{{\rm L}^2(M)}\to\delta_y$ weak*.
\medskip
\noindent Let \(g\in\Gamma\setminus\{e\}\).
Since
$C(X)\rtimes_r\Gamma$ is simple \cite[Theorem~3.4]{kawabe2017uniformly} implies that
$\Gamma\curvearrowright\widetilde X$ is topologically free.  By
\cite[Proposition~3.3(i)]{kawabe2017uniformly}, $\operatorname{Fix}_{\widetilde
X}(g)$ is clopen.  Hence $\operatorname{Fix}_{\widetilde X}(g)=\operatorname{int}\bigl(\operatorname{Fix}_{\widetilde X}(g)\bigr)=\emptyset$,
so $gy\neq y$. Applying
Lemma \ref{LemApproxBoundaryVanishing} to \(\widetilde{X}\), to the hypertraces \(\Phi_i\), and to the elements $a_i:=E_{N_i}(u_g)\in (N_i)_1$, we obtain $\tau\bigl(E_{N_i}(u_g)u_g^*\bigr)\to 0$. On the other hand,
$$\|E_{N_i}(u_g)\|_2^2=\tau\bigl(E_{N_i}(u_g)^*E_{N_i}(u_g)\bigr)=\tau\bigl(E_{N_i}(u_g)u_g^*\bigr)\to 0.$$
By Lemma \ref{LemExpectationConvergenceFromBoundary}, this implies $N_i\to M$. Hence $M\in \overline{\{u_sNu_s^*:s\in\Gamma\}}$.
\medskip
\noindent $(3)\Rightarrow(1)$. Assume that \(C(X)\rtimes_{ r}\Gamma\) is not simple. By \cite[Theorem 1.7 (iii)]{kawabe2017uniformly}, there is an amenable subgroup \(\Lambda\leq \Gamma_x\) such that $\{e\}\notin\Ucal:=\overline{\{g \Lambda g^{-1}:g\in\Gamma\}}$. Then, $N:=M\rtimes\Lambda$ is amenable (indeed, $M$ is amenable and $\Lambda$ is an amenable group, so the crossed product $M\rtimes\Lambda$ is amenable), contains $M$ and, by
Proposition \ref{PropConfinedSubgroup}, \(N\) is \(M\)-confined. This contradicts (3).\end{proof}
\noindent We say that a URA $\mathcal{U}\subset M\rtimes \Gamma$ \textbf{contains $M$} if every $N\in \mathcal{U}$ contains $M$.
\medskip
\noindent Theorem~\ref{ThmUnified} is stated in terms of $M$-confinement, which
is a condition on a single subalgebra together with its whole orbit closure. We
now reformulate it intrinsically, in terms of the URAs that orbit closure
contains. This is the form in which the criterion becomes a statement about the
Effros--Mar\'echal dynamics of amenable subalgebras.

\begin{corollary}\label{CorURACrossed} Let $\Gamma \curvearrowright (M,\tau_M)$ be a trace-preserving action of a countable discrete group on an
amenable finite von Neumann algebra with separable predual and  $X$ be a non-empty compact Hausdorff minimal $\Gamma$-space. The following are equivalent.
\begin{enumerate}
\item $C(X)\rtimes_r\Gamma$ is simple.
\item For all $x\in X$, the only amenable URA $\Ucal$ of $M\rtimes\Gamma$ containing $M$ and such that $\Ucal\cap\Sub(M\rtimes\Gamma_x)\neq\emptyset$  is $\Ucal=\{M\}$.
\item There exists $x\in X$ such that the only amenable URA $\Ucal$ of $M\rtimes\Gamma$ containing $M$ and such that $\Ucal\cap\Sub(M\rtimes\Gamma_x)\neq\emptyset$  is $\Ucal=\{M\}$.
\end{enumerate}
\end{corollary}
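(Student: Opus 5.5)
The plan is to deduce the corollary from Theorem~\ref{ThmUnified}, transferring between $M$-confined amenable subalgebras and amenable URAs through orbit closures. Two facts will be used throughout. First, the set $\Sub_{\rm am}(P)$ of amenable subalgebras is closed (Remark~\ref{rmk-amenbale-URA}). Second, the set $\{N\in\Sub(P): M\subseteq N\}$ is closed and $\Gamma$-invariant: it is closed because $E_{N_i}(a)=a$ for all $a\in M$ passes to limits, and it is invariant because $u_gMu_g^*=M$. Consequently, the orbit closure of an amenable subalgebra containing $M$ consists entirely of amenable subalgebras containing $M$. Note also that $\{M\}$ is itself a URA, being a fixed point.

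For $(1)\Rightarrow(2)$, fix $x$ and let $\Ucal$ be an amenable URA containing $M$ with some $N\in\Ucal\cap\Sub(M\rtimes\Gamma_x)$. Then $N$ is amenable, contains $M$, and lies in $M\rtimes\Gamma_x$. By Theorem~\ref{ThmUnified} $(1)\Rightarrow(2)$, $N$ is not $M$-confined, so $M\in\overline{\Gamma\cdot N}=\Ucal$ by minimality. Since $\{M\}$ is closed and invariant, minimality forces $\Ucal=\{M\}$. The implication $(2)\Rightarrow(3)$ is trivial.

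For $(3)\Rightarrow(1)$, I would verify condition (3) of Theorem~\ref{ThmUnified} at the same point $x$. Suppose, for contradiction, that there is an amenable $N\subset M\rtimes\Gamma_x$ containing $M$ that is $M$-confined. Let $\mathcal O=\overline{\Gamma\cdot N}$. This is the main obstacle: $\Sub(P)$ need not be compact, so Zorn's lemma does not directly give a minimal subset, and even if one exists it need not meet $\Sub(M\rtimes\Gamma_x)$.

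I would try to avoid this by passing to the subgroup level. Since $N\subseteq M\rtimes\Gamma_x$, for every $g$ we have $u_gNu_g^*\subseteq M\rtimes\Gamma_{gx}$. The cleaner route is to run the contrapositive of Theorem~\ref{ThmUnified} $(3)\Rightarrow(1)$ directly. If $C(X)\rtimes_r\Gamma$ is not simple, Kawabe's theorem gives an amenable $\Lambda\le\Gamma_x$ with $\{e\}\notin\overline{\Gamma\cdot\Lambda}$. The Chabauty orbit closure $\overline{\Gamma\cdot\Lambda}$ is compact, so it contains a URS $Y$. I would want $Y$ to meet $\Sub(\Gamma_x)$, which I expect to arrange by choosing $\Lambda$ as in Kawabe's proof, where the amenable URS already consists of subgroups of stabilizers. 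Its members are amenable because the amenable subgroups form a closed set, and $\{e\}\notin Y$.

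Then I take $\Ucal:=\{M\rtimes\Lambda':\Lambda'\in Y\}$. This set is compact and invariant as the continuous image of $Y$ under Lemma~\ref{LemmaChabauty}. It is minimal: since the map is injective on the compact set $Y$, it is a homeomorphism onto its image, so minimality transfers. Its members are amenable and contain $M$, and it meets $\Sub(M\rtimes\Gamma_x)$. Finally, $\Ucal\neq\{M\}$ because $\{e\}\notin Y$. This contradicts (3). The remaining detail to check is that the URS can be chosen to meet $\Sub(\Gamma_x)$; I would handle this by citing Kawabe's characterization in its URS form.
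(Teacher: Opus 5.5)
Your $(1)\Rightarrow(2)$ and $(2)\Rightarrow(3)$ are correct and match the paper. For $(3)\Rightarrow(1)$ you end up on the paper's route: Kawabe's Theorem~1.7 gives an amenable $\Lambda\le\Gamma_x$ with $\{e\}\notin\overline{\Gamma\cdot\Lambda}$; you take a URS $Y$ inside the compact Chabauty orbit closure; and you push it forward by $H\mapsto M\rtimes H$.

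However, the one non-formal step is left open: that $Y$ meets $\Sub(\Gamma_x)$ for the \emph{given} point $x$. You defer it to an unspecified ``URS form'' of Kawabe's theorem, or to choosing $\Lambda$ as in Kawabe's proof. As written this is a gap. Without it, $\Ucal$ need not meet $\Sub(M\rtimes\Gamma_x)$, and you get no contradiction with (3). Knowing that each member of $Y$ lies in \emph{some} stabilizer would not suffice either; you need one member contained in $\Gamma_x$ itself.

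No special choice of $\Lambda$ is needed. The paper closes this with upper semicontinuity of stabilizers and minimality of $X$.

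First observe: if $H_i\le\Gamma_{y_i}$, $H_i\to H$ in the Chabauty topology and $y_i\to y$ in $X$, then $H\le\Gamma_y$. Indeed, each $h\in H$ eventually lies in $H_i$, so $hy_i=y_i$ eventually, and $hy=y$ in the limit.

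Now take any $H\in Y$ and a net $(g_i)$ with $g_i\Lambda g_i^{-1}\to H$. Since $g_i\Lambda g_i^{-1}\le\Gamma_{g_ix}$ and $X$ is compact, after passing to a subnet $g_ix\to y$ for some $y$, so $H\le\Gamma_y$. By minimality of $X$, choose $(t_j)$ with $t_jy\to x$. By compactness and invariance of $Y$, pass to a subnet with $t_jHt_j^{-1}\to K\in Y$. Since $t_jHt_j^{-1}\le\Gamma_{t_jy}$, the same observation gives $K\le\Gamma_x$. Hence $M\rtimes K\in\Ucal\cap\Sub(M\rtimes\Gamma_x)$, and the rest of your argument goes through.

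A minor point: injectivity of $H\mapsto M\rtimes H$ is not needed for minimality of $\Ucal$, since any continuous equivariant image of a compact minimal space is minimal.
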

\begin{proof}
Write $P:=M\rtimes\Gamma$. Note that $(2)\Rightarrow(3)$ is obvious.
\medskip
\noindent$(1)\Rightarrow(2)$. Let $x\in X$ and $\mathcal U\subset \operatorname{Sub}(P)$ be an amenable URA containing $M$ and such that there exists $N\in\Ucal$ with $M\subset N\subset M\rtimes\Gamma_x$. By Theorem \ref{ThmUnified}, $M\in \overline{\{u_gNu_g^*:g\in\Gamma\}}$. Since \(\mathcal U\) is closed and \(\Gamma\)-invariant, and since \(N\in\mathcal U\), it follows that $M\in\Ucal$. But \(\{M\}\) is a closed \(\Gamma\)-invariant subset of \(\mathcal U\). Since \(\mathcal U\) is minimal, we get $ \mathcal U=\{M\}$.
\medskip
\noindent$(3)\Rightarrow(1)$. Assume that \(C(X)\rtimes_{\mathrm r}\Gamma\) is not simple. By \cite[Theorem 1.7 (iii)]{kawabe2017uniformly}, there is an amenable subgroup \(\Lambda\leq \Gamma_x\) such that $\{e\}\notin\Ucal:=\overline{\{g \Lambda g^{-1}:g\in\Gamma\}}$. Since $\Sub(\Gamma)$ is compact, there exists $\mathcal H\subset\overline{\{g\Lambda g^{-1}:g\in\Gamma\}}$ a minimal closed \(\Gamma\)-invariant subset. Note that \(\mathcal H\) is an amenable URS (since the set of amenable subgroups is closed in $\Sub(\Gamma)$) and it is non-trivial, since \(\Lambda\) is confined and hence the orbit closure of \(\Lambda\) does not contain \(\{e\}\). By Lemma \ref{LemmaChabauty}, the map $\Theta:\operatorname{Sub}(\Gamma)\to \operatorname{Sub}(P)$, $H\mapsto M\rtimes H$  is continuous and it is obviously \(\Gamma\)-equivariant. Therefore $\mathcal U:=\Theta(\mathcal H)$ is a compact minimal closed \(\Gamma\)-invariant subset of \(\operatorname{Sub}(P)\). Hence, \(\mathcal U\) is an amenable URA  (each $M\rtimes H$, $H\in\mathcal H$, being amenable because $M$ is amenable and $H$ is an amenable group) and contains $M$. It remains to see that \(\mathcal U\) meets \(\operatorname{Sub}(M\rtimes\Gamma_x)\). Let \(H\in\mathcal H\) and let \((g_i)_i\) be a net in \(\Gamma\) such that $g_i\Lambda g_i^{-1}\to H$. Since \(X\) is compact, after passing to a subnet we may and will assume that \(g_i x\to y\) for some \(y\in X\). As \(\Lambda\leq \Gamma_x\), we have $g_i\Lambda g_i^{-1}\leq \Gamma_{g_i x}$. By upper semi-continuity of the stabilizer map (see \cite{glasner2015uniformly}), it follows that $H\leq \Gamma_y$. Since \(X\) is minimal, choose a net \(t_j\in\Gamma\) with $t_jy\to x$. Passing to a subnet, we may and will assume that $t_jHt_j^{-1}\to K$ for some \(K\in\mathcal H\), because \(\mathcal H\) is compact and \(\Gamma\)-invariant. Again, by upper semicontinuity of the stabilizer map, $K\leq \Gamma_x$. Therefore $M\rtimes K\in \mathcal U\cap\operatorname{Sub}(M\rtimes\Gamma_x)$. Finally, \(\mathcal U\neq \{M\}\). Indeed, if \(M\rtimes H=M\) for some \(H\in\mathcal H\), then \(H=\{e\}\), contradicting the fact that \(\mathcal H\) is non-trivial. Hence \(\mathcal U\) is a non-trivial amenable URA containing \(M\) and meeting \(\operatorname{Sub}(M\rtimes\Gamma_x)\).\end{proof}
%%%%%%%%%%%%%%%%%%%%%%%%%%%%%%%%%%%%%%%%%%%%%%%%%%%%%%%%%%%%%%%%%%%%%%%%%%%%%%%%%%%%%%%%%%%%%%%%%%%%%%%%
\section{The state-space construction of URAs}
\label{sec:statespace}
%%%%%%%%%%%%%%%%%%%%%%%%%%%%%%%%%%%%%%%%%%%%%%%%%%%%%%%%%%%%%%%%%%%%%%%%%%%%%%%%%%%%%%%%%%%%%%%%%%%%%%%%
We now turn to the central construction of the paper. As discussed in the introduction, the URS theory of \cite{AJ} carries out a Glasner--Weiss-style dynamics inside the pointwise-compact space $\mathrm{PD}_1(\Gamma)$ of positive definite functions, and recovers URS as $\Gamma$-orbit closures of $\phi_\Lambda(g)=\mathbf{1}_\Lambda(g)$. For von Neumann algebras the analogue of $\phi_\Lambda$ is the positive definite function $\phi_M(g)=\tau(u_g^*E_M(u_g))$ attached to a subalgebra $M\subset L(\Gamma)$, but a fundamental compactness obstruction --- the failure of compactness of $\Sub(M)$, characterized in Theorem~\ref{ThmEmbedding} --- prevents the URS argument from running verbatim in this setting.

To bypass this obstruction, we enlarge the ambient space to states on $\mathbb{B}(\ell^2(\Gamma))$ extending the canonical trace, which form a compact convex $\Gamma$-space $K$. Zorn's lemma then produces minimal closed $\Gamma$-invariant subsets $\mathcal{Z}\subset K$; the centralizer of any $\varphi\in\mathcal{Z}$ relative to a chosen $\Gamma$-invariant test algebra $A$ is a von Neumann subalgebra of the algebra of interest. \iffalse The catch is that the centralizer map $\Phi$ is only upper semi-continuous; the construction of URAs from it relies on a Baire-category argument supplying a dense $G_\delta$ of continuity points.\fi
The catch is that the centralizer map $\Phi$ need not be continuous; whenever it is continuous at a point of a minimal state space, the Effros--Mar\'echal orbit closure of the associated subalgebra is a URA. This construction of a URA from a continuity-point is the content of Theorem~\ref{thm:general-ura-construction}. In the compact and discrete realization theorems (Theorems \ref{ThmCompactURA} and \ref{thm:discrete-uras}), the required continuity is verified directly. %The general construction is the content of Theorem~\ref{thm:general-ura-construction}.
%% [RESOLVED below in teal: Lemma \ref{lem:fix-is-subalgebra} reformulated as a four-part lemma via the full relative centralizer C_phi and the centralizer-density condition (CD) (audit C1). Remaining open: existence of continuity points (dense G_delta).]
\subsection{The general state-space construction}
\label{subsec:generalconstruction} In this entire section, we fix the following data.
\begin{itemize}
    \item $M$ a finite von Neumann algebra acting on a separable Hilbert space $\mathcal{H}$, equipped with a faithful normal tracial state $\tau$;
    \item $g \mapsto u_g$ a unitary representation of a countable discrete group $\Gamma$ on $\mathcal{H}$ such that the conjugation action $\alpha_g(T) = u_g T u_g^*$ satisfies $$\alpha_g(M)=M\text{ and }\tau \circ \alpha_g = \tau\quad\text{for all }g\in\Gamma$$
    \item $A \subset \mathbb{B}(\mathcal{H})$ a $\Gamma$-invariant unital $*$-subalgebra such that $M \subseteq A$.
\end{itemize}
\noindent Let $S(\mathbb{B}(\mathcal{H}))$ denote the state space of $\mathbb{B}(\mathcal{H})$. We define
$$K = \left\{ \varphi \in S(\mathbb{B}(\mathcal{H})) \;\middle|\; \varphi|_{M} = \tau \right\}.$$
\begin{definition}
Let $\varphi\in K$.
\begin{itemize}
    \item \textbf{The centralizer of $\varphi$ relative to $A$} is
$$M_\varphi := \left\{ z \in M \;\middle|\; \varphi(zy) = \varphi(yz) \text{ for all } y \in A \right\}.$$
\item The \textbf{centralizer map relative to $A$} is $\Phi:K\to\Sub(M)$ defined by $$\Phi(\varphi):=M_\varphi.$$
\end{itemize}
\end{definition}
\noindent We collect elementary properties of relative centralizers and of the centralizer map in the next lemma.
\begin{lemma}\label{lem:fix-is-subalgebra}
Let $\varphi\in K$. The following hold.
\begin{enumerate}
\item $M_\varphi$ is a von Neumann subalgebra of $M$.
\item For every $g\in\Gamma$, one has $M_{g\cdot\varphi}=\alpha_g(M_\varphi)$. In particular, the centralizer map is $\Gamma$-equivariant.
\item If $(\varphi_i)_{i\in I}$ is a net in $K$ converging weak-$*$ to $\varphi\in K$ and $M_{\varphi_i}\to\mathcal N$ in the Effros--Mar\'echal topology, then $\mathcal N\subseteq M_\varphi$.
\end{enumerate}
\end{lemma}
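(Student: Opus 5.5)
For (1), I will verify directly that $M_\varphi$ is a unital $*$-subalgebra of $M$ that is closed in $\|\cdot\|_2$. It contains $1$ and is a linear subspace. For closure under products, take $z,w\in M_\varphi$ and $y\in A$. Since $M\subseteq A$ and $A$ is an algebra, both $wy$ and $yz$ lie in $A$. Hence
\[
\varphi(zwy)=\varphi(wyz)=\varphi(yzw),
\]
where the first equality uses $z\in M_\varphi$ tested against $wy\in A$, and the second uses $w\in M_\varphi$ tested against $yz\in A$. For the adjoint, $A$ is self-adjoint and $\varphi(a^*)=\overline{\varphi(a)}$. So $\varphi(z^*y)=\overline{\varphi(y^*z)}=\overline{\varphi(zy^*)}=\varphi(yz^*)$.

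For weak closure, I will use the Cauchy--Schwarz estimate already used in the proof of Proposition~\ref{lem:N-central-extension}. Because $\varphi|_M=\tau$, one has $|\varphi((z-z_j)y)|\le\|z-z_j\|_2\|y\|$, and similarly on the other side. So $M_\varphi$ is $\|\cdot\|_2$-closed in $M$. On bounded sets of $M$ the $\|\cdot\|_2$-topology agrees with the strong topology, so by Kaplansky's density theorem a $\|\cdot\|_2$-closed unital $*$-subalgebra is a von Neumann subalgebra.

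For (2), recall the action on states, $(g\cdot\varphi)(T)=\varphi(u_g^*Tu_g)$, and note that $g\cdot\varphi\in K$ since $\tau\circ\alpha_g=\tau$. Then $z\in M_{g\cdot\varphi}$ if and only if $\varphi(\alpha_g^{-1}(z)\alpha_g^{-1}(y))=\varphi(\alpha_g^{-1}(y)\alpha_g^{-1}(z))$ for all $y\in A$. Since $\alpha_g^{-1}(A)=A$ and $\alpha_g^{-1}(M)=M$, this holds if and only if $\alpha_g^{-1}(z)\in M_\varphi$.

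For (3), which is the only step with some content, fix $z\in\mathcal N$ and $y\in A$. The plan is to use the approximants $z_i:=E_{M_{\varphi_i}}(z)\in M_{\varphi_i}$, which satisfy $\|z_i\|\le\|z\|$ and $z_i\to E_{\mathcal N}(z)=z$ in $\|\cdot\|_2$ by the definition of EM convergence. Write
\[
\varphi_i(zy)-\varphi_i(yz)=\varphi_i((z-z_i)y)+\bigl[\varphi_i(z_iy)-\varphi_i(yz_i)\bigr]+\varphi_i(y(z_i-z)).
\]
The middle term vanishes. The outer terms are bounded by $\|z-z_i\|_2\|y\|$ via Cauchy--Schwarz, because every $\varphi_i$ restricts to $\tau$ on $M$. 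This bound is uniform in the states, which is the key point: the estimate does not deteriorate along the net, so it survives the weak-$*$ limit. Letting $i\to\infty$ and using weak-$*$ convergence $\varphi_i(zy)\to\varphi(zy)$ and $\varphi_i(yz)\to\varphi(yz)$, we obtain $\varphi(zy)=\varphi(yz)$, hence $z\in M_\varphi$.
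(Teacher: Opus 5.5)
Your proof is correct and follows the paper's argument essentially step by step: the same manipulations for products and adjoints in (1), the same change of variables in (2), and in (3) the same approximants $z_i=E_{M_{\varphi_i}}(z)$ together with the Cauchy--Schwarz bound coming from $\varphi_i|_M=\tau$. The differences are cosmetic. In (1) you obtain closure from $\|\cdot\|_2$-closedness plus Kaplansky, where the paper checks strong-$*$ limits of bounded nets directly. In (3) you take the weak-$*$ limit at the fixed operators $zy$ and $yz$, rather than comparing $\varphi_i(z_iy)$ with $\varphi(zy)$ as the paper does.
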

\begin{proof}
(1) Clearly $1\in M_\varphi$. If $z\in M_\varphi$ and
$y\in A$, then
$$
\varphi(z^*y)
=\overline{\varphi(y^*z)}
=\overline{\varphi(zy^*)}
=\varphi(yz^*)$$ so $z^*\in M_\varphi$. Let $z,w\in M_\varphi$ and $y\in A$. Since
$w\in M\subseteq A$ and $z\in M\subseteq A$, one has
$wy,yz\in A$. Therefore
$
\varphi(zwy)
=\varphi(wyz)
=\varphi(yzw),
$ and hence $zw\in M_\varphi$. Let $(z_j)_j$ be a bounded net in $M_\varphi$ converging strong-$*$
to $z\in M$. For every $y\in A$, the Cauchy-Schwarz inequality gives
$
|\varphi((z_j-z)y)|^2
\leq
\Vert(z_j-z)^*\Vert_2^2\varphi(y^*y)\longrightarrow0
$
and
$$
|\varphi(y(z_j-z))|^2
\leq
\varphi(yy^*)\Vert z_j-z\Vert_2^2\longrightarrow0.
$$
Consequently, $\varphi(zy)=\varphi(yz)$, so
$z\in M_\varphi$. Thus $M_\varphi$ is a unital
$*$-subalgebra of $M$ whose unit ball is strong-$*$ closed, and
hence it is a von Neumann subalgebra of $M$.
\medskip
\noindent(2) For $z\in M$ and $y\in A$ we have $(g\cdot\varphi)(zy)=\varphi\bigl(\alpha_{g^{-1}}(z)\,\alpha_{g^{-1}}(y)\bigr)$ and $(g\cdot\varphi)(yz)=\varphi\bigl(\alpha_{g^{-1}}(y)\,\alpha_{g^{-1}}(z)\bigr)$. Since $A$ is globally $\alpha$-invariant, $z\in M_{g\cdot\varphi}
\Leftrightarrow
\alpha_{g^{-1}}(z)\in M_\varphi,
$. Hence
$M_{g\cdot\varphi}=\alpha_g(M_\varphi)$.
\medskip
\noindent(3) Let $z\in\mathcal N$ and put $z_i:=E_{M_{\varphi_i}}(z)\in M_{\varphi_i}$. Effros--Mar\'echal convergence gives
$$
\Vert z_i-z\Vert_2=\Vert E_{M_{\varphi_i}}(z)-E_{\mathcal{N}}(z)\Vert_2
\to
0.
$$
The same argument gives $\Vert(z_i-z)^*\Vert_2\longrightarrow0$.
For every $y\in A$, $\varphi_i(z_iy)=\varphi_i(yz_i)$. Moreover, $|\varphi_i(z_iy)-\varphi(zy)|\leq|\varphi_i((z_i-z)y)|+|\varphi_i(zy)-\varphi(zy)|$. The second term tends to $0$ because $\varphi_i\to\varphi$ weak-$*$. For the first term, Cauchy--Schwarz gives
$$|\varphi_i((z_i-z)y)|^2\leq\varphi_i\bigl((z_i-z)(z_i-z)^*\bigr)\,\varphi_i(yy^*)\leq\Vert(z_i-z)^*\Vert_2^2\,\Vert y\Vert^2\longrightarrow0,$$
using $\varphi_i|_{M}=\tau$. Hence
$\varphi_i(z_iy)\to\varphi(zy)$. The symmetric estimate gives
$\varphi_i(yz_i)\to\varphi(yz)$. Therefore
$\varphi(zy)=\varphi(yz)$ for every $y\in A$, and thus
$z\in M_\varphi$. Hence,
$\mathcal N\subseteq M_\varphi$.
\end{proof}
\noindent The next general Lemma will be applied to the relative centralizer map.

\begin{lemma}\label{lem:semicontinuous-coordinate-map}
Let $(M,\tau)$ be a finite von Neumann algebra with separable predual, let
$Z$ be a Baire space, and let $\Phi:Z\to\Sub(M)$ be a map. For $x\in M$, define
$$
f_x:Z\to[0,+\infty)\quad f_x(z):=\Vert E_{\Phi(z)}(x)\Vert_2.
$$
Assume that either every $f_x$, $x\in M$, is upper semicontinuous, or every
$f_x$, $x\in M$, is lower semicontinuous. Then
the set $\operatorname{Cont}(\Phi)$ of points of continuity of $\Phi$ is a dense $G_\delta$ subset of $Z$.
\end{lemma}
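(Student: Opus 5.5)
We will reduce continuity of $\Phi$ to continuity of countably many real-valued functions, and then apply the classical Baire theorem for semicontinuous functions. Since $M$ has separable predual, fix a sequence $(x_k)_{k\geq1}$ in $M$ that is $\Vert\cdot\Vert_2$-dense in $M$. The first step is to show that the Effros--Mar\'echal topology is already the initial topology of the countably many maps $N\mapsto\Vert E_N(x)\Vert_2$ with $x$ ranging over the countable set $\mathcal D$ of finite $\Q[i]$-linear combinations of the $x_k$.

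For this, recall that the EM-topology is the smallest topology making all $N\mapsto\Vert E_N(x)\Vert_2$, $x\in M$, continuous. Since each $E_N$ is a $\Vert\cdot\Vert_2$-contraction, for any $x\in M$ and $y\in\mathcal D$ we have
$$\bigl|\Vert E_N(x)\Vert_2-\Vert E_N(y)\Vert_2\bigr|\leq\Vert x-y\Vert_2$$
uniformly in $N$. Hence $N\mapsto\Vert E_N(x)\Vert_2$ is a uniform limit of maps indexed by $\mathcal D$. Consequently a map $\Phi:Z\to\Sub(M)$ is continuous at $z$ as soon as $f_y$ is continuous at $z$ for every $y\in\mathcal D$: $f_x$ is then continuous at $z$ for every $x\in M$ by the uniform approximation, and continuity of $\Phi$ at $z$ follows from the definition of the initial topology. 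Therefore
$$\operatorname{Cont}(\Phi)\supseteq\bigcap_{y\in\mathcal D}\operatorname{Cont}(f_y),$$
and the reverse inclusion is clear since each $f_y$ is the composition of $\Phi$ with a continuous map.

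The second step is the classical fact that a semicontinuous real function $f$ on a Baire space, here $f_y$ with values in $[0,\Vert y\Vert_2]$, has a dense $G_\delta$ set of continuity points. In the upper semicontinuous case, for rational $q$ and $n\geq1$ we set
$$U_{q,n}:=\{z: f(z)<q\}\setminus\overline{\{z:f(z)<q-1/n\}}^{\,c}\text{-type sets},$$
or more cleanly we use the oscillation. Let $F_n:=\{z:\operatorname{osc}_f(z)\geq1/n\}$, which is closed. We show $F_n$ has empty interior. If $V\neq\emptyset$ is open, $f|_V$ is bounded, so choose $z_0\in V$ with $f(z_0)>\sup_V f-1/(3n)$. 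By upper semicontinuity, the set $W=\{z\in V: f(z)<f(z_0)+1/(3n)\}$ is an open neighbourhood of $z_0$. On $W$, however, we only get an upper bound, and the lower bound must come from picking $z_0$ near a local supremum. To handle this, we shrink iteratively, or invoke the standard argument: $\{f\geq t\}$ is closed, so the boundaries $\partial\{f\geq t\}$ for rational $t$ are closed and nowhere dense, and a point outside all of them is a continuity point. Then $\operatorname{Cont}(f)$ contains the complement of a countable union of closed nowhere dense sets. The lower semicontinuous case is symmetric, using $\{f>t\}$ open.

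Finally we intersect countably many dense $G_\delta$ sets, namely $\operatorname{Cont}(f_y)$ for $y\in\mathcal D$. Since $Z$ is Baire, the intersection is dense, and it is a $G_\delta$ because each $\operatorname{Cont}(f_y)$ is. By the first step, this intersection equals $\operatorname{Cont}(\Phi)$. The main point of care is the first reduction, namely justifying that continuity at a point, and not just global continuity, of the countably many coordinate functions suffices. The uniform $\Vert\cdot\Vert_2$-Lipschitz estimate above handles this, and the rest is the standard Baire argument.
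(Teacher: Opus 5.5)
Your proposal is correct and follows the paper's route. You reduce $\operatorname{Cont}(\Phi)$ to $\bigcap_k\operatorname{Cont}(f_{x_k})$ over a countable $\Vert\cdot\Vert_2$-dense family, where your uniform estimate $\bigl|\Vert E_N(x)\Vert_2-\Vert E_N(y)\Vert_2\bigr|\leq\Vert x-y\Vert_2$ makes explicit why these countably many coordinates suffice (the paper only asserts it); you then apply the Baire-category theorem for semicontinuous functions, which the paper cites from Fort and which your argument with the closed nowhere dense sets $\partial\{f\geq t\}$ (resp.\ $\partial\{f>t\}$), $t\in\mathbb{Q}$, proves directly, and intersect, but you should delete the garbled definition of $U_{q,n}$ and the abandoned oscillation attempt, keeping only the closedness of $\{\operatorname{osc}_f\geq 1/n\}$ to see that $\operatorname{Cont}(f)$ is a $G_\delta$.
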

\begin{proof}
Choose an $\LL^2$-dense sequence $(x_k)_{k\geq1}$ in $M$. The maps
$$
\Sub(M)\ni N\mapsto\Vert E_N(x_k)\Vert_2,
\quad k\geq1,
$$
generate the Effros--Mar\'echal topology. For each $k$, apply
\cite[Theorem~1]{fort1955category} to $f_{x_k}$ in the lower
semicontinuous case and to $-f_{x_k}$ in the upper semicontinuous case. The
set $\operatorname{Cont}(f_{x_k})$ contains a residual $G_\delta$ subset of
$Z$. Since the continuity set of a real-valued function is a $G_\delta$
subset of its domain and $Z$ is a Baire space,
$\operatorname{Cont}(f_{x_k})$ is a dense $G_\delta$ subset of $Z$. Hence $\operatorname{Cont}(\Phi)=\bigcap_{k\geq1}\operatorname{Cont}(f_{x_k})$
is a dense $G_\delta$ subset of $Z$.
\end{proof}

\noindent For the centralizer map, neither of the semicontinuity hypotheses of the
previous lemma is available directly. What is available is the one-sided
statement of Lemma~\ref{lem:fix-is-subalgebra} $(3)$. The next lemma shows that this
containment, together with relative compactness of the range, already forces
upper semicontinuity of all the coordinate maps, and hence a dense $G_\delta$ of
continuity points.
\medskip

\begin{lemma}
\label{lem:Baire-compact-range}
Let $(M,\tau)$ be a finite von Neumann algebra with separable
predual, $Z$ be a Baire space and $\Phi:Z\to\Sub(M)$ be any map.  If
\begin{enumerate}
\item $\Phi(Z)$ is relatively compact in $\Sub(M)$;
\item whenever a net $(z_i)_i$ in $Z$ satisfies $z_i\to z$ and
$\Phi(z_i)\to\mathcal N$ in the Effros--Mar\'echal topology, one has
$\mathcal N\subseteq\Phi(z)$,
\end{enumerate}
then the set $\operatorname{Cont}(\Phi)$ of points of continuity of $\Phi$ is a dense $G_\delta$
subset of $Z$. \end{lemma}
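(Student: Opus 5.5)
The plan is to reduce to Lemma~\ref{lem:semicontinuous-coordinate-map}: we show that every coordinate map $f_x(z)=\Vert E_{\Phi(z)}(x)\Vert_2$ is upper semicontinuous on $Z$. Once this is established, that lemma gives directly that $\operatorname{Cont}(\Phi)$ is a dense $G_\delta$ subset of $Z$.

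Fix $x\in M$ and $z\in Z$, and argue by contradiction. Suppose $f_x$ is not upper semicontinuous at $z$. Then there exist $\varepsilon>0$ and a net $z_i\to z$ with
$$\Vert E_{\Phi(z_i)}(x)\Vert_2\geq\Vert E_{\Phi(z)}(x)\Vert_2+\varepsilon\quad\text{for all } i.$$
By hypothesis (1) the net $(\Phi(z_i))_i$ lies in a compact subset of $\Sub(M)$, so after passing to a subnet we may assume $\Phi(z_i)\to\mathcal N$ in the Effros--Mar\'echal topology. (Since $z_i\to z$ is preserved under subnets, nets rather than sequences are used here; no metrizability of $Z$ is needed.) Hypothesis (2) then gives $\mathcal N\subseteq\Phi(z)$. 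Continuity of $N\mapsto\Vert E_N(x)\Vert_2$ on $\Sub(M)$ yields $\Vert E_{\Phi(z_i)}(x)\Vert_2\to\Vert E_{\mathcal N}(x)\Vert_2$.

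The key monotonicity step is the following. If $\mathcal N\subseteq Q$, then $E_{\mathcal N}=E_{\mathcal N}\circ E_Q$, and $E_{\mathcal N}$ is a $\Vert\cdot\Vert_2$-contraction (it is an orthogonal projection on $\LL^2$). Hence
$$\Vert E_{\mathcal N}(x)\Vert_2\leq\Vert E_Q(x)\Vert_2 .$$
Applying this with $Q=\Phi(z)$ and passing to the limit in the displayed inequality gives
$$\Vert E_{\Phi(z)}(x)\Vert_2+\varepsilon\leq\Vert E_{\mathcal N}(x)\Vert_2\leq\Vert E_{\Phi(z)}(x)\Vert_2,$$
which is a contradiction. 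Therefore every $f_x$ is upper semicontinuous, and Lemma~\ref{lem:semicontinuous-coordinate-map} concludes the proof.

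The only delicate point is the subnet extraction. It requires relative compactness (hypothesis (1)) to produce a limit $\mathcal N$ at all, which is exactly why this hypothesis is needed. Once a limit exists, the rest follows from monotonicity of $\Vert E_N(x)\Vert_2$ under inclusion of subalgebras.
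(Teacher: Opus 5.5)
Your proof is correct and follows the paper's argument essentially step for step. Like the paper, you prove upper semicontinuity of each $f_x$ by contradiction, extracting a convergent subnet from relative compactness, then using hypothesis (2) and the monotonicity $\Vert E_{\mathcal N}(x)\Vert_2=\Vert E_{\mathcal N}(E_{\Phi(z)}(x))\Vert_2\leq\Vert E_{\Phi(z)}(x)\Vert_2$, and you conclude with Lemma~\ref{lem:semicontinuous-coordinate-map}.
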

\begin{proof}
For $x\in M$, define $f_x:Z\to\mathbb R$, $f_x(z):=\Vert E_{\Phi(z)}(x)\Vert_2$.  We first claim that the function $f_x$ is upper
semicontinuous.  Indeed, otherwise there would exist $\varepsilon>0$ and a net $(z_i)_i$ converging to $z$ such that $f_x(z_i)\geq f_x(z)+\varepsilon$ for every $i$.  After passing
to a subnet, relative compactness gives
$\Phi(z_i)\to\mathcal N$ so $f_x(z_i)=\Vert E_{\Phi(z_i)}(x)\Vert_2\to\Vert E_{\mathcal{N}}(x)\Vert_2$.  Hence, $f_x(z)+\varepsilon
\leq\Vert E_{\mathcal{N}}(x)\Vert_2$ and assumption {\rm(2)} yields
$\mathcal N\subseteq\Phi(z)$ so
$$
f_x(z)+\varepsilon
\leq \Vert E_{\mathcal{N}}(x)\Vert_2=\Vert E_{\mathcal{N}}(E_{\Phi(z)}(x))\Vert_2
\leq \Vert E_{\Phi(z)}(x)\Vert_2
=f_x(z),
$$
a contradiction.
The conclusion follows from
Lemma~\ref{lem:semicontinuous-coordinate-map}.
\end{proof}
\noindent The following general Lemma will be useful to construct URA.
\begin{lemma}\label{lem:Minimality from a continuity point}
Let $\Gamma\curvearrowright Z$ be a minimal action on a compact Hausdorff
space, let $\Gamma\curvearrowright Y$ be an action by homeomorphisms on a
regular Hausdorff space, and let $\Phi:Z\to Y$ be $\Gamma$-equivariant.  If
$z_0\in Z$ is a point of continuity of $\Phi$, then $\overline{\Gamma\cdot\Phi(z_0)}$ is a minimal closed $\Gamma$-invariant subset of $Y$. Moreover $\overline{\Gamma\cdot\Phi(z_0)}=\overline{\Phi(\operatorname{Cont}(\Phi))}$.
\end{lemma}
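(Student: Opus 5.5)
Write $y_0:=\Phi(z_0)$ and $\mathcal O:=\overline{\Gamma\cdot y_0}$. The set $\mathcal O$ is closed and $\Gamma$-invariant, since $\Gamma$ acts by homeomorphisms. For minimality, I will show that $y_0\in\overline{\Gamma\cdot y}$ for every $y\in\mathcal O$. Then every nonempty closed invariant subset of $\mathcal O$ contains $y_0$, and hence contains all of $\mathcal O$.

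The first step is the following claim: every $y\in\mathcal O$ lies in $\overline{\Phi(\Gamma\cdot z)}$ for some $z\in Z$ with $y\in\overline{\Phi(\ldots)}$ in a controlled sense. More precisely, I only need the following. Fix $y\in\mathcal O$ and an open neighbourhood $W$ of $y_0$. By regularity of $Y$, choose an open $W'\ni y_0$ with $\overline{W'}\subseteq W$. By continuity of $\Phi$ at $z_0$, choose an open $V\ni z_0$ with $\Phi(V)\subseteq W'$.

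By minimality of $Z$ and compactness, finitely many translates cover $Z$: $Z=\bigcup_{k=1}^n g_k^{-1}V$. Now write $y=\lim_i h_i y_0=\lim_i\Phi(h_iz_0)$ along a net. For each $i$, pick $k(i)$ with $g_{k(i)}h_iz_0\in V$. Passing to a subnet, $k(i)=k$ is constant. Then
$$g_kh_iy_0=\Phi(g_kh_iz_0)\in W'.$$
Since $g_k$ is a homeomorphism, $g_kh_iy_0\to g_ky$, so $g_ky\in\overline{W'}\subseteq W$. Thus $\Gamma\cdot y$ meets every neighbourhood of $y_0$, which gives $y_0\in\overline{\Gamma\cdot y}$, and minimality follows.

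For the second assertion, the inclusion $\overline{\Gamma\cdot\Phi(z_0)}\subseteq\overline{\Phi(\operatorname{Cont}(\Phi))}$ holds because $\operatorname{Cont}(\Phi)$ is $\Gamma$-invariant: $\Phi\circ g=g\circ\Phi$, and $g$ is a homeomorphism on both spaces. For the converse, let $z_1\in\operatorname{Cont}(\Phi)$. By minimality of $Z$ there is a net with $h_iz_0\to z_1$, and continuity of $\Phi$ at $z_1$ gives
$$\Phi(z_1)=\lim_i\Phi(h_iz_0)=\lim_i h_iy_0\in\mathcal O.$$
Hence $\Phi(\operatorname{Cont}(\Phi))\subseteq\mathcal O$, and taking closures gives equality. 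As a side remark, applying the first part to $z_1$ also shows $\overline{\Gamma\Phi(z_1)}=\mathcal O$ by minimality, although this is not needed.

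The main obstacle is the first step, and specifically where regularity and compactness enter. Regularity is needed to pass from $g_kh_iy_0\in W'$ to the limit lying in $W$. Compactness, through the finite cover coming from syndeticity, is needed to make the translating element $g_k$ independent of $i$ after passing to a subnet. Continuity at the single point $z_0$ is exactly what allows $V$ to be pulled back from $W'$.
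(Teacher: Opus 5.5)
Your proof is correct and is essentially the paper's argument. Regularity gives $\overline{W'}\subseteq W$, continuity at $z_0$ gives $V$, a single group element pushes a cofinal part of the net $(h_iz_0)_i$ into $V$, and passing to the limit puts a translate of $y$ in $W$; the second assertion is handled exactly as in the paper. The only difference is how you get that fixed group element: you take a finite cover $Z=\bigcup_k g_k^{-1}V$ and pigeonhole along the net, whereas the paper extracts a cluster point $z$ of $(h_iz_0)_i$ and uses minimality to move $z$ into $V$. You should also delete the garbled ``first step'' sentence, which your next sentence supersedes anyway.
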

\begin{proof}
Put $H:=\overline{\Gamma\cdot\Phi(z_0)}$, and let $y\in H$.  Choose a net
$(g_i)_i$ in $\Gamma$ such that
\[
\Phi(g_i z_0)=g_i\Phi(z_0)\longrightarrow y.
\]
After passing to a subnet, compactness of $Z$ gives $g_i z_0\to z$ for some
$z\in Z$.  By minimality, there is a net $(h_j)_j$ in $\Gamma$ such that
$h_jz\to z_0$.
Let $W$ be an arbitrary open neighborhood of $\Phi(z_0)$.  By regularity,
choose an open set $U$ such that
\[
\Phi(z_0)\in U\subset\overline U\subset W.
\]
Continuity at $z_0$ gives an open neighborhood $V$ of $z_0$ such that
$\Phi(V)\subset U$.  Choose $j$ such that $h_jz\in V$.  Then, for all
sufficiently large $i$, one has $h_jg_i z_0\in V$, and therefore $h_jg_i\Phi(z_0)=\Phi(h_jg_i z_0)\in U$. Passing to the limit in $i$ gives $h_jy\in\overline U\subset W$.  Thus every
neighborhood of $\Phi(z_0)$ meets $\Gamma\cdot y$, so $\Phi(z_0)\in\overline{\Gamma\cdot y}$. Since $H=\overline{\Gamma\cdot\Phi(z_0)}$, it follows that
$\overline{\Gamma\cdot y}=H$.  Hence $H$ is minimal.
\medskip
\noindent Continuity of both $\Gamma$-actions and equivariance of $\Phi$
imply that $Z_0:=\operatorname{Cont}(\Phi)$ is $\Gamma$-invariant.
Consequently, $\Gamma\cdot\Phi(z_0)=\Phi(\Gamma\cdot z_0)\subseteq\Phi(Z_0)$
hence $H\subseteq\overline{\Phi(Z_0)}$. Conversely, let $z\in Z_0$.
Minimality of $Z$ gives a net $(g_i)_i$ in $\Gamma$ such that
$g_iz_0\to z$. Continuity of $\Phi$ at $z$ and equivariance give
$g_i\Phi(z_0)=\Phi(g_iz_0)\longrightarrow\Phi(z)$.
Thus $\Phi(Z_0)\subseteq H$.
\end{proof}
\noindent To apply the previous Lemma and construct URA we need minimality. This is obtained as follows: the state space $K$ is weak-$*$ compact, so Zorn's lemma applies. The next lemma harvests its consequence.
\begin{lemma}\label{lem:hypertrace-space}The set $K$ is a compact, convex $\Gamma$-space. Furthermore, there exists a minimal, non-empty, closed, $\Gamma$-invariant subset $\mathcal{Z} \subset K$.\end{lemma}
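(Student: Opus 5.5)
The plan is to verify the four ingredients directly: non-emptiness, convexity, weak-$*$ compactness, and invariance of $K$ under a continuous $\Gamma$-action. Minimality then follows from Zorn's lemma.

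\emph{Non-emptiness.} The trace $\tau$ is a state on the C$^*$-subalgebra $M\subseteq\mathbb{B}(\mathcal H)$, so by the Hahn--Banach theorem (equivalently, Krein's extension theorem for states) it extends to a state $\varphi$ on $\mathbb{B}(\mathcal H)$. Then $\varphi|_M=\tau$, so $\varphi\in K$.

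\emph{Convexity and compactness.} Convexity is clear, since the restriction condition is affine. For compactness, $K$ is the intersection of the weak-$*$ compact state space $S(\mathbb{B}(\mathcal H))$ (Banach--Alaoglu) with the weak-$*$ closed sets $\{\varphi:\varphi(x)=\tau(x)\}$, $x\in M$. Hence $K$ is weak-$*$ compact.

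\emph{The $\Gamma$-action.} Define $(g\cdot\varphi)(T):=\varphi(u_g^*Tu_g)=\varphi(\alpha_{g^{-1}}(T))$. This is the convention already used in Lemma~\ref{lem:fix-is-subalgebra}(2). Each $g\cdot\varphi$ is again a state. For $x\in M$ we have $\alpha_{g^{-1}}(x)\in M$, so
\[
(g\cdot\varphi)(x)=\tau(\alpha_{g^{-1}}(x))=\tau(x)
\]
by $\tau$-invariance of the action. Thus $K$ is $\Gamma$-invariant. Moreover $(gh)\cdot\varphi=g\cdot(h\cdot\varphi)$ because $u_{gh}=u_gu_h$. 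Each map $\varphi\mapsto g\cdot\varphi$ is weak-$*$ continuous, being the transpose of the norm-continuous map $\alpha_{g^{-1}}$, and it is affine. So $K$ is a compact convex $\Gamma$-space.

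\emph{Minimal subset via Zorn's lemma.} Order the family $\mathcal F$ of non-empty closed $\Gamma$-invariant subsets of $K$ by reverse inclusion; $\mathcal F$ is non-empty since $K\in\mathcal F$. A chain in $\mathcal F$ consists of closed sets with the finite intersection property inside the compact space $K$. Its intersection is therefore non-empty, and it is closed and $\Gamma$-invariant, so it is a lower bound for the chain in $\mathcal F$. Zorn's lemma then yields a minimal element $\mathcal Z$, which is the required minimal non-empty closed $\Gamma$-invariant subset.

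There is no real obstacle here. The only point needing care is the invariance of $K$: it uses both $\alpha_g(M)=M$ and $\tau\circ\alpha_g=\tau$, with the action on states taken with the inverse, consistent with the rest of the section.
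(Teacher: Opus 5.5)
Your proof is correct and follows the paper's argument step by step: Hahn--Banach for non-emptiness, $K$ as a weak-$*$ closed affine slice of the compact state space, the action $(g\cdot\varphi)(T)=\varphi(u_g^*Tu_g)$ with invariance coming from $\alpha_g(M)=M$ and $\tau\circ\alpha_g=\tau$, and Zorn's lemma via the finite intersection property. One wording slip: under reverse inclusion the intersection of a chain is an \emph{upper} bound and Zorn yields a \emph{maximal} element, which is the inclusion-minimal set you want; either order by ordinary inclusion or change ``lower bound'' and ``minimal element'' to ``upper bound'' and ``maximal element''.
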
%% [TEAL REPLACEMENT L6.1-proof] The original passage is preserved verbatim inside the \iffalse block; the corrected version follows in teal.
\begin{proof}
The state $\tau$ on the unital $C^*$-subalgebra $M\subseteq
\mathbb B(\mathcal H)$ extends to a state on $\mathbb B(\mathcal H)$.
Consequently, $K\neq\emptyset$.  The state space
$S(\mathbb B(\mathcal H))$ is weak-$*$ compact, and
$$
K=\bigcap_{x\in M}
\{\varphi\in S(\mathbb B(\mathcal H)):\varphi(x)=\tau(x)\}
$$
is weak-$*$ closed.  Hence $K$ is compact.  The defining equations of $K$
are affine, so $K$ is convex. For $g\in\Gamma$, $\varphi\in S(\mathbb B(\mathcal H))$, and
$T\in\mathbb B(\mathcal H)$, set $(g\cdot\varphi)(T):=\varphi(u_g^*Tu_g)$. This defines a weak-$*$ continuous affine action.  If $\varphi\in K$ and
$x\in M$, then
$$
(g\cdot\varphi)(x)
=\varphi(\alpha_{g^{-1}}(x))
=\tau(\alpha_{g^{-1}}(x))
=\tau(x).
$$
Thus $K$ is $\Gamma$-invariant.
\medskip
\noindent Let $\mathcal F$ be the family of all non-empty weak-$*$ closed
$\Gamma$-invariant subsets of $K$, ordered by reverse inclusion.  The family
$\mathcal F$ is non-empty because $K\in\mathcal F$.  Let
$(F_i)_{i\in I}$ be a chain in $\mathcal F$.  Every finite subfamily of
$(F_i)_{i\in I}$ has a non-empty intersection, since the family is totally
ordered by inclusion.  Compactness of $K$ therefore gives $F:=\bigcap_{i\in I}F_i\neq\emptyset$. The set $F$ is weak-$*$ closed and $\Gamma$-invariant, so
$F\in\mathcal F$.  Moreover, $F$ is an upper bound of the chain for the
reverse-inclusion order.  Zorn's lemma yields a maximal element
$\mathcal Z\in\mathcal F$ for that order.\end{proof}
\noindent We can finally state our main result on the state space construction.
\begin{theorem}\label{thm:general-ura-construction}
Let $\mathcal{Z}\subset K$ be any minimal, non-empty, closed, $\Gamma$-invariant subset and $\Phi\vert_{\mathcal{Z}}:\mathcal{Z}\to\Sub(M)$ be the restriction of the centralizer map relative to $A$ and $\operatorname{Cont}(\Phi\vert_{\mathcal{Z}})\subseteq\mathcal{Z}$ be the set of continuity points of $\Phi\vert_{\mathcal{Z}}$. The following holds:
\begin{enumerate}
    \item If
$\varphi_0\in\operatorname{Cont}(\Phi\vert_{\mathcal{Z}})$ then $\mathcal{H} = \overline{ \left\{ \alpha_g(M_{\varphi_0}) \;\middle|\; g \in \Gamma \right\} }^{\,\mathrm{EM}}$ is a URA.
\item If $\Phi(\mathcal{Z})$ is relatively compact in $\Sub(M)$ then $\operatorname{Cont}(\Phi\vert_{\mathcal{Z}})$ is $G_\delta$-dense in $\mathcal{Z}$.
\end{enumerate}
\end{theorem}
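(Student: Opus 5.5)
Both parts are assembled from the lemmas proved just above, applied to the restricted map $\Phi\vert_{\mathcal Z}$; the only genuine work is checking their hypotheses.

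For (1), we apply Lemma~\ref{lem:Minimality from a continuity point} with $Z:=\mathcal Z$ and $Y:=\Sub(M)$, and with $\Phi\vert_{\mathcal Z}$ in place of $\Phi$. Four hypotheses need checking.
\begin{enumerate}
\item The action $\Gamma\curvearrowright\mathcal Z$ is minimal by the choice of $\mathcal Z$. The space $\mathcal Z$ is compact Hausdorff, being a weak-$*$ closed subset of $K$, which is compact by Lemma~\ref{lem:hypertrace-space}.
\item The action $\Gamma\curvearrowright\Sub(M)$, $N\mapsto\alpha_g(N)$, is by homeomorphisms, as recorded at the start of Section~\ref{sec:URA} via $E_{\alpha_g(N)}=\alpha_g\circ E_N\circ\alpha_{g^{-1}}$.
\item The space $\Sub(M)$ is regular Hausdorff because it is Polish, hence metrizable.
\item The map $\Phi\vert_{\mathcal Z}$ is $\Gamma$-equivariant by Lemma~\ref{lem:fix-is-subalgebra}(2), and it takes values in $\Sub(M)$ by Lemma~\ref{lem:fix-is-subalgebra}(1).
\end{enumerate}
The lemma then says that $\overline{\Gamma\cdot M_{\varphi_0}}=\overline{\{\alpha_g(M_{\varphi_0}):g\in\Gamma\}}$ is a minimal closed $\Gamma$-invariant subset of $\Sub(M)$, i.e.\ a URA. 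The same lemma also gives the independence statement $\overline{\Phi(\operatorname{Cont}(\Phi))}$ announced in Theorem~\ref{thm:D}.

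For (2), we apply Lemma~\ref{lem:Baire-compact-range} with $Z:=\mathcal Z$. Again four checks are needed.
\begin{enumerate}
\item The space $\mathcal Z$ is a Baire space, being compact Hausdorff.
\item The range $\Phi(\mathcal Z)$ is relatively compact in $\Sub(M)$ by assumption.
\item Suppose $\varphi_i\to\varphi$ in $\mathcal Z$ and $M_{\varphi_i}\to\mathcal N$. Then $\mathcal N\subseteq M_\varphi$, which is exactly Lemma~\ref{lem:fix-is-subalgebra}(3); this applies because the $\varphi_i$ and $\varphi$ lie in $\mathcal Z\subseteq K$.
\item Continuity points are computed for the restricted map on the subspace $\mathcal Z$. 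Nets in $\mathcal Z$ converge in the relative topology, so hypothesis (2) of Lemma~\ref{lem:Baire-compact-range} holds as stated.
\end{enumerate}
The conclusion is that $\operatorname{Cont}(\Phi\vert_{\mathcal Z})$ is a dense $G_\delta$ subset of $\mathcal Z$.

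I expect no real obstacle, because all the analytic content sits in the earlier lemmas. The point most in need of care is hypothesis (2) of Lemma~\ref{lem:Baire-compact-range}. There the convergence $\Phi(z_i)\to\mathcal N$ holds for an arbitrary net, so any limit subalgebra $\mathcal N$ must be controlled. Lemma~\ref{lem:fix-is-subalgebra}(3) handles this through the Cauchy--Schwarz estimate using $\varphi_i\vert_M=\tau$, so it is enough to cite it verbatim. Relative compactness of the range is genuinely required in (2): without it the upper semicontinuity argument breaks down, which is consistent with Theorem~\ref{ThmEmbedding}.
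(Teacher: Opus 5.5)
Your proposal is correct and follows the paper's own proof. Part (1) is Lemma~\ref{lem:Minimality from a continuity point} applied to $\Phi\vert_{\mathcal Z}$, with equivariance supplied by Lemma~\ref{lem:fix-is-subalgebra}(2), and part (2) is Lemma~\ref{lem:Baire-compact-range}, with its closedness hypothesis supplied by Lemma~\ref{lem:fix-is-subalgebra}(3); your hypothesis checks (compactness and hence the Baire property of $\mathcal Z$, regularity of $\Sub(M)$) spell out what the paper's one-line proof leaves implicit.
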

\begin{proof}
$(1)$ is a direct consequence of Lemmas \ref{lem:fix-is-subalgebra} and \ref{lem:Minimality from a continuity point} and $(2)$ follows from Lemmas \ref{lem:fix-is-subalgebra} and \ref{lem:Baire-compact-range}.
\end{proof}

\noindent In the Remark below we discuss the degenerated case $A=\B(H)$.
\begin{remark}\label{rmk:maximal-test-algebra-amenable-ura}
Assume that $A=\mathbb B(\mathcal H)$. The following holds.
\begin{itemize}
\item For every $\varphi\in K$, $M_\varphi$ is amenable.
\end{itemize}
Indeed, this is a direct consequence of Proposition~\ref{prop:amenability-hypertrace}
\begin{itemize}
\item Every URA obtained from Theorem~\ref{thm:general-ura-construction} is amenable.
\end{itemize}
It follows from the first point and Remark \ref{rmk-amenbale-URA}.
\begin{itemize}
\item If $M=\LL(\Gamma)\subseteq A=\mathbb B(\ell^2(\Gamma))$ and $\Gamma$ is
$C^*$-simple then every URA obtained from Theorem~\ref{thm:general-ura-construction} is trivial.
\end{itemize}
If $\Ucal$ is such an URA then it is amenable by the previous point and trivial by Corollary \ref{CorURACrossed}.
\end{remark}
\noindent Part~{\rm(2)} of
Theorem~\ref{thm:general-ura-construction} cannot be strengthened to
continuity on all of $\mathcal Z$: the centralizer range may be relatively
compact while the centralizer map is discontinuous as shown by the following example.
\begin{example}
\label{ex:minimal-discontinuous-centralizer-compact-range}
Fix $\theta\in\mathbb R\setminus\mathbb Q$, let $X:=\mathbb R/\mathbb Z$, $\Gamma:=D_\infty=\langle a,b\mid b^2=e,\ bab=a^{-1}\rangle$ and define the action $\Gamma\curvearrowright X$ by 
$a\cdot x:=x+\theta$ and $b\cdot x:=-x$. The action $\Gamma\curvearrowright X$ is minimal because the rotation by
$\theta$ is minimal. Put $M:=\LL(\Gamma)$ and $C:=C(X)\rtimes_r\Gamma$ and form the full amalgamated free product
$\mathcal A:=M*_{C_r^*(\Gamma)}C$. For $x\in X$, let $\rho_x:\mathcal A\to\mathbb B(\ell^2(\Gamma))$ be the
representation whose restriction to $M$ is the standard representation and
whose restriction to $C$ is determined by
$$
\rho_x(f)\delta_g=f(gx)\delta_g,
\quad
\rho_x(\lambda_s)\delta_g=\delta_{sg}\quad\text{for all }f\in C(X), g,s\in\Gamma.
$$
These two representations agree on $C_r^*(\Gamma)$, so $\rho_x$ is well
defined. Put $x_0:=0+\Z\in\mathbb R/\mathbb Z$, $D:=\Gamma x_0$ and let
$H:=\bigoplus_{x\in D}\ell^2(\Gamma)$, $\rho:=\bigoplus_{x\in D}\rho_x$ and $A:=\rho(\mathcal A)\subseteq\mathbb B(H)$. The Hilbert space $H$ is separable. Since every $\rho_x|_M$ is
the standard faithful representation, $\rho|_M$ is faithful; we identify
$M$ with its image in $A$.
\medskip
\noindent Let $V$ be the set of all $c\in\mathcal A$ such that, for every
$\xi\in\ell^2(\Gamma)$, both maps
$$
X\ni x\longmapsto\rho_x(c)\xi\in\ell^2(\Gamma)
\quad
X\ni x\longmapsto\rho_x(c^*)\xi\in\ell^2(\Gamma)
$$
are norm-continuous. The set $V$ is a norm-closed involutive subalgebra of $\mathcal A$. Indeed,
linearity and stability under involution are immediate. If $c,d\in V$,
$x_0\in X$ and $\xi\in\ell^2(\Gamma)$, then
$$
\|\rho_x(cd)\xi-\rho_{x_0}(cd)\xi\|
\leq \|c\|\,\|\rho_x(d)\xi-\rho_{x_0}(d)\xi\|+\|(\rho_x(c)-\rho_{x_0}(c))\rho_{x_0}(d)\xi\|,
$$
and both terms tend to $0$ as $x\to x_0$. Applying the same argument to
$(cd)^*=d^*c^*$ gives $cd\in V$. If $c_n\in V$ and $\|c_n-c\|\to0$,
then, for every $x\in X$ and $\xi\in\ell^2(\Gamma)$,
$$
\|\rho_x(c_n-c)\xi\|\leq\|c_n-c\|\,\|\xi\|,
$$
and the analogous estimate holds for $c_n^*-c^*$; hence $c\in V$.
\medskip
\noindent Note that $M\cup C^*_r(\Gamma)\subset V$ and the canonical unitaries $\lambda_s$, $s\in\Gamma$, are
contained in $V$, since their images under $\rho_x$ are independent of $x$. For all $f\in C(X)$ and $\xi\in \ell^2(\Gamma)$
$$
\|\rho_x(f)\xi-\rho_{x_0}(f)\xi\|^2
=\sum_{g\in\Gamma}
|f(gx)-f(gx_0)|^2|\xi(g)|^2
\underset{x\to x_0}{\longrightarrow}0,
$$
by continuity of $f$ and the dominated convergence Theorem. The same argument applies to $f^*$, so $C(X)\subseteq V$.
Consequently, $V$ contains $C=C(X)\rtimes_r\Gamma$ and $M$; since
$\mathcal A=M*_{C_r^*(\Gamma)}C$ is generated by $M$ and $C$, one has
$V=\mathcal A$.
\medskip
\noindent We claim that $\ker\rho\subseteq\ker\rho_x$ for all $x\in X$. Indeed, let $c\in\ker\rho$. Then,
$\rho_d(c)=0$ for every $d\in D$. For $\xi,\eta\in\ell^2(\Gamma)$, the
function $X\ni x\longmapsto\langle\rho_x(c)\xi,\eta\rangle$ is continuous because $c\in V$, and it vanishes on the dense subset $D$.
It therefore vanishes on $X$. Thus $\rho_x(c)=0$ for every $x\in X$, which
proves the claim.
\medskip
\noindent Conjugation by the unitary $u_s:=\bigoplus_{x\in D}\lambda_s$ defines a trace-preserving action of $\Gamma$ on $M$ and leaves $A$
invariant. For $x\in X$, define a state $\omega_x\in S(A)$ by
$$\omega_x(\rho(c)):=\langle\rho_x(c)\delta_e,\delta_e\rangle\quad \text{for all }c\in\mathcal A.$$
This is well defined because $\ker\rho\subseteq\ker\rho_x$. The map
$X\to S(A)$, $x\mapsto\omega_x$ is weak*-continuous (since $V=\mathcal{A}$) and injective because $\omega_x(f)=f(x)$ for all $f\in C(X)$ and the continuous functions separate the points of $X$. Moreover,
$\omega_x|_M=\tau$ and $x\mapsto\omega_x$ is $\Gamma$-equivariant.
\medskip
\noindent Let $r:S(\mathbb B(H))\to S(A)$ be the weak*-weak*-continuous restriction map and put $Y:=r^{-1}(\{\omega_x:x\in X\})$. The Hahn Banach state-extension theorem implies that $Y$ is non-empty. Since $x\mapsto\omega_x$ is continuous and $X$ is compact, $\{\omega_x:x\in X\}$ is compact hence closed. Since $r$ is continuous, $Y$ is closed hence weak*-compact subset of $S(\mathbb B(H))$. Note that $r$ is equivariant since $A$ is $\Gamma$-invariant hence, because $x\mapsto\omega_x$ is equivariant, $Y$ is $\Gamma$-invariant. So $Y$ is a 
compact $\Gamma$-invariant subset of the state space $K:=\{\varphi\in S(\mathbb B(H)):\varphi|_M=\tau\}$. Since $X$ is compact and $S(A)$ is Hausdorff, the continuous injective map $\iota:X\to S(A)$, $x\mapsto\omega_x$ is a homeomorphism onto its image $\Omega$. Hence $q:=\iota^{-1}\circ r|_Y:Y\to X$ is the unique continuous equivariant map satisfying $\varphi|_A=\omega_{q(\varphi)}$ for all $\varphi\in Y$. Choose a minimal non-empty closed $\Gamma$-invariant subset
$\mathcal Z\subseteq Y$. Since $q(\mathcal Z)$ is a non-empty closed
invariant subset of the minimal space $X$, one has $q(\mathcal Z)=X$.
\medskip
\noindent We compute the centralizer relative to $A$. For every $x\in X$,
the inclusion $\ker\rho\subseteq\ker\rho_x$ yields a representation $\widetilde\rho_x:A\to\mathbb B(\ell^2(\Gamma))$, $\widetilde\rho_x(\rho(c)):=\rho_x(c)$ for all $c\in\mathcal A$ which satisfies $\omega_x(y)=\langle\widetilde\rho_x(y)\delta_e,\delta_e\rangle$ for all $y\in A$. Let $\varphi\in Y$ and put $x:=q(\varphi)$, so that
$\varphi|_A=\omega_x$. Let $s\in\Gamma_x$ and define, for $g,s\in\Gamma$, $R_s\delta_g:=\delta_{gs^{-1}}$ so that $R_s$ commutes with the standard left representation of $M$. For $f\in C(X)$ and $g\in\Gamma$, the equality
$s^{-1}x=x$ gives
$$
R_s\widetilde\rho_x(f)\delta_g
=f(gx)\delta_{gs^{-1}}
=f(gs^{-1}x)\delta_{gs^{-1}}
=\widetilde\rho_x(f)R_s\delta_g.
$$
Since $A$ is generated by the images of $M$ and $C(X)$, it follows that $R_s\in\widetilde\rho_x(A)'$. Furthermore, for every $y\in A$,
\begin{align*}
\omega_x(\lambda_sy)
&=\langle\lambda_s\widetilde\rho_x(y)\delta_e,\delta_e\rangle=\langle\widetilde\rho_x(y)\delta_e,\lambda_s^*\delta_e\rangle=\langle\widetilde\rho_x(y)\delta_e,R_s\delta_e\rangle=\langle R_s^*\widetilde\rho_x(y)\delta_e,\delta_e\rangle\\
&=\langle\widetilde\rho_x(y)R_s^*\delta_e,\delta_e\rangle=\langle\widetilde\rho_x(y)\lambda_s\delta_e,\delta_e\rangle=\omega_x(y\lambda_s).
\end{align*}
Since $\varphi|_A=\omega_x$, this proves $\lambda_s\in M_\varphi$. By Lemma~\ref{lem:fix-is-subalgebra} $(1)$ we deduce that $L(\Gamma_x)\subseteq M_\varphi$. Conversely, let $z\in M_\varphi$ and $s\in\Gamma\setminus\Gamma_x$. Since
$sx\neq x$ and $X$ is compact Hausdorff, there exists $f\in C(X)$ such that $f(x)=1$ and $f(sx)=0$. Set $y:=\lambda_s^*f\in A$. Since $z,\lambda_s,f\in A$ and
$\varphi|_A=\omega_x$, one has
$$
\varphi(zy)=\omega_x(z\lambda_s^*f)=\langle\widetilde\rho_x(z)\lambda_s^*
       \widetilde\rho_x(f)\delta_e,\delta_e\rangle=f(x)\langle\widetilde\rho_x(z)\lambda_s^*\delta_e,\delta_e\rangle=f(x)\tau(z\lambda_s^*).$$
Similarly,
\begin{align*}
\varphi(yz)
&=\omega_x(\lambda_s^*fz)
=\langle\lambda_s^*\widetilde\rho_x(f)
       \widetilde\rho_x(z)\delta_e,\delta_e\rangle
=\langle\widetilde\rho_x(f)
       \widetilde\rho_x(z)\delta_e,\lambda_s\delta_e\rangle
=f(sx)\langle\widetilde\rho_x(z)\delta_e,\delta_s\rangle\\
&=f(sx)\tau(z\lambda_s^*).
\end{align*}
The equality $\varphi(zy)=\varphi(yz)$ and the identities $f(x)=1$ and
$f(sx)=0$ imply
$$
\tau(z\lambda_s^*)=0
\quad\text{for all }s\in\Gamma\setminus\Gamma_x.
$$
Hence $z\in \LL(\Gamma_x)$. This shows that $M_\varphi=\LL(\Gamma_{q(\varphi)})$ for all $\varphi\in Y$.
\medskip
\noindent Every element of $\Gamma$ is either a rotation $a^n$ or a reflection $a^nb$ for a unique
$n\in\mathbb Z$. If $n\neq0$, then $a^n$ acts by the irrational rotation
$x\mapsto x+n\theta$ and has no fixed point. Moreover, $a^nb\cdot x=-x+n\theta$, so $
\operatorname{Fix}_X(a^nb)
=
\left\{\frac{n\theta}{2},\frac{n\theta}{2}+\frac12\right\}$ for all $n\in\mathbb Z$. If $n\neq m$, then $(a^nb)(a^mb)=a^{n-m}$ thus no point is fixed by two distinct reflections. Consequently, every
stabilizer is either trivial or generated by a unique reflection. Since
$\Gamma$ is countable and every non-trivial fixed-point set is finite,
$$
X_{\rm free}
:=\{x\in X:\Gamma_x=\{e\}\}
=X\setminus\bigcup_{g\in\Gamma\setminus\{e\}}\operatorname{Fix}_X(g)
$$
is an invariant dense $G_\delta$ subset of $X$. Moreover,
$\Gamma_{x_0}=\langle b\rangle$. Since $q(\mathcal Z)=X$, choose $\varphi_0\in\mathcal Z$ such that
$q(\varphi_0)=x_0$. The set $\mathcal U:=q^{-1}(X_{\rm free})\cap\mathcal Z$ is non-empty because $q(\mathcal Z)=X$, and it is $\Gamma$-invariant
because $q$ is equivariant and $X_{\rm free}$ is $\Gamma$-invariant. Its
closure in $\mathcal Z$ is therefore a non-empty closed $\Gamma$-invariant subset
of the minimal space $\mathcal Z$, so $\overline{\mathcal U}=\mathcal Z$. There exists a net $(\varphi_i)_i$ in $\mathcal U$ converging to
$\varphi_0$. The centralizer formula gives $M_{\varphi_i}=\mathbb C1$ for all $i$ and $M_{\varphi_0}=\LL(\langle b\rangle)\neq\mathbb C1$ so $\Phi|_{\mathcal Z}$ is not continuous at $\varphi_0$.
\medskip
\noindent Finally, put
$\mathcal C
:=\{\{e\}\}\cup
\{\langle a^nb\rangle:n\in\mathbb Z\}
\subseteq\Sub(\Gamma)$. Let $\widehat{\mathbb Z}:=\mathbb Z\cup\{\infty\}$ be the one-point
compactification of the discrete space $\mathbb Z$. Define
$$
\kappa:\widehat{\mathbb Z}\to\Sub(\Gamma),
\quad
\kappa(n):=\langle a^nb\rangle,
\quad
\kappa(\infty):=\{e\}.
$$
For every $g\in\Gamma\setminus\{e\}$, the element $g$ belongs to at most one
subgroup $\langle a^nb\rangle$. Hence, for every $g\in\Gamma$, the coordinate map
$\widehat{\mathbb Z}\ni n\longmapsto
1_{\kappa(n)}(g)$ is continuous at $\infty$, and it is continuous at every
$n\in\mathbb Z$ because $\mathbb Z$ is discrete. Hence, the map $\kappa$ is continuous and
$\mathcal C=\kappa(\widehat{\mathbb Z})$ is compact. By
Lemma~\ref{LemmaChabauty}, $
\{\mathbb C1\}\cup
\{\LL(\langle a^nb\rangle):n\in\mathbb Z\}
$ is compact in $\Sub(M)$. Since
$$
\Phi(\mathcal Z)
\subseteq
\{\mathbb C1\}\cup
\{\LL(\langle a^nb\rangle):n\in\mathbb Z\},
$$
the range $\Phi(\mathcal Z)$ is relatively compact. Thus all the
hypotheses of part~{\rm(2)} of
Theorem~\ref{thm:general-ura-construction} hold, whereas
$\Phi|_{\mathcal Z}$ is not continuous everywhere.
\end{example}
\noindent We can show continuity of the centralizer map under stronger assumptions. We will need the following  general Lemma that we will apply to the centralizer map.
\begin{lemma}
\label{lem:minimal-equicontinuous-propagation}
Let $\Gamma\curvearrowright Z$ be a minimal action on a topological space,
let $\Gamma\curvearrowright(Y,d)$ be an equicontinuous action on a metric
space, and let $\Psi:Z\to Y$ be $\Gamma$-equivariant. If $\Psi$ has a point
of continuity, then $\Psi$ is continuous.
\end{lemma}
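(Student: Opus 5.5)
Let $z_0\in Z$ be a point of continuity of $\Psi$. We show that $\Psi$ is continuous at an arbitrary $z\in Z$ by transporting the continuity at $z_0$ along the orbit of $z$, which is dense by minimality. The tool for the transport is equicontinuity. By definition, for every $\varepsilon>0$ there is $\delta>0$ such that $d(y,y')<\delta$ implies $d(gy,gy')<\varepsilon$ for all $g\in\Gamma$ and all $y,y'\in Y$. We use the uniform version, as is standard for a metric space, and if only pointwise equicontinuity at each point of $Y$ were assumed we would apply it at the point $\Psi(z_0)$, which is all the argument needs.

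Fix $z\in Z$ and $\varepsilon>0$, and let $\delta$ be as above for $\varepsilon/2$. By continuity at $z_0$, choose an open neighborhood $V$ of $z_0$ with $d(\Psi(w),\Psi(z_0))<\delta/2$ for all $w\in V$, so that any two points of $V$ have images at distance less than $\delta$. By minimality the orbit $\Gamma z$ is dense, so there is $h\in\Gamma$ with $hz\in V$. Since $h$ acts by a homeomorphism, $W:=h^{-1}V$ is an open neighborhood of $z$. For $w\in W$, both $hw$ and $hz$ lie in $V$, hence
\[
d(\Psi(hw),\Psi(hz))<\delta .
\]
By equivariance, $\Psi(w)=h^{-1}\Psi(hw)$ and $\Psi(z)=h^{-1}\Psi(hz)$. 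Applying equicontinuity with the element $h^{-1}$ gives $d(\Psi(w),\Psi(z))<\varepsilon/2<\varepsilon$ for all $w\in W$. Therefore $\Psi$ is continuous at $z$.

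There is no real obstacle in this argument. The only points needing care are that the action on $Z$ is by homeomorphisms, so that $h^{-1}V$ is open, and that equicontinuity yields a single $\delta$ valid for all group elements. The latter is exactly what lets us pull back the estimate along $h^{-1}$ independently of which $h$ minimality provides. Compactness of $Z$ is not needed.
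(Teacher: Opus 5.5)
Your proof is correct and is essentially the paper's argument: continuity at $z_0$, a return $hz$ of the dense orbit into a neighborhood of $z_0$, and equicontinuity to pull the estimate back along $h^{-1}$. The only difference is that the paper uses equicontinuity at the single point $\Psi(z_0)$ and bounds $d(\Psi(w),\Psi(z))$ by the triangle inequality through $h^{-1}\Psi(z_0)$. That is how your pointwise fallback has to be written out, since comparing $\Psi(hw)$ directly with $\Psi(hz)$ uses uniform equicontinuity, which is not automatic on a non-compact metric space.
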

\begin{proof}
Fix $z_0\in\operatorname{Cont}(\Psi)$, $z\in Z$, and $\varepsilon>0$.
By equicontinuity at $\Psi(z_0)$, there exists an open neighborhood $W$ of
$\Psi(z_0)$ such that $d(hy,h\Psi(z_0))<\varepsilon/2$ for all $h\in\Gamma$ and $y\in W$. By continuity at $z_0$, there exists an
open neighborhood $U$ of $z_0$ such that $\Psi(U)\subseteq W$.
By minimality, there exists $g\in\Gamma$ such that $gz\in U$. Put
$V:=g^{-1}U$. For every $w\in V$, one has $gw,gz\in U$, and hence
equivariance gives
$$
\begin{aligned}
d(\Psi(w),\Psi(z))
&=d(g^{-1}\Psi(gw),g^{-1}\Psi(gz))\\
&\leq d(g^{-1}\Psi(gw),g^{-1}\Psi(z_0))
+d(g^{-1}\Psi(z_0),g^{-1}\Psi(gz))<\varepsilon.
\end{aligned}
$$
Thus $\Psi$ is continuous at $z$.
\end{proof}
\begin{proposition}
\label{prop:equicontinuous-centralizer-continuity}
Let $\mathcal Z\subset K$ be a minimal non-empty compact
$\Gamma$-invariant subset of the state space $K$ and $
\Phi\vert_{\mathcal{Z}}:\mathcal Z\to\Sub(M)$ the centralizer map relative to $A$. Assume that
$\mathcal C:=\overline{\Phi(\mathcal Z)}^{\,\mathrm{EM}}$
is compact and that the action $\Gamma\curvearrowright\mathcal C$ is
equicontinuous. Then $\Phi\vert_{\mathcal{Z}}$ is continuous.
\end{proposition}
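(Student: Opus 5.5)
The proof will combine Theorem~\ref{thm:general-ura-construction}~(2) with Lemma~\ref{lem:minimal-equicontinuous-propagation}. We apply that lemma to the corestriction $\Psi:=\Phi\vert_{\mathcal Z}:\mathcal Z\to\mathcal C$.

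First we check the hypotheses of the lemma.

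\begin{enumerate}
\item The action $\Gamma\curvearrowright\mathcal Z$ is minimal by assumption.
\item The map $\Psi$ is $\Gamma$-equivariant by Lemma~\ref{lem:fix-is-subalgebra}~(2).
\item The set $\mathcal C$ is $\Gamma$-invariant. Indeed, $\Phi(\mathcal Z)$ is $\Gamma$-invariant by equivariance, and each $\alpha_g$ acts on $\Sub(M)$ by a homeomorphism, so closures of invariant sets are invariant. Hence $\Gamma$ acts on $\mathcal C$ by homeomorphisms.
\item The space $\mathcal C$ is a compact subset of the Polish space $\Sub(M)$. So it is metrizable; we fix a compatible metric $d$ on it. For instance, with an $\LL^2$-dense sequence $(x_k)_k$ in $M$, take
$$d(N,P)=\sum_k 2^{-k}\frac{\Vert E_N(x_k)-E_P(x_k)\Vert_2}{1+\Vert x_k\Vert_2}.$$
\end{enumerate}

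Equicontinuity is a uniform notion, but on a compact metrizable space all compatible metrics are uniformly equivalent. Therefore the assumed equicontinuity of $\Gamma\curvearrowright\mathcal C$ holds for $d$ in the metric sense used in the proof of Lemma~\ref{lem:minimal-equicontinuous-propagation}.

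Next we need one continuity point of $\Psi$. Since $\Phi(\mathcal Z)\subseteq\mathcal C$ and $\mathcal C$ is compact, the range is relatively compact. The space $\mathcal Z$ is compact Hausdorff, hence Baire. Theorem~\ref{thm:general-ura-construction}~(2), resting on Lemma~\ref{lem:Baire-compact-range} and the containment property of Lemma~\ref{lem:fix-is-subalgebra}~(3), then shows that $\operatorname{Cont}(\Phi\vert_{\mathcal Z})$ is a dense $G_\delta$ subset of $\mathcal Z$. In particular it is non-empty.

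Continuity into $\Sub(M)$ at a point is the same as continuity into the subspace $\mathcal C$ at that point. So $\Psi$ has a continuity point, and Lemma~\ref{lem:minimal-equicontinuous-propagation} gives continuity of $\Psi$ everywhere, hence of $\Phi\vert_{\mathcal Z}$.

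The only delicate point is the interpretation of equicontinuity, which we settle via the uniform-equivalence remark for compact metric spaces. Everything else is a direct combination of the earlier results.
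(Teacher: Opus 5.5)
Your proof is correct and follows the paper's argument exactly. You get a continuity point from Theorem~\ref{thm:general-ura-construction}~(2), since the range lies in the compact set $\mathcal C$, and then propagate continuity to all of $\mathcal Z$ with Lemma~\ref{lem:minimal-equicontinuous-propagation}; your remarks on corestricting to $\mathcal C$ and on the metric-independence of equicontinuity on a compact metrizable space only make explicit details the paper leaves implicit.
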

\begin{proof}
By Theorem~\ref{thm:general-ura-construction} $\rm(2)$,
$\Phi|_{\mathcal Z}$ has a point of continuity. Apply
Lemma~\ref{lem:minimal-equicontinuous-propagation}.
\end{proof}
\subsection{Compact URAs}
\label{subsec:compactURAs}
We now show that every compact URA is realized by the minimal
state-space construction of Theorem~\ref{thm:general-ura-construction}.
Lemma~\ref{LemJones} supplies the basic Jones identity, the diagonal-field
Lemma~\ref{lem:diagonal-jones-centralizer} packages the centralizer
calculation used in both the compact and discrete cases, and a
Stone--\v{C}ech argument produces the continuous parametrization required
in the compact case.
\begin{lemma}\label{LemJones}
Let \((M,\tau)\) be a finite von Neumann algebra, represented on
\(\LL^2(M,\tau)\), and let \(N\in\operatorname{Sub}(M)\). Denote by $e_N:\LL^2(M,\tau)\to \LL^2(N,\tau)$ the Jones projection, so that $e_N(a\xi_\tau)=E_N(a)\xi_\tau$ for all $a\in {M}$. Let $\omega\in \B(\LL^2(M,\tau))_*$,
\[
        \omega(T):=\langle T\xi_\tau,\xi_\tau\rangle,
        \qquad T\in \B(\LL^2(M,\tau)).
\]
Let \(\mathcal A\) be the $C^*$-algebra generated by \(M\) and
\(e_N\) inside \(\B(\LL^2(M,\tau))\). Then, for every \(x\in N\) and every
\(y\in\mathcal A\), $\omega(xy)=\omega(yx)$.\end{lemma}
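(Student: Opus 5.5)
The plan is to show that the set of $y\in\mathcal A$ satisfying $\omega(xy)=\omega(yx)$ for every $x\in N$ is a norm-closed subspace. We then check that it contains a norm-dense family of words in $M$ and $e_N$.

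\emph{Reduction.} For fixed $x\in N$, the set $\mathcal A_x:=\{y\in\mathcal A:\omega(xy)=\omega(yx)\}$ is a norm-closed linear subspace, because $\omega$ is norm continuous. The $*$-algebra generated by $M\cup\{e_N\}$ is spanned by $M$ and by words $a_0e_Na_1e_N\cdots e_Na_k$ with $a_j\in M$, and it is norm dense in $\mathcal A$. So it suffices to verify the identity on $M$ and on such words.

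\emph{Case $y\in M$.} Here $\omega(xy)=\tau(xy)=\tau(yx)=\omega(yx)$, since $\omega|_M=\tau$.

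\emph{Case of a word.} Use $e_Nae_N=E_N(a)e_N$ for $a\in M$ to collapse interior factors, so that every word equals $a\,e_N\,b$ with $a,b\in M$. Next use that $x\in N$ commutes with $e_N$; indeed $e_Nx\,c\xi_\tau=E_N(xc)\xi_\tau=xE_N(c)\xi_\tau=xe_N c\xi_\tau$. Also $e_N\xi_\tau=\xi_\tau$, and hence $\xi_\tau=e_N\xi_\tau$ in the adjoint form $\langle T e_N\xi,\xi_\tau\rangle=\langle e_N T\ldots\rangle$ as needed. We then compute
\[
\omega(x a e_N b)=\langle x a e_N b\xi_\tau,\xi_\tau\rangle=\langle x a E_N(b)\xi_\tau,\xi_\tau\rangle=\tau(xaE_N(b)),
\]
and
\[
\omega(a e_N b x)=\langle a E_N(bx)\xi_\tau,\xi_\tau\rangle=\tau(aE_N(b)x),
\]
using $E_N(bx)=E_N(b)x$ for $x\in N$. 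By traciality, $\tau(aE_N(b)x)=\tau(xaE_N(b))$, so the two sides agree. The only real bookkeeping step is justifying that every word reduces to the form $ae_Nb$ (or lies in $M$); this follows from $e_N a e_N=E_N(a)e_N$ applied inductively.
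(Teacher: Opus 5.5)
Your proposal is correct and matches the paper's proof: you reduce by norm density to the $*$-algebra $M+Me_NM$ using $e_Nae_N=E_N(a)e_N$, handle $y\in M$ by traciality, and for $y=ae_Nb$ show that both sides equal $\tau(xaE_N(b))=\tau(aE_N(bx))$. Your remarks that $x$ commutes with $e_N$, and the sentence about $e_N\xi_\tau=\xi_\tau$ ``in adjoint form'', are never used in the computation and can be deleted.
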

\begin{proof}
Let \(\mathcal A_0\) be the \(^*\)-algebra generated by \(M\) and
\(e_N\) inside \(\B(\LL^2(M,\tau))\). By continuity and density, it is enough to show the relation for $y\in\mathcal{A}_0$. We first recall the basic Jones relation $e_Nae_N=E_N(a)e_N$ for all $a\in M$. It follows that $\mathcal{A}_0=M+Me_NM$. It is therefore enough to verify the desired identity for \(y\in M\) and for
\(y=ae_Nb\), with \(a,b\in M\). First, if \(y=a\in M\), then
\[
        \omega(xa)
        =
        \tau(xa)
        =
        \tau(ax)
        =
        \omega(ax),
\]
by traciality of \(\tau\). Now let \(y=ae_Nb\), with \(a,b\in M\). Then, since $x\in N$,
\begin{eqnarray*}
        \omega(xae_Nb)
        &=&
        \langle xaE_N(b)\xi_\tau,\xi_\tau\rangle
        =
        \tau(xaE_N(b))= \tau(aE_N(b)x)=\tau(aE_N(bx))\\
        &=&\langle aE_N(bx)\xi_\tau,\xi_\tau\rangle= \omega(ae_Nbx)
\end{eqnarray*}
By linearity, the equality $\omega(xy)=\omega(yx)$ holds for every \(y\in\mathcal{A}_0\).\end{proof}

\noindent Lemma~\ref{LemJones} says that one Jones projection suffices to make
one prescribed subalgebra central for the vector state $\omega$. To realize an
entire URA we let the subalgebra vary along an index set and assemble the
corresponding Jones projections into a single diagonal operator, which then plays
the role of the test algebra $A$. The next lemma records, once and for all, the
resulting centralizer computation together with the equivariance and isolation
properties. Both the compact realization theorem and the discrete one are read
off from it, the difference between them lying only in the choice of the index
set and in whether the associated state space is minimal.
\medskip

{
\begin{lemma}\label{lem:diagonal-jones-centralizer}
Let $(M,\tau)$ be a finite von Neumann algebra, let $I$ be a set, and let
$(N_i)_{i\in I}$ be a family in $\Sub(M)$. Put $H:=\ell^2(I,\LL^2(M,\tau))$, represent $M$ diagonally on $H$, and define $(T\xi)(i):=e_{N_i}\xi(i)$ for all $\xi\in H,\ i\in I$. Let $A_I:=C^*(M,T)\subseteq\B(H)$, let
$\eta_i:=\delta_i\otimes\xi_\tau$, and let $\omega_i(S):=\langle S\eta_i,\eta_i\rangle$ for all $S\in{\B(H)}$. Then
$$
\omega_i(aTb)=\tau(aE_{N_i}(b))
\quad\text{for all }a,b\in M,\ i\in I.$$
Suppose that $(i_j)_j$ is a net in $I$ such that
$$
N_{i_j}\to N
\quad\text{in }\Sub(M)
\quad\text{and}\quad
\omega_{i_j}\to\varphi
\quad\text{weak-$*$ in }S(\B(H)).
$$
Then $\varphi|_M=\tau$, and the centralizer of $\varphi$ relative to $A_I$ is $M_\varphi=N$. In particular, $M_{\omega_i}=N_i$ for every $i\in I$.
\medskip
\noindent Suppose, in addition, that a group $\Gamma$ acts on $I$, that
$\alpha:\Gamma\curvearrowright(M,\tau)$ is trace preserving, and that
$N_{gi}=\alpha_g(N_i)$ for all $g\in\Gamma,\ i\in I$. Let $u_\alpha$ be the Koopman representation on $\LL^2(M,\tau)$, and
define $(u(g)\xi)(i):=u_\alpha(g)\xi(g^{-1}i)$. Then $u$ is a unitary representation on $H$ satisfying
$u(g)xu(g)^*=\alpha_g(x)$ and $u(g)Tu(g)^*=T$. In particular, $A_I$ is $\Gamma$-invariant and, for the induced action on
states, $g\cdot\omega_i=\omega_{gi}$.
Moreover, every $\omega_i$ is isolated in $Y_I:=\overline{\{\omega_j:j\in I\}}^{\,w^*}
\subseteq S(\B(H))$.
\end{lemma}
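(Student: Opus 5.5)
The plan is to compute $\omega_i$ on the $*$-algebra generated by $M$ and $T$, to identify the weak-$*$ limit $\varphi$ on $A_I$ through an explicit formula in which only $E_N$ appears, and then to determine the centralizer by two separate inclusions.

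\textbf{Formula and reduction of words.} Since $T$ is self-adjoint, the $*$-algebra $\mathcal A_0$ generated by $M$ and $T$ is spanned by the words $w=a_0Ta_1T\cdots Ta_n$ with $a_k\in M$. Because $T$ and $M$ act diagonally, the $i$-th coordinate of $w$ is $a_0e_{N_i}a_1e_{N_i}\cdots e_{N_i}a_n$. Repeated use of the Jones relation $e_Nae_N=E_N(a)e_N$ together with $e_N(b\xi_\tau)=E_N(b)\xi_\tau$ gives
$$
\omega_i(w)=\tau\bigl(a_0E_{N_i}(a_1E_{N_i}(a_2\cdots E_{N_i}(a_n)\cdots))\bigr).
$$
The case $n=1$ is the displayed identity $\omega_i(aTb)=\tau(aE_{N_i}(b))$, and the case $n=0$ gives $\omega_i|_M=\tau$.

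\textbf{Identification of the limit.} For a net with $N_{i_j}\to N$, I will show by induction on $n$ that the nested expressions converge in $\LL^2$. The induction step uses two facts: the nested expressions are uniformly bounded in operator norm by $\prod_k\|a_k\|$, and $E_{N_{i_j}}(x_j)-E_N(x)\to0$ in $\|\cdot\|_2$ whenever $x_j\to x$ in $\|\cdot\|_2$. The second fact follows from $\|E_{N_{i_j}}(x_j-x)\|_2\le\|x_j-x\|_2$ together with Effros--Mar\'echal convergence. Left multiplication by a fixed $a_{k}$ preserves $\LL^2$-convergence, so the induction closes. Hence
$$
\varphi(w)=\tau\bigl(a_0E_N(a_1\cdots E_N(a_n))\bigr),
$$
and $\varphi|_M=\tau$. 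By norm density of $\mathcal A_0$ in $A_I$, these values determine $\varphi$ on all of $A_I$.

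\textbf{The centralizer is $N$.} For the inclusion $N\subseteq M_\varphi$, take $x\in N$. The formula gives $\varphi(xw)=\tau(xa_0E_N(\cdots))$ and $\varphi(wx)=\tau(a_0E_N(a_1\cdots E_N(a_nx)))$. Since $x\in N$, $N$-bimodularity lets $x$ move outward through every $E_N$, and traciality then brings it to the front, exactly as in Lemma~\ref{LemJones}. So $x$ centralizes $\mathcal A_0$, and by density it centralizes $A_I$. For the reverse inclusion, let $z\in M_\varphi$ and test against $y=Tz^*\in A_I$. Then
$$
\varphi(zTz^*)=\tau(zE_N(z^*))=\|E_N(z)\|_2^2,
\qquad
\varphi(Tz^*z)=\tau(E_N(z^*z))=\|z\|_2^2 .
$$
Equality of these forces $E_N(z)=z$, so $z\in N$. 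Applying this to the constant net $i_j=i$ gives $M_{\omega_i}=N_i$.

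\textbf{Equivariance and isolation.} The map $u(g)$ is a unitary because it combines a permutation of coordinates with the Koopman unitary $u_\alpha(g)$, and $u$ is a representation. On the diagonal copy of $M$, conjugation by $u(g)$ gives $u_\alpha(g)xu_\alpha(g)^*=\alpha_g(x)$. On the $i$-th coordinate, $u(g)Tu(g)^*$ acts as $u_\alpha(g)e_{N_{g^{-1}i}}u_\alpha(g)^*=e_{\alpha_g(N_{g^{-1}i})}=e_{N_i}$, so $u(g)Tu(g)^*=T$ and $A_I$ is $\Gamma$-invariant. Since $u(g)\eta_i=\eta_{gi}$, we get $(g\cdot\omega_i)(S)=\langle S\eta_{gi},\eta_{gi}\rangle=\omega_{gi}(S)$. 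For isolation, let $P_i\in\B(H)$ be the projection onto $\delta_i\otimes\LL^2(M)$; it need not lie in $A_I$, but the states live on all of $\B(H)$. Then $\omega_j(P_i)=\delta_{ij}$, so the weak-$*$ open set $\{\psi:\psi(P_i)>1/2\}$ meets $Y_I$ only at $\omega_i$, because any weak-$*$ limit of $\omega_j$'s with $j\neq i$ vanishes on $P_i$.

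The main obstacle is the limit step. $A_I$ is not simply $M+MTM$, since $TaT$ is a diagonal operator that varies with the coordinate, so the argument has to handle words of arbitrary length through the nested formula and an inductive $\LL^2$-convergence argument with uniform operator-norm bounds.
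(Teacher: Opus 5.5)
Your proof is correct and follows the same route as the paper: the formula for $\omega_i(aTb)$, the two inclusions $N\subseteq M_\varphi$ and $M_\varphi\subseteq N$, the conjugation identity $u_\alpha(g)e_{N_{g^{-1}i}}u_\alpha(g)^*=e_{N_i}$, and isolation via the projection onto the $i$-th summand. The differences are minor. For $N\subseteq M_\varphi$, the paper uses strong convergence $e_{N_{i_j}}\to e_N$ on noncommutative polynomials and then applies Lemma~\ref{LemJones} to $C^*(M,e_N)$. You instead evaluate words on $\xi_\tau$ to get the nested formula $\varphi(a_0Ta_1\cdots Ta_n)=\tau\bigl(a_0E_N(a_1\cdots E_N(a_n))\bigr)$ and check centrality directly by $N$-bimodularity, which is self-contained and avoids citing the strong-convergence fact. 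For $M_\varphi\subseteq N$, you test against $Tz^*$ to get $\|E_N(z)\|_2=\|z\|_2$, whereas the paper tests against $aTb$.
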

\begin{proof}
The first identity follows from $\omega_i(aTb)
=\langle ae_{N_i}b\xi_\tau,\xi_\tau\rangle
=\tau(aE_{N_i}(b))$. Effros--Mar\'echal convergence gives
$E_{N_{i_j}}(b)\to E_N(b)$ in $\LL^2(M,\tau)$ for every $b\in M$.
Consequently,
$\varphi(aTb)=\tau(aE_N(b))$ for all $a,b\in M$. The equality $\varphi|_M=\tau$ follows from
$\omega_i|_M=\tau$ for every $i\in I$.
Let $x\in M_\varphi$. Testing the centralizer identity against $aTb$ gives
$\tau(aE_N(b)x)=\tau(aE_N(bx))$ for all $a,b\in M$. The faithfulness of $\tau$ yields, for all $b\in M$, $E_N(b)x=E_N(bx)$. Taking $b=1$ gives $x=E_N(x)$, and hence $M_\varphi\subseteq N$. Conversely, let $x\in N$, and let $\mathcal A_{I}\subset A_I$ be the unital
$*$-algebra generated by $M$ and $T$. Fix $y\in\mathcal A_{I}$, choose
a noncommutative polynomial expression for $y$ in $M$ and $T$, and, for
$P\in\Sub(M)$, let $y_P\in\B(\LL^2(M,\tau))$ be the operator obtained
from this expression by replacing $T$ with $e_P$.
The convergence $N_{i_j}\to N$ implies $e_{N_{i_j}}\longrightarrow e_N$ strongly on $\LL^2(M,\tau)$: this holds on the dense subspace
$M\xi_\tau$ by the defining convergence of the conditional expectations,
and the projections are uniformly bounded (see the proof of \cite[Theorem 4.3]{fima2026spacesucpmapssubalgebras}). Therefore
$y_{N_{i_j}}\to y_N$ strongly for every $y\in\mathcal A_{I}$. It follows
that
$$
\varphi(xy)
=\lim_j\langle x y_{N_{i_j}}\xi_\tau,\xi_\tau\rangle
=\langle x y_N\xi_\tau,\xi_\tau\rangle\text{ and }
\varphi(yx)
=\lim_j\langle y_{N_{i_j}}x\xi_\tau,\xi_\tau\rangle
=\langle y_Nx\xi_\tau,\xi_\tau\rangle.
$$
Lemma~\ref{LemJones}, applied to $N$, shows that the last two expressions
are equal. By norm density of
$\mathcal A_{I}$ in $A_I$, one obtains $x\in M_\varphi$. Hence, $N\subseteq M_\varphi$.
\medskip
\noindent Under the additional equivariance assumptions, it is clear that the formula defining $u$
gives a unitary representation and $
u(g)xu(g)^*=\alpha_g(x)$ for all $x\in M, g\in\Gamma$. Furthermore,
$u_\alpha(g)e_{N_{g^{-1}i}}u_\alpha(g)^*
=e_{\alpha_g(N_{g^{-1}i})}
=e_{N_i}$ so $u(g)Tu(g)^*=T$. Finally,
$u(g)\eta_i=\eta_{gi}$, and therefore
$g\cdot\omega_i=\omega_{gi}$. For $i\in I$, let $p_i\in\B(H)$ be the orthogonal projection onto the
$i$-th summand. Then
$$
\omega_j(p_i)=
\begin{cases}
1,&j=i,\\
0,&j\neq i.
\end{cases}
$$
Hence the weak-$*$ open set $V_i:=\{\psi\in S(\B(H)):\psi(p_i)>1/2\}$
satisfies
$$V_i\cap\{\omega_j:j\in I\}=\{\omega_i\}.$$
If
$\psi\in V_i\cap Y_I$, a net in $\{\omega_j:j\in I\}$ converging to
$\psi$ is eventually in $V_i$, hence eventually equal to $\omega_i$.
Thus $V_i\cap Y_I=\{\omega_i\}$.
\end{proof}
\noindent The next theorem says precisely that the general minimal state space construction from Theorem \ref{thm:general-ura-construction} captures every compact URA.} The construction is explicit: given a compact URA $\mathcal{U}$, we represent $M$ diagonally on $\ell^2(\Gamma/\Sigma,{\rm L}^2(M))$ where $\Sigma$ is the stabilizer of a basepoint $N_0\in\mathcal{U}$, and use a coset-indexed family of Jones projections to encode the URA itself.
\begin{theorem}\label{ThmCompactURA}
{
Let $(M,\tau)$ be a finite von Neumann algebra with separable predual, let
$\Gamma$ be a countable discrete group, and let
$\alpha:\Gamma\curvearrowright(M,\tau)$ be a trace-preserving action. Let
$\mathcal U\subseteq\Sub(M)$ be a compact URA. Then there exist
\begin{itemize}
\item a unitary representation
$u:\Gamma\to\Ucal(H)$ on a separable Hilbert space $H$;
\item a faithful $\Gamma$-equivariant representation
$M\subseteq\B(H)$;
\item  a $\Gamma$-invariant unital $C^*$-subalgebra
$A\subseteq\B(H)$ containing $M$,
\end{itemize}
with the following property: there is a
compact $\Gamma$-invariant subset
$$
Y\subseteq K:=\{\varphi\in S(\B(H)):\varphi|_M=\tau\}
$$
such that the centralizer map relative to $A$
$$
\Phi:Y\to\Sub(M),
\quad
\Phi(\varphi)
:=
\{z\in M:\varphi(zy)=\varphi(yz)\text{ for every }y\in A\},
$$
is continuous and satisfies $\Phi(Y)=\mathcal U$. Moreover, for every
minimal non-empty closed $\Gamma$-invariant subset
$\mathcal Z\subseteq Y$, one has
$$\Phi(\mathcal Z)=\mathcal U\quad\text{and}\quad\overline{\Gamma\cdot M_{\varphi_0}}^{\,\mathrm{EM}}=\mathcal U\text{ for all }\varphi_0\in\mathcal Z.$$
}
\end{theorem}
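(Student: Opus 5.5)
The plan is to apply Lemma~\ref{lem:diagonal-jones-centralizer} to the coset space of a stabilizer, and to let the compactness of $\mathcal U$ do all the limiting arguments.

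\textbf{Setup.} Fix $N_0\in\mathcal U$ and put $\Sigma:=\operatorname{Stab}_\Gamma(N_0)$ and $I:=\Gamma/\Sigma$, which is countable. For $i=g\Sigma\in I$ set $N_i:=\alpha_g(N_0)$; this is well defined, and $N_{hi}=\alpha_h(N_i)$. We then take the following data from Lemma~\ref{lem:diagonal-jones-centralizer}:
\begin{itemize}
\item $H:=\ell^2(I,\LL^2(M,\tau))$, which is separable;
\item the diagonal representation of $M$ on $H$, which is faithful;
\item the unitary representation $u$ from that lemma;
\item $A:=A_I=C^*(M,T)$;
\item the states $\omega_i$ and $Y:=Y_I=\overline{\{\omega_i:i\in I\}}^{\,w^*}$.
\end{itemize}
The lemma gives that $M\subseteq\B(H)$ is $\Gamma$-equivariant, that $A$ is $\Gamma$-invariant, and that $g\cdot\omega_i=\omega_{gi}$. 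Hence $Y$ is weak-$*$ compact and $\Gamma$-invariant. Since each $\omega_i|_M=\tau$ and this condition is weak-$*$ closed, $Y\subseteq K$.

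\textbf{Identifying $\Phi$ on $Y$.} Let $\varphi\in Y$ and choose a net $\omega_{i_j}\to\varphi$. The net $(N_{i_j})_j$ lies in the compact set $\mathcal U$, so after passing to a subnet $N_{i_j}\to N\in\mathcal U$. Lemma~\ref{lem:diagonal-jones-centralizer} then gives $M_\varphi=N$, and also
\[
\varphi(aTb)=\tau\bigl(aE_{M_\varphi}(b)\bigr)\quad\text{for all }a,b\in M .
\]
Thus $\Phi(Y)\subseteq\mathcal U$.

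For surjectivity, let $N\in\mathcal U$. By minimality the orbit $\Gamma\cdot N_0$ is dense in $\mathcal U$, so there is a net with $\alpha_{g_j}(N_0)\to N$. The states $\omega_{g_j\Sigma}$ have a weak-$*$ cluster point $\varphi\in Y$. Along a subnet converging to $\varphi$ we still have $N_{g_j\Sigma}\to N$, so the lemma gives $M_\varphi=N$.

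\textbf{Continuity of $\Phi$ on $Y$.} Suppose $\varphi_k\to\varphi$ in $Y$ but $M_{\varphi_k}\not\to M_\varphi$. Because $\mathcal U$ is compact, some subnet satisfies $M_{\varphi_k}\to N'\neq M_\varphi$. Passing to the limit in the displayed identity applied to $\varphi_k$ gives, for all $a,b\in M$,
\[
\tau\bigl(aE_{N'}(b)\bigr)=\lim_k\varphi_k(aTb)=\varphi(aTb)=\tau\bigl(aE_{M_\varphi}(b)\bigr).
\]
Faithfulness of $\tau$ then forces $E_{N'}=E_{M_\varphi}$, so $N'=M_\varphi$, a contradiction. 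Hence $\Phi$ is continuous on $Y$. This step is the crux of the argument, but compactness of $\mathcal U$ together with the explicit formula for $\varphi(aTb)$ makes it short. I expect the only delicate point to be justifying that the formula holds on all of $Y$, not only at the states $\omega_i$. This is handled by the subnet extraction above, which is possible precisely because $\mathcal U$ is compact.

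\textbf{Minimal subsets.} Let $\mathcal Z\subseteq Y$ be minimal, non-empty, closed and $\Gamma$-invariant. The set $\Phi(\mathcal Z)$ is compact, because $\Phi$ is continuous, and it is $\Gamma$-invariant, by Lemma~\ref{lem:fix-is-subalgebra}(2). It is a non-empty subset of the minimal set $\mathcal U$, so $\Phi(\mathcal Z)=\mathcal U$. Finally, for any $\varphi_0\in\mathcal Z$ we have $M_{\varphi_0}\in\mathcal U$, and minimality of $\mathcal U$ gives $\overline{\Gamma\cdot M_{\varphi_0}}^{\,\mathrm{EM}}=\mathcal U$.
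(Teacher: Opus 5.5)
Your proof is correct and follows the paper's construction exactly: the same $I=\Gamma/\Sigma$, $H=\ell^2(I,\LL^2(M,\tau))$, $A=C^*(M,T)$ and $Y=\overline{\{\omega_i:i\in I\}}^{\,w^*}$, with Lemma~\ref{lem:diagonal-jones-centralizer} and compactness of $\mathcal U$ doing all the work. The only difference is packaging: the paper gets continuity of $\Phi$ by extending $I\to\mathcal U$ and $I\to Y$ to $\beta I$ and factoring through the quotient map $r:\beta I\to Y$, whereas you check the closed-graph property directly with subnets and the identity $\varphi(aTb)=\tau(aE_{M_\varphi}(b))$, which amounts to the same argument.
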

\begin{proof}
{
Fix $N_0\in\mathcal U$, put $\Sigma:=\operatorname{Stab}_\Gamma(N_0)$, $I:=\Gamma/\Sigma$, $H:=\ell^2(I,\LL^2(M,\tau))$ and represent $M$ diagonally on $H$. Since $I$ is countable and $\LL^2(M,\tau)$ is
separable, $H$ is separable. For $g\Sigma\in I$, put $N_{g\Sigma}:=\alpha_g(N_0)$. This is well defined by the definition of $\Sigma$. Define $(T\xi)(g\Sigma):=e_{N_{g\Sigma}}\xi(g\Sigma)$ for all $\xi\in H$ and put $A:=C^*(M,T)\subseteq\B(H)$. The equivariant part of
Lemma~\ref{lem:diagonal-jones-centralizer}, applied to the left action of
$\Gamma$ on $I$, supplies a unitary representation $u$ for which the
diagonal representation of $M$ is $\Gamma$-equivariant, $A$ is
$\Gamma$-invariant, and the associated vector states satisfy, for all $g,h\in\Gamma$,
$\omega_{g\Sigma}|_M=\tau$ and $g\cdot\omega_{h\Sigma}=\omega_{gh\Sigma}$.
Therefore $Y:=\overline{\{\omega_{g\Sigma}:g\Sigma\in I\}}^{\,w^*}$ is a compact $\Gamma$-invariant subset of $K$.
\medskip
\noindent Let $\beta I$ be the Stone--\v{C}ech compactification of the discrete
space $I$. Since both $\mathcal{U}$ and $Y$ are compact Hausdorff, the $\Gamma$-equivariant maps
$$
I\to\mathcal U,
\,\,
g\Sigma\mapsto N_{g\Sigma}\quad\text{and}\quad
I\to Y,
\,\,
g\Sigma\mapsto\omega_{g\Sigma},
$$
extend to continuous $\Gamma$-equivariant maps $\widetilde q:\beta I\to\mathcal U$ and $r:\beta I\to Y$ respectively. Note that $\widetilde{q}$ is surjective since, by minimality, the orbit of $N_0$ is dense in $\mathcal{U}$. Also, $r$ is surjective by definition of $Y$. We claim that $\widetilde q$ factors through $r$. Suppose that
$p,p'\in\beta I$ satisfy $r(p)=r(p')=:\varphi$. Along the ultrafilter $p$,
one has $\omega_{g\Sigma}\to r(p)=\varphi$ and $N_{g\Sigma}\to\widetilde q(p)$ and Lemma~\ref{lem:diagonal-jones-centralizer} gives $M_\varphi=\widetilde q(p)$. The same argument along $p'$ gives $M_\varphi=\widetilde q(p')$. Thus $\widetilde q(p)=\widetilde q(p')$. {Since $r$ is a continuous surjection from the compact space $\beta I$ onto the Hausdorff space $Y$, it is a quotient map. Therefore}
there is a unique continuous $\Gamma$-equivariant map $q:Y\to\mathcal U$ such that $\widetilde q=q\circ r$.
\medskip
\noindent Let $\varphi\in Y$ and choose $p\in\beta I$ such that $r(p)=\varphi$.
Another application of Lemma~\ref{lem:diagonal-jones-centralizer} yields $M_\varphi=\widetilde q(p)=q(\varphi)$. Thus the centralizer map $\Phi:Y\to\Sub(M)$ is equal to $q$, and hence is
continuous. Moreover, $\Phi(Y)=q(Y)=\widetilde q(\beta I)=\mathcal U$. Choose a minimal non-empty closed $\Gamma$-invariant subset
$\mathcal Z\subseteq Y$. Since $q(\mathcal Z)$ is a non-empty closed
$\Gamma$-invariant subset of the minimal space $\mathcal U$, one has $q(\mathcal Z)=\mathcal U$. Therefore the centralizer map on $\mathcal Z$ is $\Phi=q|_{\mathcal Z}$ so it is continuous and $\Phi(\mathcal Z)=\mathcal U$. For every
$\varphi_0\in\mathcal Z$, minimality of $\mathcal U$ gives
$\overline{\Gamma\cdot M_{\varphi_0}}^{\,\mathrm{EM}}
=
\overline{\Gamma\cdot q(\varphi_0)}^{\,\mathrm{EM}}
=
\mathcal U$.
}
\end{proof}
\begin{remark}\label{rmk:urs-compact-state-space-realization}
Note that every URS on $\Gamma$ is obtained by the state space construction. Indeed, let $\Gamma$ be a countable discrete group and let
$\mathcal S\subseteq\Sub(\Gamma)$ be a URS. The continuous injective
$\Gamma$-equivariant map $\Sub(\Gamma)\to\Sub(\LL(\Gamma))$, $H\mapsto\LL(H)$
restricts to a homeomorphism from $\mathcal S$ onto the compact URA
$\mathcal U_{\mathcal S}:=\{\LL(H):H\in\mathcal S\}$. Consequently, Theorem~\ref{ThmCompactURA} gives a compact
minimal state-space realization of $\mathcal U_{\mathcal S}$.
\end{remark}
%%%%%%%%
\subsection{The discrete state space construction and discrete URA}
\label{subsec:discreteURAs}
The minimal-state-space framework of
Theorem~\ref{thm:general-ura-construction} captures every compact URA, as we
just saw. More generally, we now characterize the discrete URAs which arise
from a continuity point of a $\Gamma$-equivariant map defined on an arbitrary
compact minimal $\Gamma$-space: they are precisely the finite ones. We then
give a non-minimal state-space construction which realizes every discrete URA
(Theorem~\ref{thm:discrete-uras}).
\begin{proposition}\label{prop:discrete-minimal-state-space-characterization}
Let $\Gamma$ be a countable discrete group acting by trace-preserving
automorphisms on a finite von Neumann algebra $(M,\tau)$ with separable
predual, and let $\mathcal U\subseteq\Sub(M)$ be a discrete URA. The
following are equivalent.
\begin{enumerate}
\item There is a compact minimal $\Gamma$-space $\mathcal Z$, a
$\Gamma$-equivariant map $\Phi:\mathcal Z\to\Sub(M)$ and a point of continuity $z_0\in\mathcal{Z}$ of $\Phi$ such that $\mathcal U=\overline{\Gamma\cdot\Phi(z_0)}^{\,\mathrm{EM}}$.
\item $\mathcal U$ is finite.
\item $\mathcal U$ is compact.
\item $[\Gamma:\mathcal U]<\infty$.
\end{enumerate}
\end{proposition}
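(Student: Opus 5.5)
The four conditions are linked by a cycle of implications, with (2), (3) and (4) tied to one another by elementary topology and (1) tied to them through Lemma~\ref{LemObstruction}. Throughout I use that a URA is the orbit closure of any of its members. If $\mathcal U$ is discrete, then every orbit $\Gamma\cdot N$ with $N\in\mathcal U$ is dense in $\mathcal U$. Since every point of a discrete space is isolated, a dense subset must be the whole space, so $\mathcal U=\Gamma\cdot N$ and the action of $\Gamma$ on $\mathcal U$ is transitive.

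\textbf{(2), (3) and (4).} The implication (2)$\Rightarrow$(3) is trivial. For (3)$\Rightarrow$(2), a compact discrete space is finite. For (4)$\Leftrightarrow$(2), transitivity gives $|\mathcal U|=|\Gamma\cdot N|=[\Gamma:\operatorname{Stab}_\Gamma(N)]=[\Gamma:\mathcal U]$, using Definition~\ref{def:group-index-of-ura} (or Lemma~\ref{lem:group-index-of-ura}).

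\textbf{(1)$\Rightarrow$(4).} This is exactly Lemma~\ref{LemObstruction} applied with $X=\Sub(M)$. We have $\mathcal O=\overline{\Gamma\cdot\Phi(z_0)}=\mathcal U$, which is discrete, so $\operatorname{Stab}_\Gamma(\Phi(z_0))$ has finite index. Since $\Phi(z_0)\in\mathcal U$, this index equals $[\Gamma:\mathcal U]$.

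\textbf{(2)$\Rightarrow$(1).} I would take $\mathcal Z:=\mathcal U$ itself with the discrete topology; it is finite, hence compact. It is a minimal $\Gamma$-space because the action is transitive. Let $\Phi$ be the inclusion $\mathcal U\hookrightarrow\Sub(M)$. It is equivariant and continuous because $\mathcal Z$ is discrete, so any $z_0\in\mathcal Z$ is a continuity point, and $\overline{\Gamma\cdot\Phi(z_0)}=\mathcal U$ by minimality. If one prefers a realization of the form in Theorem~\ref{thm:general-ura-construction}, Theorem~\ref{ThmCompactURA} applies instead, since a finite URA is compact.

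\textbf{Main obstacle.} No step is genuinely hard. The only points needing care are the transitivity observation, namely that a dense subset of a discrete space is everything, and checking that the hypotheses of Lemma~\ref{LemObstruction} (compactness and minimality of $\mathcal Z$, and continuity at $z_0$) are met verbatim.
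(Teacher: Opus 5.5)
Your proposal is correct and follows essentially the same route as the paper: (1)$\Rightarrow$(4) via Lemma~\ref{LemObstruction}, (2)$\Leftrightarrow$(3) because a compact discrete space is finite, and (2)$\Rightarrow$(1) by taking $\mathcal Z=\mathcal U$ with the inclusion map. The one cosmetic difference is in (4)$\Rightarrow$(2): the paper argues that the finite orbit $\Gamma\cdot N$ is closed and invokes minimality, whereas you first deduce transitivity from discreteness and then read off $|\mathcal U|=[\Gamma:\mathcal U]$.
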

\begin{proof}
${\rm(1)}\Rightarrow{\rm(4)}$. Since $\mathcal U$ is discrete,
Lemma~\ref{LemObstruction} gives $[\Gamma:\operatorname{Stab}_\Gamma(\Phi(z_0))]<\infty$. As $\Phi(z_0)\in\mathcal U$, Definition~\ref{def:group-index-of-ura} gives ${\rm(4)}$.
\medskip
\noindent${\rm(4)}\Rightarrow{\rm(2)}$. Fix $N\in\mathcal U$ so $\Gamma\cdot N$ is finite
and therefore closed. Minimality gives
$\mathcal U=\overline{\Gamma\cdot N}^{\,\mathrm{EM}}=\Gamma\cdot N$ so ${\rm(2)}$ holds. Since $\mathcal U$ is discrete,
\medskip
\noindent${\rm(2)}\Leftrightarrow{\rm(3)}$ is clear since $\Ucal$ is discrete.
\medskip
\noindent${\rm(2)}\Rightarrow{\rm(1)}$. Put $\mathcal Z:=\mathcal U$, endowed with the
restricted action, let $\Phi:\mathcal Z\to\Sub(M)$ be the inclusion map, and
fix $z_0\in\mathcal Z$. Then $\mathcal Z$ is compact and minimal, $\Phi$ is
continuous and $\Gamma$-equivariant, and $\overline{\Gamma\cdot\Phi(z_0)}^{\,\mathrm{EM}}=\mathcal U$. Thus ${\rm(1)}$ holds.\end{proof}
\medskip
\noindent By
Proposition~\ref{prop:discrete-minimal-state-space-characterization}, every
infinite discrete URA is invisible to every construction based on a
continuity point of an equivariant map defined on a compact minimal
$\Gamma$-space (and an explicit example of an infinite
discrete URA will be constructed in the next section, Proposition~\ref{prop:finite-action-fixed-point-rigidity} and Remark~\ref{rmk:finite-centralizer-examples}). Nonetheless, every discrete URA can be captured by a
state-space construction if the ambient state space is not required to be
minimal and isolated points are used as continuity points of the centralizer
map.
\begin{theorem}
\label{thm:discrete-uras}
Use the setting of Subsection~\ref{subsec:generalconstruction}. Let
$Y$ be a compact $\Gamma$-invariant subset of the state space $K$. If
$\varphi_0\in Y$ is isolated then $\Phi:Y\to\Sub(M)$, the centralizer
map relative to $A$, is continuous at every point of
$\Gamma\cdot\varphi_0$ and the following are equivalent:
\begin{enumerate}
\item $\Gamma\cdot M_{\varphi_0}$ is a discrete URA;
\item $\Gamma\cdot M_{\varphi_0}$ is closed and discrete in $\Sub(M)$;
constant;
\end{enumerate}
\medskip
\noindent Conversely, let $(M,\tau)$ be a finite von Neumann algebra with separable
predual, $\alpha:\Gamma\curvearrowright(M,\tau)$ a trace preserving action and $\mathcal U\subseteq\Sub(M)$ a discrete URA. There exist
\begin{itemize}
\item a unitary representation $u:\Gamma\to\Ucal(H)$ on a
separable Hilbert space $H$;
\item a faithful $\Gamma$-equivariant representation
$M\subseteq\B(H)$;
\item a $\Gamma$-invariant unital $C^*$-subalgebra
$A\subseteq\B(H)$ containing $M$,
\end{itemize}
with the following property: there is a compact $\Gamma$-invariant subset
$$Y\subseteq\{\varphi\in S(\B(H)):\varphi|_M=\tau\}$$
and an isolated point $\varphi_0\in Y$ such that the centralizer map
relative to $A$ is $\Gamma$-equivariant, is continuous at every point of
$\Gamma\cdot\varphi_0$, and satisfies
$\Gamma\cdot M_{\varphi_0}=\mathcal U$. In particular, the equivalent conditions above hold for these data.
\end{theorem}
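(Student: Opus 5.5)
The plan splits into the two halves of the statement.

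\emph{First half.} Since $\varphi_0$ is isolated in $Y$ and each $g\in\Gamma$ acts on $Y$ by a homeomorphism, every $g\cdot\varphi_0$ is isolated in $Y$. At an isolated point, any map defined on $Y$ is continuous, because the singleton $\{g\cdot\varphi_0\}$ is a neighbourhood. Hence $\Phi$ is continuous at every point of $\Gamma\cdot\varphi_0$. By Lemma~\ref{lem:fix-is-subalgebra}(2), $\Phi$ is equivariant, so $\Phi(\Gamma\cdot\varphi_0)=\Gamma\cdot M_{\varphi_0}$.

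For the equivalence: (1)$\Rightarrow$(2) holds because a URA is closed by definition, and discreteness is part of (1). For (2)$\Rightarrow$(1), note that $\Gamma\cdot M_{\varphi_0}$ is then a closed $\Gamma$-invariant set. Any non-empty closed invariant subset of it contains a whole orbit, which is the full set, so it is minimal. The statement as printed appears truncated after (2) (the stray word ``constant''), so I would prove only (1)$\Leftrightarrow$(2).

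\emph{Converse.} Let $\mathcal U$ be a discrete URA. I apply Lemma~\ref{lem:diagonal-jones-centralizer} with $I:=\mathcal U$ and $N_i:=i$, with $\Gamma$ acting on $I$ through the action on $\Sub(M)$. This is legitimate since $\mathcal U$ is $\Gamma$-invariant and $N_{gi}=\alpha_g(N_i)$ holds trivially.

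Separability of $H=\ell^2(\mathcal U,\LL^2(M))$ requires $\mathcal U$ to be countable. This follows because $\mathcal U$ is a discrete subspace of the second countable space $\Sub(M)$, so it is countable. Alternatively, fixing $N_0\in\mathcal U$, discreteness shows that the orbit closure of $N_0$ adds no new point at $N_0$. Minimality then gives that every $N\in\mathcal U$ has $N_0$ in its orbit closure, and since $N_0$ is isolated, $N_0\in\Gamma\cdot N$. Hence $\mathcal U=\Gamma\cdot N_0$ is a single orbit. This transitivity is also exactly what is needed for $\Gamma\cdot M_{\varphi_0}=\mathcal U$.

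Set $A:=A_I=C^*(M,T)$, let $u$ be the representation given by the lemma, and put
\[
Y:=\overline{\{\omega_N:N\in\mathcal U\}}^{\,w^*}.
\]
The lemma gives the following facts. $Y\subseteq K$, since weak-$*$ limits restrict to $\tau$ on $M$. $Y$ is $\Gamma$-invariant, since $g\cdot\omega_N=\omega_{gN}$ and the action is continuous. $Y$ is compact as a closed subset of the state space. Each $\omega_N$ is isolated in $Y$, and $M_{\omega_N}=N$.

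Take $\varphi_0:=\omega_{N_0}$. Then $\Gamma\cdot M_{\varphi_0}=\{\alpha_g(N_0)\}=\Gamma\cdot N_0=\mathcal U$. Equivariance of the centralizer map follows from Lemma~\ref{lem:fix-is-subalgebra}(2), and continuity on $\Gamma\cdot\varphi_0$ follows from the first half.

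\emph{Main obstacle.} The key step is showing that $\mathcal U$ is a single orbit. This is where discreteness and minimality combine: for $N\in\mathcal U$, $N_0\in\overline{\Gamma\cdot N}$, and since $N_0$ is isolated in $\mathcal U$, some $gN$ equals $N_0$. The centralizer and isolation statements are already packaged in Lemma~\ref{lem:diagonal-jones-centralizer}.
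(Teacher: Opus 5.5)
Your proposal is correct and follows the paper's proof essentially step for step. Isolation of the orbit points gives continuity, transitivity gives (2)$\Rightarrow$(1), and the converse is Lemma~\ref{lem:diagonal-jones-centralizer} applied once discreteness and minimality have shown $\mathcal U=\Gamma\cdot N_0$. The one cosmetic difference is that you index the diagonal construction by $\mathcal U$ itself rather than by $\Gamma/\operatorname{Stab}_\Gamma(N_0)$, which is the same $\Gamma$-set once transitivity is known; faithfulness of the diagonal representation of $M$, which you did not state, is immediate.
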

\begin{proof}
Since a discrete URA is
closed and discrete by definition, ${\rm(1)}$ implies ${\rm(2)}$.
Conversely, assume ${\rm(2)}$, and let $\mathcal F$ be a non-empty closed
$\Gamma$-invariant subset of $\Gamma\cdot M_{\varphi_0}$. Choose
$Q\in\mathcal F$. Transitivity gives
$\Gamma\cdot Q=\Gamma\cdot M_{\varphi_0}$, and $\Gamma$-invariance of
$\mathcal F$ yields $\Gamma\cdot M_{\varphi_0}\subseteq\mathcal F$.
Hence $\mathcal F=\Gamma\cdot M_{\varphi_0}$, so
$\Gamma\cdot M_{\varphi_0}$ is minimal and ${\rm(1)}$ holds. Finally,
$\varphi_0$ is isolated and the action on $Y$ is by homeomorphisms. Hence
every point of $\Gamma\cdot\varphi_0$ is isolated in
$Y$, and therefore $\Phi$ is continuous at every such point.
\medskip
\noindent For the converse, fix $N\in\mathcal U$ and put
$\Sigma:=\operatorname{Stab}_\Gamma(N)$, $I:=\Gamma/\Sigma$, and
$N_{g\Sigma}:=\alpha_g(N)$. Minimality gives $\overline{\Gamma\cdot N}^{\,\mathrm{EM}}=\mathcal U$.
Since $\mathcal U$ is discrete, every dense subset of $\mathcal U$ is
equal to $\mathcal U$. Therefore $\mathcal U=\Gamma\cdot N=\{N_{g\Sigma}:g\Sigma\in I\}$. Apply
Lemma~\ref{lem:diagonal-jones-centralizer} to
$(N_{g\Sigma})_{g\Sigma\in I}$ and put
$H:=\ell^2(I,\LL^2(M,\tau))$, $A:=C^*(M,T)$, and
$Y:=\overline{\{\omega_{g\Sigma}:g\Sigma\in I\}}^{\,w^*}$. Since
$\Gamma$ is countable and $M$ has separable predual, $H$ is separable.
Lemma~\ref{lem:diagonal-jones-centralizer} shows that the diagonal
representation of $M$ is faithful and $\Gamma$-equivariant, that $A$ and
$Y$ are $\Gamma$-invariant, and that $\omega_{g\Sigma}|_M=\tau$, $M_{\omega_{g\Sigma}}=N_{g\Sigma}=\alpha_g(N)$ for all $g\in\Gamma$. Since restriction to $M$ is weak-$*$ continuous, it follows that $Y\subseteq\{\varphi\in S(\B(H)):\varphi|_M=\tau\}$. The same lemma shows that $\omega_\Sigma$ is isolated in $Y$. Taking
$\varphi_0:=\omega_\Sigma$ gives
$M_{\varphi_0}=N$ and $\Gamma\cdot M_{\varphi_0}=\mathcal U$. Finally,
Lemma~\ref{lem:fix-is-subalgebra} \rm(2) gives the equivariance of the
centralizer map, the first part of the proof gives its continuity on
$\Gamma\cdot\varphi_0$, and ${\rm(1)}$--${\rm(3)}$ hold because this
centralizer orbit is $\mathcal U$.
\end{proof}

\noindent It is natural to ask when the realization just constructed is already
covered by the minimal framework of Theorem~\ref{thm:general-ura-construction}.

\begin{remark}\label{rmk:compactnessandbroaderconstruction}
In the realization of Theorem~\ref{thm:discrete-uras}, the
equivalent conditions of
Proposition~\ref{prop:discrete-minimal-state-space-characterization} are also
equivalent to each of the following conditions:
\begin{enumerate}
\item $Y$ is finite;
\item $Y$ is minimal;
\item $\varphi_0$ belongs to a minimal non-empty closed
$\Gamma$-invariant subset of $Y$.
\end{enumerate}
Indeed, put $\Sigma:=\operatorname{Stab}_\Gamma(M_{\varphi_0})$. By
Lemma~\ref{lem:diagonal-jones-centralizer}, the states
$g\varphi_0=\omega_{g\Sigma}$, for $g\Sigma\in\Gamma/\Sigma$, are pairwise
distinct and $Y=\overline{\Gamma\cdot\varphi_0}^{\,w^*}$. Hence $Y$ is finite if and only if $[\Gamma:\Sigma]<\infty$, which is
equivalent to the conditions of
Proposition~\ref{prop:discrete-minimal-state-space-characterization}.
If $Y$ is finite, then
$Y=\Gamma\cdot\varphi_0$ and transitivity gives minimality. Conversely,
if $Y$ is minimal, then every point of $\Gamma\cdot\varphi_0$ is isolated,
so $\Gamma\cdot\varphi_0$ is an open invariant subset of $Y$. Hence
$\Gamma\cdot\varphi_0$ is equal to $Y$. Thus $Y$ is compact and discrete,
and hence finite. Finally, ${\rm(2)}\Rightarrow{\rm(3)}$ is immediate. If
${\rm(3)}$ holds and
$\mathcal Z\subseteq Y$ is minimal, closed, invariant, and contains
$\varphi_0$, then we have $
Y=\overline{\Gamma\cdot\varphi_0}^{\,w^*}\subseteq\mathcal Z$, so $\mathcal Z=Y$ and ${\rm(2)}$ holds.
\medskip
\noindent When these conditions hold, the realization belongs both to the compact
framework of Theorem~\ref{ThmCompactURA} and to the minimal framework of
Theorem~\ref{thm:general-ura-construction}. If they fail, every minimal
closed $\Gamma$-invariant subset of $Y$ is contained in
$Y\setminus\Gamma\cdot\varphi_0$.
Proposition~\ref{prop:discrete-minimal-state-space-characterization}
shows that no continuity point of a $\Gamma$-equivariant map defined on a
compact minimal $\Gamma$-space can have orbit closure $\mathcal U$.
Thus Theorem~\ref{thm:discrete-uras} strictly extends
Theorem~\ref{thm:general-ura-construction} for infinite discrete URAs.
\end{remark}

\noindent Discreteness of a $\Gamma$-orbit in $\Sub(M)$ is a property of the
acting group, and not of the ambient algebra alone. If one considers the inner action, the orbit of a subalgebra under
the whole unitary group of $M$ is path connected, hence it is discrete only when
it reduces to a single point. Consequently, the discreteness hypothesis in
Theorem~\ref{thm:discrete-uras} constrains the chosen subgroup of inner
automorphisms and says nothing about the full inner orbit.
\medskip

\begin{remark}\label{rmk:inner-orbit-continuity}
Let $(M,\tau)$ be a finite von Neumann algebra with separable predual and
let $N\in\Sub(M)$. Put $\mathcal O_{\mathrm{inn}}(N):=
\{uNu^*:u\in\Ucal(M)\}$. The orbit map
$$\Ucal(M)\to\Sub(M)\quad u\mapsto uNu^*,$$
is continuous when $\Ucal(M)$ is equipped with the $\|\cdot\|_2$-topology.
Indeed, for $u,v\in\Ucal(M)$ and $x\in M$,
$$
\begin{aligned}
\|E_{uNu^*}(x)-E_{vNv^*}(x)\|_2
&\leq
\|E_N(u^*xu)-E_N(v^*xv)\|_2
+
\|uE_N(v^*xv)u^*-vE_N(v^*xv)v^*\|_2\\
&\leq
\|u^*xu-v^*xv\|_2+2\|x\|\,\|u-v\|_2
\leq4\|x\|\,\|u-v\|_2.
\end{aligned}
$$
\noindent Moreover, $\mathcal O_{\mathrm{inn}}(N)$ is path connected. Indeed, by
bounded Borel functional calculus, every $u\in\Ucal(M)$ has the form
$u=e^{ih}$ for some $h=h^*\in M$, and the continuous path $[0,1]\to\Sub(M)$, $t\mapsto e^{ith}Ne^{-ith}$
joins $N$ to $uNu^*$. Consequently, exactly one of the following holds:
\begin{enumerate}
\item $uNu^*=N$ for every $u\in\Ucal(M)$, and
$\mathcal O_{\mathrm{inn}}(N)=\{N\}$;
\item $\mathcal O_{\mathrm{inn}}(N)$ has no isolated point.
\end{enumerate}
In particular, $\mathcal O_{\mathrm{inn}}(N)$ is discrete if and only if it
is a singleton. Thus, when the $\Gamma$-action is inner, discreteness of the
$\Gamma$-orbit in Theorem~\ref{thm:discrete-uras} is a property of the
prescribed subgroup of inner automorphisms and does not imply discreteness
of the full inner orbit.
\end{remark}
%%%%%%%%%%%%%%%%%%%%%%%%
\subsection{Exotic URAs from fixed-point and corner subalgebras}
\label{subsec:exoticURAs}
We now present additional families of exotic URAs, complementing the
universality theorem of Section~\ref{sec:URA}. We construct two genuinely
different families: \emph{fixed-point URAs}, arising from finite groups of
automorphisms of $\Gamma$, and \emph{corner URAs}, arising from projections in
$\LL(\Gamma)$. For the paired constructions associated with an
element $g\in\Gamma$ of finite order, the resulting fixed-point and corner
URAs are disjoint when the order of $g$ is at least $3$ and coincide when
the order is $2$, under the hypotheses of
Remark~\ref{rmk:fixed-point-corner-index-comparison}. Closedness and discreteness
follow from finite rigidity hypotheses which are strictly weaker than finite
generation. A later example shows that such a hypothesis cannot be omitted.
Throughout this subsection, $\{\lambda_g:g\in\Gamma\}$ denotes the
canonical unitaries of $\LL(\Gamma)$, and every element of
$\Aut(\Gamma)$ is denoted by the same symbol as its trace-preserving
extension to $\LL(\Gamma)$.
\medskip
\noindent We begin with the fixed-point family and isolate the construction in a
definition.
\begin{definition}\label{def:fixed-point-ura}
Let $\Gamma$ be a countable group and let
$G\leq\Aut(\Gamma)$ be a non-trivial finite subgroup. We call
$\overline{\Gamma\cdot \LL(\Gamma)^G}^{\,\mathrm{EM}}\subset\Sub(\LL(\Gamma))$ the
\textbf{fixed-point URA} associated with $G$, when it is a URA. If
$G=\langle\phi\rangle$ is cyclic, we also call it the fixed-point URA
associated with $\phi$.
\end{definition}

\noindent Two things remain to be checked. First, the members of such a family are not group von
Neumann algebras, and second, that the orbit is infinite. Both are consequences of the
rigidity lemmas of Section~\ref{sec:preliminaries} and are settled first. The
delicate point, closedness and discreteness of the orbit, is treated afterwards
and is where a finiteness hypothesis on centralizers enters.
\medskip

\begin{lemma}\label{lem:finite-fixed-point-algebra}
Let $\Gamma$ be a group and let
$G\leq\Aut(\Gamma)$ be a non-trivial finite subgroup.
\begin{enumerate}
\item There is no subgroup $H\leq\Gamma$ such that $\LL(\Gamma)^G=\LL(H)$.
\item If $\Gamma$ is i.c.c. then $[\Gamma:\operatorname{Stab}_\Gamma(\LL(\Gamma)^G)]=\infty$.
\end{enumerate}
\end{lemma}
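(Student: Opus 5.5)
For (1), I will argue by contradiction and compare conditional expectations. Suppose $\LL(\Gamma)^G=\LL(H)$ for some $H\leq\Gamma$. As in the proof of Lemma~\ref{lem:finite-fixed-point-action-rigidity}, the trace-preserving conditional expectation onto the fixed-point algebra is the average over $G$, so
$$E_G(\lambda_g)=\frac1{|\operatorname{Orb}_G(g)|}\sum_{h\in\operatorname{Orb}_G(g)}\lambda_h,$$
while $E_{\LL(H)}(\lambda_g)=1_H(g)\lambda_g$. Since $G$ is non-trivial, some $g\in\Gamma$ is moved by some $\phi\in G$, so $|\operatorname{Orb}_G(g)|\geq2$. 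Comparing Fourier coefficients at $g$ in the two formulas, the left side has coefficient $1/|\operatorname{Orb}_G(g)|\in(0,1)$, whereas the right side has coefficient $0$ or $1$. This is a contradiction, and no further hypothesis on $\Gamma$ is needed.

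For (2), the plan is to identify the stabilizer of $\LL(\Gamma)^G$ under $\Gamma$ acting by $\Ad(\lambda_s)$ (the action $g\cdot N=\lambda_gN\lambda_g^*$) and then invoke the rigidity lemmas. First, $\lambda_s\LL(\Gamma)^G\lambda_s^*=\LL(\Gamma)^{\iota_sG\iota_s^{-1}}$, where $\iota_s=\Ad(s)\in\Aut(\Gamma)$. Indeed, $x$ is fixed by $\phi$ if and only if $\lambda_sx\lambda_s^*$ is fixed by $\iota_s\phi\iota_s^{-1}$, since inner automorphisms extend to $\Ad(\lambda_s)$ on $\LL(\Gamma)$. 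Since i.c.c.\ groups have (FIR) by the Example after Definition~\ref{def:FIR}, Lemma~\ref{lem:finite-fixed-point-action-rigidity} gives
$$\operatorname{Stab}_\Gamma(\LL(\Gamma)^G)=\{s\in\Gamma:\iota_sG\iota_s^{-1}=G\},$$
that is, the preimage in $\Gamma$ of the normalizer of $G$ in $\Aut(\Gamma)$ under $s\mapsto\iota_s$.

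It remains to show this subgroup has infinite index, and this is the main obstacle. Since $G$ is finite and normalized by $S:=\operatorname{Stab}_\Gamma(\LL(\Gamma)^G)$, conjugation gives a homomorphism $S\to\Aut(G)$ whose kernel $S_0$ has finite index in $S$. Elements of $S_0$ satisfy $\iota_s\phi\iota_s^{-1}=\phi$ for every $\phi\in G$. Unwinding this, $\phi(s)s^{-1}$ commutes with everything in $\Gamma$, so it lies in $Z(\Gamma)=\{e\}$, i.e.\ $\phi(s)=s$. Thus $S_0\subseteq\operatorname{Fix}(\phi)$ for any non-trivial $\phi\in G$. If $[\Gamma:S]<\infty$, then $[\Gamma:S_0]<\infty$, so $\operatorname{Fix}(\phi)$ would have finite index. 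This contradicts Lemma~\ref{lem: finite order automorphism has infinite index fixed point subgroups}, which applies since $\Gamma$ has (FIR). Hence $[\Gamma:\operatorname{Stab}_\Gamma(\LL(\Gamma)^G)]=\infty$.

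The step most likely to need care is the computation $\iota_s\phi\iota_s^{-1}=\phi\Rightarrow\phi(s)=s$. Evaluating at $t$ gives $s\phi(s^{-1}ts)s^{-1}=\phi(t)$, i.e.\ $s\phi(s)^{-1}\phi(t)\phi(s)s^{-1}=\phi(t)$ for all $t$. Since $\phi$ is surjective, $\phi(s)s^{-1}$ is central, hence trivial. I will check this carefully.
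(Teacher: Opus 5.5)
Your proposal is correct and follows the paper's proof essentially step for step. In (2) you use the same chain: the rigidity lemma shows that the stabilizer normalizes $G$, the kernel of the map to $\Aut(G)$ lies in $\operatorname{Fix}(\phi)$ because $Z(\Gamma)=\{e\}$, and the (FIR) lemma then gives the contradiction. In (1) you compare the single Fourier coefficient $1/|\operatorname{Orb}_G(g)|\in(0,1)$ against $\{0,1\}$, which compresses into one step the paper's two-step version (first forcing $H=\Gamma$, then observing that $\LL(\Gamma)^G\neq\LL(\Gamma)$).
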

\begin{proof}
Put $N:=\LL(\Gamma)^G$.
\medskip
\noindent{\rm(1)}. Suppose that $N=\LL(H)$. For every $g\in\Gamma$, $E_N(\lambda_g)=\frac1{|G|}\sum_{\alpha\in G}\lambda_{\alpha(g)}\in\LL(H)$. The Fourier coefficient at $g$ is non-zero, so $g\in H$. Hence $H=\Gamma$.
On the other hand, choose $\alpha\in G$ and $g\in\Gamma$ such that
$\alpha(g)\neq g$. Then $E_N(\lambda_g)\neq\lambda_g$, so
$N\neq\LL(\Gamma)$.
\medskip
\noindent{\rm(2)}. Since $\Gamma$ is i.c.c. it has ${\rm(FIR)}$ and
$Z(\Gamma)=\{e\}$. Put $K:=\operatorname{Stab}_\Gamma(N)$. For $s\in K$, one has $\LL(\Gamma)^{\Ad(s)G\Ad(s^{-1})}
=\Ad(s)(N)
=N
=\LL(\Gamma)^G.$ Lemma~\ref{lem:finite-fixed-point-action-rigidity} therefore gives $\Ad(s)G\Ad(s^{-1})=G$. Thus conjugation defines a homomorphism
$$
\rho:K\longrightarrow\operatorname{Aut}(G),
\quad
\rho(s)(\alpha)
=\Ad(s)\circ\alpha\circ\Ad(s^{-1}).
$$
Its kernel $K_0$ has finite index in $K$. Fix
$\phi\in G\setminus\{\id\}$. If $s\in K_0$, then
$\Ad(s)\circ\phi\circ\Ad(s^{-1})=\phi$ and hence $s\phi(s^{-1})\in Z(\Gamma)=\{e\}$. Therefore $K_0\subseteq\operatorname{Fix}(\phi)$. If $K$ had finite index
in $\Gamma$, then $K_0$, and hence $\operatorname{Fix}(\phi)$, would have
finite index in $\Gamma$, contradicting
Lemma~\ref{lem: finite order automorphism has infinite index fixed point subgroups}.
Thus $[\Gamma:K]=\infty$.\end{proof}

\begin{proposition}\label{prop:finite-action-fixed-point-rigidity}
Let $\Gamma$ be a countable group and let
$G\leq\Aut(\Gamma)$ be a non-trivial finite subgroup. Assume that
there exists a finite set $F\subset\Gamma$ such that
$C_\Gamma(F)/Z(\Gamma)$ is finite. Then
\begin{enumerate}
\item $\mathcal U:=\{\lambda_s\LL(\Gamma)^G\lambda_s^*:s\in\Gamma\}$ is a discrete exotic URA.
\item If moreover $\Gamma$ is i.c.c., then $\mathcal U$ has infinite
cardinality.
\end{enumerate}
\end{proposition}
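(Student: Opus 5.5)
The plan is to show that the orbit $\mathcal U=\Gamma\cdot N$, with $N:=\LL(\Gamma)^G$, is closed and discrete in $\Sub(\LL(\Gamma))$. Such an orbit is automatically a URA, by the transitivity argument already used in the first part of Theorem~\ref{thm:discrete-uras}: any non-empty closed invariant subset contains a full orbit, hence all of $\mathcal U$. Exoticity then follows from Lemma~\ref{lem:finite-fixed-point-algebra}(1). Indeed, if $\mathcal U=\psi(X)$ for a URS $X$, then $N$ would be of the form $\LL(H)$, which that lemma forbids. Part (2) follows from Lemma~\ref{lem:finite-fixed-point-algebra}(2), since an infinite-index stabilizer gives an infinite orbit.

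The key computation is the conditional expectation onto a conjugate. Writing $N_s:=\lambda_sN\lambda_s^*=\LL(\Gamma)^{G_s}$ with $G_s:=\Ad(s)G\Ad(s)^{-1}$, we have
$$E_{N_s}(\lambda_g)=\frac{1}{|G|}\sum_{\alpha\in G}\lambda_{s\alpha(s^{-1}gs)s^{-1}}.$$
Each such element is a normalized sum of $|G|$ canonical unitaries. For $g\in F$, Lemma~\ref{lem: close multisets are equal} (equal lengths, threshold $\sqrt2/|G|$ after scaling) turns closeness of $E_{N_{s_i}}(\lambda_g)$ and $E_{N_{t}}(\lambda_g)$ into exact equality of the multisets $\{s\alpha(s^{-1}gs)s^{-1}:\alpha\in G\}$.

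For closedness and discreteness, take a net $N_{s_i}\to P$ in $\Sub(\LL(\Gamma))$. For each $g\in F$, the sequence $E_{N_{s_i}}(\lambda_g)$ is Cauchy in $\LL^2$. By the multiset rigidity above, it is eventually constant, so the multisets $\mathcal O_i(g):=\{s_i\alpha(s_i^{-1}gs_i)s_i^{-1}\}_\alpha$ stabilize for all $g$ in the finite set $F$.

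The main obstacle is converting stabilization on $F$ into the statement that $s_i$ eventually lies in finitely many cosets of the stabilizer $\operatorname{Stab}_\Gamma(N)$. First I would encode $s$ through the maps $\alpha\mapsto$ the function $g\mapsto s\alpha(s^{-1}gs)s^{-1}$ on $F$, namely the finitely many automorphisms $\Ad(s)\alpha\Ad(s)^{-1}$ restricted to $F$. Passing to a subnet, the labelled values also stabilize, since there are only finitely many labelings of a finite multiset. Comparing two indices $i,j$, the element $t:=s_j^{-1}s_i$ then satisfies $\Ad(t)\alpha\Ad(t)^{-1}$ agreeing on $s_j^{-1}Fs_j$ with some $\alpha'\in G$. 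Here I would try to use the hypothesis that $C_\Gamma(F)/Z(\Gamma)$ is finite: two automorphisms agreeing on $F$ and of the shape $\Ad(x)\beta$ differ by conjugation by an element of $C_\Gamma(F)$. This should place $s_i$ in finitely many cosets of $Z(\Gamma)\cdot\operatorname{Stab}$-type sets. Since $Z(\Gamma)\leq\operatorname{Stab}_\Gamma(N)$, the net $(N_{s_i})$ takes only finitely many values eventually. So $P$ equals some $N_t$, and a neighborhood of each $N_t$ (defined by the $\LL^2$ test on $\lambda_g$, $g\in F$) contains only finitely many orbit points. Shrinking further separates them, which gives discreteness.

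If this bookkeeping with conjugated automorphisms fails, I would instead test on $g\in F\cup sFs^{-1}$ directly around a fixed base point $N_t$, reducing by equivariance to $t=e$.
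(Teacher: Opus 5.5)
Your proposal is correct and follows essentially the same route as the paper. Both write the expectations on $\lambda_t$, $t\in F$, as sums of $|G|$ canonical unitaries, use Lemma~\ref{lem: close multisets are equal} to freeze the labelled values along a subsequence, use the finiteness of $C_\Gamma(F)/Z(\Gamma)$ to get only finitely many conjugated groups, and take exoticity and infinitude from Lemma~\ref{lem:finite-fixed-point-algebra}. The step you flag as the obstacle is made clean in the paper by writing $\Ad(s)\alpha\Ad(s)^{-1}=\Ad(c_\alpha)\circ\alpha$ with $c_\alpha:=s\alpha(s^{-1})$: agreement on $F$ then gives $c_{j,\alpha}^{-1}c_{i,\alpha}\in\alpha(C_\Gamma(F))$, so $\Ad(c_{i,\alpha})$ takes finitely many values for each $\alpha\in G$, and neither the change of variables $t=s_j^{-1}s_i$ nor your fallback is needed.
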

\begin{proof}
{\rm(1)}. Put $N:=\LL(\Gamma)^G$. It suffices to show that $\mathcal U$ is closed
and discrete since, in that case, it will be minimal by transitivity and exotic by
Lemma~\ref{lem:finite-fixed-point-algebra}. Suppose that $\lambda_{s_i}N\lambda_{s_i}^*\to Q$. It suffices to show that the sequence $(\lambda_{s_i}N\lambda_{s_i}^*)_i$ has a constant subsequence. For
$\alpha\in G$, put $c_{i,\alpha}:=s_i\alpha(s_i^{-1})$. For every $t\in\Gamma$,
\begin{equation}\label{eq:finite-action-conjugate-expectation}
|G|E_{\lambda_{s_i}N\lambda_{s_i}^*}(\lambda_t)
=
\sum_{\alpha\in G}
\lambda_{c_{i,\alpha}\alpha(t)c_{i,\alpha}^{-1}}.
\end{equation}
For all sufficiently large $i,j$ and every $t\in F$, the two sums in
\eqref{eq:finite-action-conjugate-expectation} have $\Vert\cdot\Vert_2$-distance less than
$\sqrt2$.
Lemma~\ref{lem: close multisets are equal} implies that the corresponding
multisets coincide. Passing to a subsequence, all the finitely many elements
$$
c_{i,\alpha}\alpha(t)c_{i,\alpha}^{-1},
\qquad
(t,\alpha)\in F\times G,
$$
are independent of $i$. Hence, for any two indices in this subsequence,
$$
c_{j,\alpha}^{-1}c_{i,\alpha}
\in C_\Gamma(\alpha(F))
=
\alpha(C_\Gamma(F)).
$$
Since
$\alpha(C_\Gamma(F))/Z(\Gamma)$ is finite, a further subsequence makes
$\Ad(c_{i,\alpha})$ independent of $i$ for every $\alpha\in G$. Thus the
conditional expectations in the subsequence are equal.
\medskip
\noindent{\rm(2)} Apply point {\rm(1)} and
Lemma~\ref{lem:finite-fixed-point-algebra} {\rm(2)}.\end{proof}

\noindent We now record the two natural sources of groups satisfying the
hypothesis above. The second one shows that it is strictly weaker than finite
generation.
\medskip

\begin{remark}\label{rmk:finite-centralizer-examples}
Let $\Gamma$ be a finitely generated i.c.c. group and let $F\subset\Gamma$ be
a finite generating set. Then $C_\Gamma(F)=Z(\Gamma)=\{e\}$ and the hypotheses of
Proposition~\ref{prop:finite-action-fixed-point-rigidity} are satisfied.
\medskip
\noindent More generally, let $\Delta$ be a non-trivial finitely generated group with finite
center, let $\Lambda$ be a non-trivial countable group which is not finitely
generated, and put $\Gamma:=\Delta*\Lambda$. Then $\Gamma$ is countable, is not finitely generated, and is i.c.c. Let
$F\subset\Delta$ be a finite generating set. Since $\Delta$ is malnormal in $\Gamma$, $C_\Gamma(F)=C_\Gamma(\Delta)=Z(\Delta)$, which is finite. Hence, the hypotheses of
Proposition~\ref{prop:finite-action-fixed-point-rigidity} are satisfied.
\medskip
\noindent More specifically, one may take $\Gamma=\F_\infty=\langle a_1,a_2,a_3,\ldots\rangle$ and the order-two automorphism $\phi\in\Aut(\Gamma)$ determined by
$a_1\mapsto a_2$, $a_2\mapsto a_1$ and $a_n\mapsto a_n$ for all $n\geq3$. Indeed, $C_{\F_\infty}(a_1)=\langle a_1\rangle$ and $C_{\F_\infty}(a_2)=\langle a_2\rangle$ so $C_{\F_\infty}(\{a_1,a_2\})=\{e\}$. Proposition~\ref{prop:finite-action-fixed-point-rigidity} {\rm(2)} shows that $\mathcal U:=\{\lambda_s\LL(\mathbb{F}_\infty)^\phi\lambda_s^*:s\in\mathbb{F}_\infty\}$ is an infinite discrete URA.
\end{remark}

\noindent The hypothesis of
Proposition~\ref{prop:finite-action-fixed-point-rigidity} is a condition on the
group $\Gamma$. We now show that it is implied by a condition on the von Neumann
algebra $\LL(\Gamma)$ alone, namely the triviality of the relative commutant in
the tracial ultrapower. The passage from one to the other is a standard local
approximation.

\noindent For a free ultrafilter $\omega\in\beta\N\setminus\N$, let $M^\omega$
denote the tracial ultrapower of $M$, that is, the quotient of
$\ell^\infty(\N,M)$ by the ideal
$\{(x_n)_n:\lim_{n\to\omega}\|x_n\|_2=0\}$, with $M\subset M^\omega$ embedded as
constant sequences.
\begin{lemma}\label{lem:full-factor-unitary-gap}
Let $(M,\tau)$ be a finite von Neumann algebra with separable predual such
that $M'\cap M^\omega=Z(M)^\omega$. For every $\varepsilon>0$, there exist a finite
set $F\subset M$ and $\delta>0$ such that every $u\in\Ucal(M)$ satisfying
$\max_{x\in F}\Vert ux-xu\Vert_2<\delta$
satisfies $\dist_2(u,Z(M))<\varepsilon$.
\end{lemma}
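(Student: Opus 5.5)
The plan is to argue by contradiction, using a diagonal sequence and the separability of the predual. Suppose the statement fails for some $\varepsilon>0$. Since $M$ has separable predual, we fix a $\Vert\cdot\Vert_2$-dense sequence $(x_k)_{k\geq1}$ in the unit ball of $M$. We then take $F_n:=\{x_1,\dots,x_n\}$ and $\delta_n:=1/n$. The failure of the statement yields, for every $n$, a unitary $u_n\in\Ucal(M)$ such that
$$\max_{k\leq n}\Vert u_nx_k-x_ku_n\Vert_2<1/n\quad\text{and}\quad \dist_2(u_n,Z(M))\geq\varepsilon.$$
The bounded sequence $(u_n)_n$ defines an element $u:=(u_n)_n$ of $M^\omega$, and this element is a unitary.

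The first step is to show that $u\in M'\cap M^\omega$. For each fixed $k$, we have $\Vert u_nx_k-x_ku_n\Vert_2\to0$, so $u$ commutes with every $x_k$. Now let $x\in M$ be arbitrary, and by scaling assume $\Vert x\Vert\leq1$. We choose $x_k$ with $\Vert x-x_k\Vert_2$ small, and estimate
$$\Vert u_nx-xu_n\Vert_2\leq\Vert u_n(x-x_k)\Vert_2+\Vert u_nx_k-x_ku_n\Vert_2+\Vert(x_k-x)u_n\Vert_2.$$
The first term is at most $\Vert x-x_k\Vert_2$ because $u_n$ is unitary. The third term equals $\Vert x_k-x\Vert_2$ because $\Vert yu\Vert_2=\Vert y\Vert_2$ for a unitary $u$, by traciality. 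Hence $\lim_{n\to\omega}\Vert u_nx-xu_n\Vert_2=0$. By the hypothesis, it follows that $u\in Z(M)^\omega$.

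The second step, which is the main point to check, is to transfer this back to the coordinates. Membership $u\in Z(M)^\omega$ means there is a bounded sequence $(z_n)_n$ in $Z(M)$ with $\lim_{n\to\omega}\Vert u_n-z_n\Vert_2=0$. This requires the natural embedding $Z(M)^\omega\subset M^\omega$, namely that an element of $Z(M)^\omega$ is represented by a sequence of central elements; this is how the identification is meant. Consequently, $\dist_2(u_n,Z(M))\leq\Vert u_n-z_n\Vert_2$, and these distances tend to $0$ along $\omega$. This contradicts the lower bound $\dist_2(u_n,Z(M))\geq\varepsilon$ for all $n$.

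The only subtleties are thus the density reduction in the first step and the interpretation of $Z(M)^\omega$ in the second. There is no need to make $z_n$ unitary, since the statement only asks for the distance to $Z(M)$.
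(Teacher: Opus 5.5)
Your proof is correct and follows the same argument as the paper. You assume the statement fails for some $\varepsilon$, build unitaries $u_n$ almost commuting with $x_1,\dots,x_n$ from a $\Vert\cdot\Vert_2$-dense sequence of the unit ball, observe that $(u_n)_n\in M'\cap M^\omega=Z(M)^\omega$, and conclude $\lim_{n\to\omega}\dist_2(u_n,Z(M))=0$, which is a contradiction. The only difference is that you spell out the density estimate and the representation of elements of $Z(M)^\omega$ by sequences in $Z(M)$, both of which the paper leaves implicit.
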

\begin{proof}
Let $(x_n)_n$ be $\Vert\cdot\Vert_2$-dense in the unit
ball of $M$. If the statement is false, there exist $\varepsilon>0$ and $u_n\in\Ucal(M)$ such that $\max_{1\leq k\leq n}\Vert u_nx_k-x_ku_n\Vert_2<\frac1n$ and $\dist_2(u_n,Z(M))\geq\varepsilon$. Then $(u_n)_n\in M'\cap M^\omega=Z(M)^\omega$, so
$\lim_{n\to\omega}\dist_2(u_n,Z(M))=0$, contradicting the second inequality.
\end{proof}
\begin{corollary}\label{cor:fixed-point-ura}
Let $\Gamma$ be a countable group and assume that
$$
\LL(\Gamma)'\cap\LL(\Gamma)^\omega
=Z(\LL(\Gamma))^\omega.
$$
Let $G\leq\Aut(\Gamma)$ be a non-trivial finite subgroup. The following
holds.
\begin{enumerate}
\item $\mathcal U
:=\{\lambda_s\LL(\Gamma)^G\lambda_s^*:s\in\Gamma\}$ is a discrete exotic URA.
\item If $\Gamma$ is i.c.c. then $\mathcal U$ has
infinite cardinality.
\end{enumerate}
\end{corollary}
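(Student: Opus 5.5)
The plan is to reduce the Corollary to Proposition~\ref{prop:finite-action-fixed-point-rigidity}. We will show that the ultrapower hypothesis produces a finite set $F\subset\Gamma$ with $C_\Gamma(F)/Z(\Gamma)$ finite. Once such an $F$ is found, part (1) follows from Proposition~\ref{prop:finite-action-fixed-point-rigidity}(1). Part (2) follows from Proposition~\ref{prop:finite-action-fixed-point-rigidity}(2), or equivalently from Lemma~\ref{lem:finite-fixed-point-algebra}(2).

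To find $F$, apply Lemma~\ref{lem:full-factor-unitary-gap} to $M:=\LL(\Gamma)$ with a small $\varepsilon$, say $\varepsilon=1/2$. This gives a finite set $F_0\subset M$ and $\delta>0$. By $\Vert\cdot\Vert_2$-approximation, replace each $x\in F_0$ by a finitely supported element $x'$ with $\Vert x-x'\Vert_2<\delta/8$, and let $F\subset\Gamma$ be the union of the supports of the $x'$. For $s\in C_\Gamma(F)$, the unitary $\lambda_s$ commutes with every $x'$. Hence
\[
\Vert\lambda_sx-x\lambda_s\Vert_2\leq 2\Vert x-x'\Vert_2<\delta
\]
for all $x\in F_0$, and the Lemma gives $\dist_2(\lambda_s,Z(\LL(\Gamma)))<1/2$.

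The key step is to turn this into finiteness of $C_\Gamma(F)/Z(\Gamma)$. Recall that $Z(\LL(\Gamma))$ is spanned, in $\LL^2$, by the normalized class sums of finite conjugacy classes. So $E_{Z}(\lambda_s)=\frac{1}{|C_s|}\sum_{t\in C_s}\lambda_t$ if the conjugacy class $C_s$ of $s$ is finite, and $E_Z(\lambda_s)=0$ otherwise. Therefore
\[
\dist_2(\lambda_s,Z)^2=1-\frac{1}{|C_s|}\quad\text{(or $1$ if $C_s$ is infinite)}.
\]
The bound $\dist_2<1/2$ thus forces $|C_s|<4/3$, that is $|C_s|=1$, so $s\in Z(\Gamma)$. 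Hence $C_\Gamma(F)=Z(\Gamma)$, and $C_\Gamma(F)/Z(\Gamma)$ is trivial, in particular finite.

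The main obstacle is this last computation: identifying $E_{Z(\LL(\Gamma))}$ on canonical unitaries via finite conjugacy classes, and choosing $\varepsilon$ small enough to force centrality exactly. No compactness or subsequence argument is needed, because the distance is computed explicitly.
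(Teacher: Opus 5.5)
Your proposal is correct and follows the paper's proof essentially verbatim: you use Lemma~\ref{lem:full-factor-unitary-gap}, approximate the finite set by finitely supported Fourier sums to get $F\subset\Gamma$, and bound $\dist_2(\lambda_s,Z(\LL(\Gamma)))$ via $\Vert E_{Z(\LL(\Gamma))}(\lambda_s)\Vert_2^2=|C_s|^{-1}$ to force $C_\Gamma(F)=Z(\Gamma)$, then invoke Proposition~\ref{prop:finite-action-fixed-point-rigidity}. The only differences are cosmetic: the paper takes any $\varepsilon<1/\sqrt2$, which is the sharp threshold since a non-central $s$ gives $\dist_2^2\geq\frac12$, while your choice $\varepsilon=\frac12$ is simply a safe value below it.
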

\begin{proof}
\noindent We first claim that there exists a finite set
$F\subset\Gamma$ such that $C_\Gamma(F)=Z(\Gamma)$. Fix $0<\varepsilon<1/\sqrt2$, and choose a finite set
$\mathcal F\subset\LL(\Gamma)$ and $\delta>0$ as in
Lemma~\ref{lem:full-factor-unitary-gap}. For every $x\in\mathcal F$, choose
a finite Fourier sum $y_x\in\mathbb C[\Gamma]$ such that $\Vert x-y_x\Vert_2<\frac{\delta}{2}$. Let $F\subset\Gamma$ be the union of the Fourier supports of the elements
$y_x$, $x\in\mathcal F$. If $g\in C_\Gamma(F)$, then
$\lambda_gy_x=y_x\lambda_g$ for every $x\in\mathcal F$, and hence $\Vert\lambda_gx-x\lambda_g\Vert_2
\leq2\Vert x-y_x\Vert_2
<\delta$. Lemma~\ref{lem:full-factor-unitary-gap} gives $\dist_2(\lambda_g,Z(\LL(\Gamma)))<\varepsilon$.
If $g\notin Z(\Gamma)$, then $\dist_2(\lambda_g,Z(\LL(\Gamma)))^2
=
1-\Vert E_{Z(\LL(\Gamma))}(\lambda_g)\Vert_2^2
\geq\frac12$,
a contradiction, since the orthogonal projection onto
$\LL^2(Z(\LL(\Gamma)))$ satisfies
$$
\Vert E_{Z(\LL(\Gamma))}(\lambda_g)\Vert_2^2
=
\begin{cases}
|\operatorname{Cl}_\Gamma(g)|^{-1},
&|\operatorname{Cl}_\Gamma(g)|<\infty,\\
0,
&|\operatorname{Cl}_\Gamma(g)|=\infty,
\end{cases}
$$
and $|\operatorname{Cl}_\Gamma(g)|\geq2$. Thus
$C_\Gamma(F)\subseteq Z(\Gamma)$, and the reverse
inclusion is immediate.
\medskip
\noindent Since
$C_\Gamma(F)/Z(\Gamma)=\{Z(\Gamma)\}$,
Proposition~\ref{prop:finite-action-fixed-point-rigidity} gives both
assertions.\end{proof}
\noindent The fixed-point URAs above are built from averaging finite groups
of automorphisms. We now give a genuinely different mechanism, in which
the subalgebra is a \emph{corner} $pMp\oplus(1-p)M(1-p)$ cut out by a single
projection rather than a fixed-point algebra. For the paired
constructions associated with a finite-order element of $\Gamma$, the
Pimsner--Popa index comparison is given in
Remark~\ref{rmk:fixed-point-corner-index-comparison}.
\begin{definition}\label{def:corner-ura}
Let $\Gamma$ be a countable i.c.c.\ group and let $p\in\LL(\Gamma)$ be a
projection with $0<\tau(p)<1$. Writing
$N=p\LL(\Gamma)p\oplus(1-p)\LL(\Gamma)(1-p)$, we call
$\overline{\Gamma\cdot N}^{\,\mathrm{EM}}\subset\Sub(\LL(\Gamma))$ the
\textbf{corner URA} associated with $p$, when it is a URA.
\end{definition}

\noindent A corner is the fixed-point algebra of the inner symmetry
$\Ad(2p-1)$, so the two families are formally parallel; the difference is that
the symmetry is now implemented inside $M$ rather than coming from
$\Aut(\Gamma)$. The next lemma records the corresponding conditional
expectation, shows that a corner determines its projection up to
complementation, and provides the $\LL^2$-estimate which makes the orbit map
continuous.
\medskip

\begin{lemma}\label{lem:block-diagonal-comparison}
Let $(M,\tau)$ be a $\mathrm{II}_1$ factor and let $p,q\in M$ be non-trivial
projections. Put $N_p:=pMp\oplus(1-p)M(1-p)$ and $u_p:=2p-1$, and define $N_q$ and $u_q$ similarly. Then
$E_{N_p}(x)=\frac12(x+u_pxu_p)$ for all $x\in M$ and the following are equivalent:
\begin{enumerate}
\item $N_p=N_q$,
\item $q\in\{p,1-p\}$,
\item $u_q\in\{u_p,-u_p\}$.
\end{enumerate}
Moreover, for every $x\in M$ with $\Vert x\Vert_\infty\leq1$, $\Vert E_{N_p}(x)-E_{N_q}(x)\Vert_2
\leq \Vert u_p-u_q\Vert_2$.
\end{lemma}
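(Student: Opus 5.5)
The plan is to treat the three parts of the lemma in order: the formula for the conditional expectation, the equivalence of (1)--(3), and then the $\LL^2$-estimate.

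\textbf{The conditional expectation.} Since $u_p=2p-1$ is a self-adjoint unitary, the map $x\mapsto\frac12(x+u_pxu_p)$ is unital, completely positive and trace preserving. Expanding $u_pxu_p$ gives
$$\frac12(x+u_pxu_p)=pxp+(1-p)x(1-p).$$
The right-hand side lies in $N_p$ and equals $x$ whenever $x\in N_p$, so the map is idempotent with range $N_p$. By uniqueness of the trace-preserving conditional expectation (or directly, since the map is an $N_p$-bimodular projection of norm one), it is $E_{N_p}$.

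\textbf{The equivalences.} The steps (2)$\Leftrightarrow$(3) and (2)$\Rightarrow$(1) are immediate: $u_{1-p}=-u_p$, and $N_{1-p}=N_p$ by symmetry of the definition. For (1)$\Rightarrow$(2) I will use the relative commutant, where the factor hypothesis is essential. Note that $N_p'\cap M\subseteq\{p\}'\cap M=pMp\oplus(1-p)M(1-p)$, because $p\in N_p$. An element $a\oplus b$ of this commutant must commute with all of $pMp$, so $a\in Z(pMp)=\C p$ (since $pMp$ is a factor); similarly $b\in\C(1-p)$. Hence $N_p'\cap M=\C p+\C(1-p)$, and its non-trivial projections are exactly $p$ and $1-p$. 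If $N_p=N_q$, then the relative commutants coincide, so $q$, being a non-trivial projection of $N_q'\cap M$, lies in $\{p,1-p\}$. This is the one place where the hypothesis enters, and it is the only step I expect to need any care, namely the identification of the relative commutant; the rest is routine.

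\textbf{The estimate.} Write
$$2\bigl(E_{N_p}(x)-E_{N_q}(x)\bigr)=u_pxu_p-u_qxu_q=(u_p-u_q)xu_p+u_qx(u_p-u_q).$$
Each term has $\Vert\cdot\Vert_2$ at most $\Vert x\Vert_\infty\,\Vert u_p-u_q\Vert_2$, using $\Vert ayb\Vert_2\le\Vert a\Vert\,\Vert y\Vert_2\,\Vert b\Vert$ and the fact that $\Vert\cdot\Vert_2$ is invariant under the adjoint for a trace. Dividing by $2$ gives the bound.
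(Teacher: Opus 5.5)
Your proof is correct and essentially the paper's own argument. The expectation formula comes by expansion, and the estimate uses the same two-term splitting of $u_pxu_p-u_qxu_q$. For (1)$\Rightarrow$(2) you use the relative commutant $N_p'\cap M$, while the paper uses $Z(N_p)=\C p\oplus\C(1-p)$. These coincide, since $N_p'\cap M\subseteq\{p\}'\cap M=N_p$, so that step is the same as the paper's.
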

\begin{proof}
The expectation formula follows by expansion. Since $M$ is a factor,
$$
Z(N_p)=\C p\oplus\C(1-p).
$$
Thus $N_p=N_q$ implies that the non-trivial central projection $q$ of $N_q$
belongs to $\{p,1-p\}$. The converse and the equivalence with
$u_q\in\{u_p,-u_p\}$ are immediate. Finally,
$$
\begin{aligned}
2\Vert E_{N_p}(x)-E_{N_q}(x)\Vert_2
&=\Vert u_pxu_p-u_qxu_q\Vert_2
\leq\Vert u_px(u_p-u_q)\Vert_2
+\Vert(u_p-u_q)xu_q\Vert_2\\
&\leq2\Vert u_p-u_q\Vert_2.
\end{aligned}
$$\end{proof}

\noindent The following lemma isolates the mechanism which produces discreteness
for families of fixed-point algebras of self-adjoint unitaries. It plays, in the
corner setting, the role played by the multiset rigidity of
Lemma~\ref{lem: close multisets are equal} in the fixed-point setting.

\begin{lemma}\label{lem:uniformly-discrete-symmetries}
Let $(M,\tau)$ be a $\mathrm{II}_1$ factor with separable predual, let
$\mathcal F\subset M$ be finite, and let
$\mathcal V\subset\Ucal(M)$ consist of self-adjoint unitaries.
Assume that, for every $x\in\mathcal F$, the set
$\{uxu:u\in\mathcal V\}$
is uniformly discrete in $\LL^2(M)$, that is, for every $x\in\mathcal F$ there
exists $\delta_x>0$ such that any two distinct elements of this set have
$\LL^2$-distance at least $\delta_x$.
Assume moreover that
$\{\Ad(vu):u,v\in\mathcal V,\ vu\in\mathcal F'\cap M\}$
is finite. Then
$\{M^{\Ad(u)}:u\in\mathcal V\}$
is closed and discrete in $\Sub(M)$.
\end{lemma}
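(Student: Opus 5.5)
Since $\Sub(M)$ is metrizable, it suffices to prove the following. If $(u_i)_i$ is a sequence in $\mathcal V$ such that $M^{\Ad(u_i)}\to Q$ in $\Sub(M)$, then some subsequence has constant $M^{\Ad(u_i)}$. This gives closedness, since then $Q=M^{\Ad(u_{i_0})}$. It also gives discreteness: if $M^{\Ad(u_i)}\to M^{\Ad(u)}$ with $u\in\mathcal V$, then, adjoining $u$ to the sequence, the sequence is eventually equal to its limit (apply the statement to every subsequence). This mirrors the proof of Proposition~\ref{prop:finite-action-fixed-point-rigidity}.

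\textbf{Step 1: stabilize $u_ixu_i$.} Since $u$ is a self-adjoint unitary, $\Ad(u)$ has order at most two, and $E_{M^{\Ad(u)}}(x)=\frac12(x+uxu)$ (the formula of Lemma~\ref{lem:block-diagonal-comparison}). Effros--Mar\'echal convergence makes $(E_{M^{\Ad(u_i)}}(x))_i$ $\LL^2$-Cauchy for each $x\in\mathcal F$, so $(u_ixu_i)_i$ is $\LL^2$-Cauchy. By uniform discreteness of $\{uxu:u\in\mathcal V\}$, this sequence is eventually constant. Since $\mathcal F$ is finite, we may pass to a tail on which $u_ixu_i=u_jxu_j$ for all $i,j$ and all $x\in\mathcal F$.

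\textbf{Step 2: commutation.} Fix $i,j$ in the tail. The relation $u_ixu_i=u_jxu_j$ gives $u_ju_ix=xu_ju_i$ for $x\in\mathcal F$, so $u_ju_i\in\mathcal F'\cap M$. By hypothesis, $\Ad(u_ju_i)$ then ranges over a finite set. Fix $i_0$. The automorphisms $\Ad(u_iu_{i_0})$ for $i$ in the tail lie in this finite set, so after passing to a subsequence, $\Ad(u_iu_{i_0})=\theta$ is independent of $i$.

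\textbf{Step 3: conclude.} Take two indices $i,j$ of the subsequence. Then $\Ad(u_iu_{i_0})=\Ad(u_ju_{i_0})$, hence $\Ad(u_i)=\Ad(u_j)$ as automorphisms of $M$. Therefore $M^{\Ad(u_i)}=M^{\Ad(u_j)}$, so the subsequence of subalgebras is constant. Because $M$ is a factor, $u_i=\pm u_j$, which is consistent with Lemma~\ref{lem:block-diagonal-comparison}; but equality of the automorphisms already suffices.

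\textbf{Main obstacle.} The delicate step is Step 1: converting Effros--Mar\'echal convergence into eventual equality using only the finitely many test elements in $\mathcal F$. The uniform discreteness hypothesis is what makes this work, since it turns an $\LL^2$-Cauchy sequence into an eventually constant one. The finiteness hypothesis then upgrades equality on $\mathcal F$ to equality of the whole automorphism.
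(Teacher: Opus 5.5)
Your proof is correct and follows the paper's own argument. The formula $E_{M^{\Ad(u)}}(x)=\frac12(x+uxu)$ and uniform discreteness make $u_ixu_i$ eventually constant on $\mathcal F$, which gives $u_iu_{i_0}\in\mathcal F'\cap M$; the finiteness hypothesis together with $\Ad(u_i)=\Ad(u_iu_{i_0})\circ\Ad(u_{i_0})$ then produces a constant subsequence. You also spell out why this suffices for both closedness and discreteness, which the paper leaves implicit.
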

\begin{proof}
Let $u_n\in\mathcal V$ and assume that $M^{\Ad(u_n)}$ converges in
$\Sub(M)$. It suffices to show that $(M^{\Ad(u_n)})_n$ has a constant subsequence. Since
$E_{M^{\Ad(u_n)}}(x)=\frac12(x+u_nxu_n),$
the sequence $(u_nxu_n)_n$ is $\Vert\cdot\Vert_2$-Cauchy for every
$x\in\mathcal F$. Uniform discreteness and finiteness of $\mathcal F$ imply
that, for all sufficiently large $i,j$, $u_ixu_i=u_jxu_j$ for all $x\in\mathcal F$.
Hence $u_ju_i\in\mathcal F'\cap M$. Fixing one sufficiently large index
$i$ and passing to a subsequence, the finiteness assumption makes
$\Ad(u_ju_i)$ independent of $j$. Since
$$
\Ad(u_j)=\Ad(u_ju_i)\circ\Ad(u_i),
$$
the corresponding fixed-point algebras are constant on this subsequence.\end{proof}

\noindent Infinitude of the orbit is a separate matter, and it requires no
rigidity hypothesis at all. It holds for every non-trivial projection in the
group von Neumann algebra of an i.c.c.\ group.
\medskip

\begin{lemma}\label{lem:corner-orbit-infinite}
Let $\Gamma$ be a countable i.c.c. group and $p\in \LL(\Gamma)$ be a non-trivial projection. The orbit of
$N_p:=p\LL(\Gamma)p\oplus(1-p)\LL(\Gamma)(1-p)$ in $\Sub(\LL(\Gamma))$ is infinite.
\end{lemma}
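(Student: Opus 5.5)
The plan is to compute the stabilizer of $N_p$ explicitly. Then I show that a finite-index stabilizer would force $p$ to be central, which is impossible in a factor.

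First, the setup. Since $p$ is a non-trivial projection, $\Gamma\neq\{e\}$. An i.c.c. group that is not trivial is infinite, so $\LL(\Gamma)$ is a $\mathrm{II}_1$ factor and Lemma~\ref{lem:block-diagonal-comparison} applies. For $s\in\Gamma$, conjugation by $\lambda_s$ gives $\lambda_sN_p\lambda_s^*=N_{\lambda_sp\lambda_s^*}$. By Lemma~\ref{lem:block-diagonal-comparison}, this equals $N_p$ if and only if $\lambda_su_p\lambda_s^*\in\{u_p,-u_p\}$, where $u_p=2p-1$. Hence
$$\Sigma:=\operatorname{Stab}_\Gamma(N_p)=\{s\in\Gamma:\lambda_su_p\lambda_s^*=\pm u_p\}.$$
The sign defines a homomorphism $\Sigma\to\{\pm1\}$; this uses $u_p\neq0$. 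Its kernel $\Sigma_0=\{s:\lambda_su_p\lambda_s^*=u_p\}$ therefore has index at most $2$ in $\Sigma$.

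Now argue by contradiction. Suppose the orbit is finite. Then $[\Gamma:\Sigma]<\infty$, so $\Sigma_0$ has finite index in $\Gamma$, and $u_p\in\LL(\Sigma_0)'\cap\LL(\Gamma)$. The key step is to show this relative commutant is $\C1$.

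Write $x=\sum_g x_g\lambda_g$ in $\ell^2(\Gamma)$ for some $x\in\LL(\Sigma_0)'\cap\LL(\Gamma)$. Commutation with each $\lambda_s$, $s\in\Sigma_0$, gives $x_{sgs^{-1}}=x_g$. So $g\mapsto x_g$ is constant on $\Sigma_0$-conjugacy classes. If $g\neq e$ had a finite $\Sigma_0$-conjugacy class, then $C_{\Sigma_0}(g)$ would have finite index in $\Sigma_0$, hence in $\Gamma$. Then the $\Gamma$-conjugacy class of $g$ would be finite, contradicting the i.c.c. hypothesis. So every $\Sigma_0$-class of $g\neq e$ is infinite, and square-summability of $(x_g)_g$ forces $x_g=0$ for all $g\neq e$. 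Thus $x\in\C1$.

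It follows that $u_p$ is a scalar, and therefore $p=\tfrac12(u_p+1)$ is a scalar projection, so $p\in\{0,1\}$. This contradicts the non-triviality of $p$, and the orbit is infinite.

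The only genuinely non-formal step is the relative commutant computation, namely passing from the i.c.c. property of $\Gamma$ to infinite $\Sigma_0$-classes. The centralizer-index argument above handles it directly.
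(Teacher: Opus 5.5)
Your proof is correct and follows essentially the same route as the paper. A finite-index stabilizer gives, via Lemma~\ref{lem:block-diagonal-comparison} and the sign homomorphism, a finite-index subgroup fixing $p$, whose relative commutant in $\LL(\Gamma)$ is trivial by the i.c.c.\ hypothesis and a Fourier-coefficient argument. The only difference is that the paper first passes to the normal core of that kernel, and your direct centralizer-index argument shows this step is unnecessary.
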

\begin{proof}
Suppose that the orbit is finite and put
$K:=\operatorname{Stab}_\Gamma(N_p)$.
Then $K$ has finite index in $\Gamma$. By
Lemma~\ref{lem:block-diagonal-comparison}, the action of $K$ on
$\{p,1-p\}$ defines a homomorphism $K\to\Z/2\Z$. Its kernel
$K_0$ has finite index in $\Gamma$ and fixes $p$. Let 
$\Lambda:=\bigcap_{r\in\Gamma}rK_0r^{-1}$ be the normal core of $K_0$: it is a normal finite-index subgroup of $\Gamma$ and $\Lambda\leq K_0$ so
$p\in\LL(\Lambda)'\cap\LL(\Gamma)$. For $g\neq e$, the $\Lambda$-conjugacy
class of $g$ is infinite. Indeed, otherwise $C_\Lambda(g)$, and hence
$C_\Gamma(g)$, would have finite index, contrary to the i.c.c. hypothesis.
Comparison of Fourier coefficients therefore gives $\LL(\Lambda)'\cap\LL(\Gamma)=\C1$. Thus $p\in\C1$, contradicting that $p$ is non-trivial.
\end{proof}

\noindent The uniform discreteness required by
Lemma~\ref{lem:uniformly-discrete-symmetries} becomes available as soon as the
projection has rational coefficients in the group algebra.

\begin{lemma}\label{lem:rational-corner-rigidity}
Let $\Gamma$ be a countable i.c.c. group,
$p\in \LL(\Gamma)$ be a non-trivial projection such that
$p\in d^{-1}\Z[\Gamma]$ for some integer $d\geq1$. Assume that there exists a finite set
$F\subset\Gamma$ such that
$D_F:=\{\lambda_t:t\in F\}'\cap \LL(\Gamma)$
is finite-dimensional. Then
$\mathcal U:=\{\lambda_rN_p\lambda_r^*:r\in\Gamma\}$, where 
$N_p:=p\LL(\Gamma)p\oplus(1-p)\LL(\Gamma)(1-p)$,
is a discrete corner URA of infinite cardinality.
\end{lemma}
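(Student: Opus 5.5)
The plan is to apply Lemma~\ref{lem:uniformly-discrete-symmetries} with $M:=\LL(\Gamma)$, which is a $\mathrm{II}_1$ factor because $\Gamma$ is i.c.c.\ and $p$ is non-trivial (so $\Gamma$ is infinite). We take the finite set $\mathcal F:=\{\lambda_t:t\in F\}$ and the family of symmetries
$$\mathcal V:=\{\lambda_r u_p\lambda_r^*:r\in\Gamma\},\qquad u_p:=2p-1 .$$
Each $\lambda_r u_p\lambda_r^*=u_{\lambda_rp\lambda_r^*}$ is a self-adjoint unitary. Moreover $M^{\Ad(\lambda_ru_p\lambda_r^*)}=N_{\lambda_rp\lambda_r^*}=\lambda_rN_p\lambda_r^*$ by the expectation formula of Lemma~\ref{lem:block-diagonal-comparison}. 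Hence $\{M^{\Ad(u)}:u\in\mathcal V\}=\mathcal U$, and the lemma gives that $\mathcal U$ is closed and discrete.

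Once closedness and discreteness are established, the rest is formal.
\begin{itemize}
\item Minimality follows by transitivity, exactly as in Theorem~\ref{thm:discrete-uras}: any non-empty closed invariant subset contains a full orbit, which is all of $\mathcal U$.
\item So $\mathcal U$ is a discrete URA, and it is a corner URA by Definition~\ref{def:corner-ura}.
\item It has infinite cardinality by Lemma~\ref{lem:corner-orbit-infinite}.
\end{itemize}

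\textbf{Uniform discreteness.} The key observation is integrality. Since $p\in d^{-1}\Z[\Gamma]$, we have $u_p\in d^{-1}\Z[\Gamma]$, and conjugating by group unitaries preserves this lattice. Hence every $u\in\mathcal V$ lies in $d^{-1}\Z[\Gamma]$, and for $t\in F$ we get $u\lambda_tu\in d^{-2}\Z[\Gamma]$, a finitely supported element. Two distinct elements of $d^{-2}\Z[\Gamma]$ differ in at least one Fourier coefficient by a non-zero multiple of $d^{-2}$. Since $(\lambda_g)_g$ is orthonormal, their $\LL^2$-distance is therefore at least $d^{-2}$, which gives the uniform discreteness hypothesis with $\delta_x=d^{-2}$.

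\textbf{Finiteness of the set of $\Ad(vu)$.} This is the step I expect to be the main obstacle. For $u,v\in\mathcal V$ the product $vu$ lies in $d^{-2}\Z[\Gamma]$ and has $\Vert vu\Vert_2=1$. If moreover $vu\in\mathcal F'\cap M=D_F$, then $vu$ lies in
$$d^{-2}\Z[\Gamma]\cap D_F\cap\{y:\Vert y\Vert_2\le1\}.$$
This set is bounded in the finite-dimensional space $D_F$, on which all norms are equivalent. By the previous paragraph its points are pairwise at $\LL^2$-distance at least $d^{-2}$. A bounded, uniformly discrete subset of a finite-dimensional normed space is finite, so only finitely many elements $vu$ occur, and hence only finitely many automorphisms $\Ad(vu)$. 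All hypotheses of Lemma~\ref{lem:uniformly-discrete-symmetries} then hold, which concludes the argument.
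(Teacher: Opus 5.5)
Your proposal is correct and follows the paper's proof essentially step for step. Both arguments apply Lemma~\ref{lem:uniformly-discrete-symmetries} to the conjugates of $2p-1$, use the $d^{-2}$-separation of $d^{-2}\Z[\Gamma]$ together with compactness of bounded sets in the finite-dimensional $D_F$, and finish with transitivity for minimality and Lemma~\ref{lem:corner-orbit-infinite} for infinitude. The only cosmetic difference is that the paper gets the separation constant $2/d^2$ from $E(\lambda_t)=\frac12(\lambda_t+u\lambda_tu)\in d^{-2}\Z[\Gamma]$, whereas you bound $u\lambda_tu$ directly by $d^{-2}$, which works equally well.
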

\begin{proof}
For $r\in\Gamma$, put $p_r:=\lambda_rp\lambda_r^*$ and $u_r:=2p_r-1$.
Then $\lambda_rN_p\lambda_r^*=\LL(\Gamma)^{\Ad(u_r)}$. For every $t\in\Gamma$,
$d^2E_{\lambda_rN_p\lambda_r^*}(\lambda_t)
=
d^2\bigl(p_r\lambda_tp_r+(1-p_r)\lambda_t(1-p_r)\bigr)
\in\Z[\Gamma]$. Consequently, for every $t\in F$, the set
$\{u_r\lambda_tu_r:r\in\Gamma\}$
is uniformly discrete in $\LL^2(\LL(\Gamma))$: two distinct elements have distance at
least $2/d^2$. Moreover, $\{u_su_r:r,s\in\Gamma\}\subseteq d^{-2}\Z[\Gamma]$ and every element of this set has $\LL^2$-norm one. Since
$d^{-2}\Z[\Gamma]$ is $d^{-2}$-separated and the unit ball of $D_F$ is
compact, the set
$D_F\cap\{u_su_r:r,s\in\Gamma\}$
is finite. Lemma~\ref{lem:uniformly-discrete-symmetries} gives closedness and
discreteness. The orbit is minimal because the action on it is transitive.
Infinitude follows from Lemma~\ref{lem:corner-orbit-infinite}.
\end{proof}

\noindent The most natural source of projections with rational coefficients is a
finite subgroup of $\Gamma$, and this is the case in which the Pimsner--Popa
index can be computed exactly.
\medskip

\begin{proposition}\label{prop:finite-subgroup-corner-ura}
Let $\Gamma$ be a countable i.c.c. group and $H\leq\Gamma$ be a non-trivial
finite subgroup. Assume that there exists a finite set
$F\subset\Gamma$ such that the subalgebra $\{\lambda_t:t\in F\}'\cap\LL(\Gamma)$ is finite-dimensional.
Define
$$
N_H:=p_H\LL(\Gamma)p_H\oplus(1-p_H)\LL(\Gamma)(1-p_H)\in\Sub(\LL(\Gamma)),\quad\text{where }p_H:=\frac{1}{\vert H\vert}\sum_{h\in H}\lambda_h.
$$
Then $\mathcal U_H:=\{\lambda_rN_H\lambda_r^*:r\in\Gamma\}$
is a discrete corner URA of infinite cardinality and
$[\LL(\Gamma):\mathcal U_H]=2$.
In particular, the conclusion holds whenever $\Gamma$ is finitely generated.
\end{proposition}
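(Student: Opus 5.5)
The plan is to reduce almost everything to Lemma~\ref{lem:rational-corner-rigidity} and Lemma~\ref{lem:index}. First, observe that $p_H=\frac1{|H|}\sum_{h\in H}\lambda_h$ is a projection, because $H$ is a finite subgroup: $p_H^*=p_H$ and $p_H^2=p_H$ follow from reindexing the sum. It has rational coefficients, namely $p_H\in d^{-1}\Z[\Gamma]$ with $d:=|H|$. It is non-trivial since $\tau(p_H)=1/|H|\in(0,1)$, using that $H\neq\{e\}$. Together with the standing hypothesis on $F$ and the i.c.c.\ assumption, all hypotheses of Lemma~\ref{lem:rational-corner-rigidity} are met. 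That lemma yields directly that $\mathcal U_H$ is a discrete corner URA of infinite cardinality.

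For the index, note that $\LL(\Gamma)$ is a $\mathrm{II}_1$ factor because $\Gamma$ is i.c.c. Apply Lemma~\ref{lem:index} with $m=2$, $p_0=p_H$ and $p_1=1-p_H$; both are non-zero, orthogonal and sum to $1$. This gives $[\LL(\Gamma):N_H]=2$. By Lemma~\ref{lem:pimsner-popa-index-of-ura}, or simply by Lemma~\ref{lem:index-conjugation-invariant} since $\mathcal U_H$ is a single orbit, the index is constant on $\mathcal U_H$. Hence $[\LL(\Gamma):\mathcal U_H]=2$.

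For the final claim, let $F$ be a finite generating set of $\Gamma$. Then $\{\lambda_t:t\in F\}'\cap\LL(\Gamma)=\LL(\Gamma)'\cap\LL(\Gamma)=\C1$, since the $\lambda_t$ with $t\in F$ generate $\LL(\Gamma)$ as a von Neumann algebra and $\LL(\Gamma)$ is a factor. This relative commutant is finite-dimensional, so the hypothesis holds.

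No step looks like a real obstacle, since the heavy lifting is done by Lemma~\ref{lem:rational-corner-rigidity}. The only point needing care is confirming that $p_H$ satisfies the rationality requirement in exactly the form that lemma uses, i.e.\ $p_H\in d^{-1}\Z[\Gamma]$ rather than merely having rational coefficients, and checking non-triviality through the trace.
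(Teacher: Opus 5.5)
Your proposal is correct and follows the paper's proof essentially line by line. You check that $p_H$ is a non-trivial projection in $|H|^{-1}\Z[\Gamma]$, invoke Lemma~\ref{lem:rational-corner-rigidity} for discreteness, minimality and infinite cardinality, apply Lemma~\ref{lem:index} with $m=2$ to get $[\LL(\Gamma):N_H]=2$ (constant along the single orbit), and take $F$ to be a finite generating set in the finitely generated case, where you spell out the detail the paper leaves implicit, namely that the relative commutant is then $\LL(\Gamma)'\cap\LL(\Gamma)=\C1$.
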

\begin{proof}
Note that $p_H$ is a non-trivial orthogonal projection in $\LL(\Gamma)$ and since $p_H\in \vert H\vert^{-1}\Z[\Gamma]$,
Lemma~\ref{lem:rational-corner-rigidity} shows that $\mathcal U_H$ is a discrete URA of infinite cardinality. Finally, Lemma~\ref{lem:index} and
Definition~\ref{def:pimsner-popa-index-of-ura} give $[\LL(\Gamma):\mathcal U_H]=[\LL(\Gamma):N_H]=2$. If $\Gamma$ is finitely generated, one may take for $F$ a finite generating set.\end{proof}

The following remarks are in order.

\begin{remark}\label{rmk:fixed-point-corner-index-comparison}
Let $\Gamma$ be a finitely generated i.c.c.\ group, let $g\in\Gamma$ have exact finite order
$m\geq2$, put $H:=\langle g\rangle$ and set
$\phi:=\Ad(g)\in\Aut(\Gamma)$, whose extension to $\LL(\Gamma)$ is
$\Ad(\lambda_g)$. By Proposition~\ref{prop:finite-action-fixed-point-rigidity}, $\mathcal V_\phi:=\overline{\{\lambda_r\LL(\Gamma)^\phi\lambda_r^*:r\in\Gamma\}}^{\,\mathrm{EM}}$ is a discrete infinite URA and Proposition~\ref{prop:finite-subgroup-corner-ura} produces another discrete infinite URA $\mathcal U_H$. Lemma~\ref{lem:index} 
gives $[\LL(\Gamma):\LL(\Gamma)^\phi]=m$, hence $[\LL(\Gamma):\mathcal V_\phi]=m$. On the other hand, Proposition~\ref{prop:finite-subgroup-corner-ura} gives $[\LL(\Gamma):\mathcal U_H]=2$. Therefore, if $m\geq3$, then $\mathcal V_\phi\cap\mathcal U_H=\emptyset$. If $m=2$, then
$p_H=\frac{1+\lambda_g}{2}$ and $\lambda_g=2p_H-1$.
Hence $
N_H=\{\lambda_g\}'\cap\LL(\Gamma)
=\LL(\Gamma)^\phi$
and therefore $\mathcal U_H=\mathcal V_\phi$.
\end{remark}
\begin{remark}
For every $\mathrm{II}_1$ factor $M$ and every non-trivial
projection $p\in M$, the algebra $N:=pMp\oplus (1-p)M(1-p)$ is maximal among the proper von Neumann subalgebras of $M$ as shown in \cite[Proposition 3.4]{zhou_maximal_typeI}. 
Maximality does not imply that the orbit closure is a URA. Indeed, let
a countable group $\Gamma$ act faithfully and sharply
$3$-transitively on an
infinite set and let $H:=\operatorname{Stab}_\Gamma(x)$. Every non-trivial element
of $\Gamma$ fixes at most two points. Thus, for every finite
$F\subseteq \Gamma\setminus\{e\}$, there exists $y\in X$ fixed by no element of
$F$. Consequently, $\{e\}$ belongs to the Chabauty closure of the conjugacy
class of $H$. By the continuity of $K\mapsto\LL(K)$, the orbit closure of
$\LL(H)$ contains $\LL(\{e\})=\C1$. Since $\C1$ is fixed and
$\LL(H)\neq\C1$, this orbit closure is not minimal. On the other hand,
$\LL(H)$ is maximal in $\LL(\Gamma)$ by
\cite[Theorem A]{Zhou_maximal_triple-transitive}.
\end{remark}
\noindent The preceding results prove discreteness of both the corner
URAs and the fixed-point URAs under their respective hypotheses. Hence all
the discrete examples obtained above fall within the scope of
Theorem~\ref{thm:discrete-uras}. The next
example shows that the finite-centralizer-modulo-the-center hypothesis in
Proposition~\ref{prop:finite-action-fixed-point-rigidity} cannot be omitted.
\medskip
\noindent We denote by $S_\infty$ the group of finitely supported bijections of $\N$.
\begin{proposition}\label{prop:fixed-point-non-fg-not-discrete}
Let $A:=\bigoplus_{\N}\Z/3\Z$ and $\Gamma:=A\rtimes_\sigma S_\infty$, where $\sigma:S_\infty\curvearrowright A$ permutes the coordinates. Define
$\phi\in\Aut(\Gamma)$ by $\phi(a,v)=(-a,v)$ and
$$\mathcal U:=\overline{\{\lambda_g\LL(\Gamma)^\phi\lambda_g^*:g\in\Gamma\}}^{\,\mathrm{EM}}.$$
Then $\Gamma$ is a locally finite i.c.c. group and $\mathcal U$ is an exotic compact fixed-point URA
homeomorphic to the Cantor space. Moreover $[\LL(\Gamma):\mathcal U]=2$.
\end{proposition}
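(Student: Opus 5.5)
The plan is to identify the whole orbit explicitly as a family of fixed-point algebras of order-two automorphisms $\psi_c$, indexed by $c\in(\Z/3\Z)^{\N}$, and then to read off the Cantor topology from this parametrisation.

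\emph{Group-theoretic facts.} First I will check that $\Gamma$ is locally finite and i.c.c. Local finiteness holds because any finite subset lies in $(\bigoplus_{k\leq n}\Z/3\Z)\rtimes S_n$. For i.c.c.: conjugating $(a,e)$ with $a\neq0$ by $S_\infty$ moves its support to infinitely many places. Conjugating $(a,v)$ with $v\neq e$ by elements $b\in A$ produces the elements $(a+b-\sigma_v(b),v)$, and since $v$ moves some coordinate, $b-\sigma_v(b)$ takes infinitely many values. By the Example in Section~\ref{sec:preliminaries}, $\Gamma$ therefore has ${\rm(FIR)}$.

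\emph{Parametrising the orbit.} For $c\in(\Z/3\Z)^{\N}$ define
$$\psi_c(a,v):=(-a+c-\sigma_v(c),\,v).$$
This is well defined because $c-\sigma_v(c)$ is finitely supported. A direct check shows that $\psi_c$ is an automorphism with $\psi_c^2=\id$, and $\psi_0=\phi$. Since $\phi$ commutes with $\Ad(w)$ for $w\in S_\infty$, and since $\Ad(b)\phi\Ad(b)^{-1}=\Ad(2b)\circ\phi$ for $b\in A$, one gets
$$\lambda_g\LL(\Gamma)^\phi\lambda_g^*=\LL(\Gamma)^{\Ad(g)\phi\Ad(g)^{-1}}=\LL(\Gamma)^{\psi_{c}}\quad\text{with }c=2b\ (=-b)\in A,$$
for $g=(b,w)$. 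Next I will check that $\psi_c=\psi_{c'}$ if and only if $c-c'$ is $S_\infty$-invariant, i.e.\ constant. Together with Lemma~\ref{lem:finite-fixed-point-action-rigidity}, applied to $G=\{\id,\psi_c\}$, this gives an injective map
$$\Theta:\mathcal K:=(\Z/3\Z)^{\N}/\{\text{constants}\}\to\Sub(\LL(\Gamma)),\qquad [c]\mapsto\LL(\Gamma)^{\psi_c}.$$
The space $\mathcal K$ is a compact metrizable, perfect, totally disconnected group, hence homeomorphic to the Cantor space.

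\emph{Continuity, closure and minimality.} For each $t\in\Gamma$, the expectation
$$E_{\LL(\Gamma)^{\psi_c}}(\lambda_t)=\tfrac12\bigl(\lambda_t+\lambda_{\psi_c(t)}\bigr)$$
depends only on finitely many coordinates of $c$. The $\LL^2$-density argument of Lemma~\ref{LemmaChabauty} then gives continuity of $\Theta$. A continuous injection of a compact space into the Hausdorff space $\Sub(\LL(\Gamma))$ is a homeomorphism onto a compact, hence closed, image. The orbit corresponds to the image of $A$ in $\mathcal K$, which is dense, so $\mathcal U=\Theta(\mathcal K)$. Under $\Theta$, the group $\Gamma$ acts on $\mathcal K$ by translations by elements of $A$ (the $S_\infty$-part acts by permuting coordinates, as $\Ad(w)\psi_c\Ad(w)^{-1}=\psi_{\sigma_w(c)}$). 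Every $A$-translation orbit is dense in $\mathcal K$, so $\mathcal U$ is minimal and is a compact URA homeomorphic to the Cantor space. It is exotic by Lemma~\ref{lem:finite-fixed-point-algebra}~(1), applied to each $\{\id,\psi_c\}$.

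\emph{Index.} By Lemma~\ref{lem:pimsner-popa-index-of-ura} it suffices to compute $[\LL(\Gamma):\LL(\Gamma)^\phi]$, where $E(x)=\frac12(x+\phi(x))$ gives $\lambda\geq\frac12$. Lemma~\ref{lem:index} does not apply, since $\phi$ is not inner. Instead, I will identify $\LL(A)\cong\LL^\infty(K,\mu)$ with $K=(\Z/3\Z)^{\N}$ the dual group and $\mu$ its Haar measure; on this picture $\phi$ acts as $\chi\mapsto-\chi$, which is measure preserving with the single fixed point $0$. I will then choose a Borel set $S$ with $S\cap(-S)=\emptyset$ and $\mu(S)>0$, for example $S=\{\chi:\chi_k=1\}$ for a fixed coordinate $k$. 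For $p=1_S$ one has $E(p)=\frac12(p+1_{-S})$, so $pE(p)p=\frac12 p$. Hence $E(p)\geq cp$ forces $c\leq\frac12$, and therefore $[\LL(\Gamma):\mathcal U]=2$.

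The step I expect to be the main obstacle is the identification of the closure. One must verify carefully that $\psi_c$ is an automorphism for every $c$, including infinitely supported ones, and that the kernel of $c\mapsto\psi_c$ consists exactly of the constants. Otherwise $\Theta$ fails to be injective or fails to capture all limit points.
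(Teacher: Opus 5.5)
Your overall route is the paper's. Your $\psi_c$ coincides with the paper's $\psi_b\phi\psi_b^{-1}$ for $c=2b$, where the paper's $\psi_b$ is conjugation by $(b,e)$ inside the larger group $\bigl(\prod_{\N}\Z/3\Z\bigr)\rtimes_\sigma S_\infty$. Since $\Gamma$ is normal in that group, this also settles the automorphism check you flagged, for infinitely supported $c$ as well. The continuity, the closure via density of $A$, minimality via $A$-translations, exoticity, and the index computation all go through as you describe. Your index argument with $p=1_S$ and $S\cap(-S)=\emptyset$ is slightly simpler than the paper's choice of a set $C$ with $\phi(1_C)=1-1_C$.

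There is, however, a concrete false step in your i.c.c.\ verification. For $(a,v)$ with $v\neq e$, you conjugate by $b\in A$ and claim that $b-\sigma_v(b)$ takes infinitely many values. It does not. One has $(b-\sigma_v(b))(n)=0$ for every $n\notin\operatorname{supp}(v)$, so $b-\sigma_v(b)$ ranges in the finite group $\bigoplus_{\operatorname{supp}(v)}\Z/3\Z$. This produces at most $3^{|\operatorname{supp}(v)|}$ conjugates. The fix, which is what the paper does, is to conjugate by $(0,w)$ with $w\in S_\infty$ instead. This gives $(\sigma_w(a),wvw^{-1})$, and $wvw^{-1}$ takes infinitely many values because its support $w(\operatorname{supp}(v))$ can be any subset of $\N$ of the same finite cardinality.

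The error matters beyond the statement itself. You use i.c.c.\ to obtain ${\rm (FIR)}$, and then invoke Lemma~\ref{lem:finite-fixed-point-action-rigidity} for the injectivity of $\Theta$. Alternatively, you can get injectivity without ${\rm (FIR)}$, as the paper does. Equality of the fixed-point algebras gives $\frac12(\lambda_t+\lambda_{\psi_c(t)})=\frac12(\lambda_t+\lambda_{\psi_{c'}(t)})$ for every $t\in\Gamma$, which forces $\psi_c=\psi_{c'}$.
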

\begin{proof}
The group $\Gamma$ is locally finite. Indeed, if
$F\subseteq\Gamma$ is finite, choose a finite set $I\subseteq\N$ containing
$\operatorname{supp}(a)\cup\operatorname{supp}(v)$ for every $(a,v)\in F$.
Then
$$
\langle F\rangle
\leq
\left(\bigoplus_I\Z/3\Z\right)\rtimes\operatorname{Sym}(I),
$$
and the group on the right is finite. $\Gamma$ is i.c.c.: if $(a,v)\neq e$ and $v\neq e$, its
conjugates by $(0,w)$, $w\in S_\infty$, attain infinitely many values because
the support of $wvw^{-1}$ can be moved to infinitely many distinct finite
subsets of $\N$. If $v=e$ and $a\neq0$, the elements $\sigma_w(a)$,
$w\in S_\infty$, attain infinitely many values. Moreover, if
$F\subseteq\Gamma$ is finite, choose $n\in\N$ outside the supports of all
components of the elements of $F$. Every element of $A$ supported at $n$
commutes with $F$. Repeating this for infinitely many $n$ shows that
$C_\Gamma(F)$ is infinite.
\medskip
\noindent Consider the compact group $B:=\prod_{\N}\Z/3\Z$ (with product topology). For $b\in B$, define $\psi_b\in\Aut(\Gamma)$ by $\psi_b(a,v)=(b-\sigma_v(b)+a,v)$. This is well defined because $b-\sigma_v(b)\in A$ and a direct computation shows that $\psi_b$ is indeed an automorphism. Another way to see that $\psi_b\in\Aut(\Gamma)$ is to remark that, in the larger group $B\rtimes_\sigma S_\infty$, one has $\psi_b(a,v)=(b,e)(a,v)(b,e)^{-1}$. Moreover, $b\mapsto\psi_b$ is a group homomorphism $B\to\Aut(\Gamma)$.
\medskip
\noindent For $g=(a,v)\in\Gamma$, one has $(\psi_b\phi\psi_b^{-1})(g)
=
(2b-2\sigma_v(b)-a,v)$. Define $N:=\LL(\Gamma)^\phi$ and  $\Theta:B\longrightarrow\Sub(\LL(\Gamma))$, $\Theta(b):=\psi_b(N)=\LL(\Gamma)^{\psi_b\phi\psi_b^{-1}}$. If $b_i\to b$ in $B$, then for every $g=(a,v)\in\Gamma$ the
automorphisms $\psi_{b_i}\phi\psi_{b_i}^{-1}$ and
$\psi_b\phi\psi_b^{-1}$ agree on $g$ for all sufficiently large $i$. Since $\phi$ has order $2$, $\psi_b\phi\psi_b^{-1}$ has order $2$ for all $b\in B$. Hence, $E_{\Theta(b)}=\frac12(\id+\psi_b\phi\psi_b^{-1})$
. It follows that $E_{\Theta(b_i)}(x)\longrightarrow E_{\Theta(b)}(x)$
in $\LL^2(\LL(\Gamma))$ for every $x\in\mathbb C[\Gamma]$. Since the
conditional expectations are $\LL^2$-contractive and
$\mathbb C[\Gamma]$ is $\LL^2$-dense in $\LL(\Gamma)$, the same convergence
holds for every $x\in\LL(\Gamma)$. Thus $\Theta$ is continuous.
\medskip
\noindent If $b,c\in B$ are such that $\Theta(b)=\Theta(c)$ then $E_{\Theta(b)}=E_{\Theta(c)}$ implies that $\psi_b\phi\psi_b^{-1}=\psi_c\phi\psi_c^{-1}$. Hence, $b-c=\sigma_v(b-c)$ for all $v\in S_\infty$. Thus $b-c$ is constant i.e. $b-c\in\langle\mathbf1\rangle$, where $\mathbf1:=(1,1,\dots)\in B$. The converse is immediate: if $b-c\in\langle\mathbf1\rangle$ then $\Theta(b)=\Theta(c)$. Therefore $\Theta$
induces a continuous injection $\overline\Theta:B/\langle\mathbf1\rangle\to\Sub(\LL(\Gamma))$. It is a homeomorphism onto its image because its domain is compact and its
codomain is Hausdorff. For $s=(a_s,\rho_s)\in\Gamma$, a direct computation gives $\lambda_sN\lambda_s^*=\Theta(a_s)$.
Since $A$ is dense in $B$, it follows that $\mathcal U=\Theta(B)\cong B/\langle\mathbf1\rangle$. Note that, for all $a\in A$ and $b\in B$,
$\lambda_a\Theta(b)\lambda_a^*=\Theta(b+a)$. Hence, since the image of $A$ is dense in $B/\langle\mathbf1\rangle$, every
$\Gamma$-orbit in $\mathcal U$ is dense. Hence $\mathcal U$ is a compact URA. Moreover, every coset in $B/\langle\mathbf1\rangle$ has a unique representative whose
first coordinate is zero. Consequently, $\mathcal U\cong\prod_{k\geq2}\Z/3\Z$ so $\mathcal U$ is a Cantor space. Since $\Theta(b)=\LL(\Gamma)^{\psi_b\phi\psi_b^{-1}}$ for every $b\in B$,
where $\psi_b\phi\psi_b^{-1}$ is a non-trivial automorphism of order two, Lemma~\ref{lem:finite-fixed-point-algebra} {\rm(1)} shows that $\mathcal U$ is exotic.
\medskip
\noindent Under the Fourier identification $\LL(A)=\LL^\infty(B)$, the
restriction of $\phi$ to $\LL(A)$ is induced by the involution
$b\mapsto-b$ of $B$. Let
$$
n(b):=\min\{n\in\N:b_n\neq0\},
\quad
C:=\{b\in B\setminus\{0\}:b_{n(b)}=1\},
$$
and let $p:=1_C\in\LL(A)$. Since the Haar measure of $\{0\}$ is zero,
$\phi(p)=1-p$. Writing $N:=\LL(\Gamma)^\phi$, one has
$E_N(x)=\frac12(x+\phi(x))\geq\frac12x$ for all $x\in\LL(\Gamma)^+)$ and hence $[\LL(\Gamma):N]\leq2$. On the other hand,
$E_N(p)=\frac12 1$. If $E_N(p)\geq cp$ for some $c>0$, compression by $p$
gives $\frac12p=pE_N(p)p\geq cp$. Thus $c\leq\frac12$, so $[\LL(\Gamma):N]=2$. Lemma~\ref{lem:pimsner-popa-index-of-ura}
and Definition~\ref{def:pimsner-popa-index-of-ura} give
$[\LL(\Gamma):\mathcal U]=2$.
\end{proof}
\noindent The discrete fixed-point and corner URAs constructed
above consist of subalgebras with finite Pimsner--Popa index. In the
classical setting of $\Sub(\Gamma)$, every finite-index subgroup of a
finitely generated group $\Gamma$ is isolated in the Chabauty topology.
Proposition~\ref{prop:fixed-point-non-fg-not-discrete} already shows that
finite Pimsner--Popa index does not imply isolation in an orbit closure. However the
ambient group in that example, being locally finite and infinite, is not finitely generated. The following
proposition strengthens this phenomenon by showing that, even for a
two-generated amenable i.c.c.
group $\Gamma$, a maximal finite-index subalgebra of $\LL(\Gamma)$ need not
be isolated in its orbit closure. The orbit closure below is not asserted to
be a URA.
\begin{proposition}\label{ex:finitely-generated-nonisolation}
Let $\operatorname{Sym}_{\mathrm{fin}}(\Z)$ be the group of finitely supported bijections of $\Z$,
$\theta\in\Aut(\operatorname{Sym}_{\mathrm{fin}}(\Z))$ be the automorphism induced by the translation $n\mapsto n+1$ and $\Gamma:=\operatorname{Sym}_{\mathrm{fin}}(\Z)\rtimes_\theta\Z$.
Then $\Gamma$ is two-generated, amenable and i.c.c. Moreover, there exists
$N\in\Sub(\LL(\Gamma))$ such that
\begin{enumerate}
\item $[\LL(\Gamma):N]=2$;
\item $N$ is maximal among the proper von Neumann subalgebras of
$\LL(\Gamma)$;
\item $N$ is not isolated in
$\overline{\{\lambda_gN\lambda_g^*:g\in\Gamma\}}^{\,\mathrm{EM}}$.
\end{enumerate}
\end{proposition}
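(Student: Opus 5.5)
The plan is to take for $N$ a \emph{corner} subalgebra $N_p:=p\LL(\Gamma)p\oplus(1-p)\LL(\Gamma)(1-p)$, where the projection $p$ has non-rational Fourier coefficients. This is forced on us by the earlier results. If $N$ were a fixed-point algebra of a finite group of automorphisms of the finitely generated i.c.c.\ group $\Gamma$, then Proposition~\ref{prop:finite-action-fixed-point-rigidity} together with Remark~\ref{rmk:finite-centralizer-examples} would make its orbit closed and discrete. Likewise, a corner cut out by a projection in $d^{-1}\Z[\Gamma]$ has discrete orbit by Lemma~\ref{lem:rational-corner-rigidity}.

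\textbf{Group-theoretic facts.} First I would check the three properties of $\Gamma$.
\begin{enumerate}
\item Two-generation: $\Gamma$ is generated by the transposition $(0\,1)$ and the generator $s$ of $\Z$, because the conjugates $s^k(0\,1)s^{-k}=(k\,\,k{+}1)$ generate $\operatorname{Sym}_{\mathrm{fin}}(\Z)$.
\item Amenability: $\Gamma$ is an extension of $\Z$ by a locally finite group.
\item The i.c.c.\ property: an element $(\sigma,n)$ with $n\neq0$ has infinitely many conjugates under finitely supported permutations placed far away. An element $(\sigma,0)$ with $\sigma\neq e$ has infinitely many conjugates under $s^k$, since its support is translated.
\end{enumerate}
Hence $\LL(\Gamma)$ is a $\mathrm{II}_1$ factor. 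For any projection $p$ with $0<\tau(p)<1$, Lemma~\ref{lem:index} then gives $[\LL(\Gamma):N_p]=2$, and \cite[Proposition 3.4]{zhou_maximal_typeI} gives maximality of $N_p$. This settles items (1) and (2) for every non-trivial $p$.

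\textbf{Choice of $p$.} Let $A\leq\operatorname{Sym}_{\mathrm{fin}}(\Z)$ be the subgroup generated by the commuting transpositions $t_k:=(2k\,\,2k{+}1)$, $k\geq0$. Then $A\cong\bigoplus_{k\geq0}\Z/2\Z$, and Fourier duality identifies $\LL(A)$ with $\LL^\infty(B)$, where $B:=\{0,1\}^{\N}$ carries the Haar (product) measure. I would take $p:=1_C\in\LL(A)$ for a Borel set $C\subseteq B$ of measure $\tfrac12$ that genuinely depends on every coordinate but only weakly on the tail. A natural candidate is
$$C:=\{b\in B:\ b_0=b_{n(b)}\},\qquad n(b):=\min\{n\geq1:b_n=1\}.$$
For $k\geq1$, let $w_k\in\operatorname{Sym}_{\mathrm{fin}}(\Z)$ be the permutation exchanging the pairs $\{2k,2k{+}1\}$ and $\{2k{+}2,2k{+}3\}$. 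Then $w_k$ normalizes $A$, and $\Ad(\lambda_{w_k})$ acts on $\LL^\infty(B)$ by swapping the coordinates $k$ and $k+1$. Put $p_k:=\lambda_{w_k}p\lambda_{w_k}^*=1_{C_k}$, where $C_k$ is the image of $C$ under this coordinate swap.

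\textbf{Convergence and distinctness.} Membership of $b$ in $C$ only sees the coordinates up to $n(b)$, so $C\triangle C_k\subseteq\{b:n(b)\geq k\}$ and hence $\Vert p_k-p\Vert_2^2\leq 2^{-k+1}\to0$. By the estimate of Lemma~\ref{lem:block-diagonal-comparison}, this yields $\lambda_{w_k}N_p\lambda_{w_k}^*=N_{p_k}\to N_p$ in the Effros--Mar\'echal topology. It then remains to show $N_{p_k}\neq N_p$, which by Lemma~\ref{lem:block-diagonal-comparison} amounts to $p_k\notin\{p,1-p\}$.
\begin{itemize}
\item $p_k\neq1-p$ for large $k$: we have $\Vert p_k-(1-p)\Vert_2\geq\Vert 2p-1\Vert_2-\Vert p_k-p\Vert_2=1-\Vert p_k-p\Vert_2>0$.
\item $p_k\neq p$: I would exhibit the positive-measure set of $b$ with $n(b)=k$, $b_{k+1}=0$ and $b_0=1$. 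Such $b$ lie in $C$, while their swapped images have $n=k+1$ and value $0$ at coordinate $k+1$, hence lie outside $C$.
\end{itemize}
Thus $N_p$ is a limit of distinct members of its orbit, which proves (3).

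The main obstacle is this last step: checking that the particular $C$ is not invariant under any of the swaps, while keeping $\tau(p)=\tfrac12$. If the candidate $C$ above turns out to be inconvenient, I would replace it by any measurable set that depends on all coordinates with geometrically decaying influence; for example, a set defined through the first index at which two independent coordinate streams disagree.
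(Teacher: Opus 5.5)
Your overall strategy is the paper's. You take a corner $N_p$ of a projection $p=1_C$ in $\LL(A)\cong\LL^\infty(\{0,1\}^{\N})$, with $A\cong\bigoplus_{\N}\Z/2\Z$, and conjugate it by permutations exchanging two adjacent pairs (your $w_k$ are the paper's $g_k$). Items (1) and (2) then come from Lemma~\ref{lem:index} and \cite[Proposition~3.4]{zhou_maximal_typeI}, convergence from the $\LL^2$-estimate of Lemma~\ref{lem:block-diagonal-comparison}, and distinctness from the equivalence in that lemma. The group-theoretic checks are also fine.

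However, the one step that carries the content of (3) fails for your set $C$. By definition $n(b)$ is an index with $b_n=1$, so $b_{n(b)}=1$ identically, and the condition $b_0=b_{n(b)}$ just says $b_0=1$. Thus, up to the null set $\{b:\ b_n=0\text{ for all }n\geq1\}$, $C=\{b:\ b_0=1\}$ depends only on coordinate $0$. None of the coordinate swaps $\vartheta_k$ ($k\geq1$) implemented by $\Ad(\lambda_{w_k})$ touches that coordinate. Hence $C_k=C$, $p_k=p$ and $\lambda_{w_k}N_p\lambda_{w_k}^*=N_p$ for every $k$, so your sequence is constant and nothing is shown about isolation. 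Your check of $p_k\neq p$ contains the matching slip. If $n(b)=k$ and $b_{k+1}=0$, the swapped point $b'$ has $b'_k=0$ and $b'_{k+1}=1$, so $n(b')=k+1$ and $b'_{n(b')}=1=b'_0$; thus $b'\in C$, not outside it.

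Your fallback is a heuristic rather than an argument. Your own candidate shows the danger: you describe it as depending on every coordinate, yet it is a function of $b_0$ alone. What is actually needed is $0<\mu(C\mathbin{\triangle}\vartheta_k(C))\to0$ for infinitely many $k$, where $\mu$ is the product measure, together with $p_k\neq1-p$. This must be verified on an explicit set.

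The paper takes $C=\{x:\ \sum_{j\geq1}2^{-j}x_j<1/3\}$, so that $\tau(p)=1/3$. It computes $\Vert p_k-p\Vert_2^2=\mu(C\mathbin{\triangle}\vartheta_k(C))=1/(3\cdot2^k)$ by comparing binary expansions lexicographically with $1/3=0.0101\ldots$. Since $\tau(p)\neq1/2$, $p_k\neq1-p$ is automatic.

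Within your own stopping-time framework the repair is immediate: let membership depend on $n(b)$ itself, for instance $C:=\{b:\ n(b)\text{ odd}\}$. Then $\mu(C)=2/3$. The swap $\vartheta_k$ changes $n(b)$, exchanging the values $k$ and $k+1$ and hence flipping parity, exactly on $\{n(b)=k,\ b_{k+1}=0\}\sqcup\{n(b)=k+1\}$. So $\Vert p_k-p\Vert_2^2=2^{-k}$, which is positive and tends to $0$, and $p_k\neq1-p$ by comparing traces. With this choice the rest of your argument goes through as written.
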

\begin{proof}
Put $\Lambda:=\operatorname{Sym}_{\mathrm{fin}}(\Z)$ and write $u$ the canonical generator of $\Z$. If $t:=(0\, \,1)$, then $u^ktu^{-k}=(k\, \, k+1)$ for every $k\in\Z$.
Every element of $\Lambda$ is supported in a finite interval of $\Z$, and
the symmetric group of a finite interval is generated by its adjacent
transpositions. Hence $\Lambda=\langle u^ktu^{-k}:k\in\Z\rangle$ and $
\Gamma=\langle u,t\rangle$. In particular, $\Gamma$ is two-generated. Moreover, $\Gamma$ is amenable because both
$\Lambda$ and $\Z$ are.
\medskip
\noindent Every element of $\Gamma$ has a unique expression $\sigma u^m$, with
$\sigma\in\Lambda$ and $m\in\Z$. If $\sigma\neq e$, the elements
$u^k(\sigma u^m)u^{-k}=\theta^k(\sigma)u^m$ take infinitely many distinct
values because a non-empty finite support cannot be invariant
under infinitely many translations. If $\sigma=e$ and $m\neq0$, put $t_k:=u^ktu^{-k}$. Then $t_ku^mt_k^{-1}=t_k\theta^m(t_k)u^m$ and the non-trivial finitary permutations
$t_k\theta^m(t_k)=\theta^k(t\theta^m(t))$ are pairwise distinct for
infinitely many $k$. Thus $\Gamma$ is i.c.c.
\medskip
\noindent Put $M:=\LL(\Gamma)$. For $j\geq1$, let $s_j:=(2j\, \, 2j+1)$ and $H:=\langle s_j:j\geq1\rangle$. Then $H\simeq\bigoplus_{\N}\Z/2\Z$. Under the Fourier identification
$\LL(H)\simeq \LL^\infty(\Omega,\mu)$, where $\Omega:=\{0,1\}^{\N}$ and $\mu$ is the Bernoulli product measure, write every $x\in\Omega$ as
$x=(x_j)_{j\geq1}$, where $x_j\in\{0,1\}$. Under the identification
$\widehat H\simeq\Omega$, the point $x$ corresponds to the character
$\chi_x\in\widehat H$ determined by $\chi_x(s_j)=(-1)^{x_j}$ for all $j\geq1$. Define the binary-expansion map $T:\Omega\to[0,1]$ and the projection
$p\in\LL^\infty(\Omega,\mu)$ by
$$
T(x):=\sum_{j\geq1}2^{-j}x_j,
\quad
C:=\{x\in\Omega:T(x)<1/3\},
\quad
p:=1_C.
$$
For every $n\geq1$ and $\varepsilon_1,\ldots,\varepsilon_n\in\{0,1\}$ the cylinder $\{x\in\Omega:x_j=\varepsilon_j,\ 1\leq j\leq n\}$ has $\mu$-measure $2^{-n}$, and its image under $T$, modulo endpoints, is
the dyadic interval
$$
\left[
\sum_{j=1}^n2^{-j}\varepsilon_j,\
\sum_{j=1}^n2^{-j}\varepsilon_j+2^{-n}
\right].
$$
Hence the push-forward of $\mu$ by $T$ is Lebesgue probability $\lambda$ on $[0,1]$, so
$\tau(p)=1/3$.
\medskip
\noindent For $k\geq1$, define
$g_k:=(2k\, \, 2k+2)(2k+1\, \, 2k+3)
\in\operatorname{Sym}_{\mathrm{fin}}(\Z)$. Conjugation by $g_k$ exchanges $s_k$ and $s_{k+1}$ and fixes every other
$s_j$. Let $\vartheta_k:\Omega\to\Omega$ exchange the $k$-th and
$(k+1)$-st coordinates, and put $p_k:=\lambda_{g_k}p\lambda_{g_k}^*=1_{\vartheta_k(C)}$. Write $1/3=\sum_{j\geq1}2^{-j}d_j$, where $d_j=0$ for $j$ odd and
$d_j=1$ for $j$ even, and put 
$$D_k:=\{x\in\Omega:x_j=d_j,\ 1\leq j\leq k+1\}.$$
Outside the countable set of points having two binary
expansions, comparison with $\frac13=0.010101\ldots$
is lexicographic. Since $\vartheta_k$ changes only the $k$-th and
$(k+1)$-st digits, $T(x)$ and $T(\vartheta_k(x))$ lie on opposite sides of
$1/3$ if and only if the first $k+1$ digits of one of $x$ and
$\vartheta_k(x)$ are $d_1,\ldots,d_{k+1}$. Consequently, modulo null sets,
$$
C\mathbin{\triangle}\vartheta_k(C)
=A_k\sqcup\vartheta_k(A_k),
\quad\text{where }A_k:=
\begin{cases}
D_k\cap C,&k\text{ odd},\\
D_k\cap C^c,&k\text{ even}.
\end{cases}
$$
\noindent For $x\in\Omega$, put
$R_k(x):=\sum_{\ell\geq1}2^{-\ell}x_{k+1+\ell}$ and 
$a_k:=\sum_{j=1}^{k+1}2^{-j}d_j$. Then
$$
T(x)=a_k+2^{-(k+1)}R_k(x)
\quad\text{for all }x\in D_k,
$$
and
$\frac13
=a_k+2^{-(k+1)}r_k$ and $r_k:=
\sum_{\ell\geq1}2^{-\ell}d_{k+1+\ell}
=
\begin{cases}
1/3,&k\text{ odd},\\
2/3,&k\text{ even}.
\end{cases}
$
\noindent Note that, for every Borel subset $B\subseteq[0,1]$, $\mu\bigl(D_k\cap R_k^{-1}(B)\bigr)=2^{-(k+1)}\lambda(B)$ and,
$$
A_k=
\begin{cases}
D_k\cap R_k^{-1}([0,1/3)),&k\text{ odd},\\
D_k\cap R_k^{-1}([2/3,1]),&k\text{ even}.
\end{cases}
$$
Hence $\mu(A_k)=\frac{2^{-(k+1)}}{3}$ and therefore  $\|p_k-p\|_2^2
=
\mu(C\mathbin{\triangle}\vartheta_k(C))
=
\frac{1}{3\cdot2^k}>0
$. Thus $p_k\neq p$ for every $k$ and $p_k\to p$ in $\Vert\cdot\Vert_2$.
\medskip
\noindent Define $
N:=pMp\oplus(1-p)M(1-p)$ and $N_k:=\lambda_{g_k}N\lambda_{g_k}^*$. Lemma~\ref{lem:index} and
\cite[Proposition~3.4]{zhou_maximal_typeI} show that $N$ has
Pimsner--Popa index $2$ and is maximal among the proper von Neumann
subalgebras of $M$. Lemma~\ref{lem:block-diagonal-comparison} gives $N_k\to N$ in the Effros--Mar\'echal topology. Since $\tau(p_k)=1/3$, the equality
$N_k=N$ would imply $p_k=p$ by the same lemma. Hence $N_k\neq N$ for all $k\geq1$ so $N$ is not isolated in its orbit closure.\end{proof}
%%%%%%%%%%%%%%%%%%%%%%%%%%%%%%%%%%%%%%%%%%%%%%%%%%%%%%%%%%%%%%%%%%%%%%%%%%%%%%%%%%%%%%%%%%%%%%%%%%%%%%%%%%%%%%%%%%%%%%%%%%%%%%%%%%%%%%%%%%%%%%%%%%%%%%%%%%
\subsection{Geometric realization via continuous fields}
\label{subsec:continuousfields}
%%%%%%%%%%%%%%%%%%%%%%%%%%%%%%%%%%%%%%%%%%%%%%%%%%%%%%%%%%%%%%%%%%%%%%%%%%%%%%%%%%%%%%%%%%%%%%%%%%%%%%%%%%%%%%%%%%%%%%%%%%%%%%%%%%%%%%%%%%%%%%%%%%%%%%%%%%%%%%%%%%%%%%%%%%%%%%%%%%%%%%

All the constructions above start from a state and read off its centralizer. We
close with a construction of a different, geometric nature.  The interest of this variant is that the
URAs it produces need be neither compact nor discrete, so they escape both
Theorem~\ref{ThmCompactURA} and Theorem~\ref{thm:discrete-uras}; an explicit
instance is given in Example~\ref{ex:discontinuous-fiber-map}.

Let $\Gamma$ be a countable discrete group acting minimally on a
compact Hausdorff space $X$. Let $B$ be a separable unital $C^*$-algebra
with a faithful tracial state $\tau$, let
$\alpha:\Gamma\curvearrowright B$ be trace preserving, and put
$M:=B''$ in the GNS representation associated with $\tau$. Then $M$ has separable predual. We still denote by $\alpha:\Gamma\curvearrowright M$ the canonical trace preserving action. Identify
$C(X)\otimes_{\min}B$ with $C(X,B)$ and equip it with the diagonal action
$$
(g\cdot F)(x):=\alpha_g(F(g^{-1}x))
\quad\text{for all }g\in\Gamma,\ F\in C(X,B),\ x\in X.
$$
Let $C(X)\subseteq A\subseteq C(X,B)$ be a $\Gamma$-invariant
intermediate $C^*$-algebra. For $x\in X$, put
$A_x:=\{F(x):F\in A\}\subseteq B$ and define $\pi:X\to\Sub(M)$ by $\pi(x):=A_x''\in\Sub(M)$. The map $\pi$ is called
\textbf{the fiber map}.
\begin{remark}\label{rmk:fiber-map-equivariance}
The fiber map is $\Gamma$-equivariant. Indeed, since $g\cdot A=A$, one has
$A_{gx}
=\{(g\cdot F)(gx):F\in A\}
=\alpha_g(A_x)
\qquad(g\in\Gamma,\ x\in X)$. Consequently,
$$
\pi(gx)=A_{gx}''=\alpha_g(A_x)''=\alpha_g(A_x'')=\alpha_g(\pi(x)).
$$
\end{remark}

\noindent Unlike the centralizer map, the fibre map is lower semicontinuous.

\begin{lemma}\label{lem:fiber-map-continuity-points}
The set $\operatorname{Cont}(\pi)$ is a dense $G_\delta$ subset of $X$.
\end{lemma}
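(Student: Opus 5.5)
The plan is to apply Lemma~\ref{lem:semicontinuous-coordinate-map} to the Baire space $X$ (compact Hausdorff, hence Baire) and the map $\pi$. This reduces the statement to showing that every coordinate function
$$f_x:X\to[0,\infty),\qquad f_x(y):=\Vert E_{\pi(y)}(x)\Vert_2,\quad x\in M,$$
is lower semicontinuous. Once this is established, the lemma yields that $\operatorname{Cont}(\pi)$ is a dense $G_\delta$ subset of $X$.

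To prove lower semicontinuity, fix $x\in M$, $y_0\in X$ and $\varepsilon>0$; we must find a neighbourhood $V$ of $y_0$ with $f_x(y)>f_x(y_0)-\varepsilon$ for all $y\in V$. We use the variational description of $E_{\pi(y)}(x)$ as the orthogonal projection of $x\xi_\tau$ onto $\LL^2(\pi(y))$. Since $A_{y_0}$ is a unital $*$-subalgebra of $B$ (it contains the constants because $C(X)\subseteq A$), Kaplansky density shows that $A_{y_0}$ is $\Vert\cdot\Vert_2$-dense in $A_{y_0}''=\pi(y_0)$. We may therefore pick $F\in A$ with $\Vert F(y_0)-E_{\pi(y_0)}(x)\Vert_2<\delta$, where $\delta$ is small.

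For every $y$ we have $F(y)\in A_y\subseteq\pi(y)$, so the projection inequality gives
$$\Vert E_{\pi(y)}(x)\Vert_2\;\geq\;\frac{|\langle x\xi_\tau,F(y)\xi_\tau\rangle|}{\Vert F(y)\Vert_2}=\frac{|\tau(F(y)^*x)|}{\Vert F(y)\Vert_2}$$
whenever $F(y)\neq0$. The map $y\mapsto F(y)$ is norm continuous into $B$, and $\Vert b\Vert_2\le\Vert b\Vert$. Hence both $y\mapsto\tau(F(y)^*x)$ and $y\mapsto\Vert F(y)\Vert_2$ are continuous. The right-hand side is thus continuous in $y$, and at $y_0$ it is within $O(\delta)$ of $\Vert E_{\pi(y_0)}(x)\Vert_2^2/\Vert E_{\pi(y_0)}(x)\Vert_2=f_x(y_0)$. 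Choosing $\delta$ small and then $V$ small gives the required bound.

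The degenerate case $E_{\pi(y_0)}(x)=0$ is trivial, since $f_x\ge0$. The only real care needed is the case distinction avoiding division by zero, together with the justification that $A_{y_0}$ is $\LL^2$-dense in $\pi(y_0)$. Density holds because the strong closure of the unit ball of a unital $*$-algebra is the unit ball of its bicommutant, and strong convergence of bounded nets implies $\LL^2$ convergence at $\xi_\tau$. Equivariance from Remark~\ref{rmk:fiber-map-equivariance} is not needed for this statement.
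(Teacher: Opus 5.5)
Your proof is correct and follows essentially the paper's route. You obtain lower semicontinuity of $y\mapsto\Vert E_{\pi(y)}(x)\Vert_2$ from a single $F\in A$ whose value at $y_0$ nearly attains the projection norm, using $F(y)\in A_y\subseteq\pi(y)$ for all $y$ together with continuity of $y\mapsto F(y)$, and then apply Lemma~\ref{lem:semicontinuous-coordinate-map} on the Baire space $X$. The only cosmetic difference is that the paper normalizes via $\sup\{|\tau(a^*b)|:a\in A_{y_0},\ \Vert a\Vert_2\leq1\}$, whereas you use the ratio $|\tau(F(y)^*x)|/\Vert F(y)\Vert_2$.
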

\begin{proof}
For $b\in M$, put $h_b(x):=\Vert E_{A_x''}(b)\Vert_2$ for all $x\in X$. The projection formula and Kaplansky density give
$h_b(x)=\sup\bigl\{|\tau(a^*b)|:a\in A_x,\ \Vert a\Vert_2\leq1\bigr\}$. Fix $x_0\in X$ and $0\leq r<h_b(x_0)$. There exists $a\in A_{x_0}$ such
that $\Vert a\Vert_2\leq1$ and $|\tau(a^*b)|>r$. Multiplying $a$ by a
scalar sufficiently close to $1$, we may assume that
$\Vert a\Vert_2<1$ while retaining $|\tau(a^*b)|>r$. Choose $f\in A$ satisfying
$f(x_0)=a$. The maps $x\to\Vert f(x)\Vert_2$ and $x\to\tau(f(x)^*b)$
are continuous. Hence, on a neighborhood $V$ of $x_0$,
$\Vert f(x)\Vert_2<1$ and $|\tau(f(x)^*b)|>r$. Since $f(x)\in A_x$, one
has $h_b(x)>r$ for every $x\in V$. Thus $h_b$ is lower semicontinuous and lemma~\ref{lem:semicontinuous-coordinate-map}
applies to $\pi$.
\end{proof}
\noindent The fiberwise picture connects to the state-space construction as follows.
\begin{example}
\label{ex:continuous-field-ura}
By Lemma~\ref{lem:fiber-map-continuity-points},
$\operatorname{Cont}(\pi)$ is a dense $G_\delta$ subset of $X$. For every
$x_0\in\operatorname{Cont}(\pi)$,
Lemma~\ref{lem:Minimality from a continuity point} gives that $\mathcal{U} = \overline{\{ \alpha_g(A_{x_0}'') \mid g \in \Gamma \} }^{\,\mathrm{EM}} \subset \mathrm{Sub}(M)$ is a URA.
\end{example}
\noindent The following example shows simultaneously that the fiber map
need not be continuous and that the construction of
Example~\ref{ex:continuous-field-ura} produces URAs which are neither
compact nor discrete.
\begin{example}
\label{ex:discontinuous-fiber-map}
\noindent Let $Y:=\{0,1\}^{\N}$, let
$\mu:=(\frac12\delta_0+\frac12\delta_1)^{\otimes\N}$, and put
$B_0:=C(Y)$ and $M_0:=\LL^\infty(Y,\mu)$. Denote by $\tau$ the
corresponding trace. For $n\in\N$, consider
$q_n:=1_{\{\omega\in Y:\omega_n=0\}}$. The defining cylinder set is
clopen, so $q_n\in B_0$ is a projection, and the family $(q_n)_{n\in\N}$
is independent with $\tau(q_n)=1/2$. Define
$p_n:=q_0+(1-q_0)q_n$ and
$D_n:=C^*(1,p_n)=W^*(p_n)$ for $n\geq 1$. The two summands defining $p_n$ are orthogonal
projections, so $p_n\in B_0$ is a projection. Independence gives
$\tau(p_n)=\tau(q_0)+\tau((1-q_0)q_n)=3/4$ and, for distinct
$n,m\geq1$,
$$\Vert p_n-p_m\Vert_2^2=\tau((1-q_0)(q_n-q_m)^2)=1/4,$$
because
$\tau(1-q_0)=\tau((q_n-q_m)^2)=1/2$. Since
$q_0\leq p_n$ and
$D_n=\mathbb Cp_n\oplus\mathbb C(1-p_n)$, for distinct
$n,m\geq1$ one has
\begin{equation}\label{eq:continuous-field-separated-fibers}
E_{D_n}(q_0)
=\frac{\tau(q_0)}{\tau(p_n)}p_n
=\frac23p_n,
\quad
\Vert E_{D_n}(q_0)-E_{D_m}(q_0)\Vert_2
=\frac13.
\end{equation}
\medskip
\noindent Let $X:=\mathbb R/\mathbb Z$, fix
$\theta\in\mathbb R\setminus\mathbb Q$, and let $\mathbb Z\curvearrowright X$
be the minimal action generated by
$R(t+\mathbb Z):=t+\theta+\mathbb Z$. For
$n\geq 1$, choose non-zero functions $f_n\in C(X)_+$ with
pairwise disjoint supports and put $V_n:=\{x\in X:f_n(x)>0\}$.
Put
$$
B:=\bigotimes_{k\in\mathbb Z}B_0,
\quad
M:=B''=\overline{\bigotimes_{k\in\mathbb Z}(M_0,\tau)}^{\,\mathrm w},
$$
and denote by $\tau$ the product trace on $M$. Following the notation
of the proof of Lemma~\ref{LemInfiniteTens}, for $c\in M_0$ and
$k\in\mathbb Z$, denote by $c^{(k)}$ the elementary tensor whose
$k$-th coordinate is $c$ and whose other coordinates are equal to $1$.
For a unital subalgebra $Q\subseteq M_0$, put
$Q^{(k)}:=\{c^{(k)}:c\in Q\}$. Let
$\alpha:\mathbb Z\curvearrowright M$ be the Bernoulli shift preserving
$B$ and satisfying $\alpha_m(c^{(k)})=c^{(k+m)}$. For $k\in\mathbb Z$,
$n\geq 1$ and $x\in X$, put
$s_{k,n}(x):=f_n(R^{-k}x)p_n^{(k)}$, and define
$$
A:=C^*\bigl(C(X)1,\ s_{k,n}:k\in\mathbb Z,\
n\geq1\bigr)
\subseteq C(X,B).
$$
One has $m\cdot s_{k,n}=s_{k+m,n}$, so $A$ is
$\mathbb Z$-invariant for the diagonal action defined above.
\medskip
\noindent Define
$$
D(x):=
\begin{cases}
D_n,&x\in V_n,\\
\mathbb C1,&x\notin\displaystyle\bigcup_{n\geq1}V_n.
\end{cases}
$$
Fix $x\in X$. Since the sets $V_n$ are pairwise disjoint, for every
$k\in\mathbb Z$ there is at most one $n\geq1$ such that
$R^{-k}x\in V_n$. Put
$$
C_x:=C^*(1,\ p_n^{(k)}:k\in\mathbb Z,\ n\geq1,\
R^{-k}x\in V_n),
$$
viewed as a $C^*$-subalgebra of $B$. The evaluation map
$\operatorname{ev}_x:A\to B$, $\operatorname{ev}_x(F):=F(x)$, is a
$*$-homomorphism. Hence $A_x=\operatorname{ev}_x(A)
=C^*(1,\ s_{k,n}(x):k\in\mathbb Z,\ n\geq1)$.
If $R^{-k}x\notin V_n$, then $s_{k,n}(x)=0$. If
$R^{-k}x\in V_n$, then
$s_{k,n}(x)=f_n(R^{-k}x)p_n^{(k)}\in C_x$. Thus $A_x\subseteq C_x$.
Conversely, if $R^{-k}x\in V_n$, then $f_n(R^{-k}x)>0$, and hence
$$p_n^{(k)}=f_n(R^{-k}x)^{-1}s_{k,n}(x)\in A_x.$$
Therefore
$C_x\subseteq A_x$, so $A_x=C_x$.
\medskip
\noindent For $k\in\mathbb Z$, the copy of $D(R^{-k}x)$ in the $k$-th
tensor factor is $D(R^{-k}x)^{(k)}$. If $R^{-k}x\in V_n$, then
$D(R^{-k}x)^{(k)}=C^*(1,p_n^{(k)})$; if
$R^{-k}x\notin\bigcup_{n\geq1}V_n$, then
$D(R^{-k}x)^{(k)}=\mathbb C1$. Hence
$$
A_x=C_x=C^*(D(R^{-k}x)^{(k)}:k\in\mathbb Z)
=\bigotimes_{k\in\mathbb Z}D(R^{-k}x)
\subseteq B.
$$
Taking bicommutants gives
\begin{equation}\label{eq:continuous-field-tensor-fiber}
\pi(x):=A_x''
=
\left(\bigotimes_{k\in\mathbb Z}D(R^{-k}x)\right)''
=\overline{\bigotimes_{k\in\mathbb Z}
D(R^{-k}x)}^{\,\mathrm w}.
\end{equation}
\medskip
\noindent
By Remark~\ref{rmk:fiber-map-equivariance}, $\pi$ is
$\mathbb Z$-equivariant. Choose a continuity point $x_0$ of $\pi$,
whose existence follows from
Lemma~\ref{lem:fiber-map-continuity-points}. Lemma~\ref{lem:Minimality from a continuity point}
then implies that
$$
\mathcal U
:=\overline{\{\alpha_m(\pi(x_0)):m\in\mathbb Z\}}^{\,\mathrm{EM}}
=\overline{\{\pi(R^mx_0):m\in\mathbb Z\}}^{\,\mathrm{EM}}
$$
is a URA.
\medskip
\noindent For every $n\geq1$, minimality of $R$ gives
$m_n\in\mathbb Z$ such that $x_n:=R^{m_n}x_0\in V_n$. Thus
$D(x_n)=D_n$. The conditional expectation formula used in the proof of Lemma~\ref{LemInfiniteTens} gives, for every $x\in X$
and $a\in M_0$, $E_{\pi(x)}(a^{(0)})=\bigl(E_{D(x)}(a)\bigr)^{(0)}$. Taking $x=x_n$ and $a=q_0$, and using
\eqref{eq:continuous-field-separated-fibers}, gives
$E_{\pi(x_n)}(q_0^{(0)})=\frac23p_n^{(0)}$.
Since the embedding $M_0\to M$, $a\mapsto a^{(0)}$, preserves the
$L^2$-norm, Equation~\eqref{eq:continuous-field-separated-fibers} also shows that
$$
\Vert
E_{\pi(x_n)}(q_0^{(0)})
-E_{\pi(x_m)}(q_0^{(0)})
\Vert_2
=\frac13
\quad\text{for all }n,m\geq1,\ n\neq m.
$$
Thus
$(\pi(x_n))_{n\geq1}\subseteq\mathcal U$ has no convergent
subsequence. Since
$\Sub(M)$ is metrizable, $\mathcal U$ is not compact. If $\mathcal U$
were discrete,
Proposition~\ref{prop:discrete-minimal-state-space-characterization} $(1)\Rightarrow(3)$,
applied with $\mathcal Z=X$, $\Phi=\pi$ and $z_0=x_0$,
would imply that it is compact. Therefore $\mathcal U$ is
not discrete. Finally, if $\pi$ were continuous, then $\pi(X)$ would be
compact and hence closed in $\Sub(M)$. Equivariance would give
$$
\mathcal U
=\overline{\{\pi(R^mx_0):m\in\mathbb Z\}}^{\,\mathrm{EM}}
\subseteq\pi(X),
$$
so $\mathcal U$ would be a closed subset of the compact space $\pi(X)$.
This contradiction shows that $\pi$ is not continuous.
\end{example}
\bibliographystyle{alphaurl}
\bibliography{biblio}
\end{document}